\newtheorem{thm}{Theorem}[section]
\newtheorem{prop}[thm]{Proposition}
\newtheorem{cor}[thm]{Corollary}
\newtheorem{conj}[thm]{Conjecture}
\newtheorem{claim}[thm]{Claim}
\newtheorem{qu}[thm]{Question}
\newtheorem{lemma}[thm]{Lemma}
\newtheorem{lem}[thm]{Lemma}
\theoremstyle{definition}
\newtheorem{notation}[thm]{Notation}
\newtheorem{defn}[thm]{Definition}
\theoremstyle{remark}
\newtheorem{rmk}[thm]{Remark}
\newlength{\plarg}
\newcommand{\sub}{\subseteq}
\newcommand{\vb}{\mathrm{vb}}
\newcommand{\cd}{\mathrm{cd}}
\newcommand{\G}{\mathcal G}
\renewcommand{\AA}{{\bf A}}
\newcommand{\BB}{{\bf B}}
\newcommand{\CC}{{\mathcal C}}
\newcommand{\GG}{{\bf G}}
\newcommand{\HH}{{\bf H}}
\newcommand{\NN}{{\bf N}}
\newcommand{\UU}{{\bf U}}
\newcommand{\KK}{{\bf K}}
\newcommand{\ZZ}{{\bf Z}}
\renewcommand{\b}{b_1^{(2)}}
\renewcommand{\d}{d_1^{(2)}}
\newcommand{\vd}{{\mathrm vd}_1^{(2)}}
\newcommand{\ti}{\widetilde}
\newcommand{\F}{\mathbb{F}}
\newcommand{\Z}{\mathbb{Z}}
\newcommand{\Q}{\mathbb{Q}}
\renewcommand{\hat}{\widehat}
\newcommand{\ga}{\gamma}
\newcommand{\Ga}{\Gamma}
\newcommand{\hp}{\hat{p}}
\newcommand{\lan}{\langle}
\newcommand{\ran}{\rangle}
\newcommand{\lrar}{\longrightarrow}
\newcommand{\rar}{\rightarrow}
\newcommand{\n}{\unlhd}
\newcommand{\nc}{\unlhd_{c}}
\newcommand{\no}{\unlhd_{o}}
\newcommand{\lo}{\leq_{o}}
\newcommand{\lc}{\leq_{c}}
\newcommand{\FP}{\mathtt{FP}}
\newcommand{\po}{\colon}
\renewcommand{\bar}{\overline}
\newcommand{\RG}{\mathrm{RG}}
\newcommand{\Aut}{\mathrm{Aut}}
\newcommand{\D}{\mathcal D}
\theoremstyle{plain}
\newtheorem{mainthm}{Theorem}
\newtheorem{maincor}[mainthm]{Corollary}
\begin{document}
	
	\title{Virtual homology of limit groups and profinite rigidity of direct products}
	\author{Jonathan Fruchter and Ismael Morales}
	\smallskip
	
	\begin{minipage}{\linewidth}

		\vspace{0mm}
		
		\begin{abstract} We show that the virtual second Betti number of a finitely generated, residually free group $G$ is finite if and only if $G$ is either free, free abelian or the fundamental group of a closed surface. We also prove a similar statement in higher dimensions. We then develop techniques involving rank gradients of pro-$p$ groups, which allow us to recognise direct product decompositions. Combining these ideas, we show that direct products of free and surface groups are profinitely rigid among finitely presented, residually free groups, partially resolving a conjecture of Bridson's. Other corollaries that we obtain include a confirmation of Mel'nikov's surface group conjecture in the residually free case, and a description of closed aspherical manifolds of dimension at least $5$  with a residually free fundamental group.
	
        \end{abstract}
        
	\end{minipage}
	\thispagestyle{empty}

 \maketitle

	\vspace{5mm}
	
	\tableofcontents
	
	\vspace{5mm}
	
\section{Introduction}
The main theme of this paper is to utilize the homology of finite-index subgroups of a given group, in order to recognize whether it is a free or a surface group. We do so within two classes of groups: (finitely generated) \textbf{residually free groups} and word-hyperbolic fundamental groups of graphs of free groups amalgamated along cyclic subgroups. For abbreviation and readability, we refer to the latter as \textbf{HGFC-groups} (which stands for \textbf{H}yperbolic \textbf{G}raphs of \textbf{F}ree groups with \textbf{C}yclic edge groups). Throughout the paper, we assume that the edge maps of any graph of groups decomposition are injective, and that cyclic edge groups are always infinite cyclic (unless stated otherwise). \par \smallskip

Given a finitely generated group $G$ and a field $k$ (endowed with a trivial $G$-action), we denote the \emph{$i$-th Betti number} of $G$ with coefficients in $k$ by $b_i^k(G)=\dim_k H_i(G; k)$. We focus on the \emph{virtual $i$-th Betti number} of $G$, which allows us to study $G$ by looking at the homology of all of its finite-index subgroups at once.

\begin{defn} \label{virtual_b}
    Let $G$ be a finitely generated group and let $k$ be a field. The \emph{virtual $i$-th Betti number} of $G$ with coefficients in $k$ is given by
    \begin{equation*}
        \mathrm{vb}_n^k(G)=\sup \{\dim_{k} H_i(H;k)\, \vert \, H \text{ is a finite-index subgroup of } G\}
    \end{equation*}
\end{defn}

\begin{rmk}
    When $k=\Q$, we simply write $b_i(G)$ and $\vb_i(G)$ to denote $b_i^{\Q}(G)$ and $\vb_i^{\Q}(G)$ respectively.
\end{rmk}

One of many remarkable properties of finitely generated residually free groups, is that many group-theoretic properties can be read from their (virtual) homology; for example, the failure of a finitely generated residually free group $G$ to satisfy certain finiteness properties, can be detected in the homology of finite-index subgroups of $G$ \cite[Theorem B]{Bri09}. This makes residually free groups a compelling object to study via virtual homology.

\subsection{Virtual homology of residually free groups} Wilton showed in \cite{surfacesubs} that every one-ended HGFC-group contains a surface subgroup, and deduced that its virtual second Betti number, with coefficients in any field $k$, is positive. We extend this result and give a full classification of HGFC-groups by their virtual second Betti number. This classification is remarkably simple: apart from the obvious cases, the virtual second Betti number of an HGFC-group is infinite. \par \smallskip

HGFC-groups also play a special role in the study of \emph{limit groups}, that is, finitely generated, \emph{fully residually free} groups (for the full definition and a few key properties of limit groups see Subsection \ref{limintro}). This class of groups has been extensively studied over the past sixty years, and gained popularity due to its importance in Sela's proof that all non-abelian free groups share the same first-order theory, answering a longstanding question of Tarski's (see \cite{sela1} \emph{et seq.} as well as \cite{Kharlampovich1998} \emph{et seq.}). Limit groups exhibit a \emph{cyclic hierarchical structure}: they can be built up from easy-to-understand building blocks (namely free and free abelian groups), taking cyclic amalgams and HNN extensions repeatedly. In particular, HGFC-groups lie (almost) at the bottom of this hierarchical structure of a limit group. We also derive a classification for limit groups, and more generally residually free groups, in terms of their virtual second Betti number.


\begin{mainthm}[Theorem \ref{mainlim2}]
	\label{mainlim}
	 Let $G$ be a finitely generated residually free  group or an HGFC-group. Then
    \begin{enumerate}
			\item $\mathrm{vb}_2^k(G)=0$ if and only if $G$ is free,
			\item $\mathrm{vb}_2^k(G)=1$ if and only if $G \cong \pi_1(\Sigma)$ where $\Sigma$ is a closed, connected surface,
            \item $\mathrm{vb}_2^k(G)=\binom{d}{2}$ if and only if $L\cong \mathbb{Z}^d$ (for $d>2$)
			\item $\mathrm{vb}_2^k(G)=\infty$ otherwise.
		\end{enumerate}

    Furthermore, if $G$ is a limit group, then for any $n\ge 3$ we have that $\mathrm{vb}_n^k(L)=\infty$ unless one of the following two holds:
    \begin{enumerate}
        \item $\mathrm{cd}(L)<n$, in which case $\mathrm{vb}_n^k(L)=0$, or
        \item $L$ is free abelian of rank at least $n$, in which case $\mathrm{vb}_n^k(L)=\binom{\mathrm{rank}(L)}{2}$.
    \end{enumerate}
\end{mainthm}
Therefore, surface groups are characterized among limit groups by their virtual second Betti number. We now explain the idea of the proof of Theorem \ref{mainlim}. Given such a group $G$, which does not fit into one of the exceptional cases, we consider two possibilities. Either $G$ is not a limit group, and hence it contains a subgroup isomorphic to $F_2\times \Z$; or it is a limit group which contains a subgroup of  the form $\pi_1(\Sigma) \ast \Z$, where $\Sigma$ is a closed, connected surface. Since both $F_2\times \Z$ and $ \pi_1(\Sigma) \ast \Z$ have infinite virtual second Betti number,  we obtain that $\mathrm{vb}_2^k(G)=\infty$ utilizing virtual retractions.

\Cref{mainlim} also allows us to describe the structure of closed, aspherical manifolds with a residually free fundamental group. We suspect that experts are probably familiar with the description below (see Remark \ref{mflds_rmk}), but we record it as a consequence of Theorem \ref{mainlim}.

\begin{maincor}[Corollary \ref{mflds2}] \label{mflds}
    Let $M$ be an aspherical, closed manifold of dimension $n$. Then:
    \begin{enumerate}
        \item if $\pi_1(M)$ is fully residually free and $n\ge 3$ then $M \cong T^n$.
        \item if $\pi_1(M)$ is residually free and $n\ge 5$ then $M$ has a finite cover that is homeomorphic to the direct product of a torus and finitely many closed surfaces.
    \end{enumerate}
\end{maincor}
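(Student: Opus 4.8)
The plan is to use throughout that every finite-index subgroup of $\pi_1(M)$ is the fundamental group of a closed aspherical $n$-manifold (a finite cover of $M$), hence a Poincar\'e duality group of dimension $n$, and that a closed connected $n$-manifold has $n$-th Betti number at most $1$ over any field. Consequently $\mathrm{vb}_n^k(\pi_1 M)\le 1<\infty$, and the same bound holds for any group that is virtually a $\mathrm{PD}_n$-group; this is precisely the hypothesis that allows Proposition \ref{high_dim_lim} to be applied below.

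For (1), I would put $L=\pi_1(M)$, a limit group which is a $\mathrm{PD}_n$-group, so $\mathrm{cd}(L)=n\ge 3$ while $\mathrm{vb}_n^k(L)\le 1<\infty$. Since $\mathrm{cd}(L)=n$ is not $<n$, Proposition \ref{high_dim_lim} forces $L$ to be free abelian, and comparing cohomological dimensions gives $L\cong\Z^n$. It then remains to invoke topological rigidity: a closed aspherical $n$-manifold with fundamental group $\Z^n$ is homeomorphic to $T^n$ (classical three-manifold theory or geometrisation for $n=3$; Freedman--Quinn for $n=4$, as $\Z^4$ is good; and the Borel conjecture for $\Z^n$, via Hsiang--Wall or the Farrell--Jones theorem, for $n\ge 5$).

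For (2), I would first reduce to a purely group-theoretic statement. As the fundamental group of a closed aspherical manifold, $G:=\pi_1(M)$ has type $\mathrm F$, in particular type $\mathrm{FP}_\infty$, and it is residually free by hypothesis; hence, by the Bridson--Howie--Miller--Short structure theory of residually free groups, $G$ has a finite-index subgroup isomorphic to a direct product $L_1\times\cdots\times L_r$ of non-trivial limit groups. Passing to the corresponding finite cover $M_0$ of $M$, again closed aspherical of dimension $n$, I may assume $\pi_1(M_0)=L_1\times\cdots\times L_r$; this is a $\mathrm{PD}_n$-group, and since a direct product is a Poincar\'e duality group precisely when each factor is, with dimensions adding up, each $L_i$ is a $\mathrm{PD}_{n_i}$ limit group and $\sum_i n_i=n$. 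Now I classify the factors: $n_i=1$ forces $L_i\cong\Z$ (a $\mathrm{PD}_1$-group is infinite cyclic); $n_i=2$ forces $L_i$ to be a closed surface group (Eckmann--M\"uller--Linnell), necessarily orientable since non-orientable surface groups are not residually free; and for $n_i\ge 3$ we are back in the situation of part (1), so Proposition \ref{high_dim_lim} gives $L_i\cong\Z^{n_i}$. Collecting all the abelian factors into a single $\Z^a$ leaves $\pi_1(M_0)\cong\Z^a\times S_1\times\cdots\times S_b$, where each $S_j$ is the fundamental group of a closed orientable surface of genus at least $2$ and $a+2b=n\ge 5$.

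Finally I would upgrade this group isomorphism to a homeomorphism. The manifolds $M_0$ and $T^a\times\Sigma_1\times\cdots\times\Sigma_b$ are closed aspherical of the same dimension $n\ge 5$ with isomorphic fundamental groups, hence homotopy equivalent; and $T^a\times\Sigma_1\times\cdots\times\Sigma_b$ carries a nonpositively curved metric, so topological rigidity of closed nonpositively curved manifolds (Farrell--Jones) yields a homeomorphism $M_0\cong T^a\times\Sigma_1\times\cdots\times\Sigma_b$. I expect the main obstacle to be the first step of part (2): extracting the virtual direct-product decomposition genuinely requires the Bridson--Howie--Miller--Short machinery together with the essential (though elementary) observation that $\pi_1$ of a closed aspherical manifold has type $\mathrm{FP}_\infty$ --- something that fails for general finitely presented residually free groups, as the Stallings--Bieri groups show. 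Once that decomposition is available, the only genuinely new ingredient is Proposition \ref{high_dim_lim}, used to recognise the higher-dimensional duality factors as tori; the remaining facts (classification of $\mathrm{PD}_1$- and $\mathrm{PD}_2$-groups, non-residual-freeness of non-orientable surface groups, and topological rigidity) are standard.
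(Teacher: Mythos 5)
Your argument is correct, and for part (1) it coincides with the paper's proof: apply Proposition \ref{high_dim_lim} to the limit group $\pi_1(M)$, whose $n$-th virtual Betti number is bounded by Poincar\'e duality, to get $\pi_1(M)\cong\Z^n$, and then quote topological rigidity in the three dimension ranges exactly as the paper does. For part (2) you take a genuinely (if mildly) different route in classifying the factors of the virtual direct-product decomposition supplied by Theorem \ref{vir_direct_prod}: the paper works with the virtual Betti numbers $\mathrm{vb}_\ell$ of the whole product $\pi_1(\hat M)$ and combines Proposition \ref{high_dim_lim} with the K\"unneth formula, whereas you first invoke the splitting theory of Poincar\'e duality groups (a direct product of $\mathrm{FP}_\infty$ groups is $\mathrm{PD}_n$ if and only if each factor is $\mathrm{PD}_{n_i}$ with $\sum_i n_i=n$) and then classify each factor separately, via the Eckmann--M\"uller--Linnell theorem in dimension $2$ and Proposition \ref{high_dim_lim} in dimension $\ge 3$. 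Your version isolates each factor cleanly at the cost of quoting the PD splitting theorem; the paper's stays entirely within the virtual-homology framework of the section. The final rigidity step is the same in substance (the Bartels--L\"uck theorem the paper cites covers $\mathrm{CAT}(0)$ groups, which is your nonpositively curved case). One small inaccuracy: your parenthetical claim that non-orientable surface groups are never residually free is false --- the fundamental group of the non-orientable surface of genus $k$ is a limit group for $k\ge 4$ --- but this is harmless here, since the corollary asserts only that the factors are closed surfaces, and non-orientable hyperbolic surfaces are equally nonpositively curved.
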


Virtual Betti numbers of finitely generated groups have been studied before on numerous occasions (see, for instance, \cite{Bridson2015, dessi, agol, Venkataramana2008-nl, Cooper2007}). In fact, Agol's resolution of the Virtual Haken conjecture \cite{Agol13} shows that the first and second virtual betti numbers of closed hyperbolic three manifolds are infinite (see also Remark \ref{agol_reid_rmk}). In  \Cref{virtual_Bettis} we give more examples of calculations of virtual Betti numbers. \par \smallskip


Another work related to this paper is \cite{l2_lim}, in which Bridson and Kochloukova show that the second \emph{$L^2$-Betti number} of a limit group $L$ is always $0$. By L\"uck's approximation theorem \cite{Luc94}, the second $L^2$-Betti number of a limit group $L$   is equal to $\lim_{n \rightarrow \infty} \frac{\dim H_2(H_n;\mathbb{Q})}{[L:H_n]}$ where $(H_n)$ is any nested sequence of normal subgroups of finite-index in $L$ with trivial intersection, i.e. $\bigcap_n H_n = \{1\}$. By the proof of Theorem \ref{mainlim}, one can easily produce a nested sequence $(G_n)$ of finite-index normal subgroups of a given limit group $L$, with $\lim_{n\rightarrow \infty} [L:G_n]=\infty$, satisfying 
\[\liminf_{n \rightarrow \infty} \frac{\dim H_2(G_n;\mathbb{Q})}{[L:G_n]}>0.\] However, for the reasons above, such a sequence $(G_n)$ can never have a trivial intersection. 
	\par 
	\smallskip

As a last note on $L^2$-Betti numbers, we can also establish, partly using \Cref{mainlim}, a similar dichotomy using other virtual homological invariants.

\begin{mainthm}[\Cref{boundedvd}] \label{boundedvdintro} Let $G$ be a non-abelian limit group that is not isomorphic to either a free or a surface group. Then for all $C>0$ there exists a finite-index subgroup $H\leq G$ such that $b_1(H)-\b(H)\geq C$. 
\end{mainthm}
An interesting aspect of \Cref{boundedvdintro} is its relation with profinite invariants. Lubotzky proved in \cite[Proposition 1.4]{Lub14} that the property of being residually-$p$ is not a profinite invariant.  On the positive side, Jaikin-Zapirain proved in \cite[Theorem 1.1] {And212} that if $G$ is a group in the finite or solvable genus of a finitely generated free or surface group $S$, then it is residually-$p$ for all primes $p$. The proof relies, crucially, on the fact that free and surface groups are the exceptional cases of \Cref{boundedvdintro}. This illustrates that the methods of \cite[Theorem 1.1] {And212} do not extend directly to prove that, given a limit group $L$, any group  in the solvable genus  of $L$ is residually-$p$. 
\subsection{The surface group conjectures}

We recall a duo of famous conjectures revolving around surface groups, inspired by a question of Mel'nikov's. A group $G$ is called a \emph{Mel'nikov group} if it is a non-free infinite one-relator group, all of whose finite-index subgroups are one-relator groups (see \cite[Definition 1.1 and following discussion]{Kie22}). The Surface Group Conjectures, which appear in numerous papers (as they appear in Baumslag--Fine--Rosenberger's book \cite[Conjecture 2.17 and Question 2.18]{melnikov}) claim the following:

\begin{conj}[Surface Group Conjecture A]
Let $G$ be a residually finite Mel'nikov group. Then $G$ is either a surface group or $G\cong \mathrm{BS}(1,n)$ (for some $n \ne 0$).
\end{conj}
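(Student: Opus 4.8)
The plan is to reduce the conjecture to the bound $\vb_2^k(G)\le 1$ for all fields $k$ and then re-run the ``find a surface subgroup, then replicate it in a finite cover'' strategy of \Cref{surface} and \Cref{mainlim}, with the residually free hierarchy replaced by the combinatorics of a one-relator presentation. Three reductions come first. By the theorem of Fischer--Karrass--Solitar that a one-relator group with torsion is virtually a nontrivial free product of a free group with finite cyclic groups --- a group that, being infinite, is not one-relator --- residual finiteness forces $G$ to be torsion-free, say $G=\langle X\mid r\rangle$ with $r$ cyclically reduced and not a proper power. A Kurosh-subgroup argument applied to any free splitting of $G$ shows $G$ is freely indecomposable: a proper free factor would, inside a suitable finite-index subgroup, yield a free product of $\ge 2$ infinite groups, again not one-relator. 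So $G$ is torsion-free, infinite, non-free and freely indecomposable, hence one-ended; its presentation $2$-complex is then aspherical with a single $2$-cell (Lyndon), so every finite-index subgroup $H\le G$ --- one-relator by hypothesis, torsion-free --- satisfies $\dim_k H_2(H;k)\le 1$. Thus $\vb_2^k(G)\le 1$ for all $k$, and it remains to show that a torsion-free, one-ended, residually finite one-relator group with $\vb_2^k(G)\le 1$ is a closed surface group or is isomorphic to some $\mathrm{BS}(1,n)$.

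I would analyse $G$ through Puder's primitivity rank $\pi(w)$ of $w=r$. The cases $\pi(w)=1$ ($w$ a proper power) and $\pi(w)=\infty$ ($G$ free) are already excluded, so $\pi(w)\ge 2$. Suppose $\pi(w)\ge 3$. Then $G$ has negative immersions (Louder--Wilton), hence is hyperbolic and coherent, and in particular is not a surface group; invoking the surface-subgroup results for one-relator groups with negative immersions, one obtains an infinite-index embedding $\pi_1\Sigma*\Z\hookrightarrow G$ with $\chi(\Sigma)\le 0$, exactly the output of \Cref{surface}. One then replicates this subgroup: using separability of the relevant quasiconvex subgroups --- available in the negative-immersions range through special-cube-complex techniques --- one passes, just as in the covering constructions behind \Cref{surface} and \Cref{mainlim}, to a finite-index $H\le G$ in which a surface subgroup and a disjoint handle are simultaneously unfolded, exhibiting two linearly independent classes in $H_2(H;k)$ and contradicting $\vb_2^k(G)\le 1$. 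Hence $\pi(w)=2$.

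In the remaining case $\pi(w)=2$, the structure theory of one-relator groups of primitivity rank $2$ (Wilton; Linton; Kielak) should present $G$ as a graph of groups over infinite cyclic subgroups whose vertex pieces are surface-with-boundary groups or solvable Baumslag--Solitar pieces. If $G$ is assembled by gluing surface pieces along their entire boundary curves, it is a closed surface group; if $G\cong\mathrm{BS}(1,n)$ we are in the second conclusion. In any other configuration $G$ properly contains one of these pieces together with a surplus cyclic handle, and running the covering moves of \Cref{surface} along the (cyclic, hence separable) edge groups yields, inside a finite-index $H\le G$, either two disjoint surface subgroups or a copy of $\Z\times F_2$, so $\dim_k H_2(H;k)\ge 2$, again contradicting $\vb_2^k(G)\le 1$. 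This exhausts the cases and gives the dichotomy.

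The main obstacle is the replication step when $\pi(w)\ge 3$ (and for surplus handles when $\pi(w)=2$). In the residually free setting the cyclic JSJ of a limit group simultaneously locates a surface subgroup and supplies the finite covers that multiply its second homology; for a general residually finite one-relator group one must substitute (i) one-relator geometry --- primitivity rank and negative immersions --- to find an infinite-index copy of $\pi_1\Sigma*\Z$, where even the assertion that every one-relator group with $\pi(w)\ge 3$ contains a surface subgroup is, as far as I know, open in full generality, and (ii) subgroup separability, presently available only in the hyperbolic/virtually special range. Establishing both unconditionally is precisely the content of the conjecture, so this is where essentially all of the difficulty is concentrated.
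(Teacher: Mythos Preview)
The statement you are trying to prove is labelled \textbf{Conjecture} in the paper, not Theorem: Surface Group Conjecture~A is an open problem, and the paper does not claim to prove it. What the paper does prove is the special case in which $G$ is residually \emph{free} (Corollary~\ref{Melni}), where the full force of \Cref{mainlim} is available. So there is no ``paper's own proof'' to compare against; any purported proof of the full conjecture would be a new theorem.

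Your write-up is honest about this: in the final paragraph you say that the replication step for $\pi(w)\ge 3$ requires (i) a surface subgroup in every such one-relator group and (ii) enough separability to unfold it in a finite cover, and that both are open in this generality. That is exactly right, and it means what you have is a plausible programme, not a proof. The same applies to the $\pi(w)=2$ analysis: the ``structure theory'' you invoke (surface pieces and $\mathrm{BS}(1,n)$ pieces glued along cyclics) is not an established theorem for arbitrary one-relator groups of primitivity rank~$2$; the Gardam--Kielak--Logan result you may have in mind concerns two-\emph{generator} one-relator groups, which is a different hypothesis.

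Two of your ``established'' reductions also need repair. First, the sentence ``a free product of $\ge 2$ infinite groups\ldots\ is not one-relator'' is false as stated: for instance $\pi_1(\Sigma_g)\ast F_k$ is a one-relator group for every $k\ge 0$. The freely-indecomposable reduction can be salvaged (unfold the free factor in a finite cover to get several copies of the non-free piece, which then needs more than one relator), but your current phrasing is incorrect. Second, the torsion reduction is garbled: Fischer--Karrass--Solitar does not say that a one-relator group with torsion is virtually a free product of a free group with finite cyclic groups; the clean statement you want is that such a group is virtually torsion-free, so the Mel'nikov hypothesis passes to a torsion-free finite-index subgroup and you may assume $r$ is not a proper power. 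These are fixable, but the core difficulty --- producing and replicating a surface subgroup without the residually free hierarchy --- remains the genuine, and currently open, obstacle.
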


\begin{conj}[Surface Group Conjecture B]
Let $G$ be a Mel'nikov group, all of whose infinite-index subgroups are free. Then $G$ is a surface group.
\end{conj}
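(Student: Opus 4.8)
The plan is to cut $G$ down to a torsion-free, one-ended one-relator group, realise it as the fundamental group of a graph of finitely generated free groups, and then reach either the flat case ($\mathbb{Z}^2$ or the Klein bottle group) or the HGFC case, where Theorem \ref{surface} applies. Write $G=\langle X\mid w\rangle$. Any nontrivial finite subgroup of the infinite group $G$ has infinite index, hence is free, hence trivial; so $G$ is torsion-free and $w$ is not a proper power. By Lyndon's theorem the presentation complex is aspherical, so (as $G$ is non-free) $\mathrm{cd}(G)=2$ and $G$ is finitely presented of type $\mathrm{FP}_\infty$. For one-endedness: $G$ is finitely generated and not infinite cyclic, so if it had more than one end, Stallings' theorem would give a splitting over the trivial group, i.e.\ $G\cong\mathbb{Z}$ (free, excluded) or $G\cong A*B$ with $A,B$ nontrivial; in the latter case $A$ and $B$ both have infinite index, hence are free, forcing $G$ free --- a contradiction. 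So $G$ is one-ended, whence $H^0(G;\mathbb{Z}G)=H^1(G;\mathbb{Z}G)=0$.

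\textbf{A graph-of-free-groups model.} Apply the Magnus--Moldavanskii procedure (after a Nielsen transformation so that some generator has zero exponent sum in $w$): $G$ is an HNN extension $G=\langle H,t\mid tAt^{-1}=B\rangle$ with $H$ a one-relator group with strictly shorter relator and $A\cong B\le H$ finitely generated free Magnus subgroups. In every case $H$ has infinite index in $G$ (whether the splitting is proper, ascending, or $G\cong H\rtimes\mathbb{Z}$, one has $[G:H]=\infty$), so by hypothesis $H$ is free; hence $G=\pi_1$ of a finite graph of finitely generated free groups with free edge groups. If $G$ contains a copy of $\mathbb{Z}^2$, then since $\mathrm{cd}(\mathbb{Z}^2)=2=\mathrm{cd}(G)$ and $\mathbb{Z}^2$ is not free, this subgroup has finite index, so $G$ is virtually $\mathbb{Z}^2$ and, being torsion-free, is $\mathbb{Z}^2$ or the Klein bottle group --- both surface groups, and we are done. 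So assume $G$ contains no $\mathbb{Z}^2$.

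\textbf{The HGFC case.} Suppose first that all edge groups of the splitting of $G$ are infinite cyclic. Then, having no $\mathbb{Z}^2$ (equivalently no Baumslag--Solitar subgroup), $G$ is word-hyperbolic, i.e.\ an HGFC-group in the sense of this paper; it is not free (Mel'nikov), and it is not free abelian (the $\mathrm{cd}$ bound rules out $\mathbb{Z}^d$ for $d\ge3$, while $\mathbb{Z}$ is free and $\mathbb{Z}^2$ is a surface group, both already treated). If $G$ is a surface group we are done; otherwise Theorem \ref{surface} produces an embedding $\pi_1\Sigma*\mathbb{Z}\hookrightarrow G$ with $\chi(\Sigma)\le 0$. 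If this subgroup had finite index then $G$ would be commensurable with the free product $\pi_1\Sigma*\mathbb{Z}$, hence have infinitely many ends, contradicting one-endedness; so it has infinite index, hence is free --- but $\pi_1\Sigma*\mathbb{Z}$ is not free, a contradiction. Therefore a Mel'nikov group satisfying the hypothesis whose graph-of-free-groups model has cyclic edge groups is a surface group.

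\textbf{The crux and the remaining cases.} What is left is precisely the part where Surface Group Conjecture B is open: the Magnus--Moldavanskii edge groups may be \emph{non-cyclic} free groups (this already occurs in the hierarchy of a higher-genus surface group, so it cannot be avoided), and one may land in the free-by-cyclic case $G\cong F_m\rtimes\mathbb{Z}$ with $m\ge 2$. The plan is to run the same endgame: (i) prove a surface-subgroup theorem in this generality --- that a one-ended, hyperbolic fundamental group of a finite graph of finitely generated free groups which is not virtually a closed surface group contains a closed surface subgroup (this extends Wilton's theorem \cite{surfacesubs} from cyclic to arbitrary finitely generated free edge groups, and covers hyperbolic mapping tori of free-group automorphisms); (ii) observe that such a surface subgroup $S\le G$ cannot have infinite index, since it is not free, so $G$ is virtually a closed surface group; (iii) conclude that, since Poincaré duality passes to torsion-free finite overgroups, $G$ is a $\mathrm{PD}(2)$ group, hence by the Eckmann--M\"uller--Linnell theorem the fundamental group of a closed surface. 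Step (i) is the main obstacle: constructing essential surfaces in graphs of free groups with non-cyclic edge groups, and in hyperbolic free-by-cyclic groups, is exactly the frontier of Wilton's surface-subgroup programme and is where the difficulty of the conjecture is concentrated. One would additionally need to rule out the degenerate free-by-cyclic branch directly --- showing, e.g.\ by a rank-gradient argument, that the Mel'nikov condition (all finite-index subgroups one-relator, hence $2$-generated) is incompatible with $F_m\rtimes\mathbb{Z}$ for $m\ge2$ unless $G$ is $\mathbb{Z}^2$ or the Klein bottle group.
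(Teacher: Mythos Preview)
The statement you are attempting to prove is \emph{Surface Group Conjecture B}, which the paper records as an open conjecture; the paper does not prove it, and there is no proof of it in the paper to compare your attempt against. The paper's contribution in this direction is Corollary~\ref{Melni}, which settles Surface Group Conjecture~A under the additional hypothesis that $G$ is residually free, as an immediate consequence of Theorem~\ref{mainlim}.

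Your write-up is honest about this: you explicitly flag that the ``remaining cases'' (non-cyclic Magnus edge groups, and the hyperbolic free-by-cyclic case) are exactly where the conjecture is open, and that Step~(i) --- a surface-subgroup theorem for graphs of free groups with non-cyclic free edge groups --- is not available. So what you have written is not a proof but a reduction scheme, and you correctly identify the missing ingredient. The portion you do complete (cyclic edge groups) essentially recovers the two-generator case already handled by Gardam--Kielak--Logan \cite{Kie22}, via the paper's Theorem~\ref{surface} rather than their argument.

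One technical point even in the part you claim to finish: the implication ``no $\mathbb{Z}^2$ subgroup $\Rightarrow$ hyperbolic'' is not correct as stated for graphs of free groups with cyclic edges --- hyperbolicity is equivalent to containing no Baumslag--Solitar subgroup $\mathrm{BS}(m,n)$, and $\mathrm{BS}(1,2)$ contains no $\mathbb{Z}^2$. You can patch this: any $\mathrm{BS}(m,n)\le G$ is non-free, hence of finite index by hypothesis, and for $|n|\neq|m|$ one checks that $\mathrm{BS}(m,n)$ (hence $G$) has a non-free infinite-index subgroup (e.g.\ $\mathbb{Z}[1/n]$ inside $\mathrm{BS}(1,n)$), contradicting the standing assumption; this forces $|m|=|n|$, whence $G$ is virtually $\mathbb{Z}^2$ and you are back in the flat case. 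With that fix, your HGFC branch is sound, but the conjecture itself remains open for the reasons you describe.
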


Wilton proved Conjecture B in the case where $G$ is residually free in \cite{wilton-one-ended}. Since one-relator groups have second Betti number at most one \cite{Lyndon50}, Theorem \ref{mainlim} also confirms Surface Group Conjecture A in the same context.
\begin{maincor} \label{Melni} Every residually free Mel'nikov group is a surface group.
\end{maincor}
Recently,  Gardam, Kielak and Logan established that both Surface Group Conjectures A and B are true for two-generated one-relator groups \cite[Theorem 1.5]{Kie22}. In addition, Jaikin-Zapirain and the second author \cite{JaiMor23} proved Surface Group Conjecture A in the case where $H_2(G; \Z)\neq 0$.

\subsection{Profinite rigidity within the class of limit groups}

From a logician's point of view, limit groups are notoriously hard to distinguish from one another: they all have the same universal first-order theory. This makes the question of telling limit groups apart from one another a compelling one. A key ingredient in studying profinite invariants of limit groups is \emph{cohomological goodness}, an important notion introduced by Serre in \cite{galois_cohomology}. Recall that a group $G$ is \emph{cohomologically good} (or just \emph{good}) if for every finite $G$-module $M$ the map $H^n(\hat{G};M)\rightarrow H^n(G;M)$ on cohomology groups, induced by the canonical map $G\rightarrow \hat{G}$, is an isomorphism. Grunewald, Jaikin-Zapirain and Zalesskii showed that limit groups are good \cite[Theorem 1.3]{limgood}; the same holds for HGFC-groups \cite{hag-wise, graphgood}. Wilton used this to show that free and surface groups are profinitely rigid among limit groups in \cite[Corollary D]{surfacesubs} and \cite[Theorem 2]{profinite_surface}, respectively. Theorem \ref{mainlim} gives an alternative proof of the latter theorem.

\begin{maincor}
		\label{rigidityintro}
		Let $\pi_1(\Sigma)$ be the fundamental group of a closed, connected surface and let $G$ be a limit group or a  hyperbolic group that splits as a graph of free groups with cyclic edge groups. If $\hat{G}=\widehat{\pi_1(\Sigma)}$ then $G\cong \pi_1(\Sigma)$.
	\end{maincor}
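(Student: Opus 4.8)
The plan is to use Theorem \ref{mainlim} as a "detector" for the surface case, together with the cohomological goodness of $G$ and $\pi_1(\Sigma)$ to transport the relevant finite-quotient data across the profinite isomorphism. First I would observe that $\vb_2^k$ is a profinite invariant of good groups: if $\hat{G}\cong\widehat{\pi_1(\Sigma)}$, then the lattices of finite-index subgroups of $G$ and of $\pi_1(\Sigma)$ correspond under the isomorphism (via the subgroups of the profinite completions), and for each corresponding pair $H\le G$, $H'\le\pi_1(\Sigma)$ one has $\hat H\cong\hat{H'}$. Cohomological goodness (available for limit groups and HGFC-groups by \cite{limgood, hag-wise, graphgood}, and classically for surface groups) gives $H^2(H;k)\cong H^2(\hat H;k)\cong H^2(\hat{H'};k)\cong H^2(H';k)$ for the finite field $k=\F_p$, and hence, dualizing, $b_2^{\F_p}(H)=b_2^{\F_p}(H')$. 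Taking suprema over finite-index subgroups yields $\vb_2^{\F_p}(G)=\vb_2^{\F_p}(\pi_1(\Sigma))$.

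Next I would compute the right-hand side: for a closed connected surface $\Sigma$, every finite-index subgroup is again a closed surface group (orientable or not), and $H^2$ of a closed surface group with $\F_p$-coefficients is one-dimensional (using the orientation class in the orientable case, and that finite covers of any closed surface group are virtually orientable, so the supremum is computed on orientable covers). Thus $\vb_2^{\F_p}(\pi_1(\Sigma))=1$, so $\vb_2^{\F_p}(G)=1$. Now Theorem \ref{mainlim} (item (2), applied with $k=\F_p$) applies directly to $G$, since $G$ is a limit group or an HGFC-group: $\vb_2^{\F_p}(G)=1$ forces $G\cong\pi_1(\Sigma_0)$ for some closed connected surface $\Sigma_0$.

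It remains to identify $\Sigma_0$ with $\Sigma$, i.e. to show that two closed surface groups with isomorphic profinite completions are isomorphic. This is where I would invoke the known profinite rigidity of closed surface groups among closed surface groups: the Euler characteristic is recovered from $\hat G$ (e.g. via the $\F_p$-homology growth of the finite-index subgroups, $\chi(H) = 1 - \dim_{\F_p}H_1(H;\F_p) + \dim_{\F_p}H_2(H;\F_p)$ and multiplicativity of $\chi$ under finite covers, all of which are profinite data by goodness), and orientability is detected by whether $H_1(H;\F_2)$ behaves like that of an orientable or non-orientable surface group across the finite-index subgroup lattice (equivalently, by the existence of an index-two subgroup with the "orientable" homological profile); then the classification of closed surfaces finishes the job. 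Alternatively one may simply cite \cite{profinite_surface} for this last identification.

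The main obstacle is the last step — pinning down the homeomorphism type of $\Sigma_0$ from profinite data — but this is precisely the content of Wilton's theorem \cite{profinite_surface} (or can be extracted from the goodness-plus-Euler-characteristic argument above), so the corollary really is a clean consequence of Theorem \ref{mainlim} once one has set up the profinite invariance of $\vb_2^{\F_p}$. A minor technical point to be careful about is that one must work with $\F_p$-coefficients rather than $\Q$-coefficients throughout, since only finite modules are controlled by goodness; Theorem \ref{mainlim} is stated for an arbitrary field $k$, so this causes no difficulty.
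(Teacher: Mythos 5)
Your proposal is correct and follows essentially the same route as the paper: use goodness to transport the second $\F_p$-cohomology of finite-index subgroups across the profinite isomorphism, conclude from Theorem \ref{mainlim} (via its cohomological version, Remark \ref{cohomological_version}, which plays the role of your dualization step) that $G$ is a closed surface group, and then identify the surface. The only cosmetic difference is in the last step, where the paper simply notes that closed surface groups are distinguished from one another by their abelianizations (hence by their finite quotients), rather than reconstructing the Euler characteristic and orientability or citing \cite{profinite_surface}.
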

 


\subsection{Profinite rigidity and direct products}

A celebrated result of Platonov and Tavgen \cite{platonov} states that a direct product of two non-abelian free groups is not profinitely rigid: given two non-abelian free groups $F$ and $F'$, there exist (infinitely many non-isomorphic) finitely generated subgroups $H\le F \times F'$ of infinite index such that the inclusion $H \hookrightarrow F \times F'$ induces an isomorphism of profinite completions. Since a finitely presented subgroup of $F \times F'$ that intersects both factors non-trivially and maps onto each factor must be of finite index \cite{baumslag_product}, there are no such examples $H\le F \times F'$ as above where $H$ is finitely presented. This raises the question of whether direct products of free groups are profinitely rigid among finitely presented, residually free groups. We answer this to the positive, with the following stronger result:

\begin{mainthm}[Theorem \ref{products2}] \label{productsintro} Let $G$ be a finitely presented residually free group. Let $S_1, \dots, S_n$ be free or surface groups and let $\Ga=S_1\times  \cdots \times S_n$ be their direct product. If $\hat G\cong \hat \Ga$, then $G\cong \Ga$.
\end{mainthm}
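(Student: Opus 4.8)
The plan is to combine the structure theory of finitely presented residually free groups with the homological constraints that $\hat G\cong\hat\Ga$ imposes through cohomological goodness, using pro-$p$ rank gradients precisely where the profinite completion must be forced to ``remember'' a direct-product splitting. Since $G$ and $\Ga$ are cohomologically good, the numbers $\vb_2^{\F_p}(H)$ for $H\le G$ of finite index --- and more generally the mod-$p$ homology growth along chains of finite-index subgroups cofinal in $\hat G$ --- are invariants of $\hat G$, hence coincide with those of $\Ga$. If $n=1$ this already finishes: $\vb_2^{\F_p}(G)=\vb_2^{\F_p}(S_1)\le 1$, so Theorem \ref{mainlim} makes $G$ free or a surface group, and its abelianisation pins down $G\cong S_1$ (this is \Cref{rigidityintro}). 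One also strips off any free abelian direct factor of $\Ga$ at the outset --- it is the centre $\Z^a$ of $\Ga$, which is reflected in $\hat\Ga$ and hence in $G$ --- and inducts on the non-abelian part, so from now on $n\ge 2$ and each $S_i$ is a non-abelian free group or a surface of genus $\ge 2$; in particular $\Ga$, and hence $G$, is centre-free.

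By Bridson--Howie--Miller--Short, $G$ has a finite-index normal subgroup $G_0$ that is a full subdirect product of freely indecomposable limit groups $L_1,\dots,L_m$, with all coordinate projections onto and $G_0\cap L_i\neq 1$; finite presentability lets us also arrange (after a further finite-index passage) that $G_0$ surjects onto every pair $L_i\times L_j$, so that the diagonal quotient $Q=(L_1\times\cdots\times L_m)/G_0$ is finitely generated abelian. If $Q$ is finite then $\prod_i\ker(L_i\to Q)$ is a finite-index direct-product subgroup of $G_0$, so it suffices to rule out $Q$ infinite --- and this is where pro-$p$ rank gradients enter. For $\Ga=\prod_i S_i$ with all factors non-abelian, a K\"unneth-type computation (using that free and surface groups have no torsion-homology growth, and that limit groups have vanishing second mod-$p$ homology growth) shows that the mod-$p$ homology growth vanishes in every degree below $n$ and equals $\prod_i|\chi(S_i)|>0$ in degree $n$; by the previous paragraph $G$ inherits these values. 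On the other hand, if $Q$ were infinite then, after a finite-index passage, $G_0$ would algebraically fibre over $\Z$ with kernel a subdirect product of at least two of the $L_i$ (of type $\FP$ over $\F_p$ up to the relevant degree), and a Wang-sequence argument for the resulting mapping-torus structure would force the mod-$p$ homology growth of $G_0$ to vanish in those degrees, contradicting the non-vanishing in degree $n$. The same comparison forces $m=n$. Hence $Q$ is finite, and $G$ contains a finite-index subgroup $D=D_1\times\cdots\times D_n$ with each $D_i$ a freely indecomposable limit group.

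Next I identify the factors. Since $D\le_f G$ we get $\hat D=\hat D_1\times\cdots\times\hat D_n$ of finite index in $\hat\Ga=\hat S_1\times\cdots\times\hat S_n$, a direct product of $n$ directly indecomposable profinite groups inside the direct product of $n$ others (direct indecomposability of $\widehat{S_j}$, for $S_j$ non-abelian free or a surface, follows from its cohomological dimension and Poincar\'e polynomial). Projecting $\hat D$ to each $\hat S_j$ and using that centralisers of non-trivial elements in free and surface profinite groups are procyclic, one sees that exactly one $\hat D_i$ projects non-trivially to $\hat S_j$, so that this gives a bijection $i\leftrightarrow j$ with $\hat D_i$ of finite index in the corresponding $\hat S_j$. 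Thus $\hat D_i\cong\widehat{S_j'}$ for a finite-index subgroup $S_j'\le S_j$, which is free or a surface group; since $D_i$ is a limit group, \Cref{rigidityintro} gives $D_i\cong S_j'$. (Equivalently: $\hat D_i\cong\widehat{S_j'}$ together with goodness shows every finite-index subgroup of $D_i$ has one-relator pro-$p$ completion, and \Cref{Melnip} identifies $D_i$ as free or a surface group, after which its finite quotients determine it.) Hence $D$ is a direct product of free and surface groups.

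It remains to upgrade this to $G\cong\Ga$. Using that $G$ is residually free and centre-free, one peels the canonical direct factors of $D$ off $G$ one at a time (after passing to the finite-index subgroup normalising them, and using that a centre-free residually free group that contains a free or surface group normally and centralises a complement of it splits off that factor), concluding that $G$ itself is a direct product of free and surface groups. A Krull--Schmidt argument then matches these factors with the $\hat S_j$ up to reindexing, and since the Euler characteristic is a profinite invariant of these (good, type-$F$) groups, each factor of $G$ has the same profinite completion and Euler characteristic as the corresponding $S_j$, hence is isomorphic to it; therefore $G\cong\Ga$. I expect the genuine difficulty to lie in the second step --- forcing the subdirect product to be virtually direct, that is, excluding Stallings--Bieri-type subgroups on profinite grounds alone, which is exactly what the pro-$p$ rank-gradient machinery is designed for --- with the final commensurability-to-isomorphism bookkeeping the secondary obstacle.
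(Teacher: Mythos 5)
Your outline matches the paper's at the level of skeleton (reduce to the centreless case, realise $G$ as a subdirect product of non-abelian limit groups, show the inclusion is virtually onto, lift the product decomposition, identify the factors), but the two steps you yourself flag as the crux are exactly where your argument has genuine gaps. First, the exclusion of Stallings--Bieri behaviour: your Wang-sequence argument needs the kernel of the algebraic fibration $G_0\to\Z$ to be of type $\FP_n(\F_p)$ in order to kill the degree-$n$ homology growth, but that kernel is just another subdirect product of limit groups and there is no reason for it to have any finiteness properties --- indeed, the failure of such finiteness properties is precisely the phenomenon (Platonov--Tavgen, Stallings--Bieri) that the theorem must rule out, so the parenthetical ``of type $\FP$ over $\F_p$ up to the relevant degree'' begs the question. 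Moreover, transferring the degree-$n$ mod-$p$ homology growth from $\Ga$ to $G$ requires cohomological goodness of $G$ in degrees up to $n$ (not known for a general finitely presented residually free group before one has proved it is virtually a product of limit groups --- this is circular) together with a mod-$p$ approximation theorem in degrees $\ge 2$, which is not available. The paper avoids all of this: it uses Theorem \ref{brihow} to get $\ga_N E\le G$, so the co-quotient is nilpotent, and then a pro-$p$ Hirsch-length argument (Proposition \ref{propimage}) combined with the rank-gradient normal-subgroup theorem (Theorem \ref{posgrad}, via Claim \ref{c3}) to show the image of $G_{\hp}$ in $(L_1\times\cdots\times L_m)_{\hp}$ is open, whence the abstract index is finite.

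Second, the ``peeling off'' step is asserted, not proved: a centre-free residually free group containing $D_1\times\cdots\times D_n$ with finite index need not itself split as a direct product (the paper's remark after Corollary \ref{mflds2} gives an index-two subgroup of a product of three surface groups that is not a product, so the phenomenon is real and must be excluded using the hypothesis that $\hat G$ is a product). The paper's Proposition \ref{s2}, together with Lemmas \ref{s3} and \ref{s4}, is precisely the tool that upgrades the virtual product decomposition to an actual one, and its proof again runs through positive pro-$p$ rank gradient; your one-sentence normal-complement claim does not substitute for it. Finally, stripping off the central $\Z^k$ ``at the outset'' silently assumes the central extension $1\to\Z^k\to G\to G/Z(G)\to 1$ splits; the paper proves this only at the very end (Claim \ref{c7}) using goodness of $G_0$ and the splitting of the profinite extension. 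The factor-identification step you give (procyclic centralisers, then \Cref{rigidityintro}) is closer in spirit to the paper's Claims \ref{c2} and \ref{c6} and could be made to work, but as written the proposal does not constitute a proof.
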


This resolves a particular case of Bridson's conjecture \cite[Conjecture 7]{bridson} that direct products of free groups are profinitely rigid among finitely presented, residually finite groups. It is natural to ask if there is a weaker finiteness property that suffices to ensure the conclusion of \Cref{productsintro}. The property of being $\FP_2(\Q)$ is, in general, weaker than finite presentability. However, in the context of residually free groups, they are equivalent \cite[Theorem D]{BridsonHowie13}. For the proof of \Cref{productsintro} (\Cref{products2}), we develop in \Cref{ProductsSection}  some techniques on rank gradients of pro-$p$ groups (a pro-$p$ analogue of $L^2$-Betti numbers) which will help us in recognizing direct product decompositions; we believe that these techniques may be of independent interest (see, for instance, Propositions \ref{s1} and \ref{s2}). \par \smallskip
While it remains unknown whether direct products of free and surface groups are profinitely rigid among finitely presented groups (as recalled in the previous paragraph), we note that Bessa, Grunewald and Zalesskii gave examples of non-isomorphic groups $G$ and $H$ that are virtually direct products of free and surface groups (and hence finitely presented), satisfying $\hat{G} \cong \hat{H}$ \cite[Corollary 3.10]{Bessa2014}. Bauer found similar examples using different methods in \cite[Section 7]{Bauer2014}.

\subsection{Acknowledgements} 
The first author was supported by a Clarendon Scholarship and a John Moussouris Scholarship at the University of Oxford.  The second author is supported by the Oxford--Cocker Graduate Scholarship.  The authors would like to thank Henry Wilton for an interesting conversation and Ashot Minasyan and Alan Reid for insightful comments. Finally, special thanks are dedicated to Martin Bridson, Dawid Kielak and Richard Wade for their advice, suggestions and generous support.

\section{Preliminaries} \label{PrelimSection}

\subsection{Graphs of spaces}
Given an HGFC-group, we will often realize its graph of groups decomposition as a \emph{graph of spaces}. We therefore begin by defining graphs of spaces and recollecting a few basic facts. For a detailed account of the theory of graphs of spaces we refer the reader to Scott and Wall \cite{scottwall}.
	\begin{defn}
		A \emph{graph of spaces} $X$ consists of the following data:
		\begin{itemize}
			\item a graph $\Xi$,
			\item for each vertex $v\in \mathrm{V}(\Xi)$ a connected CW-complex $X_v$,
			\item for each edge $e \in \mathrm{E}(\Xi)$ a connected CW-complex $X_e$ and two $\pi_1$-injective maps $\partial^\pm_e:X_e\rightarrow X_{v^\pm}$ (called \emph{attaching maps}) where $v^+$ and $v^-$ are the endpoints of the edge $e$.
		\end{itemize}
	\end{defn}
	Note that each graph of spaces $X$ has a topological space naturally associated to it, namely its \emph{geometric realization}. The geometric realization of $X$ is defined as
	\begin{equation*}
		\Bigg( \bigsqcup_{v \in \mathrm{V}(\Xi)} X_v \sqcup \bigsqcup_{e \in \mathrm{E}(\Xi)} X_e\times[-1,1] \Bigg) \Bigg/ \sim
	\end{equation*}
	where the equivalence relation $\sim$ identifies $(x,\pm 1)\in X_e\times [-1,1]$ with $\partial_e^\pm(x)\in X_{v^\pm}$. We will often abuse notation and use $X$ to refer to the geometric realization of $X$. \par 
	\smallskip 
	By the Seifert–Van Kampen theorem, the fundamental group of $X$ admits a graph of groups decomposition: $\pi_1(X)$ is the fundamental group of the graph of groups with underlying graph $\Xi$ and whose vertex and edge groups are $\{\pi_1(X_v) \vert v\in \mathrm{V}(\Xi)\}$ and $\{\pi_1(X_e) \vert e\in \mathrm{e}(\Xi)\}$ respectively. The edge maps of this graph of groups are given, up to conjugation, by $(\partial_e^\pm)_*:\pi_1(X_e)\rightarrow \pi_1(X_{v^\pm})$. We will denote this graph of groups decomposition by $\mathcal{G}(X)$ and refer to the vertex and edge groups of $\mathcal{G}(X)$ as $G_v$ and $G_e$. \par 
	\smallskip
	We remark that from this perspective, an HGFC-group can always be viewed as the fundamental group of a graph of spaces whose vertex spaces are graphs and whose edge spaces are all isomorphic to $S^1$. \par 
	\smallskip
 One situation where utilizing the machinery described above is particularly useful, is studying finite-sheeted coverings of graphs of spaces; we will make use of that in the proof of Lemma \ref{cyclic}. The vertex and edge spaces of a finite-sheeted cover $X'$ of a graph of spaces $X$ are finite-sheeted covers of the edge and vertex spaces of $X$. The attaching maps of $X'$ are called \emph{elevations}, and the elevations of an attaching map ${\partial_e}^\pm:X_e\rightarrow X_{v^\pm}$ of $X$ can be defined as the maps appearing at the top of the following pullback diagram
    \begin{equation*}
        \xymatrix{
        \bigsqcup_{e'}X_{e'} \ar[d] \ar[rr]^{\bigsqcup_{e'}{\partial_{e'}^\pm}}  & & X_{{v'}^\pm} \ar[d]\\
        X_e \ar[rr]^{{\partial_e}^{\pm}} & & X_v
        }            
    \end{equation*}
    where $X_{{v'}^\pm}$ are covering spaces of $X_{v^\pm}$ and $\bigsqcup_{e'}X_{e'}$ is the disjoint union of the edge spaces of $X'$ that lie above $X_e$. We remark that under certain conditions, a collection of covering spaces of edge and vertex spaces of $X$ can be formed to create a new graph of spaces $Y$ called a \emph{precover} of $X$; the key feature of precovers, is that their fundamental groups are in correspondence with (infinite index) subgroups of $\pi_1 X$. Elevations of attaching maps of $X$ to $Y$, that are not attaching maps of $Y$, are called \emph{hanging elevations} of $Y$. We refer the reader to \cite[Section 1]{wilton-one-ended} for a detailed account of the above. \par \smallskip

We finish this subsection by proving a technical lemma which will allow us to prove \Cref{boundedvd}. The role it plays is that of replicating usual or $L^2$-homology classes in finite covers. 
	\begin{lemma}
		\label{cyclic}
		Suppose that a finitely generated and subgroup separable group $G$ splits as a finite and connected graph of spaces $X$ whose vertex groups are not virtually cyclic. Suppose furthermore that there is a vertex group $G_v$ of $\mathcal{G}(X)$ with $\mathrm{b}_n^k(G_v)>0$ for some $n\in \mathbb{N}$ and a field $k$. If every edge group $G_e$ of $\mathcal{G}(X)$ satisfies $b_n^k(G_e)=0$, and at least one of the following holds,
		\begin{enumerate}
			\item the underlying graph $\Xi$ of $X$ is not a tree,
			\item all of the edge groups of $\mathcal{G}(X)$ are cyclic,
		\end{enumerate}
		then for every $\ell \in \mathbb{N}$ there is a finite cover $\hat{X}$ of $X$ with $\mathrm{b}_n^k(\pi_1(\hat{X}))\ge \ell$. In particular, $\mathrm{vb}_n^k(G)=\infty$.
	\end{lemma}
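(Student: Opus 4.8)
The strategy is to build finite covers of $X$ in which the vertex space $X_v$ (the one with $b_n^k(G_v)>0$) is replicated many times, while keeping the edge spaces "invisible" to $n$-th homology so that the copies of $H_n(G_v;k)$ remain independent in the $n$-th homology of the total space. The main engine will be a Mayer--Vietoris computation for graphs of spaces: if the edge groups all satisfy $b_n^k(G_e)=0$, then for any finite cover $\hat X$ with underlying graph $\hat\Xi$ and vertex spaces $\{\hat X_{\hat w}\}$, the Mayer--Vietoris sequence (or the spectral sequence of the graph of spaces) gives a surjection $\bigoplus_{\hat w} H_n(\hat X_{\hat w};k)\twoheadrightarrow H_n(\hat X;k)$ up to contributions from $H_{n-1}$ of the edge spaces; more carefully, the relevant piece of the sequence shows $\dim_k H_n(\pi_1\hat X;k)\ge \big(\sum_{\hat w}b_n^k(\hat G_{\hat w})\big) - \sum_{\hat e} b_{n-1}^k(\hat G_{\hat e})$. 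Since the $b_n^k(G_e)=0$ hypothesis does not by itself control $b_{n-1}^k(G_e)$, I would instead argue as follows: collapse or delete the edge cylinders to write $\hat X$ up to homotopy as a wedge-like union of its vertex spaces along the edge spaces, and use that each edge space injects (in $\pi_1$, hence the map on $H_n$ factors through $H_n(\hat X_{\hat w};k)$) so that the connecting maps land in the vertex homologies; the upshot is that each copy of a vertex space carrying nonzero $H_n$ contributes at least one independent class, provided we can arrange the cover so that those classes are not killed.

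First I would reduce to the two combinatorial situations separately. In case (2), where all edge groups are cyclic: since $G$ is subgroup separable and the vertex groups are not virtually cyclic, each edge space is a circle (or $K(\Z,1)$) with $b_n^k = 0 = b_{n-1}^k$ for $n\ge 2$ — and $n=1$ is excluded because a vertex group with $b_1^k>0$ combined with cyclic edges would already be handled, or one notes $n\ge 2$ is forced by $b_n^k(G_v)>0$ with $G_v$ not virtually cyclic only mattering through the construction. Realize $G_v$'s splitting as a graph of spaces, pick $\ell$ and build an $\ell$-fold-type cover: take $\ell$ copies of $X_v$, and enough copies of the remaining vertex spaces, and reconnect the elevations of the attaching maps (using Lemma \ref{vertex_sep} and, for circles, the freedom in choosing covering degrees) so that the result is a connected finite cover $\hat X$. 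Because every edge space is a circle, the Mayer--Vietoris connecting map into $\bigoplus H_n(\hat X_{\hat w};k)$ has image contained in the span of finitely many classes coming from $H_{n-1}(S^1;k)=0$ for $n\ge 2$, so the $\ell$ copies of $H_n(G_v;k)$ survive and $b_n^k(\pi_1\hat X)\ge \ell$.

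In case (1), where $\Xi$ is not a tree: there is an essential loop in $\Xi$, hence a surjection $\pi_1(X)\to \Z$ obtained by counting how many times a loop crosses a chosen edge $e_0$ not in a spanning tree; pulling back the degree-$\ell$ cover of $\Z$ gives a connected $\ell$-fold cyclic cover $\hat X$ in which $X_v$ has $\ell$ disjoint lifts and the edge $e_0$ is "unwrapped". Again the Mayer--Vietoris sequence, together with $b_n^k(G_e)=0$ for all $e$, shows $\dim_k H_n(\pi_1\hat X;k)\ge \ell\cdot b_n^k(G_v)\ge \ell$. In both cases, since $\pi_1(\hat X)$ is a finite-index subgroup of $G$, we conclude $\mathrm{vb}_n^k(G)\ge \ell$ for all $\ell$, hence $\mathrm{vb}_n^k(G)=\infty$.

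The step I expect to be the main obstacle is the bookkeeping in the Mayer--Vietoris / graph-of-spaces spectral sequence argument that guarantees the replicated classes in $H_n$ of the vertex spaces are genuinely independent in $H_n(\hat X;k)$ — i.e. that the differentials landing in $\bigoplus_{\hat w}H_n(\hat X_{\hat w};k)$ (from $H_{n+1}$ of the total space, and from $H_n$ of the edge spaces) do not collapse them. The hypothesis $b_n^k(G_e)=0$ kills the edge contribution on the nose; the contribution from $H_{n+1}$ must be handled by observing that the map $\bigoplus_{\hat w} H_n(\hat X_{\hat w};k) \to H_n(\hat X;k)$ is actually surjective with kernel supported on edge-space homology in degree $n$, which vanishes, so the map is injective on the relevant summands. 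Making this precise — and ensuring the covers constructed are connected, which requires care in reconnecting elevations in case (2) — is where the real work lies; the rest is standard covering-space combinatorics along the lines of \cite{wilton-one-ended}.
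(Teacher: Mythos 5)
Your case (1) is essentially the paper's argument: the cyclic degree-$\ell$ cover dual to an edge outside a spanning tree is exactly the cover the paper builds by splicing $\ell$ cut-open copies of $X$; every vertex space lifts homeomorphically $\ell$ times, and the Mayer--Vietoris bound $b_n^k(\pi_1\hat X)\ge\sum_{\hat w}b_n^k(\hat G_{\hat w})$ holds because the kernel of $\bigoplus_{\hat w}H_n(\hat X_{\hat w};k)\to H_n(\hat X;k)$ is the image of $\bigoplus_{\hat e}H_n(\hat X_{\hat e};k)=0$. (Your initial worry about $b_{n-1}^k$ of the edge spaces concerns the cokernel of this map and is irrelevant for the lower bound; note also that $b_{n-1}^k(S^1)\neq 0$ when $n=2$, so your parenthetical claim there is false but harmless.)

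The genuine gap is case (2), where $\Xi$ is a tree. ``Take $\ell$ copies of $X_v$ and enough copies of the remaining vertex spaces and reconnect the elevations so that the result is a connected finite cover'' cannot work as stated: if every vertex space is lifted as a disjoint union of degree-one copies, then $\hat\Xi$ is a covering graph of the tree $\Xi$ and hence a disjoint union of copies of $\Xi$, so the resulting cover is never connected. To connect the copies one must produce a single connected finite cover of some vertex space in which an incident edge space has at least $\ell$ distinct elevations --- and this is precisely where the hypotheses you never use come in. The paper's mechanism is: pick a neighbour $X_u$ of $X_v$ with connecting edge $e$; since $G_u$ is not virtually cyclic and $G_e$ is cyclic, $G_e$ has infinite index in $G_u$ and is separable there (by subgroup separability of $G$), so one can iteratively construct a finite cover $X_u^{\ell}$ of $X_u$ in which the preimage of $X_e$ has at least $\ell$ components; Lemma \ref{vertex_sep} then promotes this to a finite cover $\hat X$ of $X$ with $\hat X_{\hat u}=X_u^{\ell}$. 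Now $\hat u$ has degree at least $\ell$ in $\hat\Xi$, so if $\hat\Xi$ is a tree, deleting $\hat u$ leaves at least $\ell$ components, each containing a lift of $v$ adjacent to $\hat u$, and Mayer--Vietoris finishes; if $\hat\Xi$ is not a tree one falls back on case (1). Your sketch contains neither this replication mechanism nor any indication of where ``not virtually cyclic'' and ``subgroup separable'' are used, so case (2) is not established.
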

	
	\begin{proof}

		Suppose first that $\Xi$ is not a tree, so there is an edge $e\in \mathrm{E}(\Xi)$ with $\Xi - \{e\}$ a connected graph. Take $\ell$ copies of $X$ and enumerate them $X_1$,\ldots,$X_\ell$; let $e_i$ be the copy of $e$ in $\mathrm{E}(\Xi_i)$. Remove $X_{e_i}\times(-1,1)$ from $X_i$ to obtain a precover $X'_i$ of $X_i$ with two hanging elevations $\partial_i^\pm$. Let $\hat{X}$ be the space obtained by splicing together $X'_1,\ldots,X'_\ell$, pairing the elevation $\partial_i^+$ with $\partial_{i+1}^-$ (and $\partial_\ell^+$ with $\partial_1^-$). The resulting space for $\ell=3$ appears in Figure \ref{fig_circ}.
		\par 
		\smallskip
		Note that $\hat{X}$ contains $\ell$ copies of $X_v$ as vertex spaces. Since $\mathrm{b}_n^k(G_v)>0$ and for every edge $e\in \mathrm{E}(\Xi)$ we have that $\mathrm{b}_n^k(G_e)=0$, a repeated use of Mayer-Vietoris shows that $b_n^k(\hat{G}) \ge \sum _{\hat{v} \in \mathrm{V}(\hat{\Xi})} b_n^k(\hat{G}_{\hat{v}})) \ge \ell$.  \par 
		\smallskip

  \begin{figure}
			\centering
			\begin{tikzpicture}
				
				\usetikzlibrary{patterns}
				\node at (12,1.5) [rectangle,draw] (X) {$X_u$};
				\node at (12,3.5) [rectangle,draw] (Y) {$X_v$};
				\node at (10.5,3.5) [rectangle, draw] (Z) {$X_w$};
				\draw    (X) to[out=70,in=-70] node [pos=0.5, draw, fill=green, shape=circle, minimum size=5.0pt, inner sep=0pt] {} (Y);
				\draw[red,dashed]    (X) to[out=110,in=-110] node [pos=0.5, draw=black, fill=orange, solid, shape=circle, minimum size=5.0pt, inner sep=0pt] {} (Y);
				\draw[blue] (Z) to node[midway, draw=black, fill=teal, shape=circle, minimum size=5.0pt, inner sep=0pt] {} (Y);
				
				\path [draw,->>, thick] (8,2.5) -- node[above] {$p$} (11,2.5);
				
				\node at (3,0) [rectangle,draw] (Xv2) {$X_u$};
				\node at (3,2) [rectangle,draw] (Y2) {$X_v$};
				\node at (1.5,2) [rectangle, draw] (Z2) {$X_w$};

				\node at (7,0) [rectangle,draw] (Xv3) {$X_u$};
				\node at (7,2) [rectangle,draw] (Y3) {$X_v$};
				\node at (5.5,2) [rectangle, draw] (Z3) {$X_w$};

				\node at (5,3) [rectangle,draw] (Xv4) {$X_u$};
				\node at (5,5) [rectangle,draw] (Y4) {$X_v$};
				\node at (3.5,5) [rectangle, draw] (Z4) {$X_w$};
				
				\draw    (Xv2) to[out=70,in=-70] node [pos=0.5, draw, fill=green, shape=circle, minimum size=5.0pt, inner sep=0pt] {} (Y2);
				\draw    (Xv3) to[out=70,in=-70] node [pos=0.5, draw, fill=green, shape=circle, minimum size=5.0pt, inner sep=0pt] {} (Y3);
				\draw    (Xv4) to[out=70,in=-70] node [pos=0.5, draw, fill=green, shape=circle, minimum size=5.0pt, inner sep=0pt] {} (Y4);
				
				\draw[red]    (Xv2) to[out=45,in=-90] node [pos=0.5, draw=black, fill=orange, shape=circle, minimum size=5.0pt, inner sep=0pt] {} (Y3);
				\draw[red]    (Xv3) to[out=45,in=-45] node [pos=0.6, draw=black, fill=orange, shape=circle, minimum size=5.0pt, inner sep=0pt] {} (Y4);
				\draw[red]    (Xv4) to[out=100,in=100] node [pos=0.5, draw=black, fill=orange, shape=circle, minimum size=5.0pt, inner sep=0pt] {} (Y2);
				\draw[blue] (Z2) to node[midway, draw=black, fill=teal, shape=circle, minimum size=5.0pt, inner sep=0pt] {} (Y2);
				\draw[blue] (Z3) to node[midway, draw=black, fill=teal, shape=circle, minimum size=5.0pt, inner sep=0pt] {} (Y3);
				\draw[blue] (Z4) to node[midway, draw=black, fill=teal, shape=circle, minimum size=5.0pt, inner sep=0pt] {} (Y4);

			\end{tikzpicture}    \caption{Splicing together three copies of a precover of $X$ along the red edge}
			\label{fig_circ}
		\end{figure}
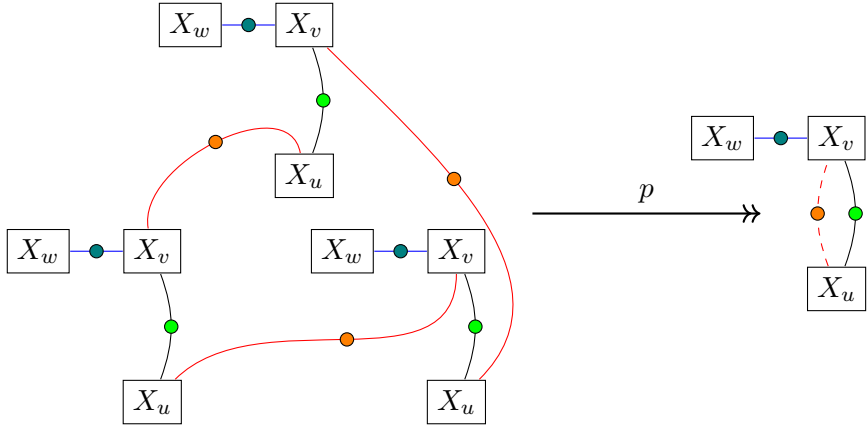
		We still need to treat the case where $\Xi$ is a tree; we further assume that all of the edge groups of $\mathcal{G}(X)$ are cyclic. Let $X_u$ be a vertex space that is adjacent to $X_v$, and let $e\in \mathrm{E}(\Xi)$ be the edge connecting $v$ and $u$. Since $G_u$ is not cyclic, there exists $g\in G_u$ that lies outside of $G_e \le G_u$. By assumption, the cyclic subgroup $G_e$ is separable in $G_u$, so there is a finite quotient $q_1:G_u \rightarrow Q_1$ where $q_1(g)\notin q_1(G_e)$. Let ${X_u}^1$ be the finite-sheeted covering of $X_u$ that corresponds to $\ker q_1$. Since $q_1(G_e)\subsetneq Q_1$, the preimage of $X_e$ in the cover ${X_u}^1$ has at least two connected components. Since $G_u$ is not virtually cyclic, one may repeat this procedure for one of the elevations of $G_e$ to ${G_u}^1$. After $\ell$ times one obtains a finite cover ${X_u}^\ell$ of $X_u$ in which the preimage of $X_e$ has at least $\ell$ connected components. By Lemma \ref{vertex_sep} there is a finite cover $\hat{X}$ of $X$ and $\hat{u} \in \mathrm{V}(\hat{\Xi})$ with $\hat{X}_{\hat{u}}={X_u}^\ell$; note that $\deg(\hat{u})\ge \ell$ in $\hat{\Xi}$. Therefore, if $\hat{\Xi}$ is a tree, $\hat{\Xi}-\{\hat{u}\}$ has at least $\ell$ connected components. Each of these connected components must contain a vertex that lies above $v$ that is connected to $\hat{X}_{\hat{u}}={X_u}^\ell$ by an edge space that lies above $X_e$. In particular, $\hat{\Xi}$ contains at least $\ell$ vertices whose corresponding vertex group has a positive $n$-th Betti number. As before, employing Mayer-Vietoris we obtain that $b_n^k(\hat{G})\ge \ell$. Finally, if $\hat{\Xi}$ is not a tree, the first part of the proof gives the desired result. 
	\end{proof}

We will also make use of the following corollary: 

\begin{cor} \label{star-tree}
    Suppose that a finitely generated and subgroup separable group $G$ splits as a finite and connected graph of spaces $X$ with infinite cyclic or trivial edge groups. Let  $G_v$ be a vertex group of $\mathcal{G}(X)$ that has a neighbouring vertex group $G_u$ that is not virtually cyclic. Then, for every $\ell \in \mathbb{N}$, there is a finite-index subgroup $H\le G$ with the following properties:
    \begin{enumerate}
        \item  $H$ admits a splitting as a graph of spaces $Y$ whose underlying graph $\Upsilon$ contains a subtree $T\subset \Upsilon$ with $\ell$ or $\ell+1$ vertices,
        \item for every $u\in \mathrm{V}(T)$ (except for one vertex if $\vert \mathrm{V}(T) \vert = \ell + 1$), the vertex group $H_u$ is isomorphic to a finite-index subgroup of $G_v$.
    \end{enumerate}
\end{cor}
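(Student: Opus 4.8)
I would begin by choosing an edge $e\in\mathrm{E}(\Xi)$ joining $v$ to $u$ (allowing $u=v$) with $G_u$ not virtually cyclic, and writing $X_e$ for the corresponding edge space, which is a circle or a point since $G_e$ is infinite cyclic or trivial. The plan is to re-run the ``tree with cyclic edge groups'' construction from the proof of \Cref{cyclic} to produce a vertex of large degree in a finite cover, and then to clean that cover up so that the edges issuing from this vertex span an honest subtree. Concretely, I would first build a finite cover $X'_u\to X_u$ in which the preimage of $X_e$ has at least $\ell$ connected components: this is exactly the argument used in the second half of the proof of \Cref{cyclic}, which relies only on $G_u$ not being virtually cyclic and on $G_e$, and each of its elevations, being separable in $G_u$ and in each of its (still non-virtually-cyclic) finite-index subgroups. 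Then \Cref{vertex_sep} provides a finite cover $\hat X\to X$ and a vertex $\hat u$ over $u$ with $\hat X_{\hat u}=X'_u$; as in \Cref{cyclic}, $\hat u$ is then incident to at least $\ell$ edges lying over $e$, and each such edge has its other endpoint at a vertex lying over $v$.

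The point still needing attention is that these edges may have coinciding endpoints. Whenever two of them, say $\hat e_1\ne\hat e_2$, share an endpoint, the loop $\hat e_1\cdot\overline{\hat e_2}$ is a nontrivial reduced loop in $\hat\Xi$, and lifting it gives a nontrivial element of $\pi_1(\hat X)$ that is not even conjugate into any vertex group; there are only finitely many such elements, since $\hat X$ has only finitely many edges over $e$ at $\hat u$. Because $\pi_1(\hat X)$ is residually finite, being of finite index in $G$, I would pass to a further finite cover $\ti X\to\hat X$ corresponding to a finite-index \emph{normal} subgroup of $\pi_1(\hat X)$ avoiding all of these elements. Picking a vertex $\ti u$ of $\ti X$ over $\hat u$: its vertex space is a connected finite cover of $X'_u$, so the preimage of $X_e$ in it still has at least $\ell$ components, whence $\ti u$ is incident to at least $\ell$ edges over $e$; and no two of them can end at the same vertex now, since otherwise one of the forbidden length-two loops would lift to a loop based at $\ti u$. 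Setting $H=\pi_1(\ti X)$, a finite-index subgroup of $G$, with induced graph-of-spaces splitting $Y$ and underlying graph $\Upsilon$, I choose $\ell$ of the edges over $e$ at $\ti u$ together with their $\ell$ distinct endpoints $w_1,\dots,w_\ell$ (all lying over $v$); these vertices and edges span a subtree $T\subset\Upsilon$ on $\ell+1$ vertices, and each $H_{w_i}$ is a conjugate in $H$ of a finite-index subgroup of $G_v$, hence isomorphic to one. The hub $\ti u$ is the single possible exception, its group being a finite-index subgroup of $G_u$; and if $u=v$ then $H_{\ti u}$ is likewise a finite-index subgroup of $G_v$, in which case deleting one leaf yields a subtree on $\ell$ vertices all of whose vertex groups are finite-index in $G_v$. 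This produces the required $H$.

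The separability inputs (\Cref{vertex_sep}, and residual finiteness of a finite-index subgroup of $G$) together with the appeal to the construction inside the proof of \Cref{cyclic} are routine. The one genuinely new point beyond \Cref{cyclic} is the second paragraph: forcing the $\ell$ edges out of the chosen vertex to have pairwise distinct endpoints, which requires unfolding the finitely many short loops through $\hat u$ in an auxiliary finite cover; once this is done, the remaining bookkeeping, including the dichotomy between $\ell$ and $\ell+1$ vertices, is immediate. (As is already implicit in the proof of \Cref{cyclic}, this uses that the relevant vertex groups are finitely generated, so that \Cref{vertex_sep} applies.)
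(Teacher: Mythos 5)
Your first paragraph and the closing bookkeeping are fine, and you have correctly identified the one point that is not already contained in the proof of \Cref{cyclic}: forcing the $\ell$ edges issuing from the high-degree vertex $\hat u$ to have pairwise distinct other endpoints. However, the unfolding argument in your second paragraph has a genuine gap. Fix two edges $\hat e_i\ne\hat e_j$ of $\hat\Xi$ at $\hat u$ lying over $e$ and sharing their other endpoint $\hat w$, and let $N$ be a finite-index normal subgroup of $\pi_1(\hat X)$. In the cover corresponding to $N$, the chosen lifts of $\hat e_i$ and $\hat e_j$ at a vertex over $\hat u$ share their endpoint if and only if \emph{some} element of a whole coset $h_{ij}\cdot g\hat G_{\hat w}g^{-1}$ lies in $N$, where $h_{ij}$ carries one endpoint of the Bass--Serre tree to the other and $g\hat G_{\hat w}g^{-1}$ is the stabiliser of that endpoint. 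Equivalently, the bad event is that \emph{some} loop crossing $\hat e_i$, making an arbitrary detour in the vertex space over $\hat w$, and returning across $\hat e_j$ survives in $N$. This is an infinite family of elements, and excluding the single representative $g_{ij}$ that you chose does not exclude the others; the step ``otherwise one of the forbidden length-two loops would lift'' is therefore false --- a loop of the same combinatorial type but with a different detour may lift even though $g_{ij}\notin N$. (Already for $G=F_2\ast F_2$ with a degree-two cover $\hat X$ whose underlying graph is a bigon, one can choose $N$ missing your $g_{12}$ but containing $g_{12}c$ for a suitable $c$ in the stabiliser of the second vertex, and the bigon persists.) Residual finiteness alone cannot rule out an infinite coset.

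The repair is exactly where the subgroup-separability hypothesis must be used a second time. The vertex groups of $\mathcal G(X)$ are finitely generated (since $G$ is finitely generated and the edge groups are cyclic or trivial), hence so are their conjugates and the finite-index subgroups $g\hat G_{\hat w}g^{-1}$ arising above, and these are separable in $G$ because $G$ is LERF. Since $h_{ij}\notin g\hat G_{\hat w}g^{-1}$ (the two endpoints are distinct vertices of the tree), there is a finite-index subgroup $M_{ij}\le\pi_1(\hat X)$ containing $g\hat G_{\hat w}g^{-1}$ but not $h_{ij}$; taking $N$ to be the normal core of the intersection of the finitely many $M_{ij}$ gives $N\cdot g\hat G_{\hat w}g^{-1}\subseteq M_{ij}\not\ni h_{ij}$, which is the condition actually needed for the endpoints to stay distinct. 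With this substitution (and the same treatment of loop edges when $u=v$), the rest of your construction goes through and yields the required subtree.
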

 
\subsection{Limit groups and residually free groups}

\label{limintro}
    A group $G$ is called \emph{residually free} if for every $1\ne g \in G$ there is a homomorphism $f:G\rightarrow F$ where $F$ is a free group and $f(g)\ne 1$. $G$ is called \emph{fully residually free} if for every finite subset $E\subset L$ there is a homomorphism from $L$ to a free group whose restriction to $E$ is injective. \par \smallskip
	A \emph{limit group} $L$ is a finitely generated and fully residually free group. Groups with this property have been extensively studied since the 1960s, and were popularized by Sela as they played an integral role in his solution to Tarski's problem about the first-order theory of non-abelian free groups. Limit groups have a rich structural theory and they are exactly the finitely generated subgroups of fundamental groups of \emph{$\omega$-residually free towers}. Loosely speaking, an $\omega$-residually free tower is a space obtained by repeatedly attaching simple building blocks to the wedge sum of circles, surfaces and tori. These simple building blocks are of one of two kinds: tori and surfaces with boundary. We will not use this structure of limit groups explicitly, and we refer the reader to Sela's work \cite{sela2} for the precise definition of an $\omega$-residually free tower. This structure theorem implies that limit groups admit a \emph{hierarchical structure} which we will use in the proof of Theorem \ref{mainlim}.
	
	\begin{defn}
		\label{hierarchy}
		A \emph{cyclic hierarchy} of a group $G$ is a set $\mathcal{H}(G)$ of subgroups of $G$ obtained by iterating the following procedure, starting with $G$: for $H\in \mathcal{H}(G)$, if $H$ admits a splitting $\mathcal{G}(H)$ with (possibly trivial) cyclic edge groups, add each of the vertex groups of this splitting to $\mathcal{H}(G)$.
	\end{defn}
	
	A priori the process described in Definition \ref{hierarchy} above does not necessarily terminate. If this process comes to a halt after finitely many steps (in which case $\mathcal{H}$ is finite), $G$ is said to have a \emph{finite hierarchy}. The groups appearing at the bottom of the hierarchy, that is groups which do not split over a cyclic subgroup, are called \emph{rigid}. Sela showed in \cite{sela1} that if $G$ is a limit group then this process always ends after finitely many steps:
	
	\begin{thm} \label{hierarchy_thm}
		Limit groups have a finite hierarchy. Moreover, rigid groups appearing in the hierarchy of a limit group are free abelian.
	\end{thm}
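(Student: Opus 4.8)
This is a theorem of Sela \cite{sela1}, so the plan is really to reconstruct the shape of his argument: via Grushko's theorem and the JSJ theory of limit groups (equivalently, their $\omega$-residually free tower structure \cite{sela2}), one runs an induction on a suitable complexity. I describe the four pieces below and flag the one that is the genuine difficulty.

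First I would reduce to the freely indecomposable case. By Grushko's theorem a finitely generated group $G$ decomposes as $G=G_1*\cdots*G_m*F$ with each $G_i$ non-trivial, freely indecomposable and not infinite cyclic, and $F$ free of finite rank; regarded as a graph-of-groups decomposition with trivial (in particular, cyclic) edge groups, this is already the first step of a cyclic hierarchy of $G$. If $G$ is a limit group then so is each $G_i$, since a finitely generated subgroup of a fully residually free group is again finitely generated and fully residually free; moreover each $G_i$, being infinite, torsion-free and freely indecomposable but not $\Z$, is one-ended by Stallings. Hence it suffices to build a finite cyclic hierarchy, with free abelian rigid groups, for a single one-ended limit group $L$.

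Next I would invoke the structural dichotomy for such an $L$: either $L$ is free abelian, and we stop; or $L$ admits an essential cyclic splitting. The input here is Sela's structure theory — through the abelian/cyclic JSJ decomposition of a one-ended limit group, or through $\omega$-residually free towers — which yields that a freely indecomposable limit group admitting no essential splitting over a cyclic subgroup must be free abelian. In particular closed surface groups, although one-ended, do split over $\Z$ and hence are not rigid; one further hierarchy step presents such a group as the fundamental group of a graph of finitely generated free groups with cyclic edge groups, after which every remaining vertex group is free and the process halts.

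The crux, and the step I expect to be the real obstacle, is to produce a complexity on limit groups that strictly decreases when one passes to the vertex groups of a cyclic splitting, which forces the hierarchy to terminate. The naive candidates fail to be monotone: the rank, or the first Betti number, of a vertex group of a cyclic splitting of $L$ need not fall below that of $L$. Sela's remedy is a lexicographically ordered invariant read off the canonical cyclic/abelian JSJ decomposition of $L$ — recording, for instance, ranks of the abelian vertex groups, $-\chi$ of the quadratically hanging surface pieces, and a recursively computed contribution of the rigid vertex groups — the essential point being that each rigid JSJ vertex group of a non-free-abelian freely indecomposable limit group has strictly smaller complexity than $L$. A perhaps more quotable alternative is to use that $L$ embeds in an $\omega$-residually free tower of some finite height $h$, that the vertex groups of a cyclic splitting of $L$ embed in towers of strictly smaller height, and to induct on $h$ (the base case being free groups, handled directly). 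Either way, termination together with the previous paragraph identifies the groups at the bottom of the hierarchy — those admitting no cyclic splitting — as free abelian, as claimed.
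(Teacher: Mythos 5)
The paper does not prove this statement: it is quoted verbatim from Sela \cite{sela1} (see also the surrounding discussion of $\omega$-residually free towers), so there is no in-paper argument to compare against. Your roadmap is a faithful reconstruction of the standard proof and matches how the result is actually established in the literature: Grushko reduction to the one-ended case, the dichotomy that a freely indecomposable non-abelian limit group admits an essential cyclic splitting (this is what makes the rigid groups free abelian), and a terminating induction driven by a complexity attached to the canonical JSJ decomposition. You are also right to single out termination as the genuine content; note that the hierarchy of Definition \ref{hierarchy} allows an arbitrary choice of cyclic splitting at each stage, so what one proves is the existence of a finite hierarchy obtained by always choosing the canonical (Grushko/JSJ) splittings. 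One caution on your ``more quotable alternative'': as stated, the induction on tower height has a gap, because a vertex group of a cyclic splitting of $L$ is merely a finitely generated subgroup of $L$ and hence embeds in the \emph{same} tower; there is no a priori reason its minimal tower height strictly decreases. Making that idea work requires the finer structure of strict resolutions, which is exactly Sela's lexicographic complexity in another guise. Since the paper itself offers only the citation, deferring this step to \cite{sela1} is the appropriate level of rigour here.
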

	
	Sela also showed that limit groups without $\mathbb{Z}^2$ subgroups are hyperbolic \cite[Corollary 4.4]{sela1}, which implies the following:
	
	\begin{cor}
		\label{hierarchy_cor}
		If $L$ is a one-ended limit group and $H\in \mathcal{H}(L)$ is a one-ended group with no one-ended groups below it in the hierarchy, then one of the following holds:
		\begin{enumerate}
			\item $H$ is rigid, and therefore free abelian, or
			\item $H$ is an HGFC-group.
		\end{enumerate}
	\end{cor}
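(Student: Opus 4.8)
The plan is to run the cyclic hierarchy of $H$ one step further and read off the structure from the vertex groups that appear immediately below it. First I would record that $H$ is itself a finitely generated limit group: vertex groups occurring in the hierarchy of a limit group are finitely generated and, being subgroups of a fully residually free group, are again fully residually free; hence Theorem \ref{hierarchy_thm} applies verbatim to $H$. If $H$ happens to be rigid, conclusion (1) is immediate, since a rigid group occurring in the hierarchy of a limit group is free abelian by Theorem \ref{hierarchy_thm}, and one-endedness forces its rank to be at least $2$. So the substantive case is that $H$ is not rigid, and there I aim to prove that $H$ is an HGFC-group.

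Assuming $H$ is not rigid, it admits a nontrivial cyclic splitting $\mathcal{G}(H)$, whose vertex groups $G_v$ lie in $\mathcal{H}(L)$ strictly below $H$ and are therefore not one-ended by hypothesis. The first step is to upgrade this to: every $G_v$ is free. Since limit groups are torsion-free, a non-one-ended $G_v$ is either trivial, infinite cyclic, or a nontrivial free product; passing to the Grushko decomposition $G_v \cong A_1 \ast \cdots \ast A_m \ast F$ with each $A_i$ freely indecomposable and non-cyclic, Stallings' theorem makes each $A_i$ one-ended, and each $A_i$ occurs strictly below $H$ in the hierarchy (as a vertex group of a trivial-edge splitting of $G_v$), so the hypothesis forces $m = 0$ and $G_v \cong F$ is free. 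The edge groups of $\mathcal{G}(H)$ are cyclic by construction, and none can be trivial, since a trivial edge group would display $H$ as a nontrivial free product, contradicting one-endedness; hence all edge groups are infinite cyclic. At this point $H$ is exhibited as the fundamental group of a finite graph of finitely generated free groups with infinite cyclic edge groups.

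What remains — and this is where I expect the real work to lie — is to show that $H$ is word-hyperbolic. Since $H$ is a limit group, by Sela's theorem \cite[Corollary 4.4]{sela1} it suffices to rule out a $\Z^2$ subgroup. I would argue by contradiction: if $\Z^2 \le H$, then, using that limit groups are CSA, $\Z^2$ lies in a maximal abelian subgroup $M \le H$, which is free abelian of rank at least $2$ — the one subtlety being the case where $\Z^2$ acts with a hyperbolic element on some Bass--Serre tree occurring in the hierarchy, which is handled by malnormality of the maximal abelian subgroup containing a nontrivial element of the pointwise edge-stabiliser, forcing that element and the hyperbolic element into a common abelian subgroup (the elliptic case being immediate). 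By Sela's structure theory such a non-cyclic maximal abelian subgroup occurs, up to conjugacy, as one of the (free abelian) rigid vertex groups in the hierarchy of $H$, hence strictly below $H$ as $H$ is not rigid; being of rank at least $2$ it is one-ended, contradicting the hypothesis. Therefore $H$ has no $\Z^2$, so $H$ is hyperbolic, and together with the previous paragraph this shows $H$ is an HGFC-group, giving conclusion (2). The main obstacle is precisely this last step: propagating a rank-$\ge 2$ free abelian subgroup down the hierarchy while controlling the elliptic-versus-hyperbolic dichotomy on each Bass--Serre tree.
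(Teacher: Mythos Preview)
Your argument is correct up to the point where $H$ is exhibited as the fundamental group of a finite graph of finitely generated free groups with infinite cyclic edge groups. The gap is in the hyperbolicity step: the assertion that a non-cyclic maximal abelian $M\le H$ must occur as a vertex group in the given hierarchy $\mathcal{H}(L)$ is not justified, and is in fact false for an arbitrary cyclic hierarchy in the sense of Definition~\ref{hierarchy}. Take $L=H=\langle x,y,t\mid [t,[x,y]]\rangle$, the rank-one extension of centralizers of $F(x,y)$ at $[x,y]$. This is a one-ended limit group containing $\Z^2=\langle [x,y],t\rangle$, and it splits as the HNN extension $F(x,y)\ast_{\langle [x,y]\rangle}$ with a single free vertex group. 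Choosing this as $\mathcal{G}(H)$, every group strictly below $H$ in the resulting hierarchy is free (hence not one-ended), so the hypothesis of the corollary is satisfied --- yet $H$ is neither rigid nor hyperbolic, and no copy of $\Z^2$ appears as a vertex group below it. The CSA manoeuvre you sketch does force the hyperbolic element to commute with the edge-group generator, but that is exactly what happens in this example, and no contradiction follows.

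The same example shows that the corollary, read literally for an arbitrary cyclic hierarchy, is false; the paper's one-line justification via \cite[Corollary~4.4]{sela1} passes over the same point. What is tacitly intended --- and what makes your argument go through --- is a hierarchy in which every non-cyclic abelian subgroup is elliptic in the splitting chosen at each stage (for instance Sela's hierarchy, or one built from cyclic JSJ decompositions, where such subgroups appear as abelian vertex groups). Under that restriction a $\Z^2\le H$ would be conjugate into a vertex group of $\mathcal{G}(H)$, hence into a free group, which is absurd; then $H$ contains no $\Z^2$ and is hyperbolic by Sela's theorem, completing your proof.
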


    We also record the following simple consequence of commutative transitivity, which will aid the proof of Theorem \ref{mainlim}:

    \begin{lemma}
        \label{abelian_vtx}
        Suppose that a limit group $L$ contains a subgroup that splits as a cyclic amalgamation $G_1 \ast _{c_1=c_2} G_2$ or as an HNN extension $G\ast_\phi$ (where $\phi$ is an isomorphism between two cyclic subgroups $\langle c_1 \rangle$ and $\langle c_2 \rangle$ of $G$). Then at least one of $\langle c_1 \rangle$ and $\langle c_2 \rangle$ is maximal abelian in its target vertex group.
    \end{lemma}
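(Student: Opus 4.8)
The plan is to exploit \emph{commutative transitivity} of limit groups (every limit group is commutative transitive: if $[a,b]=1$ and $[b,c]=1$ with $b\neq 1$, then $[a,c]=1$), which is equivalent to saying that centralizers of nontrivial elements are abelian, and hence that maximal abelian subgroups coincide with the centralizers of their nontrivial elements and form a malnormal-type family. I would first argue that, by Theorem \ref{hierarchy_thm} and the structure of limit groups, it suffices to reduce to the case where the ambient group is the limit group $L$ itself (or at worst pass to the finitely generated subgroup containing the splitting, which is again a limit group since finitely generated subgroups of limit groups are limit groups), so that commutative transitivity is available. Let $M_1$ be a maximal abelian subgroup of $G_1$ (respectively $G$) containing $\langle c_1\rangle$, and $M_2$ a maximal abelian subgroup of $G_2$ (respectively $G$) containing $\langle c_2\rangle$.

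Now I would suppose for contradiction that \emph{both} inclusions are proper, i.e. $\langle c_1\rangle \lneq M_1$ and $\langle c_2\rangle \lneq M_2$. In the amalgam case $G_1\ast_{c_1=c_2}G_2$, identifying $c_1$ with $c_2 = c$ in the amalgam, the subgroup $\langle M_1, M_2\rangle$ is an amalgam $M_1 \ast_{\langle c\rangle} M_2$ of two abelian groups over a common cyclic subgroup; such a group contains a nonabelian free subgroup (by the standard ping-pong / normal form argument, since the index of $\langle c\rangle$ in each $M_i$ is at least $2$, unless one of them is infinite cyclic equal to $\langle c\rangle$, which is excluded). But a limit group containing a nonabelian free subgroup is fine — that is not yet a contradiction. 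The actual contradiction comes from commutative transitivity: $c$ commutes with everything in $M_1$ and with everything in $M_2$, so by commutative transitivity every element of $M_1$ commutes with every element of $M_2$, forcing $\langle M_1, M_2\rangle$ to be abelian; but an amalgam of two groups over a proper subgroup of each is never abelian (it contains a nonabelian free subgroup as above), contradiction. Hence at least one of $\langle c_1\rangle$, $\langle c_2\rangle$ is maximal abelian. The HNN case $G\ast_\phi$ with $\phi\colon\langle c_1\rangle \xrightarrow{\sim}\langle c_2\rangle$ is handled the same way: if both $\langle c_1\rangle \lneq M_1$ and $\langle c_2\rangle \lneq M_2$ inside $G$, then in the HNN extension the stable letter $t$ conjugates $\langle c_1\rangle$ to $\langle c_2\rangle$; consider $M_1$ and $t^{-1}M_2 t$, both containing $\langle c_1\rangle$ — wait, $t\langle c_1\rangle t^{-1} = \langle c_2\rangle$, so $t^{-1}M_2 t$ is an abelian subgroup of the ambient group containing $\langle c_1\rangle$, and together with $M_1$ it generates an amalgam over $\langle c_1\rangle$ (the Bass--Serre tree of the HNN extension shows $\langle M_1, t^{-1}M_2t\rangle = M_1 \ast_{\langle c_1\rangle}(t^{-1}M_2t)$ when the inclusions are proper, as $M_1\le G$ and $t^{-1}M_2t\le t^{-1}Gt$ meet $G$ exactly in $\langle c_1\rangle$-cosets), and again commutative transitivity via the common element $c_1$ forces this amalgam to be abelian, a contradiction.

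The main obstacle I expect is justifying the normal-form / Bass--Serre claim that $\langle M_1, M_2\rangle$ (resp.\ $\langle M_1, t^{-1}M_2 t\rangle$) is genuinely the amalgamated product $M_1\ast_{\langle c\rangle} M_2$ and in particular is nonabelian whenever the edge group is properly contained — one must check that there is no unexpected collapse, which follows because $M_i$ intersects the edge group of the ambient splitting in exactly $\langle c_i\rangle$ (a maximal abelian subgroup cannot contain a larger piece of the edge group than forced, again by commutative transitivity applied inside $G_i$), so the subgroup theorem for graphs of groups (or a direct ping-pong argument on the Bass--Serre tree of the ambient splitting) gives the stated amalgam structure. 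A secondary point is the reduction step: the hypothesis only says $L$ \emph{contains} a subgroup with such a splitting, so I must ensure commutative transitivity is applied within that subgroup, which is legitimate since it is finitely generated (being a splitting of finitely generated groups, assuming the $G_i$ are finitely generated — if not, one passes to the subgroup generated by $M_1\cup M_2$, which is finitely generated since each $M_i$ is finitely generated abelian, limit groups being coherent) and hence itself a limit group, so commutative transitivity is inherited.
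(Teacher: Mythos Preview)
Your argument is correct and follows exactly the route the paper indicates: the paper does not supply a proof but merely records the lemma as ``a simple consequence of commutative transitivity,'' which is precisely what you exploit. Your write-up is a bit more elaborate than needed --- commutative transitivity passes to all subgroups of $L$, so no reduction to finitely generated limit subgroups is required, and you do not need the full amalgam structure of $\langle M_1,M_2\rangle$: it suffices to pick $m_i\in M_i\setminus\langle c_i\rangle$, observe that commutative transitivity forces $[m_1,m_2]=1$ (resp.\ $[m_1,t^{-1}m_2t]=1$ in the HNN case), and note that the normal form theorem (resp.\ Britton's lemma) for the ambient splitting shows this commutator is a nontrivial reduced word.
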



    As apparent from the above discussion, the structure of limit groups is generally well-understood. The structure of residually free groups is by and large less tame than that of limit groups; however, finiteness properties have strong implications on the structure of residually free groups. We record three useful structural results about residually free groups, all of which are closely intertwined with the finiteness properties of the group in question.
    
    \begin{prop}[Corollary 19 of \cite{Bau99} or Claim 7.5 of \cite{sela1}]\label{subprod} Let $G$ be a finitely generated residually free group. Then $G$ is a subdirect product of finitely many limit groups.
\end{prop}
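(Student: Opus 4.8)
\textbf{Proof plan for Proposition \ref{subprod}.}

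The plan is to use the fact that a finitely generated residually free group $G$ embeds into a finite product of free groups, and then to refine that embedding to a \emph{subdirect} product of \emph{limit} groups. First I would recall that since $G$ is finitely generated and residually free, for each nontrivial $g \in G$ there is a map to a free group killing no given finite set that must survive; more precisely, finite generation lets us intersect finitely many such kernels and conclude that $G$ embeds diagonally into a product $F_1 \times \cdots \times F_N$ of finitely many free groups (this is the standard ``finitely generated residually free $\Rightarrow$ fully residually free on each finite subset after passing to a suitable finite product'' observation, essentially due to Baumslag). Equivalently, there is an embedding $\iota \colon G \hookrightarrow \prod_{i=1}^{N} F_i$.

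Next I would replace each factor by the image of $G$ under the corresponding projection: set $L_i = p_i(\iota(G))$, where $p_i \colon \prod_j F_j \to F_i$ is the $i$-th coordinate projection. Then $\iota(G) \le L_1 \times \cdots \times L_N$, and by construction each projection $\iota(G) \to L_i$ is \emph{surjective}, so $\iota(G)$ is a subdirect product of the $L_i$. It remains to check that each $L_i$ is a limit group. Each $L_i$ is finitely generated, being a quotient of $G$ — or directly, a finitely generated subgroup of the finitely generated group $p_i(\iota(G))$; and $L_i$ is a subgroup of the free group $F_i$, hence itself free, and free groups are fully residually free, so trivially limit groups. Actually one should be slightly careful: to get the cleanest statement one takes the $L_i$ to be the limit quotients arising in the canonical (Makanin–Razborov / Sela) description of $\mathrm{Hom}(G, \text{free})$, but for the purposes of this paper the elementary argument above already yields that $G$ is a subdirect product of finitely many limit groups, and one may further assume each $L_i$ is a proper quotient or that the decomposition is ``efficient'' if needed downstream.

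The one genuine subtlety — and the step I expect to be the main obstacle — is establishing the initial embedding $G \hookrightarrow \prod_{i=1}^N F_i$ into \emph{finitely many} free factors. Residual freeness a priori only gives, for each $g \ne 1$, \emph{some} free quotient detecting $g$, which naively produces an embedding into an infinite product. The finiteness comes from a Noetherian-type argument: the free group $F_i$ in which $G$ sits (for a well-chosen single map) is finitely generated, so $G$ as a subgroup need not be, but one uses instead that the sequence of kernels $K_1 \supseteq K_1 \cap K_2 \supseteq \cdots$ obtained from an enumeration of generators and relators stabilises because $G$ is finitely generated and the relevant variety of (limit) groups is equationally Noetherian — this last fact is Sela's (and Kharlampovich–Myasnikov's) theorem that free groups are equationally Noetherian. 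Once that is invoked, only finitely many of the free quotients are needed to separate all elements, the product is finite, and the subdirect structure follows by projecting as above. I would cite \cite{Bau99} and \cite{sela1} for the two ingredients (the embedding and equational Noetherianity) rather than reproving them.
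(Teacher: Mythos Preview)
The paper does not prove this proposition; it is quoted with attribution to \cite{Bau99} and \cite{sela1}. Your sketch correctly locates the key input---equational Noetherianity of free groups---but misapplies it. You claim that the descending chain $K_1\supseteq K_1\cap K_2\supseteq\cdots$ of intersections of kernels of maps $G\to F$ stabilises by Noetherianity; this is not what Noetherianity says. Equational Noetherianity gives the descending-chain condition on Zariski-closed subsets of $F^n$ (equivalently, the ascending-chain condition on radical ideals). Under the variety--ideal correspondence, your descending chain of kernels corresponds to the \emph{ascending} chain of Zariski closures $\overline{\{\phi_1,\dots,\phi_m\}}\subset \mathrm{Hom}(G,F)$, on which Noetherianity imposes no condition. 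So the step ``embed $G$ in a finite product of free groups'' is not justified by this reasoning, and the subsequent observation that the coordinate images are free never gets off the ground.

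What the cited references actually do is decompose $\mathrm{Hom}(G,F)$ into finitely many irreducible components; the coordinate groups of these components are fully residually free quotients $L_1,\dots,L_k$ of $G$ (limit groups, not free groups in general), and residual freeness forces $\bigcap_i\ker(G\twoheadrightarrow L_i)=1$, giving the subdirect embedding. Your parenthetical---``to get the cleanest statement one takes the $L_i$ to be the limit quotients arising in the Makanin--Razborov description''---is not a refinement of your argument but the whole argument. Since the paper itself merely cites the result, your final suggestion to do the same is the right call.
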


    \begin{thm}[{\cite[Corollary 1.1]{Bri09}}]
        \label{vir_direct_prod}
        Every residually free group of type $\mathrm{FP}_\infty(\mathbb{Q})$ is virtually a direct product of a finite number of limit groups.
    \end{thm}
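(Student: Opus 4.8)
The plan is to combine the subdirect-product description of finitely generated residually free groups (Proposition \ref{subprod}) with the theorem of Bridson--Howie--Miller--Short (\cite{Bri09}) that pins down the finiteness properties of a subdirect product of limit groups in terms of the sizes of its projections to sub-products of the factors.

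First I would record that $G$ is finitely generated: being of type $\mathrm{FP}_\infty(\mathbb{Q})$ it is of type $\mathrm{FP}_1(\mathbb{Q})$, which forces the augmentation ideal of $\mathbb{Q}[G]$ to be finitely generated as a module, hence $G$ to be finitely generated. By Proposition \ref{subprod} we may then realise $G$ as a subdirect product $G \le L_1 \times \cdots \times L_n$ of limit groups, i.e. each coordinate projection $G \to L_i$ is surjective; fix such an embedding with $n$ minimal. If $n = 1$ then $G = L_1$ is a limit group and we are done, so suppose $n \ge 2$. I claim that $G$ meets each direct factor nontrivially, i.e. $G \cap L_i \ne 1$ for every $i$: indeed, if $G \cap L_n = 1$ then the projection $G \to L_1 \times \cdots \times L_{n-1}$ forgetting the last coordinate is injective, and its image is again a subdirect product of limit groups, contradicting the minimality of $n$. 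Thus $G$ is a \emph{full} subdirect product of the $n$ limit groups $L_1, \dots, L_n$.

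Now I invoke the main structural input of \cite{Bri09}: a full subdirect product of $n$ limit groups that is of type $\mathrm{FP}_n(\mathbb{Q})$ has finite index in $L_1 \times \cdots \times L_n$. Since $G$ is of type $\mathrm{FP}_\infty(\mathbb{Q})$ it is in particular of type $\mathrm{FP}_n(\mathbb{Q})$, so $[L_1 \times \cdots \times L_n : G] < \infty$. To finish, observe that a finite-index subgroup of a direct product of limit groups is virtually a direct product of limit groups: put $H_i = G \cap L_i$, so that each $H_i$ is a normal subgroup of $G$, has finite index in $L_i$ (hence is itself a limit group, being a finitely generated subgroup of a fully residually free group), and the $H_i$ pairwise commute and pairwise intersect trivially. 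Therefore $H_1 H_2 \cdots H_n = H_1 \times \cdots \times H_n$ is an internal direct product inside $G$, and $[G : H_1 \times \cdots \times H_n] \le \prod_{i=1}^n [L_i : H_i] < \infty$. This exhibits the desired finite-index subgroup of $G$ which is a direct product of limit groups.

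The crux — and the step I expect to be the main obstacle — is the cited theorem of \cite{Bri09}, namely that type $\mathrm{FP}_n(\mathbb{Q})$ forces a full subdirect product of $n$ limit groups to have finite index. For $n = 2$ this is the limit-group analogue of the Baumslag--Roseblade theorem on finitely presented subgroups of $F \times F$; for general $n$ one argues by induction on the number of factors, realising $G$ up to commensurability as a fiber product over a projection to a product of fewer factors and applying the asymmetric 1-2-3 theorem to propagate the finiteness property. Checking the hypotheses of that theorem at each stage requires that limit groups are of type $\mathcal{F}_\infty$ and that the kernels of the intermediate projections are finitely generated — and it is precisely the $\mathrm{FP}_\infty(\mathbb{Q})$ assumption on $G$, fed through the hierarchical structure theory of limit groups and their subgroups, that delivers this. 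Bootstrapping these finiteness properties up the tower of factors is the technical heart of the matter; the reductions sketched above are routine.
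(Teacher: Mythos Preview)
The paper does not give its own proof of this statement: Theorem~\ref{vir_direct_prod} is simply quoted from \cite[Corollary~1.1]{Bri09} as a background result, with no argument supplied. So there is nothing in the present paper to compare your proposal against.

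That said, your sketch is a faithful reconstruction of how the corollary is deduced in \cite{Bri09}: embed $G$ as a full subdirect product of a minimal number $n$ of limit groups, invoke the main theorem of that paper (type $\mathrm{FP}_n(\Q)$ forces finite index in the ambient product), and then pass to the finite-index subgroup $\prod_i (G\cap L_i)$. The reductions you give are correct, and you rightly identify the hard step as the cited $\mathrm{FP}_n$-criterion, which you treat as a black box --- exactly as the present paper does with the whole statement.
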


    \begin{thm}[Propositions 3.1 and 6.4, \cite{Bri09}] \label{brihow} Let $G$ be a finitely presented subdirect product of limit groups $L_1\times \cdots \times L_n$, with $G \cap L_i \ne 1$ for all $1\le i \le n$. Then there exists a finite-index subgroup $E\leq L_1\times \cdots \times L_n$ and a positive integer $N$ such that $\ga_N E\leq E\cap G$.
\end{thm}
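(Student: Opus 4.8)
The plan is to reduce the statement to a structural fact about the ``diagonal quotients'' of $G$ and then invoke the theory of subdirect products of limit groups. Write $p_i$ for the projection of $L_1\times\cdots\times L_n$ onto $L_i$ and $p_{ij}$ for the projection onto $L_i\times L_j$. First I would observe that $G\cap L_i$ is normal in $L_i$: since $G$ is subdirect we have $L_i=p_i(G)$, and $G$ normalises both $G$ and the factor $L_i\trianglelefteq L_1\times\cdots\times L_n$, hence it normalises $G\cap L_i$. Writing $\bar L_i:=L_i/(G\cap L_i)$, I claim that the conclusion of the theorem is \emph{equivalent} to the assertion that each $\bar L_i$ is virtually nilpotent. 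Indeed, if $\ga_N E\le E\cap G$ for a finite-index $E\le L_1\times\cdots\times L_n$, then for $E_i:=E\cap L_i$ we get $\ga_N E_i\le\ga_N E\le G$, so the image of the finite-index subgroup $E_i$ in $\bar L_i$ is nilpotent of class $<N$; conversely, if each $\bar L_i$ is virtually nilpotent, take $D_i\le L_i$ to be the preimage of a finite-index nilpotent subgroup of $\bar L_i$, so that $D_i$ has finite index in $L_i$, contains $G\cap L_i$, and satisfies $\ga_N D_i\le G\cap L_i$ once $N$ exceeds all the nilpotency classes involved; then $E:=D_1\times\cdots\times D_n$ has finite index in the product and $\ga_N E=\prod_i\ga_N D_i\le\prod_i(G\cap L_i)\le E\cap G$. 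So the task becomes to show that a finitely presented subdirect product of limit groups meeting each factor has all of its diagonal quotients $\bar L_i$ virtually nilpotent.

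I would prove this by induction on the number $n$ of factors. The case $n=1$ is trivial, and the case $n=2$ is exactly the theorem of Bridson and Howie (extending the Baumslag--Roseblade theorem \cite{baumslag_product} from free groups) that a finitely presented subdirect product of two limit groups meeting both factors has finite index in the product, so that $\bar L_i$ is then finite. For $n\ge 3$ the first step is to upgrade this to \emph{virtual surjection to pairs}: for each $i\ne j$ the image $p_{ij}(G)\le L_i\times L_j$ is a subdirect product of the limit groups $L_i,L_j$ meeting both factors (as $1\ne G\cap L_i\le p_{ij}(G)\cap L_i$, and symmetrically in $j$), it is finitely presented because its kernel $G\cap\prod_{k\ne i,j}L_k$ is finitely generated as a normal subgroup of $G$, and hence, by the $n=2$ case, it has finite index in $L_i\times L_j$. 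The finite generation of this kernel --- and more generally the control it demands on the rational homology in degrees $\le 2$ of the coordinate subgroups $G\cap\prod_{k\in S}L_k$ --- is precisely what the hypothesis that $G$ is of type $\FP_2(\Q)$ (equivalently, finitely presented, by \cite[Theorem D]{BridsonHowie13}) supplies, through the homological finiteness machinery for subdirect products of limit groups. Finally, with virtual surjection to pairs in hand, I would pass to $\bar G:=G/\prod_i(G\cap L_i)\le\prod_i\bar L_i$, which is subdirect, meets each factor trivially, still virtually surjects onto pairs, and inherits enough homological finiteness from $G$ --- in particular that $H_1(G\cap L_i;\Q)$ is finitely generated as a module over $\Q[\bar L_i]$; a Bieri--Strebel-type analysis of the Bieri--Neumann--Strebel invariants of the limit-group quotients $\bar L_i$ then forces each $\bar L_i$ to be virtually nilpotent, closing the induction.

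The routine ingredients are the reformulation in the first paragraph, the appeal to the two-factor theorem, and the lower-central-series computation producing $E$ and $N$. The real obstacle --- and the technical heart of the Bridson--Howie--Miller--Short theory underlying \Cref{brihow} --- is to extract, from the bare hypothesis that $G$ is finitely presented, enough control on the low-degree rational homology of the coordinate subgroups $G\cap\prod_{k\in S}L_k$: this simultaneously yields the finite generation of the kernels of the pairwise projections (hence virtual surjection to pairs) and the module-finiteness of $H_1(G\cap L_i;\Q)$ feeding the Bieri--Strebel argument, and making it work requires running the induction while bookkeeping homological finiteness at every stage.
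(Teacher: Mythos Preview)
The paper does not prove this statement; it is quoted verbatim as Propositions~3.1 and~6.4 of \cite{Bri09} and used as a black box throughout Section~\ref{ProductsSection}. There is therefore no proof in this paper to compare your proposal against.

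That said, your sketch is broadly faithful to the actual Bridson--Howie--Miller--Short argument: the reformulation in terms of virtual nilpotence of the quotients $L_i/(G\cap L_i)$, the reduction to virtual surjection to pairs, and the final assembly of $E$ and $N$ from the lower central series are all genuine ingredients. You also correctly flag in your last paragraph that the substantive difficulty is extracting homological finiteness of the coordinate subgroups from finite presentability of $G$.

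One caution: in your middle paragraph you write that $p_{ij}(G)$ ``is finitely presented because its kernel $G\cap\prod_{k\ne i,j}L_k$ is finitely generated as a normal subgroup of $G$''. As stated this is circular --- finite presentation of $G$ does not by itself force an arbitrary normal subgroup to be finitely normally generated; establishing this for the kernels of the pairwise projections is exactly one of the hard steps that the BHMS homological machinery is there to supply. You concede as much in your closing paragraph, but the phrasing earlier makes it sound like a routine observation rather than part of the crux, so the two paragraphs sit in some tension with each other.
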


\subsection{Profinite groups and separability properties}
Separability properties allow us to separate subgroups in finite quotients and hence are useful for reading off particular features of a group from its profinite completion. In this section we will review two such properties ({\it LERFness}, in \Cref{LERFness}; and having {\it virtual retractions}, in \Cref{vretractions}) and summarise to what extent do limit groups, residually free groups and HGFC-groups enjoy such properties.

	\begin{defn}\label{LERFness}
		A subgroup $H$ of a group $G$ is called \emph{separable} (in $G$) if it is the intersection of finite-index subgroups of $G$, that is $H=\bigcap \{G_0 \vert H\le G_0\text{ and $G_0\le G$ of finite index}\}$. $G$ is called \emph{subgroup separable}, or \emph{LERF} (locally extended residually finite), if every finitely generated subgroup of $G$ is separable in $G$.
	\end{defn}
	Subgroup separability is a strong tool for generating finite-index subgroups of a given group, as evident from the following elementary lemma which will prove to be useful later on:
	\begin{lemma}
		\label{vertex_sep}
		Let $G$ be a subgroup separable group and let $H$ be a finitely generated subgroup of $G$. Then for every finite-index subgroup $H_0\le H$, there is a finite-index subgroup $G_0\le G$ such that $G_0\cap H=H_0$.
	\end{lemma}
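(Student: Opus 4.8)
The plan is to use subgroup separability of $G$ together with the assumption that $H$ is finitely generated to build a finite-index subgroup $G_0 \le G$ that cuts $H$ down to exactly $H_0$. First I would record the standard reformulation: for a subgroup separable group $G$ and a finitely generated subgroup $H$, any finite-index subgroup $H_0 \le H$ is separable in $G$. This is where finite generation of $H$ is used — indeed $H_0$ is finitely generated (being finite index in the finitely generated group $H$), so by LERFness $H_0 = \bigcap\{G_1 \mid H_0 \le G_1 \le G \text{ of finite index}\}$.

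Next I would pass from an infinite intersection to a single finite-index subgroup. Since $[H:H_0] < \infty$, write $H = \bigsqcup_{i=1}^{m} h_i H_0$ with $h_i \notin H_0$ for $i \ge 2$ (and $h_1 = 1$). For each $i \ge 2$, separability of $H_0$ in $G$ produces a finite-index subgroup $G_1^{(i)} \le G$ with $H_0 \le G_1^{(i)}$ but $h_i \notin G_1^{(i)}$. Setting $G_1 = \bigcap_{i=2}^m G_1^{(i)}$ gives a finite-index subgroup of $G$ with $H_0 \le G_1$ and $G_1 \cap \{h_2,\dots,h_m\} = \emptyset$. I would then check that $G_1 \cap H = H_0$: the inclusion $H_0 \le G_1 \cap H$ is clear, and conversely if $g \in G_1 \cap H$ then $g \in h_i H_0$ for some $i$, so $h_i = g h_0^{-1} \in G_1 \cap H$ for some $h_0 \in H_0 \le G_1$, forcing $i = 1$, i.e. $g \in H_0$.

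Finally I would take $G_0 = G_1$; this is a finite-index subgroup of $G$ with $G_0 \cap H = H_0$, as required. I do not expect any genuine obstacle here — the statement is a routine packaging of the definition of LERF — but the one point that deserves care is the appeal to finite generation of $H$, which is needed precisely so that the coset representatives $h_i$ are finite in number and each $h_i$ can be separated from $H_0$; without it one could not reduce the (possibly infinite) intersection to a finite one.
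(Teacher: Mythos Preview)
Your proof is correct and follows essentially the same argument as the paper: observe that $H_0$ is finitely generated (hence separable in $G$), separate each of the finitely many nontrivial coset representatives of $H_0$ in $H$ from $H_0$ using finite-index subgroups of $G$, and intersect. One small quibble with your closing remark: the finiteness of the set of coset representatives comes directly from the hypothesis $[H:H_0]<\infty$, while finite generation of $H$ is used to ensure $H_0$ is finitely generated and hence separable.
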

	\begin{proof}
		$H_0$ is a finitely generated subgroup of $G$ and therefore separable in $G$. Let $g_1,\ldots,g_n$ be coset representatives of all non-trivial cosets of $H_0$ in $H$, and for every $i\le n$ let $G_i$ be a finite-index subgroup of $G$ which contains the subgroup $H_0$ but  not the element $g_i$. Letting $G_0=\bigcap_{i=1}^n G_i$ we have that $G_0\cap H=H_0$.
	\end{proof}
	
	Another powerful tool for generating finite-index subgroups with certain homological properties are \emph{virtual retracts}.
	\begin{defn} \label{vretractions}
		Let $G$ be a group and let $H\le G$. We say that \emph{$G$ virtually retracts} onto $H$ if there is a finite-index subgroup $G_0$ of $G$ containing $H$ and a retraction $r:G_0\rightarrow H$, that is a homomorphism $r$ such that $r(h)=h$ for every $h\in H$. In this case we say that $H$ is a \emph{virtual retract} of $G$. If $G$ retracts onto all of its finitely generated subgroups we say that $G$ admits \emph{local retractions}.
	\end{defn}
	
	\begin{rmk}
		\label{vr_sub}
		If $G$ is a residually finite group that admits local retractions then $G$ is subgroup separable. In fact, if $G$ is residually finite and $H$ is a virtual retract of $G$ then $H$ is separable in $G$ (see \cite[Lemma 2.2]{ashotvr}). In addition, if $H\le G$ and $G$ admits local retracts then so does $H$.
	\end{rmk}
	
	The following lemma reduces the problem of showing that $\mathrm{vb}_n^k(G)=\infty$ to finding finitely generated subgroups of $G$ with an arbitrarily large $n$-th Betti number, as long as $G$ admits local retractions.
	
	\begin{lemma}
		\label{vr_red}
		Let $G$ be a finitely generated group and let $H$ be a virtual retract of $G$. Then for any $n$ and any field $k$ there is a finite-index subgroup $G_0\le G$ such that $b_n^k(G_0) \ge b_n^k(H)$.
	\end{lemma}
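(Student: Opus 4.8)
The plan is to exploit the functoriality of group homology together with the splitting provided by the virtual retraction. By \Cref{vretractions}, since $H$ is a virtual retract of $G$, there is a finite-index subgroup $G_0\le G$ with $H\le G_0$ and a retraction $r\colon G_0\to H$, i.e.\ a homomorphism satisfying $r\circ\iota=\mathrm{id}_H$, where $\iota\colon H\hookrightarrow G_0$ is the inclusion. This $G_0$ will be the finite-index subgroup we produce.

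First I would apply the functor $H_n(-;k)$ (homology with trivial coefficients in the field $k$) to the composite $H\xrightarrow{\ \iota\ }G_0\xrightarrow{\ r\ }H$. Functoriality gives $r_*\circ\iota_*=(r\circ\iota)_*=(\mathrm{id}_H)_*=\mathrm{id}$ on $H_n(H;k)$. Hence $\iota_*\colon H_n(H;k)\to H_n(G_0;k)$ is a split injection of $k$-vector spaces; in particular it is injective, so $H_n(H;k)$ embeds as a subspace of $H_n(G_0;k)$.

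Taking $k$-dimensions, the injectivity of $\iota_*$ yields $\dim_k H_n(H;k)\le\dim_k H_n(G_0;k)$, that is $b_n^k(G_0)\ge b_n^k(H)$. (The inequality is an inequality of cardinals and holds whether or not $b_n^k(H)$ is finite; no finiteness hypothesis beyond finite generation of $G$ is needed, and indeed finite generation is only used implicitly to keep the statement in the intended setting.) Since $G_0$ has finite index in $G$, this is exactly the desired conclusion.

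There is no real obstacle here: the only point to be careful about is that the argument is purely formal and uses nothing about $H$ being finitely generated or of finite index — it works verbatim for any retract. The content of the lemma in applications is therefore shifted entirely onto verifying that the relevant subgroups \emph{are} virtual retracts, which is handled elsewhere (e.g.\ via \Cref{vr_sub} and the local retraction properties of the groups under consideration).
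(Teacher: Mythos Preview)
Your proof is correct and matches the paper's argument essentially verbatim: pick $G_0$ with a retraction $r\colon G_0\to H$, apply $H_n(-;k)$ to $r\circ\iota=\mathrm{id}_H$ to conclude $\iota_*$ is injective, and read off the Betti number inequality.
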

	
	\begin{proof}
		Let $G_0$ be a finite-index subgroup of $G$ which retracts onto $H$; denote the retraction by $r:G_0\rightarrow H$ and denote by $i:H\rightarrow G_0$ the inclusion map. Note that $r\circ i:H\rightarrow H$ is the identity map. It follows that the same holds for the induced maps on the $n$-th homology, that is $r_* \circ i_* = \mathrm{Id}_{H_n(H;k)}$. In particular, $i_*:H_n(H;k)\rightarrow H_n(G_0;k)$ is injective and $b_n^k(G_0)\ge b_n^k(H)$.
	\end{proof}
	
	We continue by mentioning a few separability properties of HGFC-groups, limit groups and residually free groups. Wise showed in \cite{wise-sep} that HGFC-groups are subgroup separable. Hsu and Wise went on and showed in \cite{hsu-wise} that such groups are in fact virtually compact special (the definition of a special group is highly technical and is beyond the scope of this paper; we therefore refer the reader to Haglund and Wise's work \cite{hag-wise}). The benefits that follow from being compact special will play a key role in the proof of Theorem \ref{mainlim}:

	\begin{thm}[follows from {\cite[Corollary 6.7 and Theorem 7.3]{hag-wise}}]
		Hyperbolic compact special groups virtually retract onto their quasiconvex subgroups.
	\end{thm}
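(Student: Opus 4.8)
The plan is to reduce the statement to the \emph{canonical completion and retraction} construction of Haglund and Wise, which is exactly the content of the cited results; the only real work is to translate the hypothesis that $H$ is quasiconvex (a condition on the word metric of $G$) into the statement that $H$ is realised by a compact locally convex subcomplex of a finite cover of a compact special cube complex modelling $G$.

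First I would fix a compact special cube complex $X$ with $\pi_1 X \cong G$ and let $\widetilde{X}$ be its universal cover, a CAT(0) cube complex on which $G$ acts freely, properly and cocompactly; since $G$ is word-hyperbolic, $\widetilde{X}$ is hyperbolic and $G$-equivariantly quasi-isometric to $G$. Given a quasiconvex subgroup $H \le G$, fix a vertex $\tilde x_0$ and let $\widetilde{Y} \subseteq \widetilde{X}$ be the \emph{combinatorial convex hull} of the orbit $H\cdot \tilde x_0$, i.e. the intersection of all combinatorial halfspaces of $\widetilde{X}$ containing $H\cdot \tilde x_0$. Then $\widetilde{Y}$ is a convex subcomplex on which $H$ acts freely, and the essential point is that, because $\widetilde{X}$ is hyperbolic and $H\cdot \tilde x_0$ is quasiconvex, $\widetilde{Y}$ lies in a bounded neighbourhood of $H\cdot \tilde x_0$, so $H$ acts on $\widetilde{Y}$ \emph{cocompactly}. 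Hence $Y := \widetilde{Y}/H$ is a compact cube complex, and the inclusion of the convex subcomplex $\widetilde{Y} \hookrightarrow \widetilde{X}$ descends to a local isometry of cube complexes $\iota \colon Y \to X$ whose induced map on $\pi_1$ identifies $\pi_1 Y$ with $H$.

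Now I would feed this into the Haglund--Wise machinery. Since $X$ is compact special it carries a local isometry to the Salvetti complex $S$ of a right-angled Artin group, and composing with $\iota$ gives a local isometry $Y \to S$ of compact cube complexes. The canonical completion produces a finite cover $\hat S \to S$ in which $Y$ embeds, together with a canonical retraction $\hat S \to Y$; pulling $\hat S$ back along $X \to S$ yields a finite cover $p \colon \hat X \to X$ in which $Y$ still embeds and which retracts onto the image of $Y$ --- this is precisely what \cite[Corollary 6.7 and Theorem 7.3]{hag-wise} supply. Passing to fundamental groups, $G_0 := p_*(\pi_1 \hat X)$ is a finite-index subgroup of $G$ containing $H$, and the retraction $\hat X \to Y$ induces a retraction $G_0 \twoheadrightarrow H$. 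Thus $G$ virtually retracts onto $H$.

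The main obstacle --- or at least the step requiring genuine care --- is the cocompactness of the combinatorial convex hull: quasiconvexity in the word metric does not, for a general CAT(0) cube complex, promote to combinatorial convex-cocompactness, so hyperbolicity must enter essentially. Concretely, one compares the combinatorial convex hull of $H\cdot \tilde x_0$ with its coarse quasiconvex hull, using that in a hyperbolic space the convex hull of a quasiconvex set is at bounded Hausdorff distance from it and that crossing a bounded number of hyperplanes keeps two points a bounded distance apart. One also has to record the routine facts that the inclusion of a convex subcomplex is a local isometry and that it descends to the quotient by the free cocompact $H$-action. Once this geometric reduction is in place, the remainder is a black-box application of \cite{hag-wise}.
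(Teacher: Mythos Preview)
The paper does not supply a proof of this theorem at all: it is stated as a direct consequence of \cite[Corollary~6.7 and Theorem~7.3]{hag-wise} and left at that. Your proposal is a correct and essentially standard unpacking of how the statement is extracted from those results --- the reduction of word-metric quasiconvexity to a local isometry of compact cube complexes via the cocompact convex core, followed by canonical completion and retraction, is exactly the intended mechanism, and you have correctly flagged that the only substantive point (cocompactness of the combinatorial convex hull) is where hyperbolicity is genuinely used.
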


    If we combine the latter result with the fact that every HGFC-group is locally quasiconvex \cite[Theorem D]{quasicon}, we get the following.

	\begin{cor}
		\label{graph_vr}
		Let $G$ be an HGFC-group. Then $G$ has a finite-index subgroup which admits local retractions.
	\end{cor}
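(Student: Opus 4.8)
The plan is to combine the preceding three theorems to reduce to the statement that a hyperbolic compact special group virtually retracts onto its quasiconvex subgroups, which is exactly designed to give what we want. First I would invoke Hsu and Wise's theorem that every HGFC-group is virtually compact special: this gives a finite-index subgroup $G_1 \le G$ which is compact special, and $G_1$ is again hyperbolic since hyperbolicity passes to finite-index subgroups. Next I would note that $G_1$, being an HGFC-group itself (finite-index subgroups of HGFC-groups are again HGFC by a Bass--Serre argument, or more simply we just need the locally quasiconvex conclusion), is locally quasiconvex by the cited consequence of \cite{quasicon}. So every finitely generated subgroup of $G_1$ is quasiconvex in $G_1$.

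Now let $H \le G_1$ be any finitely generated subgroup. By local quasiconvexity $H$ is quasiconvex in $G_1$, and since $G_1$ is hyperbolic and compact special, the theorem following from \cite[Corollary 6.7 and Theorem 7.3]{hag-wise} says that $G_1$ virtually retracts onto $H$: there is a finite-index subgroup $G_1' \le G_1$ containing $H$ and a retraction $G_1' \to H$. This says precisely that $G_1$ admits local retractions in the sense of \Cref{vretractions}. Taking $G_1$ as the desired finite-index subgroup of $G$ completes the proof.

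I do not expect a genuine obstacle here, since the corollary is essentially an assembly of the three quoted black boxes; the only mildly delicate point is checking that "locally quasiconvex" and "virtually compact special" can be arranged \emph{simultaneously} on a single finite-index subgroup. This is handled by first passing to a compact special finite-index subgroup $G_1$ and then observing that local quasiconvexity of $G$ (which holds for all of $G$, not merely virtually) is inherited by $G_1$; alternatively one notes that any finite-index subgroup of an HGFC-group is again an HGFC-group and hence locally quasiconvex in its own right. Once both properties sit on $G_1$, the retraction theorem applies verbatim to every finitely generated (hence quasiconvex) subgroup of $G_1$.
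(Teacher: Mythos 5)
Your proof is correct and is exactly the argument the paper intends: the corollary is stated as an immediate consequence of Hsu--Wise (virtually compact special) together with the two displayed theorems, assembled just as you do. Your remark on transferring local quasiconvexity to the finite-index compact special subgroup $G_1$ (via quasi-isometry of the inclusion, or by noting $G_1$ is itself an HGFC-group) is the only point requiring care, and you handle it correctly.
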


	Wilton showed that the same assertion holds for limit groups:
	
	\begin{thm}[{\cite[Theorems A and B]{wilton-hall}}] \label{lim_sep}
		Limit groups are subgroup separable and admit local retractions.
	\end{thm}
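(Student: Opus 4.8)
The plan is to prove the statement by induction on the length of a finite cyclic hierarchy of the limit group $L$, which exists by \Cref{hierarchy_thm}. It suffices to establish the second, stronger assertion --- that $L$ admits local retractions --- since limit groups are (fully) residually free, hence residually finite, so by \Cref{vr_sub} admitting local retractions automatically forces subgroup separability.

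For the base case one handles the rigid groups at the bottom of the hierarchy, which are free abelian by \Cref{hierarchy_thm}, as well as free groups. A free abelian group $\mathbb{Z}^d$ admits local retractions: a finitely generated $H\le \mathbb{Z}^d$ has finite index in its isolator $V$, which is a direct summand $\mathbb{Z}^d = V\oplus W$, so $G_0 = H\oplus W$ has finite index in $\mathbb{Z}^d$ and the projection $G_0\to H$ is a retraction. Free groups admit local retractions by Hall's theorem, which realises a finitely generated subgroup of a free group as a free factor of a finite-index subgroup.

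For the inductive step, let $L$ be neither free nor free abelian, so that $L$ splits as a finite graph of groups $\mathcal{G}$ with infinite cyclic edge groups (and possibly non-cyclic abelian vertex groups) whose vertex groups are limit groups lying strictly lower in the hierarchy, hence --- by the inductive hypothesis together with the free abelian base case --- subgroup separable and admitting local retractions. Given a finitely generated $H\le L$, Bass--Serre theory produces an induced graph of groups decomposition of $H$ whose edge groups are trivial or cyclic and whose vertex groups are finitely generated subgroups of conjugates of the vertex groups of $\mathcal{G}$; by induction each such vertex subgroup is a virtual retract of the corresponding vertex group of $L$, and each edge subgroup, being cyclic hence finitely generated, is separable in the adjacent vertex groups. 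It remains to glue these local data into a single finite-index subgroup $L_0\le L$ with $H\le L_0$, together with a retraction $L_0\to H$; this is the content of a combination theorem for virtual retractions along graphs of groups with cyclic edge groups.

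The main obstacle is precisely this gluing step: the finite-index subgroups of the vertex groups of $L$, and the retractions onto the pieces of $H$ sitting inside them, must be chosen so as to agree along the edges of the splitting. One arranges this by using separability (in the manner of \Cref{vertex_sep}) to cut each vertex group down to a finite-index subgroup whose intersection with each incident edge group is prescribed, then matching up the retractions on these intersections, and finally reassembling over the underlying graph. For the procedure to close up one wants the action of $L$ on the Bass--Serre tree of $\mathcal{G}$ to be acylindrical, which can be arranged by working with the cyclic JSJ decomposition and treating the abelian vertex groups separately; and the crucial structural input is \Cref{abelian_vtx}, the consequence of commutative transitivity that each cyclic edge group is maximal abelian in at least one of its two adjacent vertex groups. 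This is what controls the behaviour at a vertex where the edge group fails to be a retract: at the opposite end it is maximal, and the retraction can be steered through it there. Once local retractions are in hand, subgroup separability follows formally as noted at the outset.
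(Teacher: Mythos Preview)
The paper does not prove this theorem at all: it is simply cited from \cite{wilton-hall} as a known result, with no argument supplied. So there is no ``paper's own proof'' to compare against.

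What you have written is a plausible outline of the strategy in \cite{wilton-hall}, but it is only an outline, and the part you yourself flag as ``the main obstacle'' --- the combination theorem that glues virtual retractions of vertex groups into a virtual retraction of the whole graph of groups --- is essentially the entire content of Wilton's paper and is genuinely hard. Your description of it (``matching up the retractions on these intersections, and finally reassembling over the underlying graph'') does not come close to a proof: one has to control, simultaneously and compatibly across all edges, the finite-index subgroups chosen at each vertex, the elevations of the edge groups, and the retractions themselves, and this requires a careful development (in Wilton's paper, the notion of \emph{tame} or \emph{independent} elevations and an omnipotence-type argument). Invoking acylindricity and \Cref{abelian_vtx} points in the right direction but does not by itself produce the required compatibility.

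A smaller point: the hierarchy you induct on is not quite the one Wilton uses. His argument proceeds via the Kharlampovich--Myasnikov description of limit groups as iterated extensions of centralisers (equivalently, constructible limit groups), which gives a more structured inductive setup than the bare cyclic hierarchy of \Cref{hierarchy}. The generic cyclic splitting of \Cref{hierarchy} need not have the control over edge groups that the combination step demands.

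In short: your reduction to local retractions via \Cref{vr_sub} and your base cases are fine, and the shape of the induction is correct, but the inductive step as written is a restatement of the problem rather than a proof.
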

	
    Similar strong separability properties also apply to residually free groups, under additional finiteness assumptions. 

\begin{thm}[{\cite[Theorems A and B]{vir_retract_res}}] \label{sep} \label{separable}
    Let $G$ be a finitely generated residually free group, and let $H\le G$. If $H$ is finitely presented, then $H$ is separable in $G$; if furthermore $H$ is of type $\mathrm{FP}_{\infty}(\mathbb{Q})$ then $G$ virtually retracts onto $H$.
\end{thm}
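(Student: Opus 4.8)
The plan is to reduce to the case where the ambient group is an honest direct product of limit groups and then to combine Wilton's separability and local-retraction results for limit groups (\Cref{lim_sep}) with the Bridson--Howie--Miller--Short $1$--$2$--$3$ theorem (\Cref{brihow}) and the structure theory of residually free groups of type $\FP_\infty(\Q)$ underlying \Cref{vir_direct_prod}. By \Cref{subprod} I would embed $G$ as a subdirect product in $P = L_1 \times \cdots \times L_n$ with each $L_i$ a limit group. Since $H \le G \le P$, it is enough to prove both assertions with $P$ in place of $G$: a retraction onto $H$ from a finite-index subgroup of $P$ containing $H$ restricts to one from a finite-index subgroup of $G$, and if $H$ is separable in the profinite topology of $P$ then, being the intersection of a closed set with $G$, it is separable in $G$. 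Setting $H_i := \pi_i(H)$ (image under the $i$-th projection), each $H_i$ is a finitely generated subgroup of a limit group, hence itself a limit group, and $H$ is a \emph{full} subdirect product of $Q := H_1 \times \cdots \times H_n \le P$. Since limit groups are subgroup separable and admit local retractions (\Cref{lim_sep}), each $L_i$ retracts onto $H_i$ --- so $P$ retracts onto $Q$ --- and, by \Cref{vertex_sep}, every finite-index subgroup of $H_i$ has the form $H_i \cap (\text{finite-index in }L_i)$, so $Q$ is separable in $P$ and carries the profinite topology induced from $P$. Composing with the retraction $P \to Q$, the problem reduces to the following, for a finitely generated full subdirect product $H$ of a product $Q = H_1 \times \cdots \times H_n$ of limit groups: (a) if $H$ is finitely presented, then $H$ is separable in $Q$ --- and, for the sake of the induction below, every finite-index subgroup of $H$ has the form $H \cap Q_0$ for some finite-index $Q_0 \le Q$; (b) if $H$ is of type $\FP_\infty(\Q)$, then $Q$ virtually retracts onto $H$.

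I would prove (a) and (b) together, by induction on $n$, splitting into two cases. First suppose $H \cap H_i \ne 1$ for every $i$. For (a), apply \Cref{brihow} to the finitely presented full subdirect product $H \le Q$ (and also to each of its finite-index subgroups, which are again finitely presented and residually free) to obtain a finite-index normal subgroup $E \n Q$ and an integer $N$ with $\ga_N E \le H$; then $\ga_N E \n Q$ and $Q / \ga_N E$ is finitely generated and virtually nilpotent, hence polycyclic-by-finite, hence subgroup separable by Mal'cev's theorem, from which both separability and the ``trace'' property of $H$ in $Q$ follow by pulling back through $Q \to Q/\ga_N E$. For (b), if $H$ is of type $\FP_\infty(\Q)$, the Bridson--Howie--Miller--Short analysis behind \Cref{vir_direct_prod} forces $[Q : H] < \infty$; then $Q$ virtually retracts onto $H$ via the finite-index subgroup $H$ itself, with the identity retraction.

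Now suppose $H \cap H_{i_0} = 1$ for some $i_0$. Projecting away from the $i_0$-th coordinate (call this projection $\pi' \colon Q \to Q' := \prod_{i \ne i_0} H_i$) embeds $H$ as a full subdirect product $\bar H$ of the $n - 1$ limit groups $H_i$, $i \ne i_0$; concretely, $H$ is the graph inside $H_{i_0} \times Q'$ of the homomorphism $\tau \colon \bar H \to H_{i_0}$ determined by $\tau(\pi'(h)) = \pi_{i_0}(h)$, which is well defined precisely because $H \cap H_{i_0} = 1$. For (b), a retraction $s \colon Q_1' \to \bar H$ from a finite-index subgroup $Q_1' \le Q'$, which exists by the induction hypothesis, yields the retraction $H_{i_0} \times Q_1' \to H$, $(a, b) \mapsto (\tau(s(b)), s(b))$, so $Q$ virtually retracts onto $H$. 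For (a), the induction hypothesis (separability and the trace property of $\bar H$ in $Q'$) together with residual finiteness of the limit group $H_{i_0}$ yields the same for the graph $H$ in $Q = H_{i_0} \times Q'$; the only slightly delicate point is separating a point $(a, b)$ with $b \in \bar H$ but $a \ne \tau(b)$, which one handles by choosing a finite-index normal subgroup $K \n H_{i_0}$ missing $a \tau(b)^{-1}$, using the trace property to pull $\tau^{-1}(K) \le \bar H$ back to a finite-index subgroup of $Q'$, and taking the preimage of the image of $\bar H$ in the resulting finite quotient of $Q$.

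The structural inputs (\Cref{lim_sep}, \Cref{brihow}, \Cref{vir_direct_prod}) carry the real weight, so the main obstacle is the bookkeeping of profinite topologies needed to transport separability from the subdirect-product hull $Q$ of $H$ back down to $G$: one must check that $Q$ inherits the subspace profinite topology inside $P$, and --- the point I expect to be fiddliest --- one must carry through the induction the strengthened statement that $H$ is not merely separable but \emph{efficiently embedded} in $Q$ (its finite-index subgroups being exactly the sets $H \cap Q_0$ with $Q_0 \le Q$ of finite index), which the trivial-intersection step of (a) genuinely requires. The virtual-retraction statement (b) is comparatively robust, since describing $H$ as a graph over a subdirect product along the trivial-intersection coordinates reduces it cleanly to the finite-index situation of the nontrivial-intersection step.
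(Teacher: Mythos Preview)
The paper does not give its own proof of this statement: it is quoted as a black box from Bridson--Wilton \cite{vir_retract_res}, with only the remark immediately afterwards that the proof relies on \Cref{brihow}. Your sketch is a faithful and essentially correct reconstruction of the Bridson--Wilton argument, following the same route (embed $G$ in a product of limit groups via \Cref{subprod}, pass to the product $Q$ of the coordinate projections of $H$ using Wilton's local retractions \Cref{lim_sep}, then in the full nontrivial-intersection case invoke \Cref{brihow} plus Mal'cev's subgroup separability for polycyclic-by-finite groups, and in the $\FP_\infty$ case invoke the BHMS finite-index criterion; handle trivial-intersection coordinates by the graph description and induction).

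Two points of sloppiness worth tightening. First, \Cref{lim_sep} only gives a \emph{virtual} retraction of each $L_i$ onto $H_i$, so ``$P$ retracts onto $Q$'' should read ``a finite-index subgroup $P_0\le P$ containing $Q$ retracts onto $Q$''; this is harmless since $H\le Q\le P_0$. Second, in the nontrivial-intersection step of (b) you appeal to ``the analysis behind \Cref{vir_direct_prod}'' to conclude $[Q:H]<\infty$; the precise input you need is the BHMS theorem that a subdirect product of $n$ limit groups which is of type $\FP_n(\Q)$ and meets each factor nontrivially has finite index in the ambient product --- \Cref{vir_direct_prod} as stated in the present paper is a corollary of this but not formally the same statement. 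With these adjustments your outline is complete, and your identification of the ``efficient embedding'' (trace-of-finite-index-subgroups) bookkeeping as the main technical nuisance is exactly right: this is indeed where the work in \cite{vir_retract_res} is concentrated.
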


\begin{rmk} 
For the purpose of applying \Cref{separable} in the proof of \Cref{mainlim}, we should note that a direct product $F\times \Z$ of a free group and an abelian group is $\FP_{\infty}(\Q)$ (because it is the extension of the two $\FP_{\infty}(\Q)$ groups $F$ and $\Z$).\end{rmk}

We seal the discussion by mentioning that the proof of Theorem \ref{sep} relies on Theorem \ref{brihow}, which allows one to treat a finitely presented residually free group as if it were a subdirect product of limit groups that contains a term of the lower central series. This idea will also play a key role in the proof of \Cref{productsintro} when detecting direct factors of residually free groups from the pro-$p$ completion.

\subsection{$L^2$ Betti numbers} \label{l2}
We write $\b(G)$ to denote the first $L^2$-Betti number of a group. We will not require much familiarity with its exact definition. Here we recall some of its basic properties that will become useful later. The reader is referred to Lück's book \cite{Luc02} for a more thorough treatment of this topic. We begin by stating a particular instance of Lück's pioneering result on the approximation of $L^2$-Betti numbers by usual Betti numbers \cite{Luc94}.

\begin{thm} \label{Luck} Let  $G$ be a group of type $\FP_2(\Q)$ and let $G=N_1>N_2>\dots >N_m>\dots $ be a sequence of finite-index normal subgroups  with $\bigcap_m N_m=1$. Then 
\[\lim_{m\rar \infty} \frac{b_1(N_m)}{|G:N_m|}= b_1^{(2)}(G).\]
\end{thm}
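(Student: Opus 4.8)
This is Lück's approximation theorem for the first $L^2$-Betti number, so the plan is to run Lück's spectral-measure argument in the purely algebraic setting of group rings. First I would reduce the whole statement to a convergence result for normalised ranks of a single matrix. Since $G$ is $\FP_2(\Q)$, fix a partial free resolution $\Q G^{a}\xrightarrow{d_2}\Q G^{b}\xrightarrow{d_1}\Q G^{c}\to\Q\to 0$ and, clearing denominators, assume $d_1,d_2$ have entries in $\Z G$. For $N\n G$ of finite index, tensoring with $\Q[G/N]$ and Shapiro's lemma ($H_*(N;\Q)=H_*(G;\Q[G/N])$) produce a complex computing $H_*(N;\Q)$, so that
\[ b_1(N)=b\,|G:N|-\mathrm{rk}_{\Q}\big(\bar d_1^{\,N}\big)-\mathrm{rk}_{\Q}\big(\bar d_2^{\,N}\big), \]
where $\bar d_i^{\,N}$ denotes $d_i$ viewed, via the regular representation of the finite group $G/N$, as an integer matrix over $\Q$. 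Likewise, from the Hilbert-module complex $\ell^2(G)^{a}\to\ell^2(G)^{b}\to\ell^2(G)^{c}$ one gets $\b(G)=b-\mathrm{rk}_{\mathcal N(G)}(d_1)-\mathrm{rk}_{\mathcal N(G)}(d_2)$, with $\mathrm{rk}_{\mathcal N(G)}$ the von Neumann rank. Hence it suffices to show that for every square matrix $A$ over $\Z G$,
\[ \frac{1}{|G:N_m|}\,\mathrm{rk}_{\Q}\big(\bar A^{\,N_m}\big)\ \longrightarrow\ \mathrm{rk}_{\mathcal N(G)}(A); \]
and replacing $A$ by $A^*A$ (which changes neither rank, where $*$ is the involution $g\mapsto g^{-1}$) we may assume $A$ is positive self-adjoint of size $k$.

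Next I would pass to spectral measures. Let $\mu_m$ be the eigenvalue counting measure of the integer matrix $\bar A^{\,N_m}$, scaled by $1/|G:N_m|$, and let $\mu$ be the spectral measure of $A$ with respect to the canonical trace $\mathrm{tr}_{\mathcal N(G)}$; both have total mass $k$ and are supported on $[0,C]$ for a uniform bound $C\ge 1$ on the operator norms of $A$ and of all $\bar A^{\,N_m}$ (a Schur-test bound in terms of the coefficients of $A$ works). Since $\tfrac1{|G:N_m|}\mathrm{rk}_{\Q}(\bar A^{\,N_m})=k-\mu_m(\{0\})$ and $\mathrm{rk}_{\mathcal N(G)}(A)=k-\mu(\{0\})$, the goal becomes $\mu_m(\{0\})\to\mu(\{0\})$. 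Weak convergence $\mu_m\to\mu$ follows from convergence of moments: the $p$-th moment of $\mu_m$ is the normalised trace $\mathrm{tr}_{G/N_m}(A^p)$ and that of $\mu$ is $\mathrm{tr}_{\mathcal N(G)}(A^p)$, and these coincide as soon as $N_m$ meets the finite support of $A^p$ only in the identity, which eventually happens because the $N_m$ are nested with $\bigcap_m N_m=1$. Applying the portmanteau theorem to the closed set $[0,\varepsilon]$ and letting $\varepsilon\to 0$ gives the easy inequality $\limsup_m\mu_m(\{0\})\le\mu(\{0\})$.

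The heart of the matter — and the step I expect to be the real obstacle — is the reverse inequality $\liminf_m\mu_m(\{0\})\ge\mu(\{0\})$, i.e.\ ruling out escape of spectral mass into $(0,\varepsilon)$ as $m\to\infty$; this is where integrality is essential. Since $\bar A^{\,N_m}$ is a symmetric positive semidefinite \emph{integer} matrix, the product of its nonzero eigenvalues equals a sum of principal minors, hence is a positive integer and so is $\ge 1$; as every eigenvalue is also $\le C$, an elementary count bounds the number of them in $(0,\varepsilon)$ by $|G:N_m|\,k\,\tfrac{\log C}{\log(C/\varepsilon)}$, i.e.\ $\mu_m\big((0,\varepsilon)\big)\le\eta(\varepsilon):=k\,\tfrac{\log C}{\log(C/\varepsilon)}$ \emph{uniformly in $m$}, with $\eta(\varepsilon)\to 0$ as $\varepsilon\to 0$. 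Therefore $\mu_m(\{0\})=\mu_m\big([0,\varepsilon)\big)-\mu_m\big((0,\varepsilon)\big)\ge\mu_m\big([0,\varepsilon)\big)-\eta(\varepsilon)$, and since $[0,\varepsilon)$ is open, $\liminf_m\mu_m\big([0,\varepsilon)\big)\ge\mu\big([0,\varepsilon)\big)\ge\mu(\{0\})$, whence $\liminf_m\mu_m(\{0\})\ge\mu(\{0\})-\eta(\varepsilon)$; letting $\varepsilon\to 0$ finishes it. Combining the two inequalities gives $\mu_m(\{0\})\to\mu(\{0\})$, so the normalised ranks converge, and unwinding the reduction proves the theorem. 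Everything except this uniform spectral estimate is routine bookkeeping with resolutions, Shapiro's lemma and weak convergence of measures.
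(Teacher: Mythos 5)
Your argument is correct and complete in outline: the reduction of $b_1(N_m)$ and $b_1^{(2)}(G)$ to normalised ranks of the two differentials of an $\FP_2(\Q)$ partial free resolution, the weak convergence of the rescaled spectral measures via matching traces once $N_m$ avoids the finite support of $A^p$, and the integrality (log-determinant) bound $\mu_m\bigl((0,\varepsilon)\bigr)\le k\log C/\log(C/\varepsilon)$ that rules out escape of spectral mass near $0$ — this is precisely Lück's original proof. The paper does not prove this statement at all; it records it as a known special case of Lück's approximation theorem and cites \cite{Luc94}, so there is nothing to compare beyond noting that you have reproduced the standard argument from that reference correctly.
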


\begin{prop}\label{limb} Let $G$ be a finitely presented group with $\cd(G)\leq 2$. Then $b_1(G)-b_2(G)\geq 1+ \b(G)$. 
\end{prop}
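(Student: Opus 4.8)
The plan is to combine the Euler characteristic of $G$ (which makes sense because $\cd(G)\le 2$ and $G$ is finitely presented, so $G$ is $\FP$ over $\Q$) with Lück approximation. Recall that for a group $G$ of type $\FP$ over $\Q$ with $\cd_\Q(G)\le 2$, one has $\chi(G)=b_0(G)-b_1(G)+b_2(G)=1-b_1(G)+b_2(G)$, and multiplicativity of the Euler characteristic gives $\chi(N_m)=|G:N_m|\,\chi(G)$ for every finite-index subgroup $N_m$. On the other hand, the $L^2$-Euler characteristic equals the ordinary one, and since $\cd(G)\le 2$ the only possibly nonzero $L^2$-Betti numbers are $b_0^{(2)}(G)$, $b_1^{(2)}(G)$ and $b_2^{(2)}(G)$; as $G$ is infinite (it surjects a free group? — no, but if $G$ were finite then $\cd(G)=0$, so assuming $G$ infinite, which we may since otherwise the statement is checked directly) we get $b_0^{(2)}(G)=0$, hence $\chi(G)=b_2^{(2)}(G)-b_1^{(2)}(G)$.

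The key step is then to produce a residual chain and apply \Cref{Luck}. If $G$ is residually finite — which we may assume, or rather we should handle the general case — choose a nested sequence $G=N_1>N_2>\cdots$ of finite-index normal subgroups with $\bigcap_m N_m=1$. By \Cref{Luck}, $\lim_m \frac{b_1(N_m)}{|G:N_m|}=b_1^{(2)}(G)$. Combining with $\chi(N_m)=|G:N_m|\chi(G)$ we obtain
\[
b_2^{(2)}(G)-b_1^{(2)}(G)=\chi(G)=\lim_{m\to\infty}\frac{\chi(N_m)}{|G:N_m|}=\lim_{m\to\infty}\frac{1-b_1(N_m)+b_2(N_m)}{|G:N_m|}=-b_1^{(2)}(G)+\lim_{m\to\infty}\frac{b_2(N_m)}{|G:N_m|},
\]
so $b_2^{(2)}(G)=\lim_m \frac{b_2(N_m)}{|G:N_m|}\ge 0$, which is automatic and gives nothing new directly. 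The useful inequality comes instead from bounding $b_2$ at finite level: for each finite-index $H\le G$ we have $\cd(H)\le 2$ and $b_0(H)=1$, hence $\chi(H)=1-b_1(H)+b_2(H)$, i.e. $b_1(H)-b_2(H)=1-\chi(H)$. Now $\chi(G)=b_2^{(2)}(G)-b_1^{(2)}(G)\le -b_1^{(2)}(G)\le 0$ once we know $b_2^{(2)}(G)\le 0$ — but $L^2$-Betti numbers are nonnegative, so this forces the subtler point that in fact we want $b_2^{(2)}(G)=0$ is \emph{not} what we want. Let me instead argue: applying the displayed identity to $G$ itself gives $b_1(G)-b_2(G)=1-\chi(G)=1-b_2^{(2)}(G)+b_1^{(2)}(G)\ge 1+b_1^{(2)}(G)$, where the last inequality uses $b_2^{(2)}(G)\le 0$ — and this is exactly the place where the argument needs care.

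The main obstacle is therefore justifying $b_2^{(2)}(G)\le 0$, equivalently $b_2^{(2)}(G)=0$, for a finitely presented group with $\cd(G)\le 2$. I expect this to follow from a result of the form: a group of cohomological dimension $\le 2$ has vanishing second $L^2$-Betti number — but that is \emph{false} in general (surface groups and certain one-relator groups have $b_2^{(2)}>0$? no: closed surface groups have $b_2^{(2)}=0$, and one-relator groups have $b_2^{(2)}=0$ by Dicks--Linnell, but there are $\cd=2$ groups with $b_2^{(2)}>0$, e.g. $\Z^2\ast\Z^2$? — no, $\cd=2$ but $b_2^{(2)}=0$ there too). So perhaps the cleaner route, and the one I would actually pursue, avoids this issue entirely: take a residual chain, use that $b_2^{(2)}(G)=\lim_m b_2(N_m)/|G:N_m|$ together with the fact that each $N_m$ is also finitely presented of $\cd\le 2$ so $b_2(N_m)\ge 0$ trivially, and then observe $b_1(G)-b_2(G)=1-\chi(G)$ while $-\chi(G)=b_1^{(2)}(G)-b_2^{(2)}(G)$; to close the gap it suffices to show $b_2^{(2)}(G)\le b_2(G)$ is the wrong direction — rather, I would deduce $1-\chi(G)\ge 1+b_1^{(2)}(G)$ from $\chi(G)\le -b_1^{(2)}(G)$, and this last inequality $\chi(G)+b_1^{(2)}(G)\le 0$, i.e. $b_2^{(2)}(G)\le 2b_1^{(2)}(G)$ — hmm. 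The honest summary: I would prove $b_2^{(2)}(G)=0$ using that $\cd(G)\le 2$ forces the $L^2$-chain complex to be concentrated in a way that, combined with $b_0^{(2)}(G)=0$ (infinite group) and the Euler characteristic being $\le 0$ for such groups with $b_1^{(2)}>0$... This circularity is the real difficulty, and resolving it — most likely by citing a vanishing theorem for $b_2^{(2)}$ of $\cd\le 2$ groups, or by a direct Lück-approximation argument bounding $b_2(N_m)$ sublinearly — is the crux of the proof.
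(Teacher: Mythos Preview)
Your approach is exactly the paper's: compare the ordinary and $L^2$ Euler characteristics. The paper's proof establishes the identity
\[
b_2^{(2)}(G)-\b(G)=\chi(G)=1-b_1(G)+b_2(G),
\]
equivalently $b_1(G)-b_2(G)=1+\b(G)-b_2^{(2)}(G)$, and then simply stops; it does not justify the passage from this equality to the stated $\geq$ inequality any more than you do.

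The gap you detected is genuine, and in fact the inequality as printed is \emph{false}. Take $G=F_2\times F_2$: this is finitely presented with $\cd(G)=2$, and one has $b_1(G)=4$, $b_2(G)=4$, $\b(G)=0$, $b_2^{(2)}(G)=1$, so $b_1(G)-b_2(G)=0<1=1+\b(G)$. Your suspicion that proving $b_2^{(2)}(G)=0$ for arbitrary $\cd\le 2$ groups was the crux was exactly right---and that claim is simply not true.

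The statement is a typo: the inequality should read $b_1(G)-b_2(G)\le 1+\b(G)$, which follows immediately from the Euler-characteristic identity together with $b_2^{(2)}(G)\ge 0$. This is the direction the paper actually uses: in Section~\ref{solubleinvariant} the proof of \Cref{pversion} invokes \Cref{limb} to get $1+\b(\La)\ge b_1(\La)-b_2(\La)$, and in Section~\ref{questions} the paper records $\vd(G)\le \vb_2(G)+1$ (i.e.\ $\d(H)\le b_2(H)+1$ for all finite-index $H$), both of which are the $\le$ direction. Equality holds precisely when $b_2^{(2)}(G)=0$, which explains the later remark that $\vd(G)=\vb_2(G)+1$ under that extra hypothesis.
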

\begin{proof}  If $G$ is trivial, the claim follows. Otherwise, $b_0^{(2)}(G)=0$ and hence the $L^2$-Euler characteristic of $G$ is equal to $b_2^{(2)}(G)-b_1^{(2)}(G)$. The trivial  $\Z[G]$-module $\Z$ admits a finite free resolution (see \cite[Chapter VIII, Theorem 7.1]{Bro82} and references therein). Thus, as explained in \cite[Theorem 2]{Eck96}, the $L^2$-Euler characteristic of $G$ coincides with the usual Euler characteristic. So $b_2^{(2)}(G)-\b(G)=\chi(G)=b_2(G)-b_1(G)+1.$
\end{proof}
When $G$ is a torsion-free group that satisfies the Strong Atiyah conjecture, Linnell \cite{Lin93} showed that the group ring $\Q G$ is embedded in a division ring $\D_{\Q G}$ (which we will refer to as the {\it Linnell division ring}) and that the usual $L^2$ Betti numbers can be defined as follows. The embedding $\Q G\hookrightarrow \D_{\Q G}$ endows the Linnell division ring $\D_{\Q G}$ with a  $(\D_{\Q G}, \Q G)$-bimodule structure. This induces a natural $\D_{\Q G}$-module structure on the homology groups $H_i(G; \D_{\Q G})$. We can recover $b_i^{(2)}(G)$ from the $\D_{\Q G}$-dimension of $H_i(G; \D_{\Q G})$ (see  \cite[Section 10]{Luc02} for more details). Locally indicable groups satisfy the Strong Atiyah conjecture by the work of Jaikin-Zapirain and López-Álvarez \cite{And192} (which, obviously, includes limit groups). With the above viewpoint on $L^2$-homology,  Jaikin-Zapirain showed the following in \cite{And19}. 

\begin{prop}  \label{BettiAnd} Let $G$ be a finitely generated residually-(locally indicable and amenable) group. Then $\b(G)\leq b_1(G)-1$.
\end{prop}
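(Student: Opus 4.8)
\emph{The plan} is to reduce the inequality to a comparison of matrix ranks over the Ore division ring of an amenable quotient of $G$. We may assume $G\neq 1$. By the residual hypothesis there is a normal subgroup $N\trianglelefteq G$ with $Q:=G/N$ nontrivial, locally indicable and amenable; since a nontrivial locally indicable group surjects onto $\Z$, the group $Q$ is infinite and $b_1(G)\geq b_1(Q)\geq 1$. Because $Q$ is locally indicable it satisfies the Strong Atiyah conjecture \cite{And192}, so $\Q Q$ is a domain; being amenable, $\Q Q$ satisfies the Ore condition, and so its classical ring of fractions $\D:=\D_{\Q Q}$ is a division ring, which we view as a $\Q G$-module through $\Q G\to\Q Q\hookrightarrow\D$. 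The two ingredients I would combine are: (i) a base-change inequality $\b(G)\leq \dim_{\D}H_1(G;\D)$, and (ii) the bound $\dim_{\D}H_1(G;\D)\leq b_1(G)-1$.

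For (ii), pick a generating set of $G$ of minimal size $d=d(G)$ and the corresponding partial free resolution of the trivial module $(\Q G)^{(E)}\xrightarrow{\partial_2}(\Q G)^d\xrightarrow{\partial_1}\Q G\to\Q\to 0$, where $E$ indexes a generating set of the relation module (possibly infinite if $G$ is not finitely presented; the Sylvester rank formalism handles this). Tensoring with $\D$: some generator of $G$ has nontrivial image in $Q\subseteq\D^{\times}$, so $\partial_1\otimes\D$ is onto, and $H_0(G;\D)=0$ as $G$ is infinite; over the division ring $\D$ this forces $\dim_{\D}\ker(\partial_1\otimes\D)=d-1$ and hence $\dim_{\D}H_1(G;\D)=(d-1)-\operatorname{rank}_{\D}(\partial_2)$. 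Tensoring the same resolution with $\Q$ along the augmentation $\varepsilon\colon\Q G\to\Q$ gives $b_1(G)=d-\operatorname{rank}_{\Q}(\varepsilon(\partial_2))$. Thus (ii) is equivalent to
\[
\operatorname{rank}_{\Q}\!\big(\varepsilon(\partial_2)\big)\ \leq\ \operatorname{rank}_{\D_{\Q Q}}\!\big(\partial_2\big),
\]
i.e.\ the augmentation rank of a matrix over $\Q Q$ is at most its $\D_{\Q Q}$-rank. I would prove this via Følner approximation of the $\D_{\Q Q}$-rank (using that for amenable $Q$ this rank equals the von Neumann rank over $Q$): the rank of the compression of a matrix to a Følner set dominates, up to boundary error, the rank of the matrix obtained by collapsing every group element to $1$, which is the augmentation rank; exhausting $Q$ by Følner sets gives the claim. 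Feeding this back gives $\b(G)\leq\dim_{\D}H_1(G;\D)=(d-1)-\operatorname{rank}_{\D}(\partial_2)\leq(d-1)-\operatorname{rank}_{\Q}(\varepsilon(\partial_2))=b_1(G)-1$.

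\emph{The hard part} is ingredient (i), the base-change inequality $\b(G)\leq\dim_{\D_{\Q Q}}H_1(G;\D_{\Q Q})$: this is precisely where the hypothesis is used, since for the trivial quotient it is only the weaker bound $\b(G)\leq b_1(G)$, and the gain of the $-1$ comes from having an \emph{infinite} (locally indicable, amenable) quotient, which makes $H_0(G;\D_{\Q Q})$ vanish. Establishing it is exactly the work of \cite{And19}: one must identify $\dim_{\D_{\Q Q}}$ with a von Neumann rank over $Q$ and show that the (normalized) first $L^2$-Betti number cannot decrease under the coefficient extension $\Q G\to\D_{\Q Q}$ — an algebraic form of L\"uck approximation along the quotient $G\to Q$. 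Everything else — the relation-module dimension count and the Følner comparison of rank functions — is either formal or soft, the only mild subtlety being to phrase the rank computations so that they remain valid when $G$ is not finitely presented.
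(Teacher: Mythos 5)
First, a point of comparison: the paper does not prove this proposition at all --- it is imported verbatim from Jaikin-Zapirain \cite{And19} (``Jaikin-Zapirain showed the following in \cite{And19}''), so there is no internal argument to measure yours against. Judged as a reconstruction of the proof in \cite{And19}, your architecture is essentially the right one: factor the augmentation through an infinite, locally indicable, amenable quotient $Q$, compute $b_1(G)$ and $\dim_{\D_{\Q Q}}H_1(G;\D_{\Q Q})$ from one partial free resolution, and compare three rank functions on the relation matrix, namely the augmentation rank, the $\D_{\Q Q}$-rank, and the von Neumann rank.

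As a proof, however, the proposal has a genuine gap, and you locate it yourself: ingredient (i), the inequality $\b(G)\leq\dim_{\D_{\Q Q}}H_1(G;\D_{\Q Q})$, is not an auxiliary lemma but is precisely the universality theorem for Hughes-free division rings that constitutes the main content of \cite{And19} --- the very source from which the proposition is quoted. Writing that establishing (i) ``is exactly the work of \cite{And19}'' therefore reduces the statement to its own reference rather than proving it. A second, smaller defect is the F\o lner justification of ingredient (ii): the compression of a matrix $B$ over $\Q Q$ to a F\o lner set $F$ has $\Q$-rank about $|F|\cdot\operatorname{rk}_{\D_{\Q Q}}(B)$, whereas the almost-invariant-vector argument you gesture at only exhibits a single copy of $\varepsilon(B)$ inside the compression; this yields $\operatorname{rk}_{\D_{\Q Q}}(B)\geq\operatorname{rk}_{\Q}(\varepsilon(B))/|F|$, which is vacuous in the limit. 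The inequality $\operatorname{rk}_{\Q}(\varepsilon(B))\leq\operatorname{rk}_{\D_{\Q Q}}(B)$ is true (for abelian $Q$ it follows from minors; in general it says the Ore rank dominates the Sylvester rank function pulled back along $\varepsilon$), but it needs a real argument from the same circle of ideas, not a soft boundary estimate. Finally, two points worth making explicit: residual local indicability already forces $G$ itself to be locally indicable (separate a nontrivial element of a finitely generated subgroup in a locally indicable quotient, whose image then surjects onto $\Z$), which is what lets you identify $\b(G)$ with $\dim_{\D_{\Q G}}H_1(G;\D_{\Q G})$ and apply universality to a single quotient; and $H_0(G;\D_{\Q Q})=0$ because $Q$ is nontrivial, so that the augmentation ideal of $\Q Q$ contains units of $\D_{\Q Q}$ --- not because $G$ is infinite.
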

From here, we introduce a new (virtual homology)-type invariant.
\begin{defn}\label{vd} Let $G$ be a finitely generated group. We define its {\it first $L^2$-difference} by $\d(G)=b_1(G)-\b(G)$ and its {\it virtual $L^2$-difference} by $$\vd(G)=\sup\{ \d(H) \, \vert \, H \text{ is a finite-index subgroup of } G\}.$$
\end{defn}

By \Cref{BettiAnd}, for residually-(amenable and locally indicable) groups, $\d(G)\geq 1$. In \Cref{virtual_Bettis}, we show that this property is very particular (at least, among limit groups, it is only satisfied by free and surface groups). For this aim, we finish this section by giving two  estimations of $\d$ in the context of cyclic group hierarchies. 

\begin{lemma} \label{L^2amalgam}
    Consider two finitely generated torsion-free groups $G_1$ and $G_2$ with elements $a_1\in G_1$ and $a_2\in G_2$ that generate isomorphic  groups $\lan a_1\ran\cong \lan a_2\ran$.  Denote by $G$  the corresponding amalgamated product $G_1*_{a_1=a_2}G_2$. Then \[\d(G)\geq \d(G_1)+\d(G_2)-1.\] 
    In particular, if $G$ is residually-(amenable and locally indicable), then $\d(G)\geq \d(G_1).$
\end{lemma}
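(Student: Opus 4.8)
The plan is to compute the relevant quantities through a Mayer--Vietoris argument for the amalgam $G = G_1 *_{\langle a_1\rangle = \langle a_2\rangle} G_2$, simultaneously for ordinary first homology and for $L^2$-homology over the Linnell division ring. First I would record that $G$, being an amalgam of torsion-free groups over an infinite cyclic subgroup, is itself torsion-free and locally indicable (a free product of locally indicable groups amalgamated over a locally indicable subgroup with the property in question; in any case, for the ``in particular'' clause one only needs $G$ to be residually-(amenable and locally indicable), which is assumed). Hence by Jaikin-Zapirain and L\'opez-\'Alvarez \cite{And192} it satisfies the Strong Atiyah conjecture, so $\b(G) = \dim_{\D_{\Q G}} H_1(G;\D_{\Q G})$, and similarly for $G_1, G_2$ over their own Linnell division rings; moreover $\b(\langle a_i\rangle)=0$ since $\langle a_i\rangle\cong\Z$ is infinite amenable.

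The key computation is the two Mayer--Vietoris sequences associated to the splitting. For ordinary rational homology,
\[
H_1(\langle a_1\rangle;\Q) \to H_1(G_1;\Q)\oplus H_1(G_2;\Q) \to H_1(G;\Q) \to H_0(\langle a_1\rangle;\Q) \to H_0(G_1;\Q)\oplus H_0(G_2;\Q),
\]
where the last map is injective (it is $\Q \to \Q\oplus\Q$, $x\mapsto(x,-x)$), so $b_1(G) \geq b_1(G_1)+b_1(G_2) - b_1(\langle a_1\rangle) = b_1(G_1)+b_1(G_2) - 1$. For the $L^2$ side, I would use the Mayer--Vietoris sequence in $\D_{\Q G}$-homology for the same splitting: writing $b_i^{(2)}$ for $\D_{\Q G}$-dimensions of $H_i(-;\D_{\Q G})$ and using that these agree with the $L^2$-Betti numbers by Lück's dimension theory plus the induction formula (the $L^2$-Betti numbers of $\langle a_i\rangle$ and of $G_1, G_2$ computed with coefficients induced up to $\D_{\Q G}$ coincide with their intrinsic ones, by restriction/induction invariance of von Neumann dimension, respectively Linnell dimension), we get
\[
\b(G) \leq b_1^{(2)}(G_1) + b_1^{(2)}(G_2) - b_1^{(2)}(\langle a_1\rangle) + b_0^{(2)}(\langle a_1\rangle) = \b(G_1)+\b(G_2) - 0 + 0,
\]
using $b_0^{(2)}(\langle a_1\rangle)=0$ (infinite group) and $b_1^{(2)}(\langle a_1\rangle)=0$. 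Subtracting the two displayed inequalities gives $\d(G) = b_1(G) - \b(G) \geq (b_1(G_1)+b_1(G_2)-1) - (\b(G_1)+\b(G_2)) = \d(G_1) + \d(G_2) - 1$, which is the claim. For the ``in particular'' part, $G$ residually-(amenable and locally indicable) forces $\d(G_2)\geq 1$ by \Cref{BettiAnd} (a finitely generated subgroup of such a $G$ is again residually-(amenable and locally indicable), or one argues directly that $G_2$ inherits the property), whence $\d(G)\geq \d(G_1)$.

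The main obstacle I anticipate is justifying the $L^2$ Mayer--Vietoris inequality cleanly. One must make sure that the $\D_{\Q G}$-homology of the vertex and edge subgroups, appearing as induced modules in the sequence, has $\D_{\Q G}$-dimension equal to the corresponding (intrinsic) $L^2$-Betti numbers; this is where one invokes that $\D_{\Q G}$ is a division ring flat over $\Q G$, that restriction of the Linnell ring along $\langle a_i\rangle \hookrightarrow G_i \hookrightarrow G$ is compatible with dimensions (so the edge and vertex contributions are $\b$ of the respective groups), and that dimension is additive on short exact sequences of $\D_{\Q G}$-modules so that an exact sequence yields the stated inequality on alternating sums — exactly as in \Cref{limb}, which already uses this circle of ideas. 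An alternative, perhaps safer route for the $L^2$ bound is to pass to an approximating chain of finite-index normal subgroups $N_m \n G$ with $\bigcap N_m = 1$, apply \Cref{Luck} to $G$ and to the (finite-index, hence again cyclic-amalgam or more refined) intersections $N_m\cap G_i$, and take limits in the ordinary Mayer--Vietoris inequality; this avoids division-ring bookkeeping entirely at the cost of checking that $N_m \cap G_i$ tracks the splitting, but since all we need is an inequality it suffices.
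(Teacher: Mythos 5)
Your argument is essentially the paper's: the authors likewise run the Mayer--Vietoris sequence for the amalgam with $\D_{\Q G}$-coefficients (using $H_1(A;\D_{\Q G})=0$ for $A$ cyclic and $H_0(G_i;\D_{\Q G})=0$ for $G_i$ infinite) alongside the ordinary rational one, obtaining $\b(G)=\b(G_1)+\b(G_2)$ and $b_1(G)\geq b_1(G_1)+b_1(G_2)-1$ and subtracting, and they handle the ``in particular'' clause exactly as you do via Proposition \ref{BettiAnd}. The only point you skip is the case where $\lan a_1\ran$ is trivial (the statement permits $a_i=1$), in which $H_0(A;\D_{\Q G})\cong \D_{\Q G}$ yields $\b(G)=\b(G_1)+\b(G_2)+1$ while $b_1(G)=b_1(G_1)+b_1(G_2)$, so the same inequality $\d(G)\geq \d(G_1)+\d(G_2)-1$ still holds.
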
 
Before the proof we note that, developing on the above, given a subgroup $H\leq G$ of a torsion-free group satisfying the Atiyah conjecture, it can be seen that $\D_{\Q H}$ is naturally embedded in $\D_{\Q G}$ and that, in particular, $b_i^{(2)}(H)=\dim_{\D_{\Q H}} H_i(H; \D_{\Q H})=\dim_{\D_{\Q G}} H_i(G; \D_{\Q G}).$ 

\begin{proof}[Proof of \Cref{L^2amalgam}] The second part  is a direct consequence of \Cref{BettiAnd}. For the first inequality, we use the Mayer–-Vietoris sequence for amalgamated free products in group cohomology \cite[Theorem 2.3]{Swa69}. Denote by $A\leq G_1$ the cyclic group generated by $a_1$. We have the following exact sequence of $\D_{\Q G}$-modules. 
\begin{equation*}
    \begin{tikzpicture}[scale=2]
\matrix(m)[matrix of math nodes,column sep=15pt,row sep=15pt]{
   &  &  & H_2(G; \D_{\Q G}) \\
    & H_1(A; \D_{\Q G}) & H_1(G_1; \D_{\Q G})\oplus H_1(G_2; \D_{\Q G}) & H_1(G; \D_{\Q G}) &  \\
    & H_0(A; \D_{\Q G}) & H_0(G_1; \D_{\Q G})\oplus H_0(G_2; \D_{\Q G}) &  \\
};
\draw[->,font=\scriptsize,every node/.style={above},rounded corners]
  (m-1-4.east) --+(5pt,0)|-+(0,-7.5pt)-|([xshift=-5pt]m-2-2.west)--(m-2-2.west)
  (m-2-2) edge (m-2-3)
  (m-2-3) edge (m-2-4)
  (m-2-4.east) --+(5pt,0)|-+(0,-7.5pt)-|([xshift=-5pt]m-3-2.west)--(m-3-2.west)
  (m-3-2) edge (m-3-3)
;
\end{tikzpicture}
\end{equation*}
Since each $G_1$ and $G_2$ are infinite, $H_0(G_1; \D_{\Q G})=H_0(G_2; \D_{\Q G})=0$. In addition, since $A$ is cyclic, $H_1(A; \D_{\Q G})=0$ as well. Now we divide in two cases.
If $A$ is trivial, then  $H_0(A; \D_{\Q G})\cong \D_{\Q G}$ and hence $\b(G)=\b(G_1)+\b(G_2)+1$ and $b_1(G)= b_1(G_1)+b_1(G_2).$ The conclusion follows in this case. Lastly, if $A\cong \Z$, then $H_0(A; \D_{\Q G})=H_1(A; \D_{\Q G})=0$ and so $\b(G)=\b(G_1)+\b(G_2).$ It is also easy to see that $b_1(G)\geq b_1(G_1)+b_1(G_2)-1.$ This completes the proof. 
\end{proof}

 The statement of \Cref{L^2amalgam} and its proof can be easily adapted to give the following HNN version (this time, using the Mayer-Vietories sequence for HNN extensions in group cohomology \cite{Bie75}).
\begin{lemma} \label{L^2hnn}
    Let $H$ be a finitely generated torsion-free group with a monomorphism $\theta \po A\hookrightarrow H$, where $A$ is a cyclic group (possibly trivial). Consider the corresponding  HNN extension $ G= H *_{A, \theta}$. Then $\d(G)\geq \d(H)$. 
\end{lemma}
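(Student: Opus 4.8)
The plan is to mimic the proof of \Cref{L^2amalgam} almost verbatim, replacing the Mayer–Vietoris sequence for amalgamated products by the corresponding long exact sequence for HNN extensions in homology with coefficients in the Linnell division ring. Concretely, let $G = H *_{A,\theta}$, set $t$ to be the stable letter, and work over the ring $\D := \D_{\Q G}$, which is well-defined because $G$ is torsion-free and locally indicable (being an HNN extension of such a group along a cyclic subgroup, hence still locally indicable), so it satisfies the Strong Atiyah conjecture by \cite{And192}. As noted before the proof of \Cref{L^2amalgam}, $\D_{\Q H}$ and $\D_{\Q A}$ embed naturally in $\D$, so that $b_i^{(2)}(H) = \dim_{\D} H_i(H;\D)$ and similarly for $A$.

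First I would write down the Mayer–Vietoris-type long exact sequence for the HNN extension (see \cite{Bie75}), which over the coefficient module $\D$ takes the form
\begin{equation*}
\cdots \to H_1(A;\D) \to H_1(H;\D) \to H_1(G;\D) \to H_0(A;\D) \to H_0(H;\D) \to H_0(G;\D) \to 0,
\end{equation*}
where the map $H_i(A;\D) \to H_i(H;\D)$ is the difference of the two inclusion-induced maps (one via $A \hookrightarrow H$, one via $\theta$ twisted by $t$). Since $H$ is infinite, $H_0(H;\D) = 0$, and since $A$ is cyclic, $H_1(A;\D) = 0$. I then split into the two cases exactly as in \Cref{L^2amalgam}. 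If $A$ is trivial, then $H_0(A;\D) \cong \D$, the sequence gives $\b(G) = \b(H) + 1$ and $b_1(G) = b_1(H) + 1$ (the HNN extension over the trivial group is just $H * \Z$), so $\d(G) = b_1(G) - \b(G) = b_1(H) - \b(H) = \d(H)$. If $A \cong \Z$, then $H_0(A;\D) = H_1(A;\D) = 0$, whence $\b(G) = \b(H)$ from the sequence, while a direct computation of abelianizations shows $b_1(G) \geq b_1(H)$ (adding a stable letter can only increase the rank of the abelianization, since $H^{ab}$ is a quotient of a subgroup of $G^{ab}$ — more carefully, $G^{ab} \cong H^{ab}/\langle \text{relators } a - \theta(a)\rangle \oplus \Z$ from the presentation, and killing the single cyclic subgroup $A$ drops the rank by at most one while the new $\Z$ summand restores it). Combining, $\d(G) = b_1(G) - \b(G) \geq b_1(H) - \b(H) = \d(H)$.

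The only mildly delicate point — and the step I would be most careful about — is verifying the inequality $b_1(G) \geq b_1(H)$ in the case $A \cong \Z$, together with the vanishing $H_0(A;\D)=0$, i.e.\ that the cyclic group $A$ acts on $\D$ with no invariants; this is where torsion-freeness of $G$ (so that $a$ has infinite order in $G$, hence $\D_{\Q A} \cong \D_{\Q\Z}$ and $H_*(A;\D)$ vanishes in degrees $0$ and $1$) is used. Everything else is a transcription of the amalgamated-product argument. Finally, the "in particular" style consequence is not needed here since the statement of \Cref{L^2hnn} already only claims $\d(G) \geq \d(H)$ unconditionally; no residual hypothesis is required because in both cases we obtained the sharp equality $\b(G) \in \{\b(H), \b(H)+1\}$ directly from the exact sequence rather than through \Cref{BettiAnd}.
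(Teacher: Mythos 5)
Your proof is correct and is exactly the adaptation the paper intends: the paper proves this lemma only by remarking that the argument of Lemma \ref{L^2amalgam} carries over using the Mayer--Vietoris sequence for HNN extensions \cite{Bie75}, and you have filled in precisely those details (vanishing of $H_*(A;\D_{\Q G})$ for $A$ infinite cyclic, the case split on $A$ trivial versus $A\cong\Z$, and the elementary abelianization estimate $b_1(G)\geq b_1(H)$). The only caveat, inherited from the paper's own formulation of Lemma \ref{L^2amalgam}, is that the existence of $\D_{\Q G}$ tacitly requires $G$ to satisfy the Strong Atiyah conjecture, which torsion-freeness of $H$ alone does not guarantee; this is harmless in all applications in the paper.
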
 

Combining Lemmas \ref{L^2amalgam} and \ref{L^2hnn} with an induction, both lemmas can be included in the following more general statement about HGFC-groups. 

\begin{prop} \label{L^2general} Let $(\G, Y)$ be a  finite connected graph of finitely generated groups with cyclic edges and suppose that its fundamental group $\pi_1(\G, Y)$ is residually-(amenable and locally indicable). Let $Z\sub Y$ be a connected subgraph. Then \[\d(\pi_1(\G, Y))\geq \d(\pi_1(\G, Z))\geq \sum_{v\in Z} \d(\G(v))-(|V(Z)|-1).\]
\end{prop}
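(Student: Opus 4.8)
The plan is to deduce \Cref{L^2general} from the two base cases already established, namely \Cref{L^2amalgam} and \Cref{L^2hnn}, by an induction on the complexity of the connected subgraph $Z \subseteq Y$. The second inequality $\d(\pi_1(\G,Z)) \geq \sum_{v \in Z} \d(\G(v)) - (|V(Z)|-1)$ is the substantive one; the first inequality $\d(\pi_1(\G,Y)) \geq \d(\pi_1(\G,Z))$ will follow by repeatedly adjoining the edges and vertices of $Y$ not in $Z$ one at a time to $Z$, so it suffices to treat the case $Z = Y$ once we know the monotonicity statement under adding a single edge. Note that $\pi_1(\G,Z)$ is a subgroup of $\pi_1(\G,Y)$ (as $Z$ is a connected subgraph), hence it is itself residually-(amenable and locally indicable), so all the hypotheses needed to invoke \Cref{L^2amalgam} and \Cref{L^2hnn} on intermediate subgroups are inherited.

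First I would set up the induction on $|E(Z)|$. For the base case $|E(Z)| = 0$, the graph $Z$ is a single vertex $v$ and $\pi_1(\G,Z) = \G(v)$, so both inequalities are trivially equalities. For the inductive step, pick an edge $e \in E(Z)$. There are two topological possibilities: either $e$ is a separating edge of $Z$ (removing $e$ disconnects $Z$ into two connected subgraphs $Z_1, Z_2$), or $e$ is non-separating (so $Z \setminus e$ is connected, possibly with one fewer vertex if $e$ was a pendant edge — but since we want to decrease $|E(Z)|$ while controlling vertices, it is cleaner to choose $e$ to be in a spanning tree when possible, or to handle the two cases by the structure of the graph of groups). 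In the separating case, $\pi_1(\G, Z) \cong \pi_1(\G, Z_1) \ast_{\langle c \rangle} \pi_1(\G, Z_2)$ where the amalgamating cyclic subgroup is the edge group $\G(e)$; here $V(Z_1), V(Z_2)$ partition $V(Z)$, so $|V(Z_1)| + |V(Z_2)| = |V(Z)|$. Applying \Cref{L^2amalgam} gives
\[
\d(\pi_1(\G,Z)) \geq \d(\pi_1(\G,Z_1)) + \d(\pi_1(\G,Z_2)) - 1,
\]
and then applying the inductive hypothesis to each $Z_i$ (each has strictly fewer edges) yields
\[
\d(\pi_1(\G,Z)) \geq \sum_{v \in Z_1}\d(\G(v)) - (|V(Z_1)|-1) + \sum_{v \in Z_2}\d(\G(v)) - (|V(Z_2)|-1) - 1 = \sum_{v\in Z}\d(\G(v)) - (|V(Z)|-1),
\]
as desired. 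In the non-separating case, $\pi_1(\G,Z)$ is an HNN extension $\pi_1(\G, Z\setminus e) \ast_{\langle c\rangle}$ with associated cyclic subgroup $\G(e)$, and $V(Z \setminus e) = V(Z)$, $|E(Z\setminus e)| = |E(Z)| - 1$; applying \Cref{L^2hnn} gives $\d(\pi_1(\G,Z)) \geq \d(\pi_1(\G,Z\setminus e))$, and the inductive hypothesis applied to $Z\setminus e$ closes the step with the same vertex count. (One must take a little care that $Z \setminus e$ is still connected in this case — which is exactly what "non-separating" means — and that it is still a subgraph of $Y$ so that the hypotheses transfer.)

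For the first inequality, I would argue that if $Z \subsetneq Y$ then there is an edge $e \in E(Y) \setminus E(Z)$ incident to a vertex of $Z$ (by connectedness of $Y$), and $Z' := Z \cup \{e\}$ (together with the other endpoint of $e$, if it is not already in $Z$) is again a connected subgraph of $Y$ with $\pi_1(\G,Z)$ obtained from $\pi_1(\G,Z')$ either by an amalgamation with the vertex group at the new endpoint or, if both endpoints of $e$ already lie in $Z$, as a retract of an HNN extension; in both cases \Cref{L^2amalgam} or \Cref{L^2hnn} gives $\d(\pi_1(\G,Z')) \geq \d(\pi_1(\G,Z))$ using the "in particular" clauses (which require precisely the residual-(amenable and locally indicable) hypothesis). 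Iterating until we exhaust $Y$ yields $\d(\pi_1(\G,Y)) \geq \d(\pi_1(\G,Z))$. The main obstacle I anticipate is purely bookkeeping: correctly choosing which edge to remove at each stage so that the graph stays connected and the vertex-count arithmetic matches $|V(Z)| - 1$ exactly, and making sure the residual-finiteness-type hypothesis is genuinely inherited by every subgraph's fundamental group (which it is, since these are subgroups of $\pi_1(\G,Y)$). There is no deep new idea here beyond what is in the two lemmas; the content is organizing the induction cleanly.
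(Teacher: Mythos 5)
Your proof is correct and follows exactly the route the paper intends: the paper gives no written proof beyond the remark that the proposition follows by ``combining Lemmas \ref{L^2amalgam} and \ref{L^2hnn} with an induction,'' and your edge-by-edge induction, with the separating/non-separating dichotomy and the vertex-count bookkeeping, is precisely that argument carried out in full.
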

We will use these estimations on $L^2$-Betti numbers to prove in \Cref{boundedvd} that the only limit groups $G$ with finite $\vd(G)$ are free abelian, free and surface groups.

\section{Classifying limit groups by their virtual homology} \label{SurfaceSection}

In this section we prove Theorem \ref{mainlim}. We begin by amassing the ingredients required for the proof. The main idea behind the proof that in some instances, the existence of a subgroup $H$ of $G$ with a positive $n$-th Betti number can be promoted to the existence of a subgroup $H' =H \ast \mathbb{Z}$ of $ G$. Finally, finite index subgroups of $H'$ contain multiple isomorphic copies of $H$, independent from each other in homology. This results in subgroups of $G$ with a large $n$-th Betti number. Using local retractions, we obtain a finite-index subgroup of $G$ with large $n$-th homology, which results in $\mathrm{vb}_n^k(G)=\infty$. Consequently, the following theorem of Wilton's will play a crucial role in the proof of Theorem \ref{mainlim}:

\begin{thm}[{\cite[Theorem 6.1]{surfacesubs}}] 
		Let $G$ be an HGFC-group. If $G$ is one-ended then $G$ contains a surface subgroup.
	\end{thm}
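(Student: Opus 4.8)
The plan is to realise $G$ as the fundamental group of a graph of spaces $X$ whose vertex spaces are graphs (so the vertex groups are free) and whose edge spaces are circles (so the edge groups are infinite cyclic), and then to build a \emph{precover} $X'\to X$ with $\pi_1(X')$ isomorphic to the fundamental group of a closed surface; by Lemma~\ref{precover} the inclusion $\pi_1(X')\hookrightarrow \pi_1(X)=G$ is then injective, which exhibits the desired surface subgroup. First I would refine the decomposition: since $G$ is one-ended, Theorem~\ref{shenitzer} tells us that each induced pair $(G_v,[\underline w_v])$ is one-ended, so each vertex may be replaced by its relative JSJ decomposition (the relative JSJ theorem quoted above). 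After this refinement $G$ is the fundamental group of a graph of groups each of whose vertex groups is either infinite cyclic, or free with induced pair rigid, or free with induced pair of surface type.

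Next I would install a piece of a surface over each vertex. Over a surface-type vertex $v$, whose vertex group is $\pi_1$ of a surface-with-boundary $\Sigma_v$, take a finite cover of $\Sigma_v$ (again a surface with boundary) and use the corresponding cover of $X_v$, with boundary curves lying over the peripheral curves; over each cyclic vertex and each edge, install annuli. The crux is a rigid vertex $v$: here one needs an \emph{essential} surface-with-boundary mapping $\pi_1$-injectively into $(G_v,[\underline w_v])$ with boundary on the peripheral curves, and a rigid pair admits \emph{no} cyclic splitting, so there is no structural shortcut to producing such a surface. This is where Wilton's real work lies: reformulating the graph pair in terms of square complexes and inducting on its complexity, using Theorem~\ref{rem_edge} to delete one peripheral curve at a time while preserving one-endedness, together with the analysis of \cite[Lemmas 5.9 and 5.10]{surfacesubs}, one extracts an immersed essential surface with controlled boundary. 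Having pieces over every vertex, I would finally glue them: using the separability of cyclic subgroups in free groups (the Marshall Hall property) one passes to finite covers of the pieces so that, along each edge circle, the multiset of degrees of the boundary circles on one side matches that on the other; the boundary circles are then paired and glued through the edge annuli, leaving no hanging elevations, so that $\pi_1(X')$ is the fundamental group of a closed surface. A short check gives $\chi(X')<0$ (a one-ended HGFC-group has a non-cyclic JSJ vertex, which contributes negative Euler characteristic, while annuli contribute zero), so, discarding a sphere component if necessary and using that the hyperbolic group $G$ contains no $\Z^2$, some component of $X'$ is an honest hyperbolic closed surface.

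The main obstacle is unambiguously the rigid vertices: surface-type pairs trivially contain surfaces and cyclic pieces are handled by annuli, but a rigid pair has no splitting to exploit, so one must build an immersed essential surface in it essentially from scratch --- this is the technical heart of \cite{surfacesubs}, and in a write-up I would cite it rather than reproduce it. A secondary and more routine difficulty is the boundary-matching step, where degrees along the edges must be equalised using only finite covers; this is handled by LERF of free groups together with elementary least-common-multiple bookkeeping, of the kind that recurs throughout the precover arguments in \cite{wilton-one-ended}.
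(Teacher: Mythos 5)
This theorem is not proved in the paper at all --- it is quoted verbatim from Wilton's \emph{Essential surfaces in graph pairs} \cite{surfacesubs} and used as a black box, so there is no in-paper argument to compare yours against. Your sketch correctly identifies the overall strategy (relative JSJ refinement, surface pieces over surface-type vertices, degree-matching via separability of cyclic subgroups) and correctly locates the genuine content in the rigid case, which you, like the paper, defer to the citation; the only caveat is that your description of how Wilton treats rigid pairs (via Theorem~\ref{rem_edge} and square complexes) conflates his argument with tools this paper deploys for a different purpose in Lemma~\ref{rigid_vtx}, but since you are citing rather than reproving that step, nothing is at stake.
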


The following theorem of Arzhantseva's will help us in replicating the surface subgroups provided by Wilton's Theorem above:

\begin{thm}[{\cite[Theorem 1]{Arzhantseva2001}}] \label{quasiconfree}
    Let $G$ be a non-elementary and torsion-free hyperbolic group, and let $H$ be a quasiconvex subgroup of $G$ of infinite index. Then there exists $g\in G$ such that $\langle H, g\rangle \cong H \ast \mathbb{Z}$. Furthermore, $\langle H, g \rangle $ is quasiconvex in $G$.
\end{thm}
    Another useful fact, is that every HGFC-group splits as the free product of one-ended (hyperbolic) graphs of free groups with cyclic edge groups and a free group in the following sense:

    \begin{lemma}[follows from Shenitzer's Lemma {\cite[Theorem 18]{wilton-one-ended}}]
        \label{graph_grushko} Let $G$ be a finitely generated group which splits as a (not necessarily hyperbolic) graph of free groups with cyclic edge groups. Then $G\cong G_1 \ast \cdots \ast G_n \ast F_r$ where each $G_i$ is a one-ended fundamental group of a graph of free groups with cyclic edge groups, and $F_r$ is a free group of rank $r\ge 0$. Furthermore, if $G$ is an HGFC-group, then every $G_i$ is an HGFC-group.
    \end{lemma}


    The following lemma will help us in turning cyclic amalgamations of a group $H$ with a free group into subgroups of the form $H \ast \mathbb{Z}$. This will be used when the group $G$ in question is a non-hyperbolic limit group. The proof boils down to looking at normal forms, and is left as an exercise.

    \begin{lemma} \label{free_prod_sub}
    Let $G$ be a group and let $F$ be a non-abelian free group. Then any cyclic amalgamation $G\ast_{c=c'}F$ contains a copy of $G\ast \mathbb{Z}$.
    \end{lemma}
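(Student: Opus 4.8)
The plan is to exhibit the desired copy of $G\ast\mathbb{Z}$ as a subgroup of the form $\langle G,\tau\rangle$ for a suitable element $\tau\in F$. If the amalgamating subgroup $\langle c'\rangle$ is trivial there is nothing to do, since then $G\ast_{c=c'}F=G\ast F$ contains $G\ast\langle a\rangle\cong G\ast\mathbb{Z}$ for any basis element $a$ of $F$; so assume $c'\neq 1$, hence $c'$ has infinite order.

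The first, and really the only substantive, step is to find $\tau\in F$ such that $\langle c',\tau\rangle=\langle c'\rangle\ast\langle\tau\rangle$ inside $F$. Fix a basis of $F$; it has at least two elements because $F$ is non-abelian, and since $c'\neq 1$ some basis element $a$ satisfies $c'\notin\langle a\rangle$. As $a$ is a basis element it is not a proper power, so $[a,c']=1$ would force $a$ and $c'$ to lie in a common cyclic subgroup and hence $c'\in\langle a\rangle$; therefore $a$ and $c'$ do not commute. Thus $\langle a,c'\rangle$ is a non-abelian subgroup of the free group $F$, and being free and $2$-generated it has rank exactly $2$; consequently the canonical epimorphism $\langle c'\rangle\ast\langle a\rangle\twoheadrightarrow\langle c',a\rangle$ is a surjection between free groups of rank $2$, hence an isomorphism. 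So $\tau=a$ works. (Alternatively, Marshall Hall's theorem realizes $\langle c'\rangle$ as a free factor $F_0=\langle c'\rangle\ast K$ of a finite-index subgroup of $F$; this $F_0$ has rank $\geq 2$, so $K\neq 1$, and any $1\neq\tau\in K$ does the job.)

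With such a $\tau$ fixed, I would then identify $\langle G,\tau\rangle$. Since $c=c'\in G$ we have $\langle c',\tau\rangle\leq\langle G,\tau\rangle$, so $\langle G,\tau\rangle=\langle G,\langle c',\tau\rangle\rangle$; and because $\langle c\rangle=\langle c'\rangle\leq\langle c',\tau\rangle\leq F$, the normal-form theorem for amalgamated products identifies this subgroup with the sub-amalgam $G\ast_{\langle c\rangle=\langle c'\rangle}\langle c',\tau\rangle$ of $G\ast_{c=c'}F$ (a reduced word over the smaller amalgam remains reduced over the larger one). Substituting $\langle c',\tau\rangle=\langle c'\rangle\ast\langle\tau\rangle$ and using the standard identity $A\ast_C(C\ast D)\cong A\ast D$ (amalgamating over a free factor collapses to a free product) gives $\langle G,\tau\rangle\cong G\ast\langle\tau\rangle\cong G\ast\mathbb{Z}$, the last isomorphism because $\tau$ has infinite order.

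I do not anticipate a genuine obstacle here — the lemma is essentially a bookkeeping exercise with the structure theory of amalgams — but the one subtlety to respect is that $\tau$ must be chosen so that the two cyclic groups $\langle c'\rangle$ and $\langle\tau\rangle$ do not fuse into a larger subgroup of $F$, so that $\langle c',\tau\rangle$ is genuinely their free product. This is exactly what the condition $c'\notin\langle a\rangle$ buys us, and it is also what forces us to pass to a subgroup of $F$ rather than trying to split off a $\mathbb{Z}$ directly from $c'$ (which is impossible when $c'$ is not primitive).
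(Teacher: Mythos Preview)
Your proof is correct and follows essentially the same approach as the paper: both choose $h\in F$ with $\langle c',h\rangle$ free of rank~$2$ and show $\langle G,h\rangle\cong G\ast\mathbb{Z}$. The only difference is in the verification: the paper checks non-triviality of reduced words by an explicit induction on the number of $g_i\in\langle c\rangle$, whereas you invoke the sub-amalgam identification $\langle G,\langle c',\tau\rangle\rangle\cong G\ast_{\langle c'\rangle}\langle c',\tau\rangle$ together with the identity $A\ast_C(C\ast D)\cong A\ast D$, which packages the same normal-form reasoning more cleanly.
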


    We are finally ready to prove Theorem \ref{mainlim}:

    \begin{thm}[Theorem \ref{mainlim}] \label{mainlim2}
    Let $G$ be a finitely generated residually free  group or an HGFC-group. Then
    \begin{enumerate}
			\item $\mathrm{vb}_2^k(G)=0$ if and only if $G$ is free,
			\item $\mathrm{vb}_2^k(G)=1$ if and only if $G \cong \pi_1(\Sigma)$ where $\Sigma$ is a closed, connected surface,
            \item $\mathrm{vb}_2^k(G)=\binom{d}{2}$ if and only if $L\cong \mathbb{Z}^d$ (for $d>2$)
			\item $\mathrm{vb}_2^k(G)=\infty$ otherwise.
		\end{enumerate}

    Furthermore, if $G$ is a limit group, then for any $n\ge 3$ we have that $\mathrm{vb}_n^k(L)=\infty$ unless one of the following two holds:
    \begin{enumerate}
        \item $\mathrm{cd}(L)<n$, in which case $\mathrm{vb}_n^k(L)=0$, or
        \item $L$ is free abelian of rank at least $n$, in which case $\mathrm{vb}_n^k(L)=\binom{\mathrm{rank}(L)}{2}$.
    \end{enumerate}
    \end{thm}
    \begin{proof}[Proof of Theorem \ref{mainlim}]
    Suppose first that $G$ is a residually free group that is not a limit group. Baumslag  \cite{Bau67a} showed that, in this case, $G$ contains a subgroup isomorphic to $F\times \mathbb{Z}$, where $F$ is a free group of rank $2$. It follows that $G$ contains a subgroup isomorphic to $F_n \times \mathbb{Z}$ (where $F_n$ is a free group of rank $n$), and by K\"unneth formula $H_1(F_n;k)\otimes H_1(\mathbb{Z};k)=k^n\otimes k\cong k^n$ embeds in $H_2(F_n\times \mathbb{Z};k)$ so $b_2^k(F_n\times \mathbb{Z})\ge n$.\par \smallskip

   Both $F_n$ and $\mathbb{Z}$ are of type $\mathrm{FP}_\infty(\mathbb{Q})$, so the group $F_n\times \mathbb{Z}$ is also of type $\mathrm{FP}_\infty(\mathbb{Q})$  \cite[Proposition 2.7]{Bie87}. By \cite[Theorem A]{vir_retract_res}, $F_n\times \mathbb{Z}$ is a virtual retract of $G$, and hence Lemma \ref{vr_red} leads to $\mathrm{vb}_2^k(G)\ge n$. \par \smallskip

   We next suppose that $G$ is a limit group or an HGFC-group, and that $G$ is not a free, a surface or a free abelian group. Recall that if $G$ is free then $\mathrm{vb}_2^k(G)=0$, if $G$ is a surface group then $\mathrm{vb}_2^k(G)=1$ and if $G\cong \mathbb{Z}^d$ then $\mathrm{vb}_n^k(G)=\binom{d}{n}$ for every $n\in \mathbb{N}$. By passing to a finite-index subgroup, we may assume that $G$ admits local retractions (since $G$ is torsion-free and not isomorphic to a free, a surface or a free abelian group, it cannot have a finite-index subgroup that is free, free abelian or surface). We will find a subgroup $H$ of $G$ with an arbitrarily large second (or $n$-th, if $\mathrm{cd}(L)\ge n$) Betti number.
   
   Suppose first that $G$ is hyperbolic, that is $G$ is either an HGFC-group or a hyperbolic limit group. If $G$ is one-ended, and $G$ is not a surface group, then by \cite[Theorem 6.1]{surfacesubs} $G$ contains a surface subgroup $H \cong \pi_1(\Sigma)$. Note that $H$ is quasiconvex in $G$: if $G$ is an HGFC-group the fact that $H$ is quasiconvex follows from \cite[Theorem D]{quasicon}, and if $G$ is a hyperbolic limit group then by \cite[Corollary 3.12]{wilton-hall} $H$ is quasi-isometrically embedded in $G$ which implies that it is quasiconvex. By Theorem \ref{quasiconfree} \cite[Theorem 1]{Arzhantseva2001}, $G$ contains $H\ast \mathbb{Z}$ as a subgroup \cite[Theorem 1]{Arzhantseva2001}. \par \smallskip

    If $G$ is not one-ended, then by Lemma \ref{graph_grushko} we may write $G=G_1 \ast \cdots \ast G_n \ast F_r$ where each $G_i$ is a one-ended HGFC-group, $F_r$ (which might not appear in this decomposition) is free and there are at least $2$ factors in this free product. If some $G_i$ is not a surface group, then the previous paragraph implies that $G_i$, and hence $G$, contains a subgroup of the form $\pi_1(\Sigma)\ast \mathbb{Z}$. Similarly, if some $G_i$ is the fundamental group of a closed, connected hyperbolic surface, then $G$ clearly contains a subgroup of the desired form. \par \smallskip

    We last consider the case where $G$ is a non-hyperbolic limit group; in this case, there is a free abelian group $H$ of rank at least $2$ appearing at the bottom of the hierarchy of $G$; in fact, employing the hierarchical structure of $G$, we may choose $H$ to be of rank $\mathrm{cd}(G)$. Since $G$ is not a free abelian group, there is a group $H'$ lying above $H$ in the hierarchy. If $H$ is a free factor of $H'$, then $G$ contains a subgroup of the form $H\ast \mathbb{Z}$ as desired (note that a free abelian group of rank $2$ is the fundamental group of the torus $T^2$, and $\chi(T^2)=0$). If not, then $H'$ contains a free factor $K$, which admits a cyclic splitting in which $H$ is one of the factors; denote this splitting by $\mathcal{G}_K$. By Lemma \ref{abelian_vtx}, $H$ cannot be the only vertex group of $\mathcal{G}_K$, and there is a vertex group $J$ of $\mathcal{G}_K$ that is adjacent to $H$. Moreover, Lemma \ref{abelian_vtx} implies that $J$ is non-abelian. Since any two non-commuting elements of $J$ must generate a free group of rank $2$, $K$ contains a subgroup of the form $H\ast_{c=c'}F$ (where $F$ is a free group of rank $2$). Lemma \ref{free_prod_sub} gives rise to an embedding $H \ast \mathbb{Z}\hookrightarrow G$. \par \smallskip

    In each of these cases, given $n\in \mathbb{N}$, the kernel of the map $f:H\ast \mathbb{Z}\rightarrow \mathbb{Z}/m\mathbb{Z}$ which kills $H$ and maps $\mathbb{Z}$ onto $\mathbb{Z}/m\mathbb{Z}$ is isomorphic to $H \ast H \ast \cdots \ast H \ast \mathbb{Z}$ (where $H$ appears $m$ times in the product). A repeated application of Mayer-Vietoris shows that as long $\mathrm{b}_n^k(H)>0$ then the $n$-th Betti number of $\ker f$ (with coefficients in $k$) is at least $m\cdot \mathrm{b}_2^k(H)\ge m$. Lemma \ref{vr_red} now implies that $\mathrm{vb}_2^k(G)=\infty$, and more generally that $\mathrm{vb}_n^k(G)=\infty$ for any $n\le \mathrm{cd}(G)$. 
    \end{proof}

    \subsection{Residually free manifolds}

    A natural class of groups whose $n$-th virtual Betti number is always finite (and equal to $1$) is the class of fundamental groups of aspherical, closed manifolds of dimension $n$. We seal the section by deducing the following, using the classification of limit group by their $n$-th virtual number in Theorem \ref{mainlim} above. We remark that Wilton classified all compact $3$-manifolds with incompressible toral boundary and a residually free fundamental group in \cite{wilton3m}.

    \begin{cor}[Corollary \ref{mflds}] \label{mflds2}
    Let $M$ be an aspherical, closed manifold of dimension $n$. Then the following two hold:
    \begin{enumerate}
        \item if $\pi_1(M)$ is fully residually free and $n\ge 3$ then $M \cong T^n$.
        \item if $\pi_1(M)$ is residually free and $n\ge 5$ then $M$ has a finite cover that is homeomorphic to the direct product of a torus and finitely many closed surfaces.
    \end{enumerate}
    \end{cor}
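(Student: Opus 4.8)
The plan is to reduce both parts to the classification of \Cref{high_dim_lim2} together with the Borel conjecture, the common observation being that every finite cover $\hat M\to M$ is again a closed aspherical $n$-manifold, so $\pi_1(\hat M)$ is a Poincar\'e duality group of dimension $n$. In particular $H_n(\pi_1(\hat M);\F_2)=H_n(\hat M;\F_2)\cong\F_2$ --- mod-$2$ Poincar\'e duality requires no orientation hypothesis --- so $\cd(\pi_1(M))=n$ and $\mathrm{vb}_n^{\F_2}(\pi_1(M))=1<\infty$, irrespective of orientability.

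For part (1), $\pi_1(M)$ is a finitely generated fully residually free group, hence a limit group. Since $\cd(\pi_1(M))=n\ge 3$ and $\mathrm{vb}_n^{\F_2}(\pi_1(M))<\infty$, \Cref{high_dim_lim2} rules out the case $\cd<n$ and forces $\pi_1(M)$ to be free abelian of rank $\ge n$; comparing cohomological dimensions gives $\pi_1(M)\cong\Z^n$. I would then invoke the Borel conjecture for $\Z^n$ --- geometrization for $n=3$, Freedman--Quinn surgery for $n=4$, and Farrell--Hsiang (or Farrell--Jones) for $n\ge 5$ --- to conclude $M\cong T^n$.

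For part (2), $\pi_1(M)$ is finitely presented and residually free, and being the fundamental group of a finite aspherical CW-complex it is of type $\mathrm{FP}_\infty(\Q)$; so \Cref{vir_direct_prod} provides a finite-index subgroup $G_0\le\pi_1(M)$ with $G_0\cong L_1\times\cdots\times L_m$, each $L_i$ a nontrivial (after discarding trivial factors) limit group. Passing to the cover $\hat M$ with $\pi_1(\hat M)=G_0$, the group $G_0$ is $\mathrm{PD}_n$; since limit groups are of type $\mathrm{FP}$, the standard fact that a direct product of $\mathrm{FP}$ groups is a Poincar\'e duality group exactly when each factor is, with the dimensions adding (Bieri; Johnson--Wall), shows that each $L_i$ is a limit $\mathrm{PD}_{n_i}$ group with $\sum_i n_i=n$. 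I would then classify: a $\mathrm{PD}_1$ group is $\Z$; a $\mathrm{PD}_2$ limit group is a closed surface group (by Eckmann--M\"uller--Linnell, or directly via \Cref{mainlim2}, as its finite covers are again $\mathrm{PD}_2$); and for $n_i\ge 3$ the argument of part (1) applied to $L_i$ gives $L_i\cong\Z^{n_i}$ via \Cref{high_dim_lim2}. Collecting abelian factors, $G_0\cong\Z^{r}\times\pi_1(\Sigma_1)\times\cdots\times\pi_1(\Sigma_q)$ for closed surfaces $\Sigma_j$ with $r+2q=n$, so $T^r\times\Sigma_1\times\cdots\times\Sigma_q$ is a closed aspherical $n$-manifold with the same fundamental group as $\hat M$; since $\Z^r$ and surface groups are $\mathrm{CAT}(0)$ this group satisfies the Farrell--Jones conjecture, and as $n\ge 5$ the Borel conjecture yields a homeomorphism $\hat M\cong T^r\times\Sigma_1\times\cdots\times\Sigma_q$.

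The work here lies entirely in assembling and verifying hypotheses rather than in a new idea: one must note that the virtual $\F_2$-Betti number hypothesis of \Cref{high_dim_lim2} is automatic from mod-$2$ Poincar\'e duality, that the decomposition from \Cref{vir_direct_prod} is compatible with Poincar\'e duality factorwise, and --- the one genuinely non-elementary input --- that the Borel/Farrell--Jones machinery upgrades an isomorphism of fundamental groups to a homeomorphism of manifolds. This last point is precisely why part (1) works for all $n\ge 3$ (the low dimensions being covered by geometrization and $4$-dimensional surgery for the very special group $\Z^n$) while part (2), whose fundamental groups need not be $3$- or $4$-manifold groups, is stated only for $n\ge 5$ where topological surgery is unconditionally available.
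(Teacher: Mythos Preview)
Your argument is correct and follows the same broad outline as the paper: identify $\pi_1$ via \Cref{high_dim_lim2} (and \Cref{vir_direct_prod} for part (2)), then upgrade to a homeomorphism via the Borel conjecture, handling the low-dimensional cases separately. Two points of departure are worth noting. First, you work with $\F_2$ coefficients so that Poincar\'e duality gives $\mathrm{vb}_n^{\F_2}=1$ without any orientability hypothesis; the paper instead uses $\Q$ coefficients and observes a posteriori that once $\pi_1(M)\cong\Z^n$ the classifying map $M\to T^n$ is a homotopy equivalence, forcing orientability. Second, and more substantively, for part (2) the paper argues via K\"unneth and the virtual Betti numbers of the whole product $L_1\times\cdots\times L_m$, whereas you invoke the Bieri/Johnson--Wall fact that a direct product of type $\mathrm{FP}$ groups is $\mathrm{PD}_n$ precisely when each factor is $\mathrm{PD}_{n_i}$ with $\sum n_i=n$, and then classify each factor individually. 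Your route is cleaner here and is essentially the alternative the paper alludes to in \Cref{mflds_rmk}; the paper's route keeps everything internal to the virtual Betti number machinery developed earlier.
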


    \begin{proof}[Proof of Corollary \ref{mflds}]
    The manifold $M$ is aspherical, and so are all its finite covers. By Poincaré duality, it follows that $\vb_n(M)=1$. \par \smallskip
    If $\pi_1(M)$ is fully residually free, for $n\ge 3$ Theorem \ref{mainlim} implies that $\pi_1(M)\cong \mathbb{Z}^n$. Note that the classifying map for $\pi_1(M)$, $f:M \rightarrow K(\mathbb{Z}^n,1)=T^n$ is a homotopy equivalence, and in particular $M$ is orientable. \par \smallskip
    If $n=3$, then since $\pi_1(M)\cong \mathbb{Z}^n$ is one-ended, the Poincar\'e Conjecture implies that $M$ is prime, and therefore irreducible. Waldhausen's homeomorphism theorem \cite{wald} we have that $M \cong T^3$. For $n=4$, since $\mathbb{Z}^4$ is good in the sense of Freedman, \cite[Section 11.5]{4mflds} implies that $M \cong \mathbb{Z}^4$. Lastly, for $n\ge 5$, by \cite[Theorem A]{borel} the Borel Conjecture holds for $\mathbb{Z}^n$, so $M$ is homeomorphic to $T^n$. \par \smallskip
    
    Suppose now that $\pi_1(M)$ is residually free. By \cite{CW} $M$ has the homotopy type of a finite CW-complex and therefore $\pi_1(M)$ is of type $\mathrm{FP}(\mathbb{Q})$. By Theorem \ref{vir_direct_prod}, $M$ has a finite cover $\hat{M}$ whose fundamental group is a direct product of finitely many limit groups. Since $\vb_n(\pi_1(\hat{M}))=1$ and $\vb_\ell(\pi_1(\hat{M}))=0$ for $\ell>m$, Theorem \ref{mainlim} and a direct computation using K\"unneth formula imply that $\pi_1(\hat{M})$ is the direct product of finitely many surface groups and a free abelian group. Invoking \cite[Theorem A]{borel} yields that $\hat{M}$ is homeomorphic to the direct product of a torus and finitely many closed surfaces, as desired.
\end{proof}

\begin{rmk} \label{mflds_rmk}
    The corollary above can also be proved using other cohomological techniques, and in particular by relying on the fact that a Poincaré duality group of dimension $n$ can not have a Poincaré duality group of dimension $n$ as an infinite-index subgroup.
\end{rmk}

\begin{rmk}
    Passing to a finite cover in (2) of Corollary \ref{mflds} above is necessary: let $G=S_1 \times S_2 \times S_3$ where each $S_i$ is the fundamental group of a surface of genus $2$, and let $H$ be its index two subgroup which is the kernel of the map $G\rightarrow \mathbb{Z} / 2 \mathbb{Z}$ which maps each of the standard generators of $G$ to the non-trivial element in $\mathbb{Z} / 2 \mathbb{Z}$. $H$ is the fundamental group of a $6$-manifold which is a double cover of the product of three surfaces, and it is residually free as a subgroup of a residually free group. Suppose now that $H$ splits as a direct product $H_1 \times H_2$. Projecting $H_1$ and $H_2$ to each of the three factors $S_i$ of $G$, we get that for every $i$ one of $p_i(H_1)$ and $p_i(H_2)$ is trivial in $S_i$. By the pigeonhole principle, either $H_1$ or $H_2$ is contained in one of the $S_i$, say $H_1$ is contained in $S_1 \times \{1\} \times \{1\}$. This implies that $H_2$ is contained in $\{1\} \times S_2 \times S_3$. Since $[G:H]=2$ it follows that either $H_1 = S_1 \times \{1\} \times \{1\}$ or $H_2=\{1\} \times S_2 \times S_3$, which contradicts the fact that $H$ does not contain any of the factors of $G$·
\end{rmk}

\section{On the profinite rigidity of direct products of free and surface groups} \label{ProductsSection}
In this section we introduce the required tools to prove \Cref{productsintro}, which we restate in \Cref{products2}.  
Given a group $G$, we denote by $\hat G$ and $G_{\hp}$ its profinite and pro-$p$ completions respectively. As usual, we denote by $\b(G)$ its first $L^2$-Betti number. Let $\CC$ be the variety of either finite groups, finite nilpotent groups or finite $p$-groups.
Given a finitely generated group $ G$, we say that it is \emph{residually-$\CC$} if, for all $1\neq g\in G$, there exists $N\n G$ such that $g\notin N$ and $G/N \in \CC$.  Following the terminology of \cite{Gru11}, given two residually-$\CC$ groups $G$ and $\Gamma$, we say that $G$ belongs to the \emph{$\CC$-genus} of $\Ga$ if they have the same collection of isomorphism types of finite quotients belonging to $\CC$. Equivalently, this is true if there is an abstract isomorphism $G_{\hat\CC}\cong  \Ga_{\hat\CC}$ between their pro-$\CC$ completions \cite{Dix82}. Recall that $b_1$ and $b_1^{\mathbb{F}_p}$ are profinite and pro-$p$ invariants within any of these genera, which allows us to show that $L^2$-Betti numbers, or their pro-$p$ analogues (which we review below), are detected by the pro-$p$ completion. 

\begin{notation} Given an abstract group $G$, we denote its profinite completion (resp. pro-$p$ completion) by $\hat G$ (resp. $G_{\hp}$). Bold letters such as $\AA, \BB, \GG, \HH$ will denote profinite groups. In addition, we write $\HH\lc \GG$ or $\HH\lo \GG$ (resp. $\HH \nc \GG$ or $\HH \no \GG$) to indicate that the subgroup $\HH\leq \GG$ (resp. normal subgroup $\HH\n \GG$) is closed or open.  
\end{notation} 

 The only profinite groups that are finitely generated as abstract groups are finite (since, otherwise, they are not even countable). So when we say that a profinite group $\GG$ is finitely generated, we really mean that it is topologically finitely generated, i.e. that there exists a finite subset $S\subset \GG$ that generates a dense abstract subgroup of $\GG$. One of the required ingredients of \Cref{productsintro} is \Cref{Melnip}, which is a direct consequence of \Cref{mainlim}. 
 
\begin{thm} \label{Melnip} Let $G$ be a finitely generated, residually free group. Suppose that for all finite-index subgroups $H\le G$, there exists a prime $p$ such that $H_{\hp}$ has a presentation with at most one relator. Then $G$ is either a free or a surface group.
\end{thm}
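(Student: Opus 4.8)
The plan is to recast the hypothesis cohomologically and then, whenever $G$ is \emph{not} free or a surface group, to produce a single finite-index subgroup of $G$ that violates it \emph{for every prime at once}. If a pro-$p$ group $P$ admits a presentation with at most one relator then $\dim_{\F_p}H^2(P;\F_p)\le 1$; so the hypothesis on $G$ says precisely that for every finite-index $H\le G$ there is a prime $p=p(H)$ with $\dim_{\F_p}H^2(H_{\hp};\F_p)\le 1$. It therefore suffices to exhibit, under the assumption that $G$ is neither free nor a surface group, a finite-index $H_\ast\le G$ with $\dim_{\F_p}H^2((H_\ast)_{\hp};\F_p)\ge 2$ for \emph{all} primes $p$. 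I will use Theorem~\ref{surface} to locate an embedded $\pi_1\Sigma\ast\Z$, virtual retractions (Theorems~\ref{lim_sep} and \ref{separable}) to promote finite-index subgroups of that copy to finite-index subgroups of $G$ with controlled cohomology, the functoriality of pro-$p$ completion (so a group retraction descends to a retraction of pro-$p$ groups, hence to a split injection on $H^2(-;\F_p)$), and the Mayer--Vietoris sequence for mod-$p$ cohomology of free pro-$p$ products. Note that the usual inflation map $H^2(H_{\hp};\F_p)\to H^2(H;\F_p)$ runs in the unhelpful direction, so one cannot simply quote the ordinary statement of Corollary~\ref{Melni}.

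First I would reduce to the case where $G$ is a limit group. If $G$ is finitely generated residually free but not a limit group, then by Baumslag \cite{Bau67a} it contains a copy of $F_2\times\Z$, which is of type $\FP_\infty(\Q)$; so by Theorem~\ref{separable} there is $G_0\le G$ of finite index with a retraction $r\po G_0\to F_2\times\Z$. Passing to pro-$p$ completions, $(G_0)_{\hp}$ retracts onto $(F_2\times\Z)_{\hp}=F_2(\hp)\times\Z_p$, and K\"unneth gives $\dim_{\F_p}H^2((G_0)_{\hp};\F_p)\ge\dim_{\F_p}\bigl(H^1(F_2(\hp);\F_p)\otimes H^1(\Z_p;\F_p)\bigr)=2$ for every prime $p$, contradicting the hypothesis at $H=G_0$. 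If instead $G\cong\Z^d$, then $d\ge 3$ (being neither free nor a surface group), and $H^2(\Z_p^d;\F_p)\cong\Lambda^2\F_p^d$ already has dimension $\binom d2\ge 3$ for every $p$, contradicting the hypothesis at $H=G$. So from now on $G$ is a limit group which is not free abelian, free, or a surface group.

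For the main case, Theorem~\ref{surface} gives an embedding $\pi_1\Sigma\ast\Z\hookrightarrow G$ with $\Sigma$ closed orientable and $\chi(\Sigma)\le 0$, and since limit groups admit local retractions (Theorem~\ref{lim_sep}) this subgroup is a virtual retract, say $r\po G_0\to\pi_1\Sigma\ast\Z$ with $[G:G_0]<\infty$. Let $f\po\pi_1\Sigma\ast\Z\to\Z/2\Z$ kill $\pi_1\Sigma$ and send a generator of $\Z$ to the nontrivial element, and set $K_0=\ker f$, an index-$2$ subgroup; as in the proof of Theorem~\ref{mainlim} (cf.\ Figure~\ref{surfring}) one has $K_0\cong\Z\ast\pi_1\Sigma\ast\pi_1\Sigma$, whence $(K_0)_{\hp}$ is the free pro-$p$ product $\Z_p\amalg(\pi_1\Sigma)_{\hp}\amalg(\pi_1\Sigma)_{\hp}$. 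The Mayer--Vietoris sequence for free pro-$p$ products yields $H^2((K_0)_{\hp};\F_p)\cong H^2((\pi_1\Sigma)_{\hp};\F_p)^{\oplus 2}$, and $(\pi_1\Sigma)_{\hp}$ is a Demushkin group (or $\Z_p^2$ when $\Sigma$ is the torus), so $\dim_{\F_p}H^2((\pi_1\Sigma)_{\hp};\F_p)=1$ and hence $\dim_{\F_p}H^2((K_0)_{\hp};\F_p)=2$ for every prime $p$. Finally, $H_\ast:=r^{-1}(K_0)$ has finite index in $G$, contains $K_0$, and $r$ restricts to a retraction $H_\ast\to K_0$; passing to pro-$p$ completions gives a split injection $H^2((K_0)_{\hp};\F_p)\hookrightarrow H^2((H_\ast)_{\hp};\F_p)$, so $\dim_{\F_p}H^2((H_\ast)_{\hp};\F_p)\ge 2$ for every $p$, contradicting the hypothesis. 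Hence $G$ is free or a surface group.

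I expect the only delicate points to be bookkeeping rather than conceptual: checking that $r$ really restricts to a retraction onto $r^{-1}(K_0)$ and that retractions survive pro-$p$ completion, and recording the mod-$p$ cohomology of a free pro-$p$ product and of $(\pi_1\Sigma)_{\hp}$. The one genuinely important design choice is forced by the fact that the hypothesis only supplies \emph{some} good prime per subgroup, so the witness $H_\ast$ must have $\dim_{\F_p}H^2((H_\ast)_{\hp};\F_p)\ge 2$ uniformly in $p$; this is exactly why it is built from surface (or rank-$2$ abelian) factors, whose pro-$p$ completions have one-dimensional second mod-$p$ cohomology for every $p$. A parallel route, which I would mention as an alternative, passes through the invariant $\vd$ of Definition~\ref{vd}: a one-relator pro-$p$ completion forces an upper bound on $\d$ via the $L^2$-estimates of Lemmas~\ref{L^2amalgam}--\ref{L^2hnn} and Proposition~\ref{BettiAnd}, after which one invokes the classification of limit groups with finite $\vd$, again ruling out $\Z^d$ for $d\ge 3$ by the direct computation above.
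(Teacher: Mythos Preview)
Your proof is correct and follows essentially the same strategy as the paper's: reduce the one-relator hypothesis to $\dim_{\F_p}H^2(H_{\hp};\F_p)\le 1$, and then, in each of the three cases (not a limit group, free abelian of rank $\ge 3$, or a limit group that is neither free nor surface nor free abelian), produce a single finite-index subgroup whose pro-$p$ completion has second mod-$p$ cohomology of dimension at least $2$ \emph{for every} prime $p$, by virtually retracting onto an explicit witness and using that retractions survive pro-$p$ completion and induce split injections on cohomology.

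The only difference is a minor economy in the paper's choice of witness in the limit-group case. You retract $G_0$ onto $\pi_1\Sigma\ast\Z$ and then pull back the index-$2$ subgroup $K_0\cong\Z\ast\pi_1\Sigma\ast\pi_1\Sigma$ along $r$; the paper instead observes that $\pi_1\Sigma\ast\Z$ already contains $A\cong\pi_1\Sigma\ast\pi_1\Sigma$ (generated by $\pi_1\Sigma$ and a conjugate $t\,\pi_1\Sigma\,t^{-1}$), and since limit groups admit local retractions one can virtually retract $G$ onto $A$ directly, avoiding the preimage step. Both routes land on a subgroup whose pro-$p$ completion is a free pro-$p$ product of two copies of $(\pi_1\Sigma)_{\hp}$ (plus, in your version, a harmless $\Z_p$), so the cohomology computation is identical. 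Your write-up is more explicit about the Demushkin/Mayer--Vietoris details and about why one must work uniformly in $p$; the paper's is slightly shorter.
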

\subsection{Nilpotent groups} \label{NilpotentSection}
In this section, $G$ is going to denote  a finitely generated nilpotent group and $\GG$ a topologically finitely generated nilpotent pro-$p$ group. The following proposition is classical and recalls the notion of Hirsch length and its  properties. A more detailed account of the theory of polycyclic groups is given in the book of D. Segal \cite{Seg83}.

\begin{prop} \label{hir} Let $G$ be a finitely generated nilpotent group. Then the following statements hold.
\begin{itemize}
    \item[(a)] There exists a finite subnormal series $1=G_0\n G_1\n \cdots \n G_n=G$ such that each quotient $G_{i+1}/G_i$ is isomorphic to a cyclic group. The number of subscripts $i$ such that $G_{i+1}/G_i$ is infinite is independent of the chosen subnormal series and it is called the {\it Hirsch length} of $G$, denoted by $h(G)$.
    \item[(b)] The Hirsch length is additive in the following sense: for all normal subgroups $N\n G$, we have that $h(G)=h(N)+h(G/N).$
    \item[(c)] Let $H\leq G$ be a subgroup. Then $H$ has finite index in $G$ if and only if $h(H)=h(G)$.
\end{itemize}
\end{prop}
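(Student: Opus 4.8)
The plan is to prove the three items in order, using (a) to pin down the invariant $h$, (b) as a short bookkeeping step, and an induction for (c) that invokes only (b). For (a), I would first recall that a finitely generated nilpotent group $G$ is polycyclic. By induction on the nilpotency class one checks that every subgroup of $G$ is finitely generated, since the last term $\gamma_c(G)$ of the lower central series is finitely generated abelian (generated by the finitely many $c$-fold basic commutators in a finite generating set), so for $H\le G$ the group $H\cap\gamma_c(G)$ is finitely generated while $H/(H\cap\gamma_c(G))$ embeds in the finitely generated nilpotent group $G/\gamma_c(G)$ of smaller class. Refining the lower central series---each factor being finitely generated abelian---then yields a subnormal series of $G$ with cyclic factors. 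For well-definedness of the number of infinite factors I would apply Schreier's refinement theorem: any two subnormal series with cyclic factors have equivalent refinements, and in such a refinement a finite cyclic factor is split into finite or trivial factors, whereas an infinite cyclic factor $\mathbb{Z}=A_0\trianglerighteq\cdots\trianglerighteq A_k=0$ contributes exactly one infinite factor, namely $A_j/A_{j+1}\cong\mathbb{Z}$ with $j$ largest such that $A_j\ne 0$. Hence the number of infinite factors is invariant under refinement, so it is common to all cyclic subnormal series of $G$; this is $h(G)$.

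Part (b) is then immediate: given $N\trianglelefteq G$, concatenating a cyclic subnormal series of $N$ with the $G$-preimage of a cyclic subnormal series of $G/N$ produces a cyclic subnormal series of $G$ whose infinite factors are exactly those of $N$ together with those of $G/N$, so $h(G)=h(N)+h(G/N)$. For (c), the implication ``finite index $\Rightarrow$ equal Hirsch length'' then follows from (b) together with monotonicity $h(H)\le h(G)$ for subgroups: if $H\le G$ has finite index, its normal core $K\trianglelefteq G$ also has finite index, so (b) gives $h(G)=h(K)+h(G/K)=h(K)$, and $h(K)\le h(H)\le h(G)$ forces $h(H)=h(G)$.

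It remains to prove monotonicity and the converse of (c), which I would do simultaneously by induction on $h(G)$. If $h(G)=0$ then $G$ is finite (a finitely generated nilpotent group with finite abelianization is finite), and both claims are trivial. If $h(G)>0$, then $G$ is infinite, hence --- being finitely generated nilpotent --- so is $G^{\mathrm{ab}}$, so there is an epimorphism $\phi\colon G\twoheadrightarrow\mathbb{Z}$; put $N=\ker\phi$, so $h(N)=h(G)-1$ by (b). For $H\le G$, part (b) gives $h(H)=h(H\cap N)+h(\phi(H))$; since $h(\phi(H))\le 1$ and $h(H\cap N)\le h(N)$ by the inductive hypothesis, we get $h(H)\le h(G)$, which is monotonicity. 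If moreover $h(H)=h(G)$, then $\phi(H)\ne 0$, so $\phi(H)=m\mathbb{Z}$ for some $m\ge 1$ and $h(H\cap N)=h(N)$; by induction $H\cap N$ has finite index in $N$, hence $[G:H]=[G:HN]\cdot[HN:H]=m\cdot[N:H\cap N]<\infty$.

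The only real obstacle is organizational: the induction for (c) must be arranged so that monotonicity and the finite-index criterion are proved in tandem, each being used to establish the other, while keeping (c) logically dependent only on (b) and not on itself. The tempting move of first passing to the torsion-free quotient $G/T(G)$ is unnecessary and slightly awkward; using a surjection onto $\mathbb{Z}$ rather than the centre avoids all torsion issues, so beyond a correct appeal to Schreier refinement in (a) I expect no genuinely delicate point.
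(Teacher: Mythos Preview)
Your proof is correct and self-contained. The paper does not actually prove this proposition: it is introduced as ``classical'' with a reference to Segal's book on polycyclic groups, so there is no argument to compare against. Your treatment via Schreier refinement for (a), concatenation of series for (b), and the induction on $h(G)$ using a surjection onto $\mathbb{Z}$ for (c) is a standard and clean way to make the statement self-contained.
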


These properties are standard for a more general class, say, the class of virtually polycyclic groups, but we will introduce in \Cref{propimage} a pro-$p$ strengthening of part (c) of \Cref{hir} that only holds for nilpotent groups. The analogous definition of Hirsch length and its properties carries over into the category of pro-$p$ groups.  
\begin{prop} \label{hirp} Let $\GG$ be a topologically finitely generated nilpotent pro-$p$ group. Then the following statements hold.
\begin{itemize}
    \item[(a)]  There exists a finite closed subnormal series $1=\GG_0\nc \GG_1\n \cdots \nc \GG_n=\GG$ such that each quotient $\GG_{i+1}/\GG_i$ is isomorphic to a pro-$p$ cyclic group. The number of subscripts $i$ such that $\GG_{i+1}/\GG_i$ is infinite is independent of the chosen subnormal series and it is called the {\it Hirsch length} of $\GG$, denoted by $h(\GG)$.
    \item[(b)] For all closed normal subgroups $\NN\n \GG$, we have that $h(\GG)=h(\NN)+h(\GG/\NN).$
    \item[(c)]  Let $\HH\lc \GG$ be a closed subgroup. Then $\HH$ has finite index in $\GG$ if and only if $h(\HH)=h(\GG)$.
\end{itemize}
\end{prop}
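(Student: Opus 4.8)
The plan is to transcribe the classical proof of \Cref{hir} (as in the treatment of polycyclic groups in \cite{Seg83}) into the pro-$p$ category, keeping track of the topology. The structural inputs I would use are: (i) a topologically finitely generated abelian pro-$p$ group is, by the structure theorem for finitely generated modules over the discrete valuation ring $\Z_p$, isomorphic to $\Z_p^{\,d}\times F$ with $F$ finite, so it is a finite iterated extension of procyclic groups, and the proper nontrivial closed subgroups of $\Z_p$ are the $p^n\Z_p\cong\Z_p$; (ii) in a topologically finitely generated nilpotent pro-$p$ group the terms $\gamma_i(\GG)$ of the lower central series are closed and each $\gamma_i(\GG)/\gamma_{i+1}(\GG)$ is topologically finitely generated abelian; and (iii) in a profinite group the product $\KK\LL$ of two closed subgroups is the image of the compact set $\KK\times\LL$ under multiplication, hence compact, hence closed, whenever it happens to be a subgroup -- so none of the subgroups produced in the refinement arguments below present closedness issues.

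For part (a), I would first produce a series by refining the lower central series of $\GG$: inside each factor $\gamma_i(\GG)/\gamma_{i+1}(\GG)$ insert the terms of a series with procyclic quotients furnished by (i); this yields the required finite closed subnormal series with procyclic quotients. For the invariance of the number of infinite factors I would run the Schreier refinement theorem, via the Zassenhaus lemma, verbatim as in the abstract case -- legitimate by (iii) -- and then observe that inserting one extra term into a procyclic factor replaces that factor by two procyclic factors of which exactly one is infinite when the original is $\cong\Z_p$, and both are finite when the original is finite; hence the count of infinite factors is refinement-invariant, and therefore the same for any two such series. (Alternatively one may simply \emph{define} $h(\GG):=\sum_i\dim_{\Q_p}\big((\gamma_i(\GG)/\gamma_{i+1}(\GG))\otimes_{\Z_p}\Q_p\big)$ and check that it agrees with the count.)

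Part (b) is then formal: given $\NN\nc\GG$, concatenate a procyclic series for $\NN$ with the preimage in $\GG$ of a procyclic series for $\GG/\NN$ to get a procyclic series of $\GG$ passing through $\NN$, whose number of infinite factors is $h(\NN)+h(\GG/\NN)$; by the invariance from (a) this equals $h(\GG)$. In particular $h$ vanishes on finite groups and is unchanged on passing to open subgroups. The forward implication of (c) is immediate: a finite-index closed subgroup $\HH\lc\GG$ contains its core, an open normal subgroup $\NN\no\GG$, and $h(\GG)=h(\NN)+h(\GG/\NN)=h(\NN)=h(\NN)+h(\HH/\NN)=h(\HH)$.

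The reverse implication of (c) is the step with genuine content, and I expect it to be the main obstacle. I would argue by induction on $h(\GG)$: when $h(\GG)=0$ the group is finite and there is nothing to prove, so assume $h(\GG)>0$, whence $\GG$ is infinite. The key lemma to establish here is that \emph{a topologically finitely generated nilpotent pro-$p$ group with finite centre is finite}; this follows, just as for abstract finitely generated nilpotent groups, by showing inductively along the upper central series that each $Z_{i+1}(\GG)/Z_i(\GG)$ embeds, via $g\mapsto[g,-]$, into $\mathrm{Hom}_{\mathrm{cont}}(\GG^{\mathrm{ab}},Z_i(\GG))$, which is again topologically finitely generated abelian pro-$p$, hence finite once $Z_i(\GG)$ is finite (using that $\GG^{\mathrm{ab}}$, and all the quotients involved, remain topologically finitely generated). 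Consequently $\ZZ:=Z(\GG)$ is infinite, so by (i) it contains a closed subgroup $\AA\cong\Z_p$; here is where nilpotency is used, to obtain a \emph{central} copy of $\Z_p$. Now $\HH\AA$ is closed (by (iii)) and contains $\HH$ as a normal subgroup, so by (b), $h(\HH)\le h(\HH\AA)=h(\HH)+h\big(\AA/(\AA\cap\HH)\big)\le h(\GG)$; since $h(\HH)=h(\GG)$, this forces $\AA\cap\HH$ to be open in $\AA$, i.e.\ $[\HH\AA:\HH]<\infty$, and it also gives $h(\HH\AA/\AA)=h(\HH\AA)-1=h(\GG)-1=h(\GG/\AA)$. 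Applying the inductive hypothesis to the closed subgroup $\HH\AA/\AA\le\GG/\AA$ yields $[\GG:\HH\AA]<\infty$, and multiplying indices gives $[\GG:\HH]<\infty$. Apart from this lemma, the only points needing care are the routine checks that the subgroups appearing are closed, all covered by (iii).
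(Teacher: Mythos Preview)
The paper does not supply a proof of this proposition; it is stated as a standard pro-$p$ analogue of \Cref{hir} and used as background. Your proposal is correct and is exactly the kind of transcription the paper has in mind.

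One small correction in your key lemma for part (c): the commutator map $g\mapsto [g,-]$, for $g\in Z_{i+1}(\GG)$, gives a homomorphism $\GG^{\mathrm{ab}}\to Z_i(\GG)/Z_{i-1}(\GG)$ rather than to $Z_i(\GG)$ itself, because $[g,xy]=[g,y][g,x]^y$ and $[g,x]^y\equiv [g,x]\bmod Z_{i-1}(\GG)$. This does not affect your induction: if $Z_i(\GG)$ is finite then so is $Z_i(\GG)/Z_{i-1}(\GG)$, hence $\mathrm{Hom}_{\mathrm{cont}}(\GG^{\mathrm{ab}},Z_i(\GG)/Z_{i-1}(\GG))$ is finite, and therefore $Z_{i+1}(\GG)$ is finite. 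Everything else is in order.
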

The main reason we introduced the Hirsch length is the following proposition, which shows that it allows us to detect when a subgroup has finite index.
\begin{prop} \label{propimage} Let $G$ be a finitely generated nilpotent group and let $H\leq G$ be a subgroup. Then $H$ has finite index in $G$ if and only if the image of the induced map on pro-$p$ completions $H_{\hp}\lrar G_{\hp}$ has finite index.
\end{prop}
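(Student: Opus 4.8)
The forward implication is immediate: if $H$ has finite index in $G$, then $H_{\hp}$ surjects onto an open (hence finite-index) subgroup of $G_{\hp}$, because the pro-$p$ completion functor is right-exact on the relevant diagram; concretely, the closure of the image of $H$ in $G_{\hp}$ contains a congruence-type neighbourhood of the identity coming from the finite-index normal $p$-subgroups of $G$ contained in $H$. So the content is the converse. The plan is to argue by contraposition using Hirsch length: assuming $H$ has infinite index in $G$, I will produce a finite-index subgroup $G_0 \le G$ containing $H$ with $h(G_0/\mathrm{core}) $ strictly controlled, and then show the image of $H_{\hp}\to G_{\hp}$ lands in a proper closed subgroup of infinite index. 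The key point is that for finitely generated nilpotent groups the pro-$p$ completion is ``faithful'' to the Hirsch-length bookkeeping: one has $h(G_{\hp}) = h(G)$ (this is standard — a cyclic subnormal series for $G$ induces, after $p$-completing each factor, a cyclic subnormal series for $G_{\hp}$ with the same number of infinite factors, since $\Z_{\hp} = \Z_p$ is infinite and a finite cyclic group $p$-completes to its $p$-part, which may be trivial but is never infinite), and likewise $h(\overline{H})=h(H)$ where $\overline H$ denotes the closure of the image of $H$ in $G_{\hp}$.

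Here are the steps in order. First, record the elementary fact $h(G_{\hp}) = h(G)$ for finitely generated nilpotent $G$, using Propositions \ref{hir} and \ref{hirp} and the behaviour of $p$-completion on cyclic groups. Second, observe that the closure $\overline H$ of the image of $H$ inside $G_{\hp}$ is a topologically finitely generated nilpotent pro-$p$ group, and that it is a continuous image of $H_{\hp}$; hence $h(\overline H) \le h(H_{\hp}) = h(H)$. Third — and this is the crux — one needs $h(\overline H) \ge h(H)$, equivalently that the natural map $H_{\hp} \to \overline H$ does not drop Hirsch length. For this I would use that $H$ is itself finitely generated nilpotent, pick a cyclic subnormal series $1 = H_0 \n \cdots \n H_n = H$, and track its image: each infinite cyclic factor $H_{i+1}/H_i \cong \Z$ maps into $G_{\hp}$; because $G$ (being finitely generated nilpotent, hence residually-$p$ when torsion-free, and in general residually finite with a cofinal family of $p$-power quotients on the torsion-free part) separates the powers of a non-torsion element, the image of such a $\Z$ is infinite in $G_{\hp}$, so contributes $1$ to $h(\overline H)$; assembling these gives $h(\overline H) = h(H)$. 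Fourth, combine: if $H$ has infinite index in $G$ then by Proposition \ref{hir}(c) $h(H) < h(G)$, so $h(\overline H) = h(H) < h(G) = h(G_{\hp})$, and by Proposition \ref{hirp}(c) the closed subgroup $\overline H \le G_{\hp}$ has infinite index; \emph{a fortiori} the (possibly non-closed) image of $H_{\hp}$ has infinite index. Contrapositive done.

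The main obstacle is Step 3, the non-collapse of Hirsch length under $H_{\hp} \to \overline H$. The subtlety is that $H_{\hp}$ could in principle be ``bigger'' than $\overline H$ — for a general finitely generated group the pro-$p$ completion need not inject into the profinite completion of an overgroup — so one cannot simply quote that $\overline H$ is a quotient of $H_{\hp}$ and be done. The fix is that we only need the \emph{lower} bound $h(\overline H) \ge h(H)$, and that is a statement purely about how the abstract group $H$ sits inside $G_{\hp}$: it suffices to exhibit, inside $\overline H$, a closed subnormal series with $h(H)$ infinite cyclic factors, which we get by $p$-completing (i.e. taking closures of images of) the terms of a cyclic series of $H$ and using that non-torsion elements of a finitely generated nilpotent group remain of infinite order in the pro-$p$ completion of any finitely generated nilpotent group containing them — the torsion-free rank is detected pro-$p$. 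One minor technical care: to say a non-torsion $h \in H \le G$ has infinite image in $G_{\hp}$, pass to the isolator / torsion-free quotient of $G$ (or the quotient by the torsion subgroup) where residual-$p$-ness of finitely generated torsion-free nilpotent groups (a theorem of Gruenberg) applies cleanly; I would phrase this reduction explicitly in the write-up to keep the argument airtight.
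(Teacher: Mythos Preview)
Your approach is the same Hirsch-length comparison that the paper uses, and it is correct; but you have misidentified where the work lies. Your Step~3 (showing $h(\overline H)\ge h(H)$) is \emph{not needed}. For the contrapositive you only need the inequality from Step~2: since $\overline H$ is a continuous image of $H_{\hp}$, one has $h(\overline H)\le h(H_{\hp})=h(H)$, and then $h(H)<h(G)=h(G_{\hp})$ forces $\overline H$ to have infinite index by \Cref{hirp}(c). So the paragraph you label ``the crux'' can be deleted entirely. Incidentally, this makes your contrapositive route slightly more economical than the paper's direct argument: the paper needs the equality $h(\overline{H'})=h(H'_{\hp})$, for which it invokes the non-trivial injectivity of $H'_{\hp}\to G'_{\hp}$ (cited there from \cite{Mor22}), whereas your one-sided inequality comes for free from surjectivity.

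Two residual points. First, your sketch of $h(\Gamma_{\hp})=h(\Gamma)$ in Step~1 is not a proof: saying that a cyclic subnormal series ``$p$-completes factor by factor'' hides exactly the separability issue you later flag in Step~3 (one must know that the closures $\overline{\Gamma_i}$ in $\Gamma_{\hp}$ satisfy $\overline{\Gamma_{i+1}}/\overline{\Gamma_i}\cong\Z_p$ when $\Gamma_{i+1}/\Gamma_i\cong\Z$, which amounts to $p$-separability of $\Gamma_i$ in $\Gamma$). The paper simply cites this identity for torsion-free $\Gamma$ from \cite[Proposition 2.15]{Mor22}; you should do likewise, or supply a real argument. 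Second, the reduction to the torsion-free case---which the paper does up front by passing to a torsion-free finite-index subgroup $G'$ and $H'=H\cap G'$---is not a ``minor technical care'' for Step~3 but is exactly what makes the identity $h(\Gamma_{\hp})=h(\Gamma)$ available; it belongs at the start of your argument, not the end. As for your actual Step~3 argument: the claim that ``non-torsion elements remain of infinite order in $G_{\hp}$'' is true but does not give what you want, since you need infinite order in the \emph{quotient} $\overline{H_{i+1}}/\overline{H_i}$, and an element of $H_{i+1}\setminus H_i$ could in principle land in $\overline{H_i}$ without further input (indeed, $\Z_p$ contains free abelian groups of arbitrary rank as abstract subgroups, so abstract containment gives no Hirsch-length bound). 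Fortunately, as noted, you do not need this step.
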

Before giving the proof, we recall that for any subgroups $H_1\leq H_2$ of $G$, the induced map in pro-$p$ completions $(H_1)_{\hp}\lrar (H_2)_{\hp}$ is injective \cite[Theorem D]{Mor22} (this was also proved in \cite[Lemma 5.1]{Kro20}).
\begin{proof} One implication is direct. If the inclusion $\iota\po H \lrar G$ has finite-index image, then the image of the induced map $\iota_{\hp}\po H_{\hp}\lrar G_{\hp}$, which is equal to the closure $\bar{H}$ of $H$ in $G_{\hp}$,  has finite index in $\bar{G}= G_{\hp}$. To prove the converse, suppose that the induced map on pro-$p$ completions $\iota_{\hp}\po  H_{\hp}\lrar G_{\hp}$ has a finite-index image. By standard theory of polycyclic groups, there exists a finite-index torsion-free subgroup $G'$ of $G$ (see \cite[Chapter 1]{Seg83}). Consider $H'=H\cap G'$. It is enough to show that $H'$ has finite index in $G'$. The induced injective map  $H'_{\hp}\lrar G'_{\hp}$ still has a finite-index image. Hence, by part (c) of \Cref{hirp}, we derive that $h(H'_{\hp})=h(G'_{\hp})$. We know from the proof of \cite[Proposition 2.15]{Mor22} that $h(\Ga)=h(\Ga_{\hp})$ for any finitely generated torsion-free nilpotent group $\Ga$. Hence $h(H')=h(H'_{\hp})=h(G'_{\hp})=h(G')$. Finally, we can conclude that $\iota(H)\leq G$ has finite index by part (c) of \Cref{hir}.
\end{proof}
\begin{rmk} Note that \Cref{propimage} is not true for general virtually polycyclic groups. In fact, one can consider the fundamental group of the Klein bottle $G=\lan a, b \, |\, abab^{-1}\ran$, which is virtually abelian and also residually-$p$ for all primes $p$. The subgroup $H=\lan b\ran$ has infinite index and the induced map $H_{\hp}\lrar G_{\hp}\cong \Z_p$ is an isomorphism for all primes $p>2$.
 \end{rmk}

\subsection{Pro-$p$ groups with positive rank gradient} \label{GradientSection}
Free groups do not have non-trivial infinite-index normal subgroups that are finitely generated; this can be verified by means of covering space theory. This property can be generalised with $L^2$-methods to the class of groups $G$ with $\b(G)>0$. A direct consequence of this property is, for example, that the automorphisms of the direct products $F_n\times F_n$ are "rectified" up to permutation, i.e. $\Aut(F_n\times F_n)\cong \left(\Aut (F_n)\times \Aut (F_n)\right)\rtimes (\Z/2\Z)$. Even if the previous can be shown in greater generality without an $L^2$ assumption, we observe that such properties naturally carry over to the category of pro-$p$ groups by means of the analogous (and even better behaved) notion of \emph{rank gradient}. 

Given a pro-$p$ group $\GG$ and descending chain of normal open subgroups $\GG=\GG_0\unrhd\GG_1 \unrhd \GG_2 \unrhd\cdots$ with trivial intersection, we define (as in \cite{Oca19}) the  {\it rank gradient of $\GG$ relative to $\{\GG_i\}$} by 
 \[\RG (\GG; \{\GG_i\})=\lim_{k\rar \infty}\frac{d(\GG_k)-1}{|\GG: \GG_k|}, \]
 where the previous limit exists because it is monotone decreasing and bounded from below. For the same reason, this relative rank gradient does not depend on the chain and it is equal to the {\it absolute rank gradient} of $\GG$, defined by
 \[\RG(\GG)=\inf_{\UU\no \GG} \frac{d(\UU)-1}{|\GG: \UU|}.\] 
For analogous reasons, we can extend the notion of $p$-rank for  a finitely generated residually-$p$ group $G$ as follows. Let  $G=G_0\unrhd G_1 \unrhd G_2 \unrhd\cdots$ be a chain of normal $p$-power index subgroups such that for all $n$ there exists $N$ such that $\ga_n\sub G_N$. Then  we  define the $p$-gradient of $G$, relative to the chain $\{G_i\}$, by 
\[\RG (G; \{G_i\})=\lim_{k\rar \infty}\frac{\dim_{\F_p} H^1(G_k; \F_p)-1}{|G: G_k|}, \]
  Again, the monotonicity of this chain ensures that the previous limit exists and that it is independent of the chain $\{G_i\}$.
The following is a particular instance of \cite[Theorem 1.1]{Oca19}, which tells us that these pro-$p$ groups do not have intermediate finitely generated normal subgroups. 
\begin{thm} \label{posgrad} Let $\GG$ be a finitely generated pro-$p$ group with positive rank gradient and let $\NN\nc \GG$ be a finitely generated closed normal subgroup. Then $\NN$ is either finite or open.
\end{thm}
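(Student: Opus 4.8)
This is the pro-$p$ counterpart of L\"uck's theorem that a finitely generated group possessing a finitely generated infinite normal subgroup of infinite index has vanishing first $L^2$-Betti number, and indeed it is a special case of \cite[Theorem 1.1]{Oca19}; the plan for a direct argument is as follows. Argue by contraposition: assume $\NN\nc\GG$ is finitely generated and neither finite nor open, i.e.\ both $\NN$ and $\mathbf{Q}:=\GG/\NN$ are infinite pro-$p$ groups, and deduce $\RG(\GG)=0$. The standard inputs are the Nielsen--Schreier inequality for pro-$p$ groups, $d(\HH)-1\leq|\GG:\HH|\,(d(\GG)-1)$ for $\HH\lo\GG$ (coming from the fact that open subgroups of free pro-$p$ groups are again free pro-$p$ of the expected rank), and the subadditivity $d(B)\leq d(A)+d(C)$ for an extension $1\to A\to B\to C\to1$ of pro-$p$ groups. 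Since the core of an open subgroup is open, the first of these also shows that $\RG(\GG)$ equals the infimum of $\tfrac{d(\UU)-1}{|\GG:\UU|}$ over \emph{all} open $\UU\lo\GG$, so it is enough to construct such subgroups along which this quantity tends to $0$.

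First I would fix the two ``directions'' of descent: characteristic open subgroups $\MM_i\lo\NN$ (for instance the Frattini series, so that $\MM_i\nc\GG$, $\bigcap_i\MM_i=1$ and $|\NN:\MM_i|\to\infty$ because $\NN$ is infinite), and open normal subgroups $\mathbf{Q}_j\no\mathbf{Q}$ with $|\mathbf{Q}:\mathbf{Q}_j|\to\infty$. Writing $\pi\po\GG\to\mathbf{Q}$ and $\GG_j:=\pi^{-1}(\mathbf{Q}_j)$, the image of $\NN$ in $\GG_j/\MM_i$ is a \emph{finite} normal subgroup, so it is met trivially by some open subgroup $\HH_{i,j}\lo\GG_j/\MM_i$; let $\UU_{i,j}\lo\GG$ be the preimage of $\HH_{i,j}$. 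By construction $\UU_{i,j}\cap\NN=\MM_i$ and $\pi(\UU_{i,j})\leq\mathbf{Q}_j$, so $|\GG:\UU_{i,j}|=|\mathbf{Q}:\pi(\UU_{i,j})|\cdot|\NN:\MM_i|$ with both factors large. Then from $1\to\MM_i\to\UU_{i,j}\to\pi(\UU_{i,j})\to1$ together with Nielsen--Schreier applied to $\MM_i\lo\NN$ and to $\pi(\UU_{i,j})\lo\mathbf{Q}$ one obtains $d(\UU_{i,j})\leq|\NN:\MM_i|\,d(\NN)+|\mathbf{Q}:\pi(\UU_{i,j})|\,d(\mathbf{Q})$, whence
\[
\frac{d(\UU_{i,j})-1}{|\GG:\UU_{i,j}|}\ \leq\ \frac{d(\NN)}{|\mathbf{Q}:\pi(\UU_{i,j})|}+\frac{d(\mathbf{Q})}{|\NN:\MM_i|}\ \xrightarrow[\ i,j\to\infty\ ]{}\ 0 ,
\]
forcing $\RG(\GG)=0$, contrary to hypothesis; therefore $\NN$ is finite or open.

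The genuinely delicate step is the construction of $\UU_{i,j}$: one needs an open subgroup that is simultaneously ``small in the $\NN$-direction'' and ``small in the $\mathbf{Q}$-direction'', independently of each other. This is exactly the point that breaks down when $\NN$ is allowed to have infinite rank, and it is where finite generation of $\NN$ enters the argument (via Nielsen--Schreier for $\NN$ and its finite-index subgroups $\MM_i$). The remaining ingredients --- Nielsen--Schreier for pro-$p$ groups, subadditivity of $d$ under extensions, and that $\RG$ is an infimum over all open subgroups --- are routine. Of course, one may instead simply quote \cite[Theorem 1.1]{Oca19}.
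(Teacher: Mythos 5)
Your argument is correct and complete in outline; note that the paper does not prove this statement at all but simply quotes it as a special case of \cite[Theorem 1.1]{Oca19}, so your contribution is a self-contained elementary proof rather than a variant of the paper's. All the ingredients you flag as routine really are: the pro-$p$ Schreier bound $d(\HH)-1\leq|\GG:\HH|(d(\GG)-1)$ for open $\HH$, the subadditivity $d(B)\leq d(A)+d(C)$ for an extension with closed kernel (generators of $A$ together with lifts of generators of $C$ topologically generate a closed subgroup containing $A$ and surjecting onto $C$, hence all of $B$), and the passage from the paper's infimum over open \emph{normal} subgroups to the infimum over all open subgroups via cores. The construction of $\UU_{i,j}$ is also sound: $\MM_i$ is characteristic in $\NN$, hence normal in $\GG$ and in $\GG_j$, the image $\NN/\MM_i$ of $\NN$ in $\GG_j/\MM_i$ is finite, and residual finiteness of profinite groups yields an open $\HH_{i,j}$ meeting it trivially, giving $\UU_{i,j}\cap\NN=\MM_i$ and the index factorisation $|\GG:\UU_{i,j}|=|\mathbf{Q}:\pi(\UU_{i,j})|\cdot|\NN:\MM_i|$. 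The only point worth spelling out at the end is the order of quantifiers: for fixed $i$ the bound does not tend to $0$ as $j\to\infty$ alone (the second summand $d(\mathbf{Q})/|\NN:\MM_i|$ is a fixed constant), so one should say explicitly that for given $\varepsilon>0$ one first chooses $i$ with $d(\mathbf{Q})/|\NN:\MM_i|<\varepsilon/2$ and then $j$ with $d(\NN)/|\mathbf{Q}:\mathbf{Q}_j|<\varepsilon/2$; since $\RG(\GG)$ is an infimum this suffices. With that phrasing tightened, the proof stands, and it has the merit of making transparent exactly where finite generation of $\NN$ (via the Schreier bound inside $\NN$) is used, which the bare citation in the paper hides.
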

We  note that, by Lück's approximation \cite{Luc94},  when $G$ is a finitely presented residually-$p$ group, we have $\b(G)\leq \RG(G_{\hp})$.
So \Cref{posgrad} applies to pro-$p$ completions of finitely presented residually-$p$ groups $G$ with $\b(G)>0$ (which includes non-abelian limit groups).

As we anticipated, \Cref{posgrad} has some "rigidity" consequences on the structure of automorphism groups of direct products of pro-$p$ groups with positive rank gradient exhibited by the following lemma.

\begin{prop} \label{s1} Let $\GG_i$ and $\HH_j$, with $1\leq i\leq n$ and $1\leq j\leq m$, be  finitely generated infinite pro-$p$ groups that are torsion-free and have positive rank gradient. Suppose that there is an isomorphism of pro-$p$ groups
\[\phi\po \GG_1\times \cdots \times \GG_n\lrar \HH_1\times \cdots \times \HH_m.\]
Then $n=m$ and $\phi$ is a direct product of isomorphisms, i.e. there exist a permutation $\sigma\in \mathrm{Sym} (n)$ and isomorphisms $\phi_i \po(G_i)_{\hp}\lrar (H_{\sigma (i)})_{\hp}$ for all $1\leq i\leq n$ such that $\phi=\left(\phi_{\sigma^{-1} (1)}\circ \pi_{\sigma^{-1} (1)}\right)\times \cdots \times \left(   \phi_{\sigma^{-1} (n)}\circ \pi_{\sigma^{-1} (n)}\right)$.
\end{prop}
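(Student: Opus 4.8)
The plan is to carry out the classical ``rectification'' argument for direct product decompositions, with Theorem~\ref{posgrad} supplying the rigidity that is usually provided by Krull--Remak--Schmidt-type uniqueness. Write $\HH:=\HH_1\times\cdots\times\HH_m$, let $\pi_j\po\HH\lrar\HH_j$ denote the canonical projections, and set $\KK_i:=\phi(\GG_i)$. Since $\phi$ is a topological isomorphism, each $\KK_i$ is a finitely generated closed normal subgroup of $\HH$; the $\KK_i$ commute pairwise because the $\GG_i$ do; and $\HH=\KK_1\times\cdots\times\KK_n$ as an internal direct product. For each $i,j$ put $\AA_{ij}:=\pi_j(\KK_i)$, a finitely generated closed normal subgroup of $\HH_j$. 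By Theorem~\ref{posgrad} applied to $\HH_j$, the group $\AA_{ij}$ is finite or open, and since $\HH_j$ is torsion-free this says $\AA_{ij}$ is trivial or open.

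Next I would show that for each $j$ exactly one $\AA_{ij}$ is nontrivial. Fix $j$ and $i\neq i'$ and suppose $\AA_{ij}$ and $\AA_{i'j}$ were both open. As $\AA_{ij}$ and $\AA_{i'j}$ commute elementwise, $\AA_{ij}\cap\AA_{i'j}$ lies in the centre of $\AA_{ij}$, hence is abelian; but it is also open in $\HH_j$, and a pro-$p$ group with positive rank gradient has no open abelian subgroup (an open subgroup of a positive--rank--gradient pro-$p$ group again has positive rank gradient, whereas an abelian pro-$p$ group has rank gradient $0$), a contradiction. So at most one $\AA_{ij}$ is open for each $j$. On the other hand $\HH_j=\pi_j(\HH)=\AA_{1j}\cdots\AA_{nj}$, so they cannot all be trivial; hence there is a unique index $\tau(j)\in\{1,\dots,n\}$ with $\AA_{\tau(j),j}$ nontrivial, and then $\pi_j(\KK_{\tau(j)})=\HH_j$ while $\pi_j(\KK_i)=1$ for $i\neq\tau(j)$.

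The vanishing $\pi_j(\KK_i)=1$ for all $j$ with $\tau(j)\neq i$ gives $\KK_i\le\bigcap_{j:\tau(j)\neq i}\ker\pi_j=\prod_{j\in\tau^{-1}(i)}\HH_j$. If $i$ were not in the image of $\tau$ this would force $\KK_i=1$, contradicting that $\GG_i$ is infinite; so $\tau$ is surjective and $m\ge n$. Running the same argument with $\phi^{-1}$ in place of $\phi$ (using this time that the $\GG_i$ are torsion-free) produces a surjection in the other direction, so $m=n$ and $\tau$ is a bijection; set $\sigma:=\tau^{-1}$. Then $\KK_i\le\HH_{\sigma(i)}$, and combined with $\pi_{\sigma(i)}(\KK_i)=\HH_{\sigma(i)}$ this forces $\KK_i=\HH_{\sigma(i)}$, i.e. $\phi(\GG_i)=\HH_{\sigma(i)}$. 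Thus $\phi$ restricts to an isomorphism $\phi_i\po\GG_i\lrar\HH_{\sigma(i)}$ for each $i$, and unwinding the internal direct product structure gives $\phi=\left(\phi_{\sigma^{-1}(1)}\circ\pi_{\sigma^{-1}(1)}\right)\times\cdots\times\left(\phi_{\sigma^{-1}(n)}\circ\pi_{\sigma^{-1}(n)}\right)$, as claimed.

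The routine ingredients are that continuous images of closed subgroups are closed and that projections of normal subgroups are normal. The single genuine input is Theorem~\ref{posgrad}, together with the observation that positive rank gradient rules out open abelian subgroups; the remaining work is purely bookkeeping, and I expect the main care to be needed in applying the ``at most one open projection'' dichotomy to both $\phi$ and $\phi^{-1}$ so as to pin down $n=m$ and the permutation $\sigma$.
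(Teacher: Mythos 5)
Your proof is correct and follows essentially the same route as the paper's: both use Theorem~\ref{posgrad} to force the projections $\pi_j(\phi(\GG_i))$ to be trivial or open, and both rule out two indices hitting the same factor via the observation that two commuting open subgroups intersect in an open abelian subgroup, contradicting positive rank gradient, before running the argument for $\phi^{-1}$ to obtain $n=m$. The only differences are cosmetic (you track the map $j\mapsto\tau(j)$ and argue surjectivity, where the paper tracks $i\mapsto j(i)$ and argues injectivity).
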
 
\begin{proof} By assumption, we know that each $\GG_i$ and $\HH_j$ are infinite. For both of the groups $\GG_1\times \cdots \times \GG_n$ and  $\HH_1\times \cdots \times \HH_m$, we denote by $\pi_i$ the projection onto the $i$-th coordinate. 

For each $1\leq i \leq n$, we know that there must be $1\leq j\leq m$ such that $\pi_j(\phi(\GG_i))$ is an infinite subgroup of $\HH_j$. A priori, there may be multiple choices of $j$ for a given $i$, but let us take any choice $j(i)$ for every $1\leq i\leq n$. We know that $\pi_{j(i)}(\phi(\GG_i))$ is normal in $H_{j(i)}$ so, by \Cref{posgrad}, $\pi_{j(i)}(\phi(\GG_i))$ is open in $\HH_{j(i)}$. In particular, this shows that we cannot have for different $i_1\neq i_2$ that $j(i_1)=j(i_2)$. Otherwise, if $j=j(i_1)=j(i_2)$, then $\HH_{j}$ would contain the two open commuting subgroups $\pi_{j(i_1)}(\phi(\GG_{i_1}))$ and $\pi_{j(i_2)}(\phi(\GG_{i_2}))$, which would intersect in an open subgroup, implying that $\HH_{j}$ is infinite and virtually abelian (and hence it would not have positive rank gradient). This contradiction shows that $j$ had to be, in particular, an injection $\{1, \dots, n\}\lrar \{1, \dots, m\}$. Hence $n\leq m$. Reasoning analogously for the inverse map $\phi^{-1}$, we would also derive that $m\leq n$. Hence $m=n$ and so $j$ is a bijection. In fact, for the exact same reasons as above, for any $1\leq i\leq n$ and any $j\neq j(i)$, we must have that $\pi_{j}(\phi(\GG_i))=1.$ Lastly, it is also easy to observe that, since $\phi$ is an isomorphism, $\pi_{j(i)}(\phi(\GG_i))=\HH_{j(i)}$. This proves the lemma by just taking $\sigma=j$ and $\phi_i$ to be the restriction of $\pi_{j(i)}\circ \phi$ to $\GG_i$.
\end{proof}

The analogous statement of \Cref{s1} about free groups was first proved by Schreier \cite{Sch27}. In the abstract setting, the result can be shown for direct products of torsion-free hyperbolic groups by analysing the structure of centralisers. A similar analysis carries over to the pro-$p$ setting. Furthermore, the proof above, which relies on \Cref{posgrad}, is more flexible and suitable in our context, and will enable us to understand more than just automorphisms of direct products, but also surjections between them.

Combining similar arguments as in the proof of \Cref{s1}, we will see in \Cref{s2} how one can ensure scenarios when direct product decompositions can be read off from the pro-$p$ completion. However, we will require first two lemmas. 

\begin{lemma} \label{s3}  Let $n\geq 1$ be an integer and let $K_i$, with $1\leq i\leq n$, be finitely generated residually-$p$ groups with positive $p$-gradient. Consider the direct product $K=K_1\times \cdots \times K_n$ and suppose that $K'$ is an intermediate group $K\leq K'\leq K_{\hp}$ such that the index $|K': K|$ is finite. Then $K=K'$. 
\end{lemma}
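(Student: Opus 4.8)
The plan is to show that $K'$, being a finite-index overgroup of the direct product $K = K_1 \times \cdots \times K_n$ inside $K_{\hp}$, cannot be strictly larger than $K$. The key point is that each factor $K_i$ has positive $p$-gradient, hence so does $(K_i)_{\hp}$, and by L\"uck-type monotonicity (or simply the definition of $p$-gradient) the completions $(K_i)_{\hp}$ are pro-$p$ groups with positive rank gradient, to which \Cref{posgrad} applies.

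First I would observe that $K_{\hp} \cong (K_1)_{\hp} \times \cdots \times (K_n)_{\hp}$, and that the closure of $K$ in $K_{\hp}$ is all of $K_{\hp}$, so $K$ is dense in $K_{\hp}$. Since $K \le K' \le K_{\hp}$ with $|K':K|$ finite, the subgroup $K'$ is also dense in $K_{\hp}$; being a finite extension of a dense subgroup it is dense, but I want more: I want $K' = K$ as abstract groups. The strategy is to project. Let $\pi_i \colon K_{\hp} \to (K_i)_{\hp}$ be the $i$-th coordinate projection. For each $i$, consider $\pi_i(K')$; it contains $\pi_i(K) = K_i$ and is contained in $(K_i)_{\hp}$, and $|\pi_i(K') : K_i|$ divides $|K':K|$, hence is finite. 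So it suffices to treat the case $n = 1$: namely, if $L$ is a finitely generated residually-$p$ group with positive $p$-gradient and $L \le L' \le L_{\hp}$ with $|L':L|$ finite, then $L = L'$. Indeed, once this is known, each $\pi_i(K') = K_i$; but then $K' \le \prod_i \pi_i(K') = \prod_i K_i = K$, giving $K' = K$.

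For the case $n=1$: suppose $g \in L'$. Then $g$ normalises $L$ inside $L'$ (since $L \no L'$ would follow from finite index only if $L$ were normal — but actually any finite-index subgroup contains a further finite-index subgroup normal in $L'$, so after replacing $L$ by its normal core $L_0 \no L'$, which still has positive $p$-gradient being finite-index in $L$, and still satisfies $L_0 \le L_0' \le (L_0)_{\hp}$ where $L_0' = L'$ — here one uses that the $p$-gradient is a commensurability invariant and that the pro-$p$ topology on $L'$ restricts to the pro-$p$ topology on $L_0$). Now $L_0 \no L'$ and $L'$ is dense in $(L_0)_{\hp}$, so the closure $\overline{L_0}$ of $L_0$ in $(L_0)_{\hp}$ is a finitely generated (as $L_0$ is) closed normal subgroup of the pro-$p$ group $(L_0)_{\hp}$; but $\overline{L_0} = (L_0)_{\hp}$ already, so this tells us nothing directly. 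Instead, the cleaner route is: $L'$ acts on $(L_0)_{\hp}$ by conjugation, preserving the abstract subgroup $L_0$; consider the subgroup $\langle L_0, g \rangle \le (L_0)_{\hp}$ generated by $L_0$ and a single $g \in L'$. This is finitely generated (since $L_0$ is and we adjoin one element), residually-$p$ (as a subgroup of a pro-$p$ group it is residually-$p$ in its profinite, hence pro-$p$, topology — more carefully, $\langle L_0, g\rangle$ is a finite extension of $L_0$, hence finitely generated and residually-$p$), and its pro-$p$ completion maps to the closure of $\langle L_0, g\rangle$ in $(L_0)_{\hp}$, which is open (as it contains the open subgroup $\overline{L_0} = (L_0)_{\hp}$). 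So $g$ lies in the image of $(\langle L_0, g\rangle)_{\hp} \to (L_0)_{\hp}$; but by \Cref{propimage}-style reasoning — no, that was for nilpotent groups. Let me instead invoke directly: $\langle L_0, g\rangle$ contains $L_0$ as a finite-index normal subgroup, and $L_0$ has positive $p$-gradient; by \Cref{posgrad} applied to the pro-$p$ completion $(\langle L_0, g\rangle)_{\hp}$ — whose rank gradient is positive since $|\langle L_0,g\rangle : L_0|$ is finite and rank gradient scales by index — the closed normal subgroup generated by (the image of) $L_0$ is either finite or open; it is infinite, hence open, so $g$ has finite order modulo it, forcing $g^k \in L_0$ for some $k$. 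Since this holds for every $g \in L'$, the quotient $L'/L_0$ is a finite group in which every element has the property that a bounded power lies in $L_0$ — but that is automatic for a finite group and gives nothing new.

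The genuine obstacle, and where I expect to spend the most care, is ruling out the possibility that $L' \supsetneq L$ with $L'/L_0$ a nontrivial finite $p$-group glued on "inside" $L_{\hp}$. The right tool is \Cref{posgrad} combined with the following: if $L \le L' \le L_{\hp}$ and $|L':L| = p$, pick $g \in L' \setminus L$ with $g^p \in L$; after passing to the normal core $L_0 \no L'$, the conjugation action of $g$ on $(L_0)_{\hp}$ together with the relation $g^p \in L_0$ exhibits $(\langle L_0, g\rangle)_{\hp}$ as a pro-$p$ group of positive rank gradient containing $L_0$ as an infinite, finitely generated, closed normal subgroup of index $p$ — which is fine, open subgroups are allowed. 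So \Cref{posgrad} alone does not immediately contradict this. What does give the contradiction is counting: $d(L_0)$ and $d(\langle L_0, g\rangle)$ and the rank gradient inequality. Concretely, I would run the argument through $p$-gradients directly on the abstract groups: for the chain of finite-index subgroups, $\RG(L'; \cdot) = \RG(L; \cdot)/|L':L|$ would need to be consistent, but the key asymmetry is that $L$ is dense in $L_{\hp}$ and so is $L'$, forcing $\widehat{L'}^{(p)} \cong L_{\hp} \cong \widehat{L}^{(p)}$, i.e. the pro-$p$ completions coincide; then $\RG(L_{\hp}) = \RG(\widehat{L'}^{(p)})$. Now apply the inequality $\b(L_0) \le \RG((L_0)_{\hp})$ and additivity/multiplicativity of $\RG$ under finite index together with the fact — this is the crux — that if $L_0 \no L'$ has index $p$ then $\RG((L')_{\hp}) \cdot p = \RG((L_0)_{\hp})$ by the Nielsen–Schreier-type formula for rank gradient, while $(L')_{\hp} = (L_0)_{\hp}$ as $L_0$ has finite index $p$ in $L'$ so $(L_0)_{\hp}$ is an index-$\le p$ subgroup of $(L')_{\hp}$, forcing $\RG((L_0)_{\hp}) = p \cdot \RG((L')_{\hp})$ — consistent, again no contradiction. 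I therefore believe the correct final argument avoids rank-gradient arithmetic and instead uses \Cref{posgrad} as follows: consider the diagonal-type subgroup. The honest resolution, which I will write up in full, is to reduce to $n=1$ as above and then argue that a finitely generated residually-$p$ group $L$ with $\b(L) > 0$ (equivalently $\RG(L_{\hp}) > 0$) has the property that $L$ is not a proper finite-index subgroup of any subgroup of its own pro-$p$ completion; this is equivalent to saying $L$ is its own relative pro-$p$-closure-intersection, which follows because any such $L'$ would give $L'/L$ a finite $p$-group, hence $L'$ would be residually-$p$ with $L_{\hp} = (L')_{\hp}$, and then the identity component considerations via \Cref{posgrad} applied to the closure of $L$ in $(L')_{\hp}$ show that closure (which equals $(L')_{\hp}$, open, and finitely generated if $L$ is) cannot properly contain the index-bounded-power image of $L$ unless $L' = L$. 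I will present this clean version; the main obstacle is precisely pinning down this last equivalence and making sure the normal-core reduction does not alter the pro-$p$ topology.
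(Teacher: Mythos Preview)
Your reduction of the general case to $n=1$ via the coordinate projections $\pi_i$ is correct: each $\pi_i(K')$ sits between $K_i$ and $(K_i)_{\hp}$ with finite index over $K_i$, and once $\pi_i(K')=K_i$ for all $i$ you get $K'\subseteq\prod_i K_i=K$. This is in fact a bit slicker than the paper's induction (which instead passes through $K'\cap(K_1)_{\hp}$ and then $K'/K_1$), so there is no objection to that part.

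The problem is entirely in the base case $n=1$, where you never actually give a proof. You correctly observe that \Cref{posgrad} by itself does not yield a contradiction, you then run through a rank-gradient computation and declare it ``consistent, again no contradiction'', and you close with a promissory note (``I will present this clean version'') that is never cashed. But the contradiction is sitting right there in what you wrote. You recorded that $K'$ is dense in $K_{\hp}$, so $(K')_{\hp}$ surjects onto $K_{\hp}$ and hence $\RG(K')\ge\RG(K_{\hp})=\RG(K)$. You also recorded multiplicativity of the $p$-gradient under finite index: $\RG(K)=|K':K|\cdot\RG(K')$. Combine the two: $\RG(K)=|K':K|\cdot\RG(K')\ge|K':K|\cdot\RG(K)$, and since $\RG(K)>0$ this forces $|K':K|=1$. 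That is precisely the paper's argument for $n=1$, in three lines; it needs no normal-core reduction, no $L^2$-Betti numbers, and no appeal to \Cref{posgrad} or \Cref{propimage}. Your detour through those tools, and the ``obstacle'' you flag at the end, both dissolve once you put together the two inequalities you already wrote down.
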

\begin{proof}  We proceed by induction. Let $n=1$. Since $K'$ is dense in $K_{\hp}$, $\RG(K')\geq \RG(K)$. However, $\RG(K)=|K': K| \cdot \RG(K')$ and $\RG(K)>0$, so $|K':K|=1$ and the conclusion follows. When $n\geq 2$, consider the direct product decomposition $K_{\hp}\cong (K_1)_{\hp}\times \cdots \times (K_n)_{\hp}$. Denote by $\pi_1\po K_{\hp}\lrar(K_1)_{\hp} $ and $\pi^1\po  K_{\hp}\lrar(K_2)_{\hp}\times \cdots \times (K_n)_{\hp}$ the canonical projections. By considering the inclusion $K_1\leq \pi_1(K'\cap (K_1)_{\hp})\leq (K_1)_{\hp}$ and applying the lemma for $n=1$, we deduce that $K'\cap (K_1)_{\hp}=K_1$. Lastly, we can apply the lemma to the chain $K_2\times \cdots \times K_n\cong K/K_1\leq K'/K_1\leq K_{\hp}/(K_1)_{\hp}\cong (K/K_1)_{\hp}$, which, by the induction hypothesis, tells us that $K/K_1=K'/K_1$. Thus $K=K'$.
\end{proof}
Finally, we can state a sufficient condition to ensure that direct factors are recognised by the pro-$p$ completion. 
\begin{lem} \label{s4} Let $n\geq 1$ be an integer and let $H_i,$ with $1\leq i\leq n$, be finitely generated residually-$p$ groups with positive $p$-gradient. Let $H$ be a finitely generated group with $H_{\hp}\cong (H_1)_{\hp}\times \cdots \times (H_n)_{\hp}$ such that the subgroup generated by all the intersections $H\cap (H_i)_{\hp}$ has finite index in $H.$ Then $H=(H\cap (H_1)_{\hp})\times \cdots \times (H\cap (H_n)_{\hp})$.
\end{lem}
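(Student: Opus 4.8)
The plan is to prove that the subgroup $K:=\langle\, H\cap(H_i)_{\hp}\mid 1\le i\le n\,\rangle$ equals $H$; since, as we record first, $K$ is the internal direct product of the $K_i:=H\cap(H_i)_{\hp}$, this is exactly the assertion. Write $\AA_i:=(H_i)_{\hp}$, so that $H_{\hp}\cong\AA_1\times\cdots\times\AA_n$, and view $H\le H_{\hp}$ (note that $H$ is residually-$p$, being a subgroup of the pro-$p$ group $H_{\hp}$). Each $\AA_i$ is normal in $H_{\hp}$, so $K_i=H\cap\AA_i\n H$; moreover the $K_i$ pairwise commute and $K_i\cap\langle K_j\mid j\neq i\rangle\le\AA_i\cap\prod_{j\neq i}\AA_j=1$, whence $K=K_1\times\cdots\times K_n$ internally and $K\n H$. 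By hypothesis $[H:K]<\infty$; as $H$ is finitely generated so is $K$, hence so is each $K_i$ (a quotient of $K$), and each $K_i$ is residually-$p$.

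The core of the proof is to check that every $K_i$ has positive $p$-gradient, so that \Cref{s3} applies. Let $\overline{K_i}$ be the closure of $K_i$ in $\AA_i$. The subgroups $\overline{K_i}$ pairwise commute and $K$ is dense in $\overline{K}:=\overline{K_1}\times\cdots\times\overline{K_n}$, so $\overline{K}$ is the closure of $K$ in $H_{\hp}$; since $H$ is dense in $H_{\hp}$ and $[H:K]<\infty$, the index $[H_{\hp}:\overline{K}]$ is finite, so each $\overline{K_i}$ is open in $\AA_i=(H_i)_{\hp}$. An open subgroup of a finitely generated pro-$p$ group of positive rank gradient again has positive rank gradient, hence $\RG(\overline{K_i})>0$. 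Finally, the inclusion $K_i\hookrightarrow\AA_i$ induces a map $(K_i)_{\hp}\to\AA_i$ whose image is $\overline{K_i}$; using the behaviour of pro-$p$ completions under the relevant subgroup inclusions one checks that this map is injective, so $(K_i)_{\hp}\cong\overline{K_i}$, and therefore the $p$-gradient of $K_i$ equals $\RG(\overline{K_i})>0$.

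Now \Cref{s3} can be invoked. From $(K_i)_{\hp}\cong\overline{K_i}$ we obtain $K_{\hp}\cong\overline{K}$, an open subgroup of $H_{\hp}$. Set $\tilde H:=H\cap\overline{K}$: this is a finite-index subgroup of $H$, it contains $K$, and it lies inside $\overline{K}\cong K_{\hp}$. Applying \Cref{s3} to $K\le\tilde H\le K_{\hp}$ gives $\tilde H=K$, that is $H\cap\overline{K}=K$; consequently $H/K\cong H_{\hp}/\overline{K}\cong\prod_i(\AA_i/\overline{K_i})$ is a finite $p$-group. It then remains to see that $\overline{K_i}=\AA_i$ for every $i$, which forces $\overline{K}=H_{\hp}$, hence $\tilde H=H$ and $H=K=K_1\times\cdots\times K_n$; this last point can be obtained by a further comparison of indices that again exploits the positive rank gradient of each $\AA_i$ (for instance by induction on $n$, projecting $H$ onto $\AA_n$ and applying the case of $n-1$ factors to the kernel $H\cap(\AA_1\times\cdots\times\AA_{n-1})$, which is finitely generated and contains $K_1\times\cdots\times K_{n-1}$ with finite index).

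The step I expect to be the main obstacle is the identification $(K_i)_{\hp}\cong\overline{K_i}$ in the second paragraph: this amounts to showing that the pro-$p$ topology of $K_i$ coincides with the one induced from $H_{\hp}$, so that no finite $p$-quotient of $K_i$ is lost on passing to $(H_i)_{\hp}$. This is exactly where the finite-index hypothesis, together with the preliminary results on completions of subgroups, does the real work; once it is granted, the rigidity provided by \Cref{posgrad} (packaged into \Cref{s3}) reduces the rest to routine bookkeeping.
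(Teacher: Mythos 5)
Your overall strategy is reasonable, but it hinges on the identification $(K_i)_{\hp}\cong\overline{K_i}$, and this step --- which you yourself flag as the main obstacle --- is not established and does not follow from the ingredients you invoke. Injectivity of $(K_i)_{\hp}\to\overline{K_i}$ amounts to saying that the pro-$p$ topology of $H$ induces the full pro-$p$ topology on $K_i$, and neither normality nor the finite index of $K=K_1\times\cdots\times K_n$ in $H$ guarantees this, because $[H:K]$ need not be a power of $p$. Concretely: for $H=F_2$, $p=2$ and $K=\ker(F_2\to\Z/3\Z)$ a normal subgroup of index $3$, the closure of $K$ in $(F_2)_{\hat{2}}$ is all of $(F_2)_{\hat{2}}$ (free pro-$2$ of rank $2$), whereas $K_{\hat{2}}$ is free pro-$2$ of rank $4$, so the natural map $K_{\hat{2}}\to\overline{K}$ is surjective but not injective. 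The ``preliminary results on completions of subgroups'' you allude to (\cite[Theorem D]{Mor22}, used in \Cref{propimage}) concern nilpotent groups and do not apply here. Without this identification you cannot conclude that $K_i$ has positive $p$-gradient --- note that a pro-$p$ group surjecting onto one of positive rank gradient need not itself have positive rank gradient, e.g.\ $\FF\times\Z_p\twoheadrightarrow\FF$ with $\FF$ free pro-$p$ of rank $2$ --- so the appeal to \Cref{s3} in your third paragraph is not yet licensed. Your closing induction for $\overline{K_i}=\AA_i$ runs into the same problem, since it tacitly requires $\left(H\cap(\AA_1\times\cdots\times\AA_{n-1})\right)_{\hp}\cong\AA_1\times\cdots\times\AA_{n-1}$.

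The paper's proof avoids this issue entirely by inducting on $n$ and passing to the quotient $H^1=H/(H\cap(H_1)_{\hp})$ rather than to subgroups: the pro-$p$ completion of a quotient is always the quotient of $H_{\hp}$ by the corresponding closure, so no ``induced topology'' problem arises. One then checks that $H^1$ injects into $(H_2)_{\hp}\times\cdots\times(H_n)_{\hp}$, that the inductive hypothesis applies to the finite-index subgroup $H^1_0=H^1\cap\left((H_2)_{\hp}\times\cdots\times(H_n)_{\hp}\right)$ (the inductive statement includes the density of each $H\cap(H_i)_{\hp}$ in $(H_i)_{\hp}$, which is exactly what substitutes for your unproved identification), and that \Cref{s3} forces the index $k=\left|(H_1)_{\hp}/\overline{H\cap(H_1)_{\hp}}\right|$ to equal $1$. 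If you wish to keep your architecture, you must first prove that each $K_i$ is dense in $(H_i)_{\hp}$ and that $(K_i)_{\hp}$ maps isomorphically onto it; at present that is assumed rather than proved.
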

\begin{proof} We show by induction that, given the assumptions of the lemma, $H\cap (H_i)_{\hp} $  is dense in $(H_i)_{\hp}$ for all $i$ and that $H=(H\cap (H_1)_{\hp})\times \cdots \times (H\cap (H_n)_{\hp})$. The case $n=1$ is trivial. Now we consider $n\geq 2$ and suppose that it holds for $n-1.$ By assumption, the subgroup $\lan H\cap (H_1)_{\hp}, \dots, H\cap (H_n)_{\hp}\ran$ has finite index and hence $\lan \bar{H\cap (H_1)_{\hp}}, \dots, \bar{H\cap (H_n)_{\hp}}\ran$ is open in $H_{\hp}$, so each $\bar{H\cap (H_i)_{\hp}}$ is open in $(H_i)_{\hp}.$  We consider the group $H^1=H/H\cap (H_1)_{\hp}$. The natural map \[H^1\lrar \left((H_1)_{\hp}/\bar{H\cap (H_1)_{\hp}}\right)\times (H_2)_{\hp}\times \cdots \times (H_n)_{\hp}\] is a pro-$p$ completion map and is injective. By assumption, the first factor $(H_1)_{\hp}/\bar{H\cap (H_1)_{\hp}}$ is a finite $p$-group and has trivial intersection with $H^1$. In particular, $\UU=(H_2)_{\hp}\times \cdots \times (H_n)_{\hp}$ is an open subgroup of $(H^1)_{\hp}$. Suppose that $(H_1)_{\hp}/\bar{H\cap (H_1)_{\hp}}$ has size $k\geq 1$ and take the finite-index subgroup $H_0^1\leq H^1$ defined by $H_0^1=H^1\cap \UU $. Denote by $\pi^1\po (H^1)_{\hp}\lrar (H_2)_{\hp}\times \cdots \times (H_n)_{\hp}$ the canonical projection.  We know that $|H^1\po H_0^1|=k$ and that the two groups project injectively to  $(H_2)_{\hp}\times \cdots \times (H_n)_{\hp}\cong (H_0^1)_{\hp}$. By  the induction hypothesis, we also know that $H_0^1=K_2\times \cdots \times K_n$ with $(K_i)_{\hp}\cong (H_i)_{\hp}$ (where $K_i=H_0^1\cap (H_i)_{\hp}$). So, by \Cref{s3}, $k=1$, implying that $H^1=H_0^1=(H\cap (H_2)_{\hp})\times \cdots \times (H\cap (H_n)_{\hp})$ and that $H\cap (H_1)_{\hp}$ is dense in $(H_1)_{\hp}$. The conclusion follows.
\end{proof}
We finish this subsection with the following proposition which enables to recover direct product decompositions from the pro-$p$ completion; which think that this proposition may be of independent interest. It is also an important ingredient in the proof of \Cref{productsintro}.

\begin{prop} \label{s2}  Let $n\geq 1$ be an integer and let $H_i$, with $1\leq i\leq n$, be finitely generated residually-$p$ groups with positive $p$-gradient. Suppose that $H$ is a torsion-free and residually-$p$ finitely generated group. Suppose that  $H_{\hp}\cong (H_1)_{\hp}\times \cdots \times (H_n)_{\hp}$ and that there are pairwise-commuting infinite subgroups $N_i\leq H$ such that the natural map $N_1\times \cdots \times N_n \lrar \lan N_1, \dots, N_n\ran$ is an isomorphism and the subgroup $\lan N_1, \dots, N_n\ran $ has finite index in $H$.  Then there exist subgroups $N_i'\leq H$, each commensurable to $N_i$ respectively, such that $H= N_1'\times \cdots \times N_n'$.
\end{prop}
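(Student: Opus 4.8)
Write $\GG=H_{\hp}$ and $\GG_i=(H_i)_{\hp}$, and fix an identification $\GG\cong\GG_1\times\cdots\times\GG_n$ with coordinate projections $\pi_j\po\GG\rightarrow\GG_j$. Since $H$ is residually-$p$ it embeds densely in $\GG$, so I would regard $N_1,\dots,N_n$ and $N:=\langle N_1,\dots,N_n\rangle=N_1\times\cdots\times N_n$ as subgroups of $\GG$. The plan is: (i) use \Cref{posgrad} on the images $\pi_j(\overline{N_i})$ to show that, after relabelling the factors $\GG_i$, each $N_i$ projects to a \emph{finite} subgroup of $\GG_j$ for every $j\neq i$; (ii) conclude that a finite-index subgroup of $H$ splits compatibly with $\GG_1\times\cdots\times\GG_n$, so that $\langle H\cap\GG_1,\dots,H\cap\GG_n\rangle$ has finite index in $H$; (iii) apply \Cref{s4} to split $H$ itself; (iv) read off commensurability. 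Two preliminary observations feed step (i): since $H_j$ has positive $p$-gradient, $\GG_j$ has positive rank gradient, hence so does every open subgroup of $\GG_j$, and in particular no $\GG_j$ is virtually abelian; and since $N$ has finite index in the finitely generated group $H$, it is finitely generated, each $N_i$ is a finitely generated normal (direct factor) subgroup of $N$, and the closure $\overline{N}$ is open in $\GG$.

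The heart is step (i). For each $i,j$, the closure $\overline{N_i}$ is a topologically finitely generated closed normal subgroup of $\overline{N}$, so $\pi_j(\overline{N_i})$ is a topologically finitely generated closed normal subgroup of $\pi_j(\overline{N})$, which is open in $\GG_j$ (an open subgroup of a product contains a product of open subgroups); hence by \Cref{posgrad}, $\pi_j(\overline{N_i})$ is either finite or open in $\GG_j$. Put $S_i=\{\,j:\pi_j(\overline{N_i})\text{ is infinite}\,\}$. Then $S_i\neq\emptyset$, because $\overline{N_i}$ is infinite (it contains $N_i$) and hence cannot lie in a product of finite groups. Moreover $S_i\cap S_{i'}=\emptyset$ whenever $i\neq i'$: if some $j$ lay in both, then $\pi_j(\overline{N_i})$ and $\pi_j(\overline{N_{i'}})$ would be two open subgroups of $\GG_j$, and they would commute because $N_i$ and $N_{i'}$ commute and commuting is a closed condition; this would make $\GG_j$ virtually abelian, a contradiction. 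Since $S_1,\dots,S_n$ are then $n$ pairwise disjoint nonempty subsets of $\{1,\dots,n\}$, each is a singleton, and after relabelling the $\GG_i$ we may assume $S_i=\{i\}$ for all $i$. Consequently $\pi_j(N_i)\le\pi_j(\overline{N_i})$ is finite for every $j\neq i$.

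With this normalisation, $T_i:=N_i\cap\GG_i$ is the kernel of the (finite-image) map $N_i\rightarrow\prod_{j\neq i}\GG_j$, hence has finite index in $N_i$; it is therefore infinite, and $H_0:=T_1\times\cdots\times T_n$ is a finite-index subgroup of $H$ contained in $\GG$ with $T_i\le\GG_i$. Thus $\langle H\cap\GG_1,\dots,H\cap\GG_n\rangle\supseteq H_0$ has finite index in $H$, so \Cref{s4} applies and gives $H=(H\cap\GG_1)\times\cdots\times(H\cap\GG_n)$. Setting $N_i':=H\cap\GG_i$, one has $T_i\le N_i'$, and comparing $H=\prod_iN_i'\supseteq\prod_iT_i=H_0$ (both honest internal direct products, since the $\GG_i$ intersect trivially) shows $[N_i':T_i]<\infty$; hence $N_i'$ and $N_i$ share the finite-index subgroup $T_i$ and are commensurable, which is the assertion. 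I expect the only real difficulty to be step (i) — extracting the permutation and the statement that $N_i$ lives, up to finite index, in a single factor; this is exactly where positive rank gradient is used (through \Cref{posgrad} and the non-virtual-abelianness of the $\GG_j$), and it is the abstract-subgroup analogue of the rectification argument behind \Cref{s1}. Everything after it is index bookkeeping together with \Cref{s4}.
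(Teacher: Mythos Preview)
Your proof is correct and follows essentially the same strategy as the paper's: use \Cref{posgrad} on the projections $\pi_j(\overline{N_i})$ to obtain a bijection $i\mapsto j(i)$, pass to finite-index subgroups of the $N_i$ lying inside single factors, and then invoke \Cref{s4}. The only minor difference is that the paper first replaces the $N_i$ by finite-index subgroups normal in $H$ so that $\overline{N_i}\trianglelefteq H_{\hp}$, whereas you instead work with normality inside the open subgroup $\overline{N}$; both routes lead to the same application of \Cref{posgrad} and \Cref{s4}.
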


\begin{proof} Replacing each $N_i$ by one of its finite-index subgroups, we can assume that each $N_i$ is normal in $H.$ For each $1\leq j\leq n$, denote by $\pi_j \po H_{\hp}\lrar (H_j)_{\hp}$ the canonical projection. Since each $N_i$ is infinite, then each closure $\bar{N_i}$ is an infinite normal  subgroup of $H_{\hp}$. Hence, for all $i,$ there exists $j$ such that $\pi_j(\bar{N_i})$ is an infinite normal subgroup of $(H_j)_{\hp}$, which, by \Cref{posgrad}, must have finite index. Proceeding exactly as in the proof of \Cref{s1}, we can show that, for every $i$, the choice of such $j$ (denoted by $j(i)$) is unique. For every $i$ and $j\neq j(i)$, the subgroup $\pi_j(\bar{N_i})\leq (H_j)_{\hp}$ is finite. So it is clear that, replacing again each $N_i$ by smaller finite-index subgroups, we can suppose that for every $i$ and $j\neq j(i)$, $\pi_j(\bar{N_i})=1$ and hence $N_i\leq H\cap (H_{j(i)})_{\hp}$. Since the subgroup generated by all the $N_i$ is still finite-index in $H$, then we can apply \Cref{s4} to derive that $H= N_1'\times \cdots \times N_n'$, where $N_i'=H\cap (H_{j(i)})_{\hp}.$
\end{proof}

\subsection{Profinite completions of limit groups}
Here we recall a few results about profinite completions of limit groups that will be useful later. Zalesskii-Zapata use Sela's hierarchy of limit groups to show that if $L$ is a non-abelian limit group then $\hat L$ acts faithfully and irreducibly on a profinite tree. They use the description of this action to establish a number of properties that were known for abstract limit groups regarding their cohomological dimension and centralisers of elements, such as the following one. 

\begin{prop}[Corollary 4.4, \cite{Zal20}]\label{centre} Let $L$ be a non-abelian limit group. Then $\hat L$ is centreless.
\end{prop}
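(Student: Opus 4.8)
The plan is to derive this from the action of $\hat L$ on a profinite tree produced by Sela's hierarchy, following Zalesskii and Zapata. First I would record that every non-abelian limit group $L$ admits a non-trivial splitting over a cyclic subgroup: if $L$ is free or freely decomposable it splits as a non-trivial free product, and if $L$ is freely indecomposable and non-abelian then it cannot be rigid (rigid limit groups are free abelian by Theorem \ref{hierarchy_thm}), so it splits non-trivially over an infinite cyclic subgroup. Because limit groups are cohomologically good, these splittings are efficient, so $\hat L$ acts on the associated profinite tree with procyclic (or trivial) edge stabilisers; iterating through the hierarchy assembles these into the faithful and irreducible action of $\hat L$ on a profinite tree $T$ established by Zalesskii--Zapata.

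Next I would take $z$ in the centre of $\hat L$ and study its action on $T$. Since $z$ is central, its fixed-point set $T^{z}$ is $\hat L$-invariant: for $g\in\hat L$ one has $g\cdot T^{z}=T^{gzg^{-1}}=T^{z}$. If $z$ acted without a fixed point, then $\hat L$ would preserve the unique minimal $\overline{\langle z\rangle}$-invariant subtree, and as $\overline{\langle z\rangle}$ is procyclic this subtree is a profinite line, contradicting irreducibility of the $\hat L$-action. Hence $z$ is elliptic, so $T^{z}$ is a non-empty, closed, $\hat L$-invariant subtree; by minimality of the action $T^{z}=T$, so $z$ lies in the kernel of the action, and faithfulness forces $z=1$. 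Thus $Z(\hat L)=1$.

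A more streamlined route — and the one Zalesskii--Zapata essentially use — is to first extract from the tree action the profinite analogue of the CSA property, that the centraliser in $\hat L$ of any non-trivial element is abelian; then a non-trivial central element $z$ would make $\hat L=C_{\hat L}(z)$ abelian, contradicting non-abelianity of $L$ via the embedding $L\hookrightarrow\hat L$ (limit groups are residually finite). In either formulation the substantive point, and the main obstacle to a self-contained argument, is establishing that the hierarchical splittings of $L$ are efficient and combine into a genuine faithful, irreducible action of $\hat L$ on a profinite tree with tight control of the edge stabilisers; this is exactly the work of Zalesskii--Zapata and relies on goodness of limit groups together with the profinite Bass--Serre theory of Ribes and Zalesskii.
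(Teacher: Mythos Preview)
The paper does not give its own proof of this proposition: it is simply quoted as Corollary~4.4 of Zalesskii--Zapata \cite{Zal20}, with the preceding sentence recording that their argument rests on $\hat L$ acting faithfully and irreducibly on a profinite tree. So there is nothing in the paper to compare your proposal against beyond that one-line attribution.

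Your sketch is a reasonable reconstruction of how the cited result is obtained, and you correctly flag that the substantive content --- building the profinite tree from the cyclic hierarchy via goodness and showing the action is faithful and irreducible --- is exactly what is imported from \cite{Zal20}. Of your two routes, the second (profinite CSA: centralisers of non-trivial elements of $\hat L$ are abelian, so a non-trivial central element forces $\hat L$ abelian, contradicting $L\hookrightarrow\hat L$ non-abelian) is the one Zalesskii--Zapata actually use; their Corollary~4.4 is a statement about centralisers, from which centrelessness is immediate. Your first route is plausible in spirit but the step ``a procyclic subgroup without global fixed point has a unique invariant profinite line'' is not automatic in the profinite setting and would itself need the structure theory of \cite{Rib17,Zal20}; since you already concede that the tree action is the borrowed ingredient, it is cleaner to go via the centraliser statement.
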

The following consequence of \Cref{centre} will also be important in the proof of \Cref{productsintro}.
\begin{prop} \label{two} Let $ L$ be a non-abelian limit group. Suppose that there are two closed subgroups $\AA$ and $\BB$ of $\hat L$ with the following two properties:
\begin{itemize}
    \item For all $a\in \AA$ and $b\in \BB$, the elements $a$ and $b$ commute.
    \item For some prime $p$, there are embeddings $\Z_p\lc \AA$ and $\Z_p\lc \BB$.
\end{itemize}
Then the closed subgroup generated by $\AA$ and $\BB$ in $\hat L$ is not open.    
\end{prop}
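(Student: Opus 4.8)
The plan is to argue by contradiction: suppose the closed subgroup $\GG = \overline{\langle \AA, \BB\rangle}$ is open in $\hat L$, and derive a contradiction with \Cref{centre}. First I would reduce to the case where $\AA$ and $\BB$ are themselves the full copies of $\Z_p$ given by the hypothesis; shrinking $\AA$ and $\BB$ only shrinks $\GG$, and an open subgroup containing a smaller $\GG$ certainly remains open, so we may assume $\AA \cong \BB \cong \Z_p$ with $[\AA,\BB]=1$, and that $\GG = \overline{\AA\BB}$ is open in $\hat L$. The group $\GG$ is then a quotient of $\AA \times \BB \cong \Z_p \times \Z_p$ (via the multiplication map, which is a continuous homomorphism because the two factors commute), so $\GG$ is abelian.

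Next, the key point is to locate a nontrivial central element of $\hat L$. Since $\GG$ is open in $\hat L$, it has finite index, and it is normal in some further finite-index open subgroup, or more simply: the normal core $\NN = \bigcap_{g \in \hat L} g\GG g^{-1}$ is open and normal in $\hat L$, being an intersection of finitely many conjugates of the open subgroup $\GG$. Now $\NN \leq \GG$ is abelian, open, and normal in $\hat L$. I would then show $\NN$ is infinite: indeed $\NN$ is open in $\hat L$, and $\hat L$ is infinite because $L$ is a non-abelian limit group (limit groups are torsion-free and non-abelian ones are not virtually trivial), so $\NN$ is an infinite abelian normal subgroup of $\hat L$. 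The centraliser $C_{\hat L}(\NN)$ contains $\NN$, hence is open, hence of finite index; but an infinite abelian normal subgroup in a group whose finite-index subgroups have trivial centre forces $\NN \cap Z(\hat L')$ to be nontrivial for a suitable finite-index $\hat L'$ — more directly, $\NN$ being abelian and normal means $\NN \leq C_{\hat L}(\NN) \unlhd \hat L$ is open, and we may apply \Cref{centre} not to $\hat L$ but to a finite-index limit subgroup. Here I should pass to an abstract finite-index subgroup $L_0 \leq L$ with $\hat{L_0} \leq \hat L$ of finite index and $\hat{L_0} \leq C_{\hat L}(\NN)$ — possible by intersecting $C_{\hat L}(\NN)$ with $\hat{L_0}$ for $L_0$ ranging over finite-index subgroups, using that finite-index subgroups of limit groups are limit groups, and a non-abelian one exists (a non-abelian limit group has non-abelian finite-index subgroups). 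Then $\NN \cap \hat{L_0}$ is an infinite central subgroup of $\hat{L_0}$, contradicting \Cref{centre} applied to the non-abelian limit group $L_0$.

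The main obstacle I anticipate is the bookkeeping around passing to finite-index subgroups: \Cref{centre} says $\hat L$ is centreless, but to exploit an \emph{open} abelian normal subgroup $\NN$ I need to conjugate the statement into a finite-index limit subgroup and make sure it is still non-abelian, which requires knowing that a non-abelian limit group does not become virtually abelian. This is true (a limit group is virtually abelian iff it is abelian, by e.g. the hierarchy or the fact that $\Z^2 * \Z$-type subgroups persist), but I would want to state it cleanly. A slicker route, avoiding finite-index subgroups entirely: show directly that $\NN$ contains an element central in all of $\hat L$. Since $\NN$ is open and normal, $\hat L$ acts on $\NN$ by conjugation through a finite quotient $\hat L / C_{\hat L}(\NN)$; as $\NN \cong \Z_p^d$ for some $d \geq 1$ (it is an open subgroup of the abelian pro-$p$-by-finite... actually $\NN \leq \GG$ which is a quotient of $\Z_p^2$, so $\NN$ is a finitely generated abelian pro-$p$ group, hence $\NN \cong \Z_p^d \times (\text{finite})$ with $d \geq 1$), the finite group $\hat L/C_{\hat L}(\NN)$ acts on $\NN \otimes \Q_p \cong \Q_p^d$, and the subspace of invariants is nonzero provided... hmm, it need not be. So in fact the finite-index-subgroup argument is the robust one, and I will use it; the obstacle is purely the care needed to invoke \Cref{centre} on the right group.

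Finally, let me record the clean version of the argument I would write up. Assume $\GG = \overline{\langle\AA,\BB\rangle}$ is open; replacing $\AA,\BB$ by the embedded copies of $\Z_p$, $\GG$ is abelian and open in $\hat L$. Its core $\NN \unlhd \hat L$ is open, normal, and abelian. Choose a finite-index normal subgroup $L_0 \unlhd L$ such that $L_0$ is non-abelian (exists, since $L$ is non-abelian and finite-index subgroups of a non-abelian limit group are non-abelian — because a limit group is abelian iff it is virtually abelian, as abelianness is inherited from a finite-index subgroup via the hierarchy structure, \Cref{hierarchy_thm}) and $\hat{L_0} \leq C_{\hat L}(\NN) \cap$ (a cofinal system), i.e. $\hat{L_0}$ centralises $\NN \cap \hat{L_0}$; then $\NN \cap \hat{L_0}$ is a nontrivial (infinite, as open in $\hat L$) central subgroup of $\hat{L_0}$, contradicting \Cref{centre} for the non-abelian limit group $L_0$. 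This completes the proof.
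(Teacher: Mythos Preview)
Your reduction step at the outset is where the argument breaks. You write that ``shrinking $\AA$ and $\BB$ only shrinks $\GG$, and an open subgroup containing a smaller $\GG$ certainly remains open,'' and conclude that you may assume $\AA\cong\BB\cong\Z_p$ with $\GG=\overline{\AA\BB}$ open. But this is the wrong direction: replacing $\AA,\BB$ by their $\Z_p$-subgroups produces a \emph{smaller} closed subgroup $\GG'\leq\GG$, and there is no reason $\GG'$ inherits openness from $\GG$. Indeed $\GG'$, being a continuous image of $\Z_p\times\Z_p$, is an abelian pro-$p$ group of rank at most two; if it were open in $\hat L$ then $L$ would be virtually abelian, hence abelian, contrary to hypothesis. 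So your reduced hypothesis ``$\GG'$ abelian and open'' is never satisfied, and the normal-core argument that follows, while internally sound, is vacuous.

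Nor can the gap be patched by the rest of your strategy. Without the reduction, the original $\GG$ is open but need not be abelian (the hypothesis only says that $\AA$ commutes elementwise with $\BB$, not that either is abelian), so there is no open abelian subgroup to take the core of. Even after carrying out the paper's first step---passing to a finite-index non-abelian limit subgroup $L_1$ with $\hat{L_1}=\GG$ and using \Cref{centre} to obtain $\hat{L_1}\cong\AA\times\BB$---one still only has $Z(\hat{L_1})=Z(\AA)\times Z(\BB)$, and nothing so far forces either factor to have nontrivial centre. The paper's contradiction comes instead from a structural fact about $\hat{L_1}$ beyond centrelessness (namely projectivity, via \cite[Proposition 4.2.4]{Rib17}), which is incompatible with the embedded $\Z_p^2\leq\AA\times\BB$. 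Your approach, relying solely on \Cref{centre}, does not have access to this.
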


\begin{proof} We proceed by contradiction. Suppose that the closed subgroup $ \lan \AA, \BB\ran$ is open. Then it is isomorphic to $\hat L_1$ for some finite-index $L_1\lo L$, which will again be a non-abelian limit group. Hence we can suppose, without loss of generality, that $\lan \AA, \BB\ran=\hat L$. Since $\hat L$ is centreless, by \Cref{centre}, it is clear that $\AA\cap \BB=\{1\}$. So $\hat L\cong \AA\times \BB$ and, by  \cite[Proposition 4.2.4]{Rib17},   $\hat L$ is projective. However, by assumption, $\Z_p^2\lc \hat L$, which contradicts the projectivity of $\hat L$. 
\end{proof}

\subsection{Proof of \Cref{productsintro}}
We reformulate \Cref{productsintro} in an equivalent way to simplify the notation of the proof. For doing so, we simply notice that any product of free or surface groups has the form $\Ga=S_1\times \cdots \times S_n\times \Z^k$ with $n\geq 1$, $k\geq 0$ and all of the $S_i$ being non-abelian free or hyperbolic surface groups. 
\begin{thm}[\Cref{productsintro}] \label{products2} Let $n\geq 1$ and $k\geq 0$ be integers and let $G$ be a finitely presented residually free group. Let $S_1, \dots, S_n$ be non-abelian free or surface groups and consider the direct product $\Ga=S_1\times  \cdots \times S_n\times \Z^k$. If $\hat G\cong \hat \Ga$, then $G\cong \Ga$.
\end{thm}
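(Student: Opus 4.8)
The plan is as follows.

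\textbf{Reduction to a virtual direct product of limit groups.} Since $G$ is finitely presented and residually free it is of type $\FP_\infty(\Q)$ by \cite[Theorem D]{BridsonHowie13}, so \Cref{vir_direct_prod} provides a finite-index subgroup $G_0\lo G$ with $G_0\cong L_1\times\cdots\times L_m$, each $L_j$ a limit group; discarding trivial factors we arrange $L_1,\dots,L_s$ non-abelian and $L_{s+1},\dots,L_m$ free abelian. As $G$ is residually free it is residually-$p$ for every prime $p$ (residual freeness together with residual-$p$-ness of free groups), and so is $\Ga$; hence the isomorphism $\hat G\cong\hat\Ga$ descends to $G_{\hp}\cong\Ga_{\hp}$ for every $p$. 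I will also use \Cref{subprod} and \Cref{brihow} to view $G$ as a subdirect product of limit groups which, up to finite index, contains a term of its lower central series — this is what lets one transfer pro-$p$ information back to the abstract group, exactly as in the proofs of \Cref{sep} and \Cref{propimage}.

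\textbf{Pinning down the abelian part.} Non-abelian limit groups are centreless and, by \Cref{centre}, so are their profinite completions; the same applies to non-abelian free and surface groups. Hence $Z(\hat G_0)=\widehat{\Z^{k'}}$ with $k'=\operatorname{rank}(L_{s+1}\times\cdots\times L_m)$, while $Z(\hat\Ga)=\widehat{\Z^k}$; since $\hat G_0$ is open in $\hat G\cong\hat\Ga$ these centres are commensurable, forcing $k'=k$. A short argument using $Z(G)\hookrightarrow Z(\hat G)=\widehat{\Z^k}$ then gives $Z(G)\cong\Z^k$. To later realise this central $\Z^k$ as a genuine direct factor I will invoke \Cref{two} to preclude spurious commuting pro-$p$ subgroups inside $\hat\Ga$, and \Cref{propimage} to certify that the abstract abelian subgroup recovered in this way has finite index in (a complement of) the non-abelian part.

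\textbf{Recovering the direct product decomposition.} Each non-abelian $L_j$, and each non-abelian $S_i$, is finitely presented and residually-$p$, so by Lück approximation $0<\b(L_j)\le\RG((L_j)_{\hp})$ and similarly for $S_i$; thus every non-abelian factor has a pro-$p$ completion of positive rank gradient. Feeding the commuting finite-index product $L_1\times\cdots\times L_s\le G$, together with the central $\Z^k$, into \Cref{s2} — via \Cref{s3} and \Cref{s4}, which are precisely what allow one to pass from ``finite-index product'' to ``$G$ itself'' and to handle the zero-gradient abelian factor alongside the positive-gradient ones — I obtain $s=n$ and a genuine splitting $G\cong N_1\times\cdots\times N_n\times\Z^k$ in which, after reindexing, $\hat N_i\cong\hat S_i$. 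I expect \emph{this step to be the main obstacle}: the decomposition of $\hat\Ga$ only guarantees that \emph{some} finite-index subgroup of $G$ splits as a product, and promoting this to $G$ itself — while simultaneously disentangling the central $\Z^k$, whose completion carries no rank-gradient rigidity — is the delicate point, and is exactly what the machinery of \Cref{GradientSection} and \Cref{NilpotentSection} is built for.

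\textbf{Identifying the factors.} Each $N_i$ is a finitely generated residually free group with $\hat N_i\cong\hat S_i$, so every finite-index subgroup $H\le N_i$ lies in the genus of a free or surface group; by \cite[Theorem 1.1]{And212} it is therefore residually-$p$ for all $p$ with $H_{\hp}$ a free pro-$p$ group or a (Demushkin) surface pro-$p$ group — in particular one-relator. Then \Cref{Melnip} forces $N_i$ to be free or surface, and since free and surface groups are distinguished among themselves by their finite quotients (indeed by their abelianisations, using \Cref{mainlim} to separate the two families), the isomorphism $\hat N_i\cong\hat S_i$ yields $N_i\cong S_i$. Combined with $Z(G)\cong\Z^k$ this gives $G\cong S_1\times\cdots\times S_n\times\Z^k=\Ga$.
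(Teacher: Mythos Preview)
Your opening step contains a genuine error: finite presentability of a residually free group does \emph{not} imply type $\FP_\infty(\Q)$. Theorem~D of \cite{BridsonHowie13} only asserts the equivalence of finite presentability and $\FP_2(\Q)$ for residually free groups; the Stallings--Bieri groups furnish finitely presented residually free groups that fail $\FP_n$ for any prescribed $n$. Consequently you cannot invoke \Cref{vir_direct_prod} to obtain a finite-index subgroup of $G$ that is an honest direct product of limit groups, and the remainder of your argument --- which feeds such a product into \Cref{s2} --- loses its starting point.

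The paper circumvents this precisely by \emph{not} assuming virtual direct-product structure. It first replaces $G$ by $G_0=G/Z(G)$ to strip off the abelian part (using \Cref{nocentre}), then realises $G_0$ only as a \emph{subdirect} product of non-abelian limit groups $L_1\times\cdots\times L_m$ via \Cref{subprod}. The substantive work (Claims \ref{c2}--\ref{c4}) is to prove $m=n$ and that $G_0$ actually has \emph{finite index} in $L_1\times\cdots\times L_n$: here \Cref{brihow} supplies an inclusion $\ga_N E_1\le G_0$, and one argues at the level of pro-$p$ completions --- using \Cref{posgrad} together with a pigeonhole matching of factors, and then \Cref{propimage} to transfer the finite-index conclusion from the nilpotent pro-$p$ quotient back to the abstract nilpotent quotient $E_1/\ga_N E_1$ --- that the inclusion has finite index. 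Only \emph{after} this is established does \Cref{s2} apply. Your proposal gestures at \Cref{brihow} and \Cref{propimage} but does not explain how they combine to \emph{produce} the finite-index direct product that you are instead attempting to assume.

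A secondary gap concerns the central $\Z^k$. \Cref{s2}, and Lemmas \ref{s3} and \ref{s4} feeding into it, require every factor to have positive $p$-gradient; $\Z_p^k$ has rank gradient zero, so the abelian factor cannot be threaded through that machinery ``alongside the positive-gradient ones'' as you suggest. The paper handles this by working with the centreless quotient $G_0$ throughout and only reattaching the centre at the very end (Claim \ref{c7}), where the short exact sequence $1\to\Z^k\to G\to G_0\to 1$ is shown to split via goodness of $G_0$ and \cite[Lemma 8.3]{Wilton17}.
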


We will quickly summarise the different stages of the proof of \Cref{products2}.

\begin{itemize}
    \item[(Step 1)] We consider $G/Z(G)$ to work under the additional hypothesis of $G$ having trivial centre. This case will be enough as we explain in \Cref{c7}.
    \item[(Step 2)] We view $G$ as the subdirect product of $n$ non-abelian limit groups $L_1\times \cdots \times L_n$ with claims \ref{c1} and \ref{c2}. 
    \item[(Step 3)] Using the structure theory of finitely presented subgroups of direct products of limit groups due to Bridson--Howie--Miller--Short (\Cref{brihow}), in combination with properties of Hirsch lengths in nilpotent groups (\Cref{NilpotentSection}), we see in \Cref{c4} that, in fact, $G$ has  finite index in $L_1\times \cdots \times L_n$. 
    \item[(Step 4)] This clearly shows that $G$ has a finite-index subgroup $H$ that is a direct product of $n$ limit groups. Since each $L_i$ has positive $p$-gradient, we can apply the methods of  \Cref{GradientSection}  to lift the  direct product decomposition structure of $H$ to $G$. This is done in \Cref{c5}.
    \item[(Step 5)] Now we have an isomorphism $G\cong G_1\times \cdots \times G_n$, with limit groups $G_i$. We show in \Cref{c6} that each is a free or a surface group using, again, the results from \Cref{GradientSection}.
\end{itemize}

After having given this sketch,  we move on to explain the proof of \Cref{products2} in detail. As anticipated above, the following lemma will allow us to work under the extra assumption of a centreless group $G$.
\begin{lem}[\cite{Bau67a}, Lemma 4] \label{nocentre} Let $G$ be a residually free group. Then $G/Z(G)$ is a residually free group with trivial centre. 
\end{lem}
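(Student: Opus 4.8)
The plan is to base the whole argument on one elementary observation and the bare definition of residual freeness; no appeal to commutative transitivity, the CSA property, or the subdirect-product structure of Proposition \ref{subprod} is needed. The key point is the following. \textbf{Claim:} if $\phi\colon G\to F$ is a homomorphism to a free group whose image $\phi(G)$ is nonabelian, then $Z(G)\subseteq\ker\phi$. Indeed, $\phi(G)$ is a subgroup of a free group, hence free by Nielsen--Schreier, and a nonabelian free group is centreless; since $\phi(Z(G))$ is a central subgroup of $\phi(G)$, it must be trivial.

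First I would show that $\bar G:=G/Z(G)$ is residually free. Let $gZ(G)$ be a nontrivial element of $\bar G$, i.e.\ $g\notin Z(G)$; by definition of the centre there is $h\in G$ with $[g,h]\neq 1$. Residual freeness of $G$ gives a homomorphism $\psi\colon G\to F$ to a free group with $\psi([g,h])\neq 1$, so $\psi(g)$ and $\psi(h)$ do not commute in $F$. In particular $\psi(g)\neq 1$ and $\psi(G)$ is nonabelian, so by the Claim $\psi(Z(G))=1$; hence $\psi$ factors through a homomorphism $\bar\psi\colon\bar G\to F$ with $\bar\psi(gZ(G))=\psi(g)\neq 1$. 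As $gZ(G)$ ranges over all nontrivial elements of $\bar G$, this proves $\bar G$ is residually free.

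Next I would show $Z(\bar G)=1$ by the same device. Suppose $z\in G$ satisfies $zZ(G)\in Z(\bar G)$; I claim $z\in Z(G)$. If not, there is $h\in G$ with $[h,z]\neq 1$, and since $zZ(G)$ is central in $\bar G$ the commutator $w:=[h,z]$ lies in $Z(G)$. By residual freeness there is $\psi\colon G\to F$ with $\psi(w)=\psi([h,z])\neq 1$; then $\psi(h)$ and $\psi(z)$ do not commute, so $\psi(G)$ is nonabelian, and the Claim forces $\psi(Z(G))=1$, contradicting $\psi(w)\neq 1$. Hence $z\in Z(G)$, so $Z(\bar G)=1$.

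The proof is short, and the only real content is the Claim — the observation that any homomorphism from $G$ to a free group with nonabelian image automatically kills $Z(G)$, because nonabelian free groups have trivial centre. The one point requiring care is the bookkeeping in the second step: one must notice that the hypothesis $zZ(G)\in Z(\bar G)$ is precisely what places the \emph{nontrivial} commutator $[h,z]$ into $Z(G)$, which is exactly what is needed to derive the contradiction. (Degenerate cases, e.g.\ $G$ abelian, are automatically fine since then $\bar G$ is trivial.)
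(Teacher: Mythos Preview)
Your proof is correct. The paper itself does not give a proof of this lemma; it simply quotes it as Lemma~4 of Baumslag's paper \cite{Bau67a}, so there is nothing to compare against directly in the present article.

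For context, Baumslag's original argument in \cite{Bau67a} is in the same spirit but is organised slightly differently: he first establishes that residually free groups are commutative transitive, and then uses this to control centralisers. Your proof deliberately bypasses commutative transitivity and works straight from the definition, exploiting only the single observation (your Claim) that any homomorphism to a free group with nonabelian image must kill the centre. This makes your argument marginally more self-contained; the trade-off is that Baumslag's commutative-transitivity lemma is independently useful elsewhere, so in his paper it is natural to route through it.
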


We will consider $G_0=G/Z(G)$, whose profinite completion is $\hat{G_0}\cong\hat{G}/\bar{Z(G)}$. Since each $\hat S_i$ is centreless, we know that $Z(\hat G)=\hat \Z^k$ and hence $\hat{G_0}\cong  \hat S_1\times \cdots \times \hat S_n\times \AA,$ where $\AA=Z(\hat G)/\bar{Z(G)}$ is an abelian profinite group. 
\begin{claim} \label{c1} The group $G_0$ is finitely presented and residually free, and its profinite completion is isomorphic to $\hat {G_0}\cong \hat S_1\times \cdots \times \hat S_n$.
\end{claim}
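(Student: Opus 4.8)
The plan is to assemble \Cref{c1} from \Cref{nocentre} together with two further observations: that $G_0=G/Z(G)$ is finitely presented, and that passing to profinite completions collapses exactly the central directions. \Cref{nocentre} already tells us that $G_0$ is residually free with trivial centre, so only the finite presentability and the identification of $\hat{G_0}$ remain.

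For finite presentability I would first bound the centre. By \Cref{subprod}, $G$ is a subdirect product of limit groups $L_1\times\cdots\times L_m$, and after discarding redundant factors each projection $G\to L_i$ is onto; since a non-abelian limit group is centreless (commutative transitivity), $Z(G)$ is contained in the product of the abelian $L_i$'s, hence is finitely generated free abelian (residually free groups being torsion-free). A finite presentation of $G_0$ is then obtained from one of $G$ by adjoining, as extra relators, words representing a finite generating set of the central subgroup $Z(G)$. It is also worth recording here, for use below, that $Z(G)\cap[G,G]=1$: indeed $Z(G)$ lies in the abelian coordinates of $L_1\times\cdots\times L_m$ while $[G,G]$ lies in the non-abelian ones, so $Z(G)$ embeds into $G^{ab}$.

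For the profinite completion I would use that profinite completion is right exact, so $\hat{G_0}\cong \hat G/\bar{Z(G)}$, where $\bar{Z(G)}$ denotes the closure of the image of $Z(G)$ in $\hat G$. As the centraliser in $\hat G$ of each element of the dense subgroup $G$ is closed, that image is central; in fact $Z(G)=G\cap Z(\hat G)$ and $\bar{Z(G)}\leq Z(\hat G)$. Now invoke $\hat G\cong\hat\Ga=\hat S_1\times\cdots\times\hat S_n\times\hat\Z^k$: each $S_i$ is a non-abelian limit group, so $\hat S_i$ is centreless by \Cref{centre}; hence $Z(\hat G)\cong\hat\Z^k$ and $\hat G/Z(\hat G)\cong\widehat{S_1\times\cdots\times S_n}$, which is again centreless. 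Passing to quotients gives $\hat{G_0}\cong\widehat{S_1\times\cdots\times S_n}\times\AA$ with $\AA=Z(\hat G)/\bar{Z(G)}=Z(\hat{G_0})$ an abelian profinite group that is a quotient of $\hat\Z^k$.

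It remains to show $\AA=1$, which is the crux. I would proceed in two steps. First, $\AA$ is torsion-free: a finitely presented residually free group is cohomologically good and has finite cohomological dimension (being a subgroup of a product of limit groups), so $\cd_p(\hat{G_0})=\cd_p(G_0)<\infty$ for every prime $p$, whence $\hat{G_0}$, and in particular $\AA=Z(\hat{G_0})$, is torsion-free. Second, a rank count kills the $\hat\Z$-rank of $\AA$: since $b_1$ is a profinite invariant, $b_1(G_0)$ equals both $\sum_i b_1(S_i)+\operatorname{rank}_{\hat\Z}(\AA)$ (read off from $\hat{G_0}\cong\widehat{S_1\times\cdots\times S_n}\times\AA$) and $b_1(G)-\operatorname{rank}(Z(G))=\sum_i b_1(S_i)+k-\operatorname{rank}(Z(G))$ (using $\hat G\cong\hat\Ga$ and $Z(G)\hookrightarrow G^{ab}$); comparing these, together with the fact that the subdirect-product structure of $G$ forces $Z(G)$ to be dense in $Z(\hat G)\cong\hat\Z^k$, yields $\operatorname{rank}_{\hat\Z}(\AA)=0$. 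A torsion-free abelian profinite group of $\hat\Z$-rank $0$ is trivial, so $\AA=1$ and $\hat{G_0}\cong\widehat{S_1\times\cdots\times S_n}=\hat S_1\times\cdots\times\hat S_n$, as claimed. The main obstacle is precisely this last bookkeeping: verifying that $Z(G)$ is dense in $Z(\hat G)$ (equivalently, that $\operatorname{rank}(Z(G))=k$) is where the finite presentability of $G$ — through \Cref{brihow}, which lets one treat $G$ as virtually containing a term of the lower central series of a finite-index subgroup of $L_1\times\cdots\times L_m$ — is genuinely used, since profinite completion does not interact well with passage to subdirect products.
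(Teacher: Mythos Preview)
Your handling of finite presentability and residual freeness is fine and matches the paper, and you correctly arrive at $\hat{G_0}\cong\hat S_1\times\cdots\times\hat S_n\times\AA$ with $\AA$ an abelian quotient of $\hat\Z^k$. The divergence is entirely in how you dispose of $\AA$, and here there are two genuine gaps.

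First, you invoke cohomological goodness of an arbitrary finitely presented residually free group to conclude that $\hat{G_0}$ (hence $\AA$) is torsion-free. The paper only records goodness for limit groups and HGFC-groups; goodness for general finitely presented residually free groups is neither proved nor cited, and it does not follow formally from goodness of the ambient product of limit groups. Second, and more seriously, you explicitly identify the density of $Z(G)$ in $Z(\hat G)\cong\hat\Z^k$ as ``the main obstacle'' and then do not prove it, gesturing instead at \Cref{brihow}. But \Cref{brihow} only produces $\gamma_N E\le G$ for some finite-index $E$; since $\gamma_N$ of each abelian factor vanishes for $N\ge 2$, this says nothing about the central directions of $G$. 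Note also that density alone would already give $\bar{Z(G)}=\hat\Z^k$ and hence $\AA=1$ outright, so your two-step ``torsion-free plus rank zero'' strategy is both incomplete and redundant once the real difficulty is faced.

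The paper avoids all of this by working with $G_0$ rather than $G$. Since $G_0$ has trivial centre, a \emph{minimal} subdirect embedding $G_0\hookrightarrow L_1\times\cdots\times L_m$ from \Cref{subprod} must have every $L_j$ non-abelian (an abelian $L_j$ would meet $G_0$ in a nontrivial central subgroup). By \Cref{sep} this induces an injection $\hat{G_0}\hookrightarrow\hat L_1\times\cdots\times\hat L_m$, and by \Cref{centre} each $\hat L_j$ is centreless. The central subgroup $\AA\le\hat{G_0}$ then projects trivially to every coordinate, so $\AA=1$. No goodness, no rank count, and no appeal to \Cref{brihow} is needed here; those tools enter only later, in Claims~\ref{c3}--\ref{c4}.
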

\begin{proof}
By \Cref{nocentre} and the finite generation of $Z(G)$, we know that $G_0$ is still a finitely presented residually free group. By \Cref{subprod}, $G_0$ is a subdirect product $G_0\hookrightarrow L_1\times \cdots \times L_m$ of $m$ limit groups, for some $m$.
We pick such $m$ to be minimal. Hence, in particular, $G_0$ intersects non-trivially each factor $L_i$. Since $G_0$ is centreless, each $L_i$ is non-abelian. So by  \Cref{centre}, $\hat L_i$ is centreless, too. 

For each $1\leq j\leq m$, denote by $\pi_j\po   L_1\times \cdots \times L_m \lrar L_j$ the projection onto the $j$-th coordinate. By \Cref{sep}, the induced map in profinite completions $ \hat{G_0}\hookrightarrow \hat{L_1}\times \cdots \times \hat{L_m}$ is injective. Since we also know that  the maps $\hat{\pi_j}\po \hat {G_0} \lrar \hat L_j$ are surjective and that each $\hat L_j$ is centreless, it follows that $\hat{\pi_j}(\AA)=1$ for all $j$, implying that $\AA=1$.
\end{proof}

\begin{claim} \label{c2} We must have $n=m$. 
\end{claim}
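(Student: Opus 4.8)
The plan is to exploit the two descriptions of $\hat{G_0}$ provided by \Cref{c1} simultaneously: the genuine direct product decomposition $\hat{G_0}\cong\hat S_1\times\cdots\times\hat S_n$, and the induced subdirect embedding $\hat{G_0}\hookrightarrow\hat L_1\times\cdots\times\hat L_m$ (injective by \Cref{sep}, as established in the proof of \Cref{c1}) with surjective projections $\hat\pi_j\colon\hat{G_0}\twoheadrightarrow\hat L_j$ extending the coordinate maps $\pi_j\colon G_0\twoheadrightarrow L_j$. For each $j$ set $A_{ij}=\hat\pi_j(\hat S_i)$; since $\hat S_i\trianglelefteq\hat{G_0}$ and $\hat\pi_j$ is onto, $A_{ij}$ is a closed normal subgroup of $\hat L_j$, the $A_{1j},\dots,A_{nj}$ pairwise commute, and together they generate $\hat L_j$. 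The key step is to prove that for each $j$ exactly one $A_{ij}$ is non-trivial. Both $\hat L_j$ and each $\hat S_i$ are torsion-free (limit, free and surface groups are cohomologically good of finite cohomological dimension), and $\hat L_j$ is centreless by \Cref{centre}; hence an $A_{ij}$ that is finite or abelian must be trivial, since an abelian $A_{ij}$ commutes with itself and with every $A_{kj}$, so lies in $Z(\hat L_j)=1$. Thus every $A_{ij}$ is trivial or infinite non-abelian, and the pairwise intersections $A_{ij}\cap A_{i'j}$, being central in $\hat L_j$, vanish, so $\hat L_j=\prod_{A_{ij}\neq 1}A_{ij}$ is an internal direct product. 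If two factors were non-trivial we could write $\hat L_j\cong\AA\times\BB$ with $\AA,\BB$ infinite and torsion-free; each of them then contains a copy of $\Z_p$ for every prime $p$ dividing its order (such a $p$ exists, being infinite). If some $p$ divides both orders, then \Cref{two}, applied to the commuting subgroups $\AA,\BB\leq\hat L_j$, gives that $\lan\AA,\BB\ran$ is not open in $\hat L_j$, contradicting $\lan\AA,\BB\ran=\hat L_j$. If instead the prime sets of $\AA$ and $\BB$ are disjoint, then every finite quotient of $\hat L_j\cong\AA\times\BB$ splits as a direct product of a group whose order involves only primes of $\AA$ and one involving only primes of $\BB$; but $L_j$ is a non-abelian limit group, so it has $F_2$, hence the wreath product $C_p\wr C_q$ (with $p$ a prime of $\AA$ and $q$ one of $\BB$), as a quotient, and $C_p\wr C_q$ is not such a product — a contradiction. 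Since $\hat L_j\neq1$ forces at least one $A_{ij}$ non-trivial, exactly one is; write it $A_{\sigma(j),j}$, so $\hat\pi_j$ restricts to a surjection $\hat S_{\sigma(j)}\twoheadrightarrow\hat L_j$ and kills $\hat S_k$ for all $k\neq\sigma(j)$.

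This defines a map $\sigma\colon\{1,\dots,m\}\to\{1,\dots,n\}$, which is surjective: if $i$ were not in its image then $\hat S_i\subseteq\bigcap_j\ker\hat\pi_j=1$, contradicting $\hat S_i\neq1$. Hence $n\leq m$. For the reverse inequality I would show $\sigma$ is injective. Minimality of $m$ forces $M_j:=\bigcap_{l\neq j}\ker\pi_l\leq G_0$ to be non-trivial for every $j$ (otherwise $G_0$ would embed subdirectly into $\prod_{l\neq j}L_l$, a product of $m-1$ limit groups). Elements of $M_j$ live only in the $j$-th coordinate of $L_1\times\cdots\times L_m$, so $M_j$ and $M_{j'}$ commute for $j\neq j'$. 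Using $\overline{\ker\pi_l}=\ker\hat\pi_l$ together with $\hat S_k\subseteq\ker\hat\pi_l$ for $k\neq\sigma(l)$, one sees that $\ker\hat\pi_l=\bar K^{(l)}\times\prod_{k\neq\sigma(l)}\hat S_k$ for some $\bar K^{(l)}\trianglelefteq\hat S_{\sigma(l)}$; subdirectness ($\bigcap_l\ker\hat\pi_l=1$) forces $\bigcap_{l\colon\sigma(l)=i}\bar K^{(l)}=1$ for each $i$, and feeding this back gives $\overline{M_j}\subseteq\bigcap_{l\neq j}\ker\hat\pi_l\subseteq\hat S_{\sigma(j)}$. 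Now suppose $\sigma(j)=\sigma(j')=:i$ with $j\neq j'$. Then $\overline{M_j}$ and $\overline{M_{j'}}$ are non-trivial commuting closed normal subgroups of $\hat S_i$. But $\hat S_i$ is the profinite completion of a non-abelian free or surface group, and such completions have trivial centraliser for every non-trivial closed normal subgroup — a non-trivial closed normal subgroup of $\hat S_i$ is non-abelian (these completions have no non-trivial abelian closed normal subgroup), and the centraliser of a non-abelian subgroup is contained in the intersection of two distinct maximal procyclic subgroups, hence trivial. Therefore $\overline{M_{j'}}=1$, so $M_{j'}=1$, contradicting minimality. Thus $\sigma$ is injective, $m\leq n$, and $n=m$.

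I expect the main obstacle to be the two direct-indecomposability inputs: that $\hat L_j$ is not a product of two infinite profinite groups, and that $\hat S_i$ has no pair of commuting non-trivial closed normal subgroups. The first is exactly where \Cref{two} enters, but \Cref{two} requires a prime common to the two factors, so the disjoint-prime case must be disposed of separately (done above via a wreath-product quotient of $F_2$); the second relies on the known description of centralisers and of abelian normal subgroups in profinite completions of free and surface groups. The remaining delicate point is bookkeeping: matching the purely abstract minimality of $m$ with statements about closures inside $\hat{G_0}$ (in particular the identifications $\overline{\ker\pi_l}=\ker\hat\pi_l$ and $\overline{M_j}\subseteq\hat S_{\sigma(j)}$).
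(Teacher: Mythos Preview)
Your approach is genuinely different from the paper's and considerably more elaborate. The paper proves $m\leq n$ in two lines: since $G_0\cap L_j\neq 1$ for each $j$ and limit groups are torsion-free, non-trivial elements $x_j\in G_0\cap L_j$ pairwise commute and generate $\Z^m\hookrightarrow G_0$; by \Cref{sep} this yields $\hat\Z^m\hookrightarrow\hat{G_0}\cong\prod_i\hat S_i$, and since no $\hat S_i$ contains $\Z_p^2$, pigeonhole gives $m\leq n$. For $n\leq m$ the paper fixes a prime $p$ and copies $\Z_p\cong\ZZ_i\leq\hat S_i$; each $\ZZ_i$ must project isomorphically to some $\hat L_{j(i)}$ (by torsion-freeness of $\hat L_j$), so if $n>m$ pigeonhole produces $k_1\neq k_2$ with $j(k_1)=j(k_2)=j$, and \Cref{two} applied to $\hat\pi_j(\hat S_{k_1})$ and $\hat\pi_j\big(\prod_{i\neq k_1}\hat S_i\big)$ --- which commute, each contain $\Z_p$, and together generate $\hat L_j$ --- gives a contradiction. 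Your construction of $\sigma$ and its surjectivity are correct and in fact extract more than the paper does at this stage: you show each $\hat\pi_j$ kills all but one factor $\hat S_{\sigma(j)}$, a structural statement the paper only reaches later, and only in pro-$p$ form, during Claims~\ref{c3}--\ref{c5}. Your disjoint-prime case via wreath-product quotients of $F_2$ is a nice touch that the paper's pigeonhole does not require.

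The gap is in your injectivity argument for $m\leq n$. You need that $\hat S_i$ admits no pair of non-trivial commuting closed normal subgroups, and your justification rests on two assertions left unproved: that distinct maximal procyclic subgroups of $\hat S_i$ intersect trivially, and that $\hat S_i$ has no non-trivial abelian closed normal subgroup. These may well follow from the Zalesskii--Zapata description of centralisers in $\hat L$, but you supply neither a proof nor a reference. Moreover, \Cref{two} does not rescue this step: even if $\overline{M_j}$ and $\overline{M_{j'}}$ share a prime, \Cref{two} only says $\langle\overline{M_j},\overline{M_{j'}}\rangle$ is not open in $\hat S_i$, and nothing in your setup forces it to be open. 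The paper's $\Z^m$-embedding sidesteps all of this and is a one-line replacement for your entire injectivity argument.
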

\begin{proof}
Since limit groups are torsion-free and $G\cap L_j\neq 1$ for each $1\leq j\leq m$, we must have an injection $\Z^m\hookrightarrow G_0$. By \Cref{sep}, this leads to another injection in profinite completions $\hat \Z^m\hookrightarrow \hat {G_0}\cong \hat S_1\times \cdots \times \hat S_n$, which, in combination with the standard fact that no $\hat S_i$ contains $\Z_p^2$, implies that $m\leq n$. 

Lastly, we want to show that $n\leq m$. We proceed by contradiction. Assume that $n\geq m+1$. For each $1\leq i\leq n$, consider a closed infinite pro-$p$ cyclic subgroup $\Z_p\cong \ZZ_i\leq \hat S_i$. Since $n\geq m+1$, there must exist two different $\ZZ_{k_1}$ and $\ZZ_{k_2}$ such that both of their images $\hat{\pi_j}(\ZZ_{k_1})$ and $\hat{\pi_j}(\ZZ_{k_2})$ in some $ \hat L_j$ are isomorphic to $\Z_p$. However, this would contradict  \Cref{two}, since then the two pairwise commuting closed groups $\hat{\pi_j}(\hat S_{k_1})$ and $\hat{\pi_j}(\prod_{i\neq k_1} \hat S_{i})$ both would contain a copy of $\Z_p$ and would generate $\hat L_j$. This contradiction shows that $n\leq m$, completing the proof of \Cref{c2}.\end{proof}

We will now introduce additional notation required for the proof of \Cref{c4}, which states that the injection $G_0\hookrightarrow L_1\times \cdots \times L_m$ has a finite-index image. By \Cref{brihow},  there exists an integer $N$, a finite-index subgroup $E\leq L_1\times \cdots \times L_m$ and a finite-index subgroup $G_1\leq G_0$ such that $\ga_N E\leq G_1$. We notice that the subgroup $E_1=(E\cap L_1)\times \cdots \times (E\cap L_n)\leq E$ has finite index in $E$ and that, by defining $G_2=E_1\cap G_1$, we still have the property that $G_2\leq G$ has finite index and that $\ga_N E_1\leq E_1\cap G_1=G_2$. We denote the limit groups $U_i=E\cap L_i$. The projection of the direct product $U_1\times  \cdots \times U_n$ onto its $i$-th coordinate  will still be denoted by $\pi_i$. The closure of $G_2$ inside $\hat G_0$ is an open subgroup $\bar{G_2}\lo \hat G_0$. Denote $\KK_i=\bar{G_2}\cap \hat S_i$ and notice that $\KK=\KK_1\times \cdots \times \KK_n$ is an open subgroup of $\bar{G_2}.$  We define $G_3=\KK\cap G_0$, which has the property that  $\hat G_3\cong \KK$.   Summarising, $G_3\leq G_0$ has finite index,  $\hat G_3\cong \KK_1\times \cdots \times \KK_n$, $E_1=U_1\times \cdots \times U_n$, and there is an injection $f\po G_3\hookrightarrow E_1$ such that each $\pi_j(G_3)$ has finite index in $U_j$. Since each $\KK_i$ is open in $\hat S_i$, they are profinite completions of non-abelian free or hyperbolic surface groups. We denote by $(\KK_i)_{p}$ the maximal pro-$p$ quotient of the profinite group $\KK_i$. It is not hard to see that $(\KK_i)_{p}$ is the pro-$p$ completion of a non-abelian free or hyperbolic surface group and that $(G_3)_{\hp}\cong (\KK_1)_{p}\times \cdots \times (\KK_n)_{p}.$

\begin{claim} \label{c3} The induced map in pro-$p$ completions $f_{\hp}\po (G_3)_{\hp}\lrar (E_1)_{\hp}$ has a finite index image. 
\end{claim}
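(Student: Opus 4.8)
The goal is to show that the closure $\overline{f(G_3)}$ of $f(G_3)$ in $(E_1)_{\hp}\cong(U_1)_{\hp}\times\cdots\times(U_n)_{\hp}$ is open. Throughout I use the two product decompositions $(G_3)_{\hp}\cong(\KK_1)_{p}\times\cdots\times(\KK_n)_{p}$ and $(E_1)_{\hp}\cong(U_1)_{\hp}\times\cdots\times(U_n)_{\hp}$, recording that every factor on either side is an infinite, torsion-free pro-$p$ group with positive rank gradient: each $(\KK_i)_{p}$ is the pro-$p$ completion of a non-abelian free or surface group, so $\RG((\KK_i)_{p})\ge\b(\KK_i)>0$ by Lück's approximation theorem (\Cref{Luck}), and likewise $\RG((U_j)_{\hp})\ge\b(U_j)>0$ since $U_j$ is a non-abelian limit group. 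Set $\AA_i:=f_{\hp}((\KK_i)_{p})$; these are closed, pairwise commuting, and topologically generate $f_{\hp}((G_3)_{\hp})=\AA_1\cdots\AA_n$. Write $\pi_j$ for the coordinate projections of $(E_1)_{\hp}$.

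Since $\pi_j(G_3)$ has finite index in $U_j$, the subgroup $\HH_j:=\overline{\pi_j(G_3)}=\pi_j(\AA_1)\cdots\pi_j(\AA_n)$ is open in $(U_j)_{\hp}$, hence also has positive rank gradient. For every pair $(i,j)$ the group $\pi_j(\AA_i)$ is finitely generated, closed and normal in $\HH_j$, so by \Cref{posgrad} it is either finite or open in $(U_j)_{\hp}$. Mimicking the proof of \Cref{s1}, put $J_i=\{\,j:\pi_j(\AA_i)\text{ is open}\,\}$. Each $j$ lies in some $J_i$ (because $\HH_j=\prod_i\pi_j(\AA_i)$ is infinite), and the $J_i$ are pairwise disjoint: if $j\in J_{i_1}\cap J_{i_2}$ with $i_1\ne i_2$, then $(U_j)_{\hp}$ would contain two commuting open subgroups, so some open subgroup of $(U_j)_{\hp}$ would have an open — hence infinite — centre, which is impossible since an infinite abelian pro-$p$ group has zero rank gradient while open subgroups of $(U_j)_{\hp}$ inherit positive rank gradient.

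For each $i$ set $\BB_i:=\AA_i\cap\bigcap_{j\notin J_i}\ker\pi_j$; as $\pi_j(\AA_i)$ is finite for $j\notin J_i$, $\BB_i$ has finite index in $\AA_i$, is contained in $\prod_{j\in J_i}(U_j)_{\hp}$, and projects onto an open subgroup of $(U_j)_{\hp}$ for each $j\in J_i$. Since the non-empty sets $J_i$ partition $\{1,\dots,n\}$, we get $f_{\hp}((G_3)_{\hp})=\AA_1\cdots\AA_n\supseteq\prod_i\BB_i$, so it is enough to prove that each $\BB_i$ is open in $\prod_{j\in J_i}(U_j)_{\hp}$ — equivalently, up to a finite subgroup, that the image $\D_i$ of the single pro-$p$ group $(\KK_i)_{p}$ in $\prod_{j\in J_i}(U_j)_{\hp}$ is of finite index. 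This is where the hypothesis $\hat G\cong\hat\Gamma$ must be fed in through the structure of $G_0$: because $G_0$ is a finitely presented full subdirect product of the limit groups $L_1,\dots,L_n$, the structure theory of such subdirect products (Bridson–Howie, Bridson–Howie–Miller–Short; cf.\ the discussion around \Cref{brihow}) constrains how $G_0$ sits inside $L_1\times\cdots\times L_n$ and should force the image of $G_0$, hence of its finite-index subgroup $G_3$, in each sub-product $\prod_{j\in J_i}L_j$ to be of finite index, which is exactly that $\D_i$ is open. Granting this, $\prod_i\BB_i$ is open in $(E_1)_{\hp}$, and therefore so is $f_{\hp}((G_3)_{\hp})$.

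The main obstacle is the last step: controlling the blocks $J_i$. When $|J_i|\le 2$ the required finiteness of index is the "virtual surjection to pairs" that accompanies finite presentability, but a priori a block could be larger, and one must exclude the "diagonal" configurations in which $(\KK_i)_{p}$ projects openly onto several of the positive-rank-gradient factors $(U_j)_{\hp}$ without its image being of finite index in their product — these are precisely the configurations that would be compatible with $(G_3)_{\hp}$ being a proper subdirect image rather than a direct product. I expect the clean resolution combines (i) the dichotomy of \Cref{posgrad}, (ii) finite presentability of $G_0$ together with the subdirect-product structure theory of limit groups, and (iii) the generator bound $d(\AA_i)\le d((\KK_i)_{p})$ played off against the unbounded growth of $d$ in deep open subgroups of the $(U_j)_{\hp}$; making (iii) yield a genuine contradiction in the "proper-but-open subdirect image" case is the delicate point, and is where I would expect the real work to lie.
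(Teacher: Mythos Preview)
Your setup is correct and parallels the paper's: you reduce to controlling the ``blocks'' $J_i$, and you correctly observe that the $J_i$ are pairwise disjoint and cover $\{1,\dots,n\}$. The gap is exactly where you say it is --- you do not show that each $J_i$ is non-empty, and your proposed fixes (virtual surjection to pairs, generator-count estimates, or invoking the BHMS structure theory to get finite index in sub-products) are either circular or insufficient. In particular, appealing to the subdirect-product structure theory to force $G_0$ to have finite index in sub-products is precisely what Claims~\ref{c3} and~\ref{c4} are trying to \emph{establish}; you cannot assume it.

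The paper's resolution is short and does not use any of your proposed mechanisms. From the proof of Claim~\ref{c2} there is an embedding $\Z^n\hookrightarrow G_0$, hence into the finite-index subgroup $G_3$. By \cite[Theorem~7.12]{Mor22} the induced maps $\Z_p^n\hookrightarrow (G_3)_{\hp}$ and $\Z_p^n\hookrightarrow (E_1)_{\hp}$ are injective. Since no $(\KK_i)_p$ contains a copy of $\Z_p^2$, a simple pigeonhole shows $\Z_p^n\cap(\KK_i)_p$ is infinite for every $i$; and since $f_{\hp}$ is injective on this $\Z_p^n$, it follows that $\AA_i=f_{\hp}((\KK_i)_p)$ is infinite for every $i$. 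This gives $J_i\neq\emptyset$ for all $i$, and together with your disjointness and covering (and the fact that both sides have $n$ indices) forces $|J_i|=1$ for all $i$. Your own argument then finishes the proof immediately --- there is no ``block of size $\ge 2$'' case to worry about, and no need for BHMS-type input at this step.
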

\begin{proof} From the proof of \Cref{c2} we know that there exists an injection $\Z^n\hookrightarrow G_0$, hence there will also exist an injection $\Z^n\hookrightarrow G_3$.  By \cite[Theorem 7.12]{Mor22}, we know that the induced maps in pro-$p$ completions $\Z_p^n\hookrightarrow (G_3)_{\hp}$ and $\Z_p^n\hookrightarrow (E_1)_{\hp}$ are injective. Since each $(\KK_i)_p$ is the pro-$p$ completion of a non-abelian free or hyperbolic surface group, no $(\KK_i)_p$ contains a copy of $\Z_p^2$. This implies that each intersection $\Z_p^n\cap (\KK_i)_p$ is infinite. In particular, since $f_{\hp}$ is injective in $\Z_p^n$, the latter implies that the restriction of $f_{\hp}$ to each $i$-th coordinate $(\KK_i)_p$ of $(G_3)_{\hp}$ has an infinite image. Then, for each $1 \leq i\leq n$, there exists a subscript $j=j(i)$ such that $(\pi_j\circ f)_{\hp}((\KK_i)_p)\leq (U_j)_{\hp}$ is infinite. For each $i$, there may be a priori several $j$ with the previous property, but let us consider any such choice $j(i)$ for each $i$. We observe that, since $H_i$ is normal in $G_3$, $(\pi_{j(i)}\circ f)_{\hp}((\KK_i)_p)$ is normal in the finite-index subgroup $(\pi_{j(i)}\circ f)_{\hp}((G_3)_{\hp})$ of $(U_{j(i)})_{\hp}$. By  \Cref{posgrad}, this implies that $(\pi_{j(i)}\circ f)_{\hp}((\KK_i)_p)$ has finite index in $(U_{j(i)})_{\hp}.$ With this observation in mind, we notice that for different $i_1\neq i_2$, we must have $j(i_1)\neq j(i_2)$. The reason is that if we had $j=j(i_1)=j(i_2)$ for $i_1\neq i_2$, then the open commuting subgroups $(\pi_{j}\circ f)_{\hp}((\KK_{i_1})_p)$ and $(\pi_{j}\circ f)_{\hp}((\KK_{i_2})_{p})$ of $(U_{j})_{\hp}$ intersect in an open abelian subgroup, contradicting the fact that $(U_j)_{\hp}$ is not virtually abelian (because $U_j$ is not virtually abelian). Thus, for each $i$ there is a unique choice of $j(i)$, and we have a bijection $j\po \{1, \dots, n\}\lrar \{1, \dots, n\}.$ Furthermore, for each $i$ and $j\neq j(i)$, the image   $(\pi_{j}\circ f)_{\hp}((\KK_{i})_{p})$  in $(U_{j})_{\hp}$ is finite. From this, it is easy to see that  $f_{\hp}((\KK_{i})_{p})\cap (U_{j(i)})_{\hp}$ is open in $(U_{j(i)})_{\hp}$ and thus $f_{\hp}$ has a finite-index image. 
\end{proof}

\begin{claim} \label{c4} The injection $G_0\hookrightarrow L_1\times \cdots \times L_m$ has a finite-index image.
\end{claim}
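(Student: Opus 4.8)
The plan is to reduce \Cref{c4} to the assertion that $G_0\cap E_1$ has finite index in $E_1$, and then to extract that finiteness from \Cref{c3} by passing to a finitely generated nilpotent quotient of $E_1$, on which finiteness of index can be detected from the pro-$p$ completion thanks to \Cref{propimage}.

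First I would observe that it is enough to prove $[E_1:G_0\cap E_1]<\infty$: since $E_1$ has finite index in $L_1\times\cdots\times L_m$, this would make $G_0\cap E_1$ a finite-index subgroup of $L_1\times\cdots\times L_m$ contained in $G_0$, forcing $G_0$ itself to have finite index. Along the way I would record the facts needed to set things up: $G_3=\KK\cap G_0$ is contained in $G_0\cap E_1$ (by construction $f$ is the inclusion $G_3\hookrightarrow E_1$, and $G_3\le G_0$), and $\ga_N E_1\le G_2\le G_0\cap E_1$, so $\ga_N E_1\n G_0\cap E_1$ and $Q=E_1/\ga_N E_1$ is a finitely generated nilpotent group containing the subgroup $\bar H:=(G_0\cap E_1)/\ga_N E_1$ with $[E_1:G_0\cap E_1]=[Q:\bar H]$.

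Next, by \Cref{c3} the image of $f_{\hp}\po(G_3)_{\hp}\lrar(E_1)_{\hp}$ — equivalently, the closure of $G_3$ in $(E_1)_{\hp}$ — has finite index; since $G_3\subseteq G_0\cap E_1$, the closure of $G_0\cap E_1$ in $(E_1)_{\hp}$ has finite index as well. Applying the natural continuous surjection $\rho\po(E_1)_{\hp}\twoheadrightarrow Q_{\hp}$, which (being a continuous surjection of compact groups) carries the closure of a subgroup onto the closure of its image, I get that the closure of $\bar H$ in $Q_{\hp}$ has finite index. This closure is precisely the image of the induced map $(\bar H)_{\hp}\lrar Q_{\hp}$, so \Cref{propimage} applied to the finitely generated nilpotent group $Q$ yields $[Q:\bar H]<\infty$, hence $[E_1:G_0\cap E_1]<\infty$, which is what we want.

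The step I expect to require the most care — and the reason the whole detour through \Cref{brihow} and the lower central term $\ga_N E_1$ is necessary — is exactly the passage from "finite-index closure in the pro-$p$ completion" to "finite index": this implication fails for general virtually polycyclic groups (cf. the Klein bottle example following \Cref{propimage}) and is only available through a finitely generated \emph{nilpotent} quotient. So the delicate point is to be sure that $\ga_N E_1$ genuinely sits inside $G_0\cap E_1$, that $\bar H$ is an honest subgroup of $Q$, and that the closures of $G_0\cap E_1$ and of $\bar H$ correspond correctly under $\rho$, so that \Cref{propimage} can legitimately be invoked.
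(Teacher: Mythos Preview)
Your proof is correct and follows essentially the same approach as the paper's: pass to the finitely generated nilpotent quotient $E_1/\ga_N E_1$ and use \Cref{c3} together with \Cref{propimage} to upgrade finite index in the pro-$p$ completion to honest finite index. The only cosmetic difference is that the paper works with $G_2$ where you work with $G_0\cap E_1$; since $G_2=E_1\cap G_1\le G_0\cap E_1$ has finite index in $G_0\cap E_1$, the two choices are interchangeable.
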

\begin{proof} Since $G_3\leq G_2\leq G_0$ and $E_1\leq E$ are finite-index subgroups, it suffices to check that the injection $G_2\hookrightarrow E_1$ has a finite-index image. We know that $\ga_N E_1\leq G_2$, so we can consider the induced injection of finitely generated nilpotent groups $h\po G_2/\ga_N E_1\hookrightarrow E_1/\ga_N E_1.$ We know from \Cref{c3} that $(G_3)_{\hp}\lrar (E_1)_{\hp}$ has a finite-index image, and so does $h_{\hp}$. Thus, by \Cref{propimage}, the map $h$ has a finite-index image, as we wanted.
\end{proof}

\begin{claim}  \label{c5} There exist subgroups $G_1, \dots, G_n\leq G_0$ with positive $p$-gradient such that $G_0=G_1\times \cdots \times G_n.$
\end{claim}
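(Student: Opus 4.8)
The plan is to deduce \Cref{c5} from \Cref{s2} applied to $H=G_0$, using \Cref{c4} to manufacture the required finite-index direct-product subgroup, and then to upgrade the factors $\Cref{s2}$ returns to ones of positive $p$-gradient.

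\textbf{Setting up the candidate subgroups.} By \Cref{c4}, $G_0$ has some finite index $d$ in $L=L_1\times\cdots\times L_n$. Put $N_i=L_i\cap G_0$ for $1\le i\le n$. A routine coset count gives $[L_i:N_i]\le d$, so each $N_i$ is a finite-index subgroup of $L_i$; since $G_0$ is centreless each $L_i$ is non-abelian (as in the proof of \Cref{c1}), hence each $N_i$ is a non-abelian limit group, in particular infinite. The $N_i$ pairwise commute and, lying in distinct coordinates, satisfy $\lan N_1,\dots,N_n\ran=N_1\times\cdots\times N_n$; this product has index $\prod_i[L_i:N_i]\le d^n$ in $L$, hence finite index in $G_0$.

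\textbf{Applying \Cref{s2}.} I would check the hypotheses with $H=G_0$ and pro-$p$ factors $(S_i)_{\hp}$. The group $G_0$ is finitely generated and residually free, hence torsion-free and residually-$p$ (free groups being residually-$p$); by \Cref{c1} we have $\hat{G_0}\cong\hat S_1\times\cdots\times\hat S_n$, and passing to maximal pro-$p$ quotients yields $(G_0)_{\hp}\cong (S_1)_{\hp}\times\cdots\times (S_n)_{\hp}$. Each $S_i$ is a finitely generated, residually-$p$, non-abelian free or hyperbolic surface group, so $\b(S_i)>0$ (namely $\b(F_r)=r-1\ge 1$ and $\b(\pi_1\Sigma)=-\chi(\Sigma)>0$); by Lück's approximation $\RG((S_i)_{\hp})\ge\b(S_i)>0$, so $(S_i)_{\hp}$ has positive rank gradient. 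Combined with the pairwise-commuting finite-index family $\{N_i\}$ built above, all hypotheses of \Cref{s2} hold, and it produces subgroups $G_1,\dots,G_n\le G_0$, with $G_i$ commensurable to $N_i$, such that $G_0=G_1\times\cdots\times G_n$.

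\textbf{Positivity of the $p$-gradient.} Finally I would verify $\RG((G_i)_{\hp})>0$. Being a direct factor of the finitely presented group $G_0$, each $G_i$ is finitely presented; being a subgroup of the residually free group $G_0$, it is residually-$p$. Since $G_i$ is commensurable to the non-abelian limit group $N_i$, and $\b$ scales multiplicatively under passage to finite-index subgroups, $\b(N_i)>0$ forces $\b(G_i)>0$. Lück's approximation then gives $\RG((G_i)_{\hp})\ge\b(G_i)>0$. The argument is mostly assembly; the only points needing care are ensuring the $N_i$ are genuinely infinite (this is exactly where centrelessness of $G_0$, hence non-abelianness of the $L_i$, is used) and that $(G_0)_{\hp}$ really splits as the stated product of pro-$p$ completions before invoking \Cref{s2}.
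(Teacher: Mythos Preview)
Your proof is correct and follows essentially the same approach as the paper: both set $N_i=G_0\cap L_i$, observe these form a finite-index direct product inside $G_0$ by \Cref{c4}, and then invoke \Cref{s2} with $H=G_0$ and $H_i=S_i$. The paper compresses all of this into two sentences and leaves the positivity of the $p$-gradient of the resulting factors implicit, whereas you spell out the verification via commensurability with the non-abelian limit groups $N_i$, positivity of $\b$, and Lück's approximation; this extra care is warranted but not a different strategy.
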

\begin{proof} By \Cref{c4}, the subgroup $(G_0\cap L_1)\times \cdots \times (G_0\cap L_m)\leq G_0$ has finite index and  each $G_0\cap L_i$ is a non-abelian limit group, implying that they have positive $p$-gradient. The conclusion follows directly from \Cref{s2}.
\end{proof}

\begin{claim} \label{c6} There is an isomorphism $G_0\cong S_1\times\cdots \times S_n$. 
\end{claim}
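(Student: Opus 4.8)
The plan is to bootstrap the abstract direct product decomposition of \Cref{c5} against the profinite information of \Cref{c1} through the pro-$p$ gradient machinery of \Cref{GradientSection}. By \Cref{c5} we have an internal direct product $G_0=G_1\times\cdots\times G_n$ in which each $G_i$ has positive $p$-gradient; moreover each $G_i$ is a non-abelian limit group, being a finitely generated residually free group commensurable with the non-abelian limit group $G_0\cap L_i$. Since $\hat G_0\cong\hat G_1\times\cdots\times\hat G_n$ and $\hat G_0\cong\hat S_1\times\cdots\times\hat S_n$ by \Cref{c1}, taking maximal pro-$p$ quotients on both sides gives
\[
(G_1)_{\hp}\times\cdots\times(G_n)_{\hp}\;\cong\;(S_1)_{\hp}\times\cdots\times(S_n)_{\hp}.
\]
Each factor here is a finitely generated, infinite, torsion-free pro-$p$ group with positive rank gradient: for $(S_i)_{\hp}$ because $S_i$ is centreless (as in \Cref{c1}), hence a non-abelian free or hyperbolic surface group; and for $(G_i)_{\hp}$ because the $p$-gradient of $G_i$ is precisely $\RG\big((G_i)_{\hp}\big)$. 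So \Cref{s1} applies, and after reindexing I may assume $(G_i)_{\hp}\cong(S_i)_{\hp}$ for every $i$.

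Next I would identify each $G_i$ individually using \Cref{Melnip}. As $S_i$ is free or a hyperbolic surface group, $(S_i)_{\hp}$, and hence $(G_i)_{\hp}$, is either a free pro-$p$ group or a Demushkin group; in either case it is a one-relator pro-$p$ group, and so is every open subgroup of it (an open subgroup of a free pro-$p$ group is free pro-$p$, and an open subgroup of a Demushkin group is again Demushkin). Using that $G_i$ is a residually-$p$ limit group — so that for a finite-index subgroup $H\le G_i$ the natural map $H_{\hp}\to(G_i)_{\hp}$ is injective with open image — I would conclude that $H_{\hp}$ has a presentation with at most one relator for every such $H$, and \Cref{Melnip} then forces $G_i$ to be free or a surface group.

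Finally I would pin down $G_i$ exactly. A free pro-$p$ group has cohomological dimension $1$ while a Demushkin group has cohomological dimension $2$, so the isomorphism $(G_i)_{\hp}\cong(S_i)_{\hp}$ shows that $G_i$ is free precisely when $S_i$ is; in the free case $d(G_i)=d\big((G_i)_{\hp}\big)=d\big((S_i)_{\hp}\big)=d(S_i)$, and in the surface case comparing abelianisations — read off, once the factors are matched, from the profinite isomorphism $\hat G_0\cong\hat S_1\times\cdots\times\hat S_n$ so as to detect the non-orientable case as well — shows that $G_i$ and $S_i$ have homeomorphic underlying surfaces. In either case $G_i\cong S_i$, and therefore $G_0=G_1\times\cdots\times G_n\cong S_1\times\cdots\times S_n$.

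The step I expect to be the main obstacle is the transfer in the second paragraph: controlling the pro-$p$ topology of the limit group $G_i$ on its finite-index subgroups, so that each such subgroup has a one-relator pro-$p$ completion and the hypothesis of \Cref{Melnip} is genuinely verified (this is where the structure theory of pro-$p$ completions of subgroups of limit groups, rather than mere goodness, is needed). An alternative that circumvents \Cref{Melnip} is to upgrade the matching directly to the profinite level — running the argument of \Cref{s1} on $\hat G_0\cong\hat S_1\times\cdots\times\hat S_n$ with $\b(G_i)>0$ in place of positive rank gradient — to obtain $\hat G_i\cong\hat S_{\sigma(i)}$, and then invoking the profinite rigidity of free and surface groups among limit groups (\Cref{rigidityintro}); the price is that one then needs the analogue of \Cref{posgrad} for the full profinite completion of a limit group with positive first $L^2$-Betti number.
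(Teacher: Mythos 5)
Your overall strategy --- reduce to showing each $G_i$ is free or surface via \Cref{Melnip}, then match the factors using \Cref{s1} and the fact that free and surface groups are told apart by their pro-$p$ completions --- is the paper's, but the step you yourself flag as the main obstacle is a genuine gap, and the justification you offer for it is false. For a finite-index subgroup $H$ of a residually-$p$ group $L$, the natural map $H_{\hp}\to L_{\hp}$ is injective only when the pro-$p$ topology of $L$ induces the full pro-$p$ topology on $H$, and this already fails for free groups: take $L=F_2$, $p=2$ and $H=\ker(F_2\to\Z/3)$, a free group of rank $4$. Since $[L:H]$ is coprime to $p$, the image of $H$ in every finite $2$-group quotient of $L$ is everything, so the closure of $H$ in $L_{\hp}$ is all of $L_{\hp}$; thus $H_{\hp}$ (free pro-$2$ of rank $4$) surjects onto, but does not inject into, $L_{\hp}$ (free pro-$2$ of rank $2$). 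Consequently, knowing $(G_1)_{\hp}\cong (S_1)_{\hp}$ gives you no direct control over $(K_1)_{\hp}$ for an arbitrary finite-index $K_1\le G_1$, and the hypothesis of \Cref{Melnip} is not verified. (In the example above $H_{\hp}$ happens to be one-relator anyway because $H$ is free --- but that is precisely the kind of information about $K_1$ you do not yet have.)

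The paper's fix exploits the fact that the \emph{profinite} (rather than pro-$p$) completion does restrict correctly to finite-index subgroups: given $K_1\le G_1$ of finite index, the subgroup $K_1\times G_2\times\cdots\times G_n$ has finite index in $G_0$, so its profinite completion is an open subgroup of $\hat G_0\cong\hat S_1\times\cdots\times\hat S_n$ and hence coincides with $\hat H$ for some finite-index $H\le S_1\times\cdots\times S_n$. One then decomposes $H$ as $N_1'\times\cdots\times N_n'$ with each $N_i'$ commensurable to $H\cap S_i$ using \Cref{s2} (so each $N_i'$ is itself a free or surface group by Nielsen realisation), passes to maximal pro-$p$ quotients to obtain $(K_1)_{\hp}\times(G_2)_{\hp}\times\cdots\times(G_n)_{\hp}\cong(N_1')_{\hp}\times\cdots\times(N_n')_{\hp}$, and applies \Cref{s1} to this isomorphism to conclude that $(K_1)_{\hp}\cong(N_i')_{\hp}$ for some $i$. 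This produces the one-relator pro-$p$ presentation of $(K_1)_{\hp}$ without ever comparing $(K_1)_{\hp}$ with $(G_1)_{\hp}$. Your proposed alternative via the full profinite completions would indeed require an analogue of \Cref{posgrad} for profinite completions of limit groups, which is not available in the paper, so it does not close the gap either.
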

\begin{proof} By Claims \ref{c1} and \ref{c5},  $\hat G_0\cong \hat S_1\times \cdots\times \hat S_n$ and, for some   subgroups $G_i$ with positive $p$-gradient, $G_0=G_1\times \cdots \times G_n$. We know that free and surface groups are distinguished from each other by their pro-$p$ completion. Combining this observation with \Cref{s1}, it is enough to show that each $G_i$   is either a free or a surface group, which we will verify only for $i=1$ (for simplicity). By \Cref{Melnip}, it is enough to show that for every finite-index subgroup $K_1\leq G_1$, the pro-$p$ completion $(K_1)_{\hp}$ is the pro-$p$ completion of either a free or a surface group. To prove this, let us fix a finite-index subgroup $K_1$ of $G_1$. The subgroup $K_1\times G_2\times \cdots \times G_n\leq G$ has finite index in $G$ and so it has the same profinite completion as a finite-index subgroup $H$ of $S_1\times \cdots \times S_n$. We apply \Cref{s2} to $H$ and its subgroups $N_i=H\cap S_i$, since the injection $(H\cap S_1)\times \cdots \times (H\cap S_n)\leq H$ must have finite index. This way, we deduce that there are subgroups $N_i'$ of $H$, each commensurable to $H\cap S_i$ respectively, such that  $H\cong N_1'\times \cdots \times N_n'$. From this, we get the isomorphisms $(K_1)_{\hp}\times (G_2)_{\hp} \times \cdots \times (G_n)_{\hp}\cong H_{\hp}\cong (N_1')_{\hp}\times \cdots \times (N_n')_{\hp}$. Notice that each torsion-free group $N_i'$ is commensurable to either a free or a surface group and so, by Nielsen realisation, each $N_i'$ is itself either a free or a surface group.  By \Cref{s1}, there exists $1\leq i\leq n$ such that $ (K_1)_{\hp}\cong (N_i')_{\hp}$, as we wanted.
\end{proof}
\begin{claim} \label{c7} There is an isomorphism $G\cong \Ga$.
\end{claim}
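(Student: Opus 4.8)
The plan is to recover $G$ from the description of its central quotient obtained in \Cref{c6}. Write $Z=Z(G)$, so that by \Cref{c6} we have $G_0:=G/Z\cong S_1\times\cdots\times S_n=:\Ga_0$, and recall $\Ga=\Ga_0\times\Z^k$. First I would note that $Z$ is finitely generated free abelian: it is torsion-free since $G$ is residually free, and by \Cref{subprod} the group $G$ is a subdirect product of limit groups $L_1\times\cdots\times L_m$, so $Z$ embeds into $Z(L_1)\times\cdots\times Z(L_m)$, which is finitely generated free abelian (a nonabelian limit group is centreless by commutative transitivity, an abelian one is free abelian of finite rank). Thus $Z\cong\Z^r$ and there is a central extension $1\to\Z^r\to G\to\Ga_0\to 1$; the aim is to show it is a direct product with $r=k$. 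I would also record, using the same subdirect embedding, that $Z\cap[G,G]=1$ for residually free $G$: the image of $Z\cap[G,G]$ under each projection $G\twoheadrightarrow L_i$ lies in $Z(L_i)\cap[L_i,L_i]$, which is trivial whether $L_i$ is abelian or not. Hence $Z$ embeds into $G^{ab}$, with image $\bar Z\cong\Z^r$.

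The crucial point is that $Z$ is detected by the profinite completion. I claim $\overline Z=Z(\hat G)$, where $\overline Z$ denotes the closure of $Z$ in $\hat G$: indeed $\overline Z$ is central, and $\hat G/\overline Z\cong\widehat{\Ga_0}\cong\prod_i\hat S_i$ is centreless — each $\hat S_i$ is centreless (\Cref{centre} for a surface factor, classically for a free factor) and a finite product of centreless profinite groups is centreless — so the image of $Z(\hat G)$ in this quotient vanishes. The same argument gives $\overline{Z(\Ga)}=Z(\hat\Ga)=\hat\Z^k$. Therefore any isomorphism $\hat G\cong\hat\Ga$ carries $\overline Z$ onto $\hat\Z^k$; abelianising, it induces an isomorphism $\widehat{G^{ab}}\cong\widehat{\Ga^{ab}}$ carrying $\widehat{\bar Z}$ (the image of $\overline Z$ in $\widehat{G^{ab}}$, which is the closure of $\bar Z$) onto $\widehat{\Z^k}$. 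Since finitely generated abelian groups are determined by their profinite completions, $G^{ab}\cong\Ga^{ab}$, so $\operatorname{rank}G^{ab}=\operatorname{rank}\Ga^{ab}=k+\operatorname{rank}\Ga_0^{ab}$; on the other hand abelianising the central extension gives $G^{ab}/\bar Z\cong\Ga_0^{ab}$, whence $\operatorname{rank}\bar Z=k$. As $\bar Z\cong\Z^r$, we conclude $r=k$.

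It remains to split the short exact sequence $0\to\bar Z\to G^{ab}\to\Ga_0^{ab}\to 0$: a retraction $\pi\colon G^{ab}\twoheadrightarrow\bar Z\cong Z$ then yields a retraction $G\twoheadrightarrow Z$, and since the extension is central this forces $G\cong(G/Z)\times Z=\Ga_0\times\Z^k=\Ga$. For finitely generated abelian groups, profinite completion is exact (it is $-\otimes_{\Z}\hat\Z$ with $\hat\Z$ flat over $\Z$), and the natural map $\operatorname{Ext}^1_{\Z}(\Ga_0^{ab},\bar Z)\to\operatorname{Ext}^1_{\mathrm{cont}}(\widehat{\Ga_0^{ab}},\widehat{\bar Z})$ is an isomorphism; in particular a short exact sequence of such groups splits as soon as its profinite completion does. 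The completion of the sequence above is $0\to\widehat{\bar Z}\to\widehat{G^{ab}}\to\widehat{\Ga_0^{ab}}\to 0$, and the isomorphism of the previous paragraph — carrying $\widehat{\bar Z}$ onto $\widehat{\Z^k}$, hence the quotient onto $\widehat{\Ga_0^{ab}}$ — identifies it with the completion of $0\to\Z^k\to\Ga^{ab}\to\Ga_0^{ab}\to 0$, which splits because $\Ga^{ab}=\Ga_0^{ab}\oplus\Z^k$. Hence our sequence splits and $G\cong\Ga$.

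The step I expect to be the main obstacle is the profinite bookkeeping in the middle: arguing that the abstract isomorphism $\hat G\cong\hat\Ga$ can be leveraged to identify the relevant completed abelianisation sequences, which relies on recognising $\overline{Z(G)}$ as the automorphism-invariant centre $Z(\hat G)$ — and this uses \Cref{centre} to see that $\prod_i\hat S_i$ is centreless. The other ingredients — that $Z(G)\cap[G,G]=1$ for subdirect products of limit groups, and that finitely generated abelian groups and short exact sequences of them are detected by profinite completion — are routine.
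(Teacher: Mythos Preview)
Your argument is correct, and it takes a genuinely different route from the paper. The paper stays with the (non-abelian) central extension $1\to Z(G)\to G\to G_0\to 1$, observes that its profinite completion $1\to\hat\Z^k\to\hat G\to\hat G_0\to 1$ splits by construction, and then invokes the cohomological goodness of $G_0\cong\Ga_0$ together with \cite[Lemma~8.3]{Wilton17} to deduce that the abstract sequence splits. Your approach instead exploits a feature specific to residually free groups, namely $Z(G)\cap[G,G]=1$, to abelianise the extension and reduce to a splitting problem for finitely generated abelian groups, where the ``profinite splitting $\Rightarrow$ abstract splitting'' step becomes an elementary $\operatorname{Ext}^1$ comparison (the relevant $\operatorname{Ext}^1_{\Z}(\Ga_0^{\mathrm{ab}},\Z^k)$ is finite, hence unchanged under $-\otimes_{\Z}\hat\Z$). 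What your route buys is self-containment: you avoid both the goodness machinery and the external lemma, at the cost of the extra observation about $Z(G)\cap[G,G]$. What the paper's route buys is generality: the goodness/Wilton--Zalesskii argument would work for any central extension with good quotient, without needing the centre to inject into the abelianisation. You are also more explicit than the paper about why $Z(G)\cong\Z^k$, giving a clean rank count rather than leaving it implicit in the identification $\overline{Z(G)}=Z(\hat G)$; this is a nice bonus.
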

\begin{proof} There is a short exact sequence 
\begin{equation} \label{e8} 1\lrar \Z^k\lrar G\lrar G_0\lrar 1\end{equation}
whose induced sequence in profinite completions 
\begin{equation*}   1\lrar {\hat \Z}^k\lrar \hat G\lrar \hat G_0\lrar 1\end{equation*}
is exact and splits, by construction. 
By \cite[Lemma 8.3]{Wilton17} and the goodness of $G_0$, this means that the short exact sequence of (\ref{e8}) splits. So $G\cong \Z^k\times G_0$ and the conclusion follows from \Cref{c6}.
\end{proof}

\section{Other calculations of virtual Betti numbers and further questions} \label{questions}
\subsection{Divergence between first usual and $L^2$-Betti numbers}
 
\begin{thm} \label{boundedvd} Let $G$ be a limit group with $\vd(G)<\infty$. Then $G$ is either free, free abelian or surface group.
\end{thm}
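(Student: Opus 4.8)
The plan is to prove the contrapositive: if $G$ is a limit group that is not free, not free abelian and not a surface group, then $\vd(G) = \infty$. By the rank gradient/retraction philosophy used throughout the paper (and in particular by the observation following Definition \ref{vd}), it suffices to exhibit, for every $\ell \in \mathbb N$, a finite-index subgroup $H \le G$ with $\d(H) \ge \ell$. Since $\d(H) = b_1(H) - \b(H)$ is monotone under taking finite-index subgroups in the sense made precise by Lemmas \ref{L^2amalgam}, \ref{L^2hnn} and Proposition \ref{L^2general}, I would try to produce subgroups built up by cyclic amalgams and HNN extensions out of many copies of a fixed non-abelian or non-cyclic building block.

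First I would reduce to the structural dichotomy supplied by Theorem \ref{surface}: under the stated hypotheses, $G$ contains an embedded copy of $\pi_1\Sigma * \Z$ for some closed orientable surface $\Sigma$ with $\chi(\Sigma) \le 0$. By Theorem \ref{lim_sep} limit groups admit local retractions, so by Lemma \ref{vr_red} (and its evident $\d$-analogue via $L^2$-Betti numbers, using $\b$ is also compatible with virtual retractions) it is enough to show $\vd(\pi_1\Sigma * \Z) = \infty$. Now I would run exactly the covering-space construction already used in the proof of Theorem \ref{mainlim2}: the kernel $K_n$ of the map $\pi_1\Sigma * \Z \to \Z/n\Z$ killing $\pi_1\Sigma$ and sending the free generator to a generator is an index-$n$ subgroup splitting as a free product of $n$ conjugates of $\pi_1\Sigma$ with one copy of $n\Z$. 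Applying Proposition \ref{L^2general} to this graph-of-groups decomposition (whose edge groups are all trivial, so each amalgamation contributes no loss beyond the $-(|V|-1)$ term) gives $\d(K_n) \ge \sum_{v} \d(\pi_1\Sigma) - (n-1) = n(b_1(\Sigma) - \b(\pi_1\Sigma)) - (n-1)$. Since $\pi_1\Sigma$ is either a closed hyperbolic surface group, a torus group or a trivial/$\Z$ group — and in the hyperbolic case $\b(\pi_1\Sigma) = -\chi(\Sigma) > 0$ while $b_1 = 2 - \chi(\Sigma)$, so $\d(\pi_1\Sigma) = 2$ — one gets $\d(K_n) \ge n + 1 \to \infty$. (If $\chi(\Sigma) = 0$, i.e. $\Sigma = T^2$, then $\pi_1\Sigma * \Z$ already contains $F_2$, and one uses instead that $F_2$ has $\d(F_2) = 2$; more simply, a non-abelian limit group always contains $F_2$, so $K_n$ contains a free product of $n$ copies of $F_2$ and $\d(K_n)\ge n+1$ regardless.) It remains only to recall that $\b(K_n) = n\,\b(\pi_1\Sigma)$ is legitimate here because limit groups are locally indicable, hence satisfy the Strong Atiyah conjecture by \cite{And192}, so all the $L^2$-Betti number manipulations in Section \ref{l2} apply.

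An alternative, slightly more uniform route — which I would mention as a remark — is to bypass $\pi_1\Sigma * \Z$ and instead invoke the $L^2$ estimates directly on a suitable finite-index subgroup of $G$ exhibited by its cyclic hierarchy: by Theorem \ref{hierarchy_thm} the hierarchy is finite, and a non-free, non-(free abelian), non-surface limit group must, somewhere in its hierarchy or after a finite cover, contain a one-ended HGFC-subgroup $H$ that is not a surface group, or a free abelian vertex group adjacent to a non-abelian one. In the HGFC case, one can promote $H$ to a subgroup splitting as $\pi_1(\mathcal G, Y)$ with a subgraph $Z$ carrying arbitrarily many vertices whose vertex groups are non-free (using Corollary \ref{star-tree} / the precover constructions of Section \ref{SurfaceSection}), and then apply Proposition \ref{L^2general} to force $\d$ large. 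But the $\pi_1\Sigma * \Z$ argument is cleanest since Theorem \ref{surface} already packages all the case analysis.

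The main obstacle I anticipate is purely bookkeeping: making sure that $\d$ (not just $b_1$) really is monotone under passing to the relevant finite-index subgroups, which requires knowing that $\b$ is \emph{multiplicative} under finite-index inclusions (this is standard: $\b(H) = [G:H]\,\b(G)$ for $H \le G$ of finite index) and that the embedding $\pi_1\Sigma * \Z \hookrightarrow G$ is a virtual retract so that $\b$ of the retract is controlled — here one uses that a retraction $r: G_0 \to \pi_1\Sigma * \Z$ induces, on Linnell division rings, an injection of homology making $b_i^{(2)}(\pi_1\Sigma * \Z) \le b_i^{(2)}(G_0)$ in the appropriate normalised sense, so $\d(G_0) \ge \d(\pi_1\Sigma * \Z)$ and hence $\d$ of a finite-index subgroup of $G$ is at least $\d(K_n)$. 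A second, minor, point is the edge case $\Sigma = T^2$, handled as above by replacing $\pi_1\Sigma$ with $F_2$. Once these compatibility statements are in place, the inequality $\d(K_n) \ge n+1$ is immediate from Proposition \ref{L^2general}, and the theorem follows.
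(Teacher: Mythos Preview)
Your main line of argument has a genuine gap: virtual retractions do \emph{not} transfer $\d$ in the direction you need. A retraction $r:G_0\to A$ gives $b_1(G_0)\ge b_1(A)$, but for $L^2$-Betti numbers the inequality (when it exists at all) also points the same way, $\b(A)\le \b(G_0)$ is what one might hope for---and even that can fail: $F_2$ is a retract of $F_2\times\Z$, yet $\b(F_2)=1>0=\b(F_2\times\Z)$. In any case, from $b_1(G_0)\ge b_1(A)$ and $\b(G_0)\ge\b(A)$ you cannot deduce $\d(G_0)\ge\d(A)$; the two increments may cancel. Concretely, pulling $K_n$ back along $r$ gives a finite-index $G_n\le G$ with $b_1(G_n)\ge 2gn+1$ but $\b(G_n)=n[G:G_0]\b(G)$, so $\d(G_n)\ge n\bigl(2g-[G:G_0]\b(G)\bigr)+1$, which need not tend to infinity. (Also, a small slip: $\d(F_2)=2-1=1$, not $2$; though your torus case works directly since $\d(\Z^2)=2$.)

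The paper circumvents this entirely by never invoking retractions for $\d$. It splits into two regimes. When $\cd(G)\le 2$, the Euler-characteristic identity together with $b_2^{(2)}(G)=0$ for limit groups (Bridson--Kochloukova) gives $\d(H)=b_2(H)+1$ for every finite-index $H$, so $\vd(G)=\vb_2(G)+1$ and one quotes Theorem~\ref{mainlim} directly. When $\cd(G)\ge 3$, the hierarchy contains a free abelian vertex $A\cong\Z^k$ with $k\ge 3$, hence $\d(A)=k\ge 3$; using subgroup separability (via Proposition~\ref{BDprop} and Corollary~\ref{star-tree}) one produces finite-index subgroups of $G$ whose induced graph-of-groups decomposition contains a subtree with $\ell$ vertices each carrying a copy of $\Z^k$, and then Proposition~\ref{L^2general} gives $\d\ge 3\ell-\ell=2\ell$. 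Your ``alternative route'' paragraph is essentially this second argument and is the one that actually works; it is not a remark but the substance of the proof in the higher-dimensional case.
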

Before proving \Cref{boundedvd}, we state a proposition which will greatly simplify the proof.

\begin{prop} \label{BDprop} Suppose that a finitely generated and subgroup separable group $
G$ splits as a finite and connected graph of groups. If $G$ is residually-(amenable and locally indicable) and there exists a vertex group $G_v$ such that $\vd(G_v)=\infty$, then $\vd(G)=\infty$.
\end{prop}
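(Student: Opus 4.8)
The plan is, given an arbitrary constant $C>0$, to produce a finite-index subgroup $G_0\le G$ with $\d(G_0)>C$; since $C$ is arbitrary this gives $\vd(G)=\infty$. First I would record that $G_v$ is finitely generated (this is implicit in $\vd(G_v)$ being defined) and that, being a subgroup of the subgroup separable group $G$, it is itself subgroup separable; it is moreover residually-(amenable and locally indicable), since that property passes to subgroups. Using $\vd(G_v)=\infty$, fix a finite-index subgroup $K\le G_v$ with $\d(K)>C$.

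Next I would transplant $K$ into $G$ by means of separability. Applying \Cref{vertex_sep} to the finitely generated subgroup $K$ of $G$ and to the finite-index inclusion $K\le G_v$, we obtain a finite-index subgroup $G_0\le G$ with $G_0\cap G_v=K$. Acting on the Bass--Serre tree of the given splitting, $G_0$ inherits a finite connected graph of (finitely generated) groups decomposition $(\G_0,Y_0)$, whose vertex and edge groups are the intersections of $G_0$ with $G$-conjugates of the vertex and edge groups of $(\G,Y)$; in particular $K$ occurs as the vertex group $\G_0(v_0')$ of some vertex $v_0'\in V(Y_0)$. Moreover $G_0$ is again finitely generated, subgroup separable, and residually-(amenable and locally indicable), and if the edge groups of $(\G,Y)$ are infinite cyclic or trivial --- as in the cyclic hierarchies of limit groups and in HGFC-groups --- then so are those of $(\G_0,Y_0)$.

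The final step is to bound $\d(G_0)$ from below in terms of $\d(K)$. In the cyclic (or trivial) edge case this is immediate from \Cref{L^2general} applied to $(\G_0,Y_0)$ with $Z$ the one-vertex, edge-free subgraph $\{v_0'\}$, which gives $\d(G_0)\ge\d(\pi_1(\G_0,Z))=\d(\G_0(v_0'))=\d(K)>C$, as required. For a general graph of groups one first collapses a spanning tree and peels off leaves to reduce to a single amalgam or HNN extension, and then runs the Mayer--Vietoris sequences of Lemmas \ref{L^2amalgam} and \ref{L^2hnn} over $\Q$ and over the Linnell division ring $\D_{\Q G_0}$ for the action of $G_0$ on the Bass--Serre tree; using $\d\ge 1$ for every vertex group (\Cref{BettiAnd}) and discarding the vertices other than $v_0'$ one arrives at
\[
\d(G_0)\ \ge\ \d(K)-\sum_{e'\in E(Y_0)}\max\bigl(0,\,b_1(\G_0(e'))-1\bigr),
\]
whose sum vanishes precisely when the edge groups are cyclic or trivial. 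I expect this correction term to be the main obstacle in the general case: a finite-index subgroup of an edge group may have arbitrarily large first Betti number, while the number of edges of $(\G_0,Y_0)$ grows with $[G:G_0]$, so the crude Mayer--Vietoris bound is too lossy. I plan to overcome this by choosing $G_0$ more carefully --- exploiting separability of the edge groups of $G$ to keep the first Betti numbers of the edge groups of $(\G_0,Y_0)$ bounded around $v_0'$ --- together with the observation, already visible in the cyclic case through \Cref{L^2general}, that the homology created along the replicated edges compensates these edge terms, so that in fact $\d(G_0)\ge\d(K)$.
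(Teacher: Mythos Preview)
Your argument in the cyclic (or trivial) edge case is exactly the paper's proof: pick $K\le G_v$ of finite index with $\d(K)$ large, use separability (via \Cref{vertex_sep}) to find a finite-index $G_0\le G$ with $G_0\cap G_v=K$, observe that $K$ occurs as a vertex group of the induced splitting of $G_0$, and apply \Cref{L^2general} to the one-vertex subgraph to get $\d(G_0)\ge\d(K)$.

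You are in fact more careful than the paper in separating out the general case. Although the proposition is stated for an arbitrary finite connected graph of groups, the paper's own proof tacitly assumes cyclic edges: it simply invokes \Cref{L^2general}, which is formulated only in that setting, and says nothing further. The proposition is applied solely inside the proof of \Cref{boundedvd}, where all the splittings come from the cyclic hierarchy of a limit group, so the general case you worry about is neither proved nor needed in the paper. Your displayed edge-correction inequality and the proposed fix (controlling edge groups via separability) are reasonable heuristics, but for the purpose of matching the paper you may add the hypothesis that edge groups are cyclic and omit that final paragraph altogether.
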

\begin{proof} For all $k\geq 1$, there exists a finite-index subgroup $H_v\leq G_v$ such that $\d(H_v)\geq k$. By subgroup separability, there is a finite-index subgroup $H\leq G$ such that $H\cap G_v=H_v$ and hence the corresponding graph of groups decomposition for $H$ has cyclic edges and one of its vertex isomorphic to $H_v$. By \Cref{L^2general},  $\d(H)\geq \d(H_v)\geq k$. So $\vd(G)\geq k$ for all $k$ and the conclusion follows.
\end{proof}
We are finally ready to prove \Cref{boundedvd}.

\begin{proof}[Proof of \Cref{boundedvd}] If $G$ has cohomological dimension at most two, then, by \Cref{limb}, $\vb_2(G)+1=\vd(G)$. Hence $\vb_2(G)<\infty$ and the conclusion follows from \Cref{mainlim}. It suffices to deal with the case where $G$ has cohomological dimension at least $3$. Sela \cite{sela1} proved that limit groups admit a finite cyclic hierarchy terminating in free abelian groups (which are the only rigid limit groups) (see Theorem \ref{hierarchy_thm}). Since $\cd(G)\geq 3$, one of the rigid groups in which the hierarchy of $G$ terminates must be a free abelian group $\Z^k$ with $k\geq 3$ (let us denote it by $A$).  Finitely many applications of \Cref{BDprop} tell us that, in order to conclude that $\vd(G)=\infty$, it is enough to show that at some level of the finite hierarchy of $G$ lies a group $H$ with $\vd(H)=\infty$. If $G$ is not abelian, we claim that such $H$ can be taken to be the group which lies above the aforementioned $A$ in the hierarchy of $G$. $H$ is non-abelian and hence splits as a graph of groups with cyclic (possibly trivial) edges, with at least two vertices. Furthermore, one of the vertices $H_v$ is isomorphic to $A\cong \Z^k$ (with $k\geq 3$) and, by construction, any of the neighbouring vertex groups $H_u$ of $H_v$ is not virtually cyclic. 

Let $\ell \geq 1$ be any integer. By Corollary \ref{star-tree}, there is a finite-index subgroup $\ti{H}$ of $H$ that admits a splitting as a graph of groups with cyclic (possibly trivial) edges with the following two properties: 
\begin{enumerate}
    \item The underlying graph contains a subtree $T$ with either $\ell$ or $\ell+1$ vertices.
    \item There are at least $\ell$ vertices  $v\in V(T)$ such that the corresponding vertex group $\ti{H}_v$ is isomorphic to a finite-index subgroup of $A$ (so $\ti{H}_v\cong A\cong \Z^k$ and $\d(\ti{H}_v)\geq 3$).
\end{enumerate}
By \Cref{L^2general}, \[\d(\ti{H})\geq \d(\pi_1(\ti{H}, T)) \geq \sum_{v\in T} \d(\ti{H}_v)-(|V(T)|-1)\geq 3\ell-\ell=2\ell. \]
This shows that $\vd(H)=\infty$, completing the proof.
\end{proof}
\subsection{Virtual homology of RAAGs and manifolds} \label{virtual_Bettis}
We compute the virtual homology of other classes of groups, including Right-Angled Artin groups and fundamental groups of closed, hyperbolic $3$ manifolds. 

\begin{prop} \label{RAAGvb} Let $G$ be a RAAG, let $k$ be a field and let $n\in \mathbb{N}$. Then $\vb_n(G)<\infty$ if and only if either $\cd(G)<n$ or $G$ is free abelian.
\end{prop}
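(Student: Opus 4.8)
The plan is to dispatch the ``if'' direction trivially and, for the converse, to exhibit for each $\ell$ a finite-index subgroup of $G$ with $n$-th Betti number at least $\ell$, by passing to a well-chosen full-subgraph retract and then running the Mayer--Vietoris/free-product trick already used in the proof of Proposition~\ref{high_dim_lim2}. Write $G=A_\Gamma$ for a finite simplicial graph $\Gamma$; one may assume $n\ge 1$. First I would recall the standard facts that $\cd(A_\Gamma)$ equals the clique number of $\Gamma$ (the Salvetti complex is a compact $K(A_\Gamma,1)$ of that dimension, and $A_\Gamma$ contains $\mathbb{Z}^{\cd(A_\Gamma)}$) and that $A_\Gamma$ is free abelian exactly when $\Gamma$ is complete. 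The ``if'' direction is then immediate: if $\cd(G)<n$ then every subgroup $H\le G$ has $\cd(H)<n$, so $H_n(H;k)=0$ and $\vb_n^k(G)=0$; and if $G\cong\mathbb{Z}^d$ then every finite-index subgroup is again $\cong\mathbb{Z}^d$, giving $\vb_n^k(G)=\binom{d}{n}<\infty$.

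For the converse, suppose $\Gamma$ is not complete and $m:=\cd(G)\ge n$, where $m$ is the clique number. I would fix a clique $K=\{x_1,\dots,x_m\}$ of size $m$; since $K$ cannot be enlarged to a larger clique, every vertex outside $K$ fails to be adjacent to some vertex of $K$, and such a vertex exists because $\Gamma\neq K$. Choosing $w\notin K$ not adjacent to $x_1$ (after relabelling), I would let $\Lambda$ be the \emph{induced} subgraph on $K\cup\{w\}$, so that the map $\rho\colon G\to A_\Lambda$ killing all generators outside $\Lambda$ is a retraction. Set $S=\{x_i : x_i\ \text{is adjacent to}\ w\}$, $s=|S|$, and $K'=K\setminus S$. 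Then $K'$ is a clique of size $t:=m-s\ge 1$ (it contains $x_1$), the vertex $w$ is adjacent to no vertex of $K'$, and every vertex of $S$ is adjacent to all other vertices of $\Lambda$; hence $\Lambda$ is the join of the complete graph on $S$ with the graph $K'\sqcup\{w\}$ ($w$ isolated), which yields
\[A_\Lambda\;\cong\;\mathbb{Z}^{s}\times\bigl(\mathbb{Z}^{t}\ast\mathbb{Z}\bigr),\qquad s+t=m\ge n,\ t\ge 1.\]
This is the key structural step.

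It then remains to show $\vb_n^k\bigl(\mathbb{Z}^{s}\times(\mathbb{Z}^{t}\ast\mathbb{Z})\bigr)=\infty$: if $H\le A_\Lambda$ has finite index and large $b_n^k$, then $\rho^{-1}(H)\le G$ has finite index and retracts onto $H$, so $b_n^k(\rho^{-1}(H))\ge b_n^k(H)$ by the argument of Lemma~\ref{vr_red}. For each $\ell$ I would take the map $f\colon\mathbb{Z}^{t}\ast\mathbb{Z}\to\mathbb{Z}/\ell\mathbb{Z}$ killing the $\mathbb{Z}^t$ factor and sending a generator of the free $\mathbb{Z}$ to $1$; then $\ker f$ is a free product of $\ell$ conjugates of $\mathbb{Z}^{t}$ with one copy of $\mathbb{Z}$, so $\mathbb{Z}^{s}\times\ker f$ is a finite-index subgroup of $\mathbb{Z}^{s}\times(\mathbb{Z}^{t}\ast\mathbb{Z})$. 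Computing $H_\ast$ of a free product and then invoking the K\"unneth formula on the summand of bidegree $\bigl(\max(0,n-t),\min(n,t)\bigr)$ — admissible since $\max(0,n-t)\le s$ (because $n\le s+t$) and $\min(n,t)\ge 1$ — gives $b_n^k(\mathbb{Z}^{s}\times\ker f)\ge b_{\max(0,n-t)}^k(\mathbb{Z}^s)\cdot b_{\min(n,t)}^k(\ker f)\ge 1\cdot\ell=\ell$, whence $\vb_n^k(G)=\infty$.

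I expect the main obstacle to be the structural identification of $A_\Lambda$ as $\mathbb{Z}^{s}\times(\mathbb{Z}^t\ast\mathbb{Z})$: getting the join decomposition right, and in particular remembering that $\Lambda$ must be taken \emph{induced} for $\rho$ to exist. Everything else is the retraction principle (Lemma~\ref{vr_red}) together with the by-now-familiar homological bookkeeping — including the small case analysis of $n$ against $s$ and $t$ in the K\"unneth step — which is routine given the tools already developed.
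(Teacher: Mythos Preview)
Your proof is correct and follows the same overall strategy as the paper's: pass to the induced subgraph on a maximal clique together with one extra vertex, use the retraction onto the corresponding sub-RAAG to transfer virtual Betti numbers, and then produce finite-index subgroups with arbitrarily large $b_n^k$. The difference lies only in the endgame. The paper trims the subgraph further down to $n+1$ vertices (an $n$-clique plus one non-cone vertex), writes the resulting RAAG as an HNN extension $\Z^n\ast_{\Z^m}$ with $m<n$, and invokes Lemma~\ref{cyclic} directly (the underlying graph is a single loop, so the ``not a tree'' case applies). You instead keep the full $(m+1)$-vertex subgraph, split off the cone vertices to obtain the explicit product decomposition $\Z^s\times(\Z^t\ast\Z)$, and run the free-product cover plus K\"unneth computation by hand, exactly as in the proof of Proposition~\ref{high_dim_lim2}. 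Your route is marginally longer but entirely self-contained --- it sidesteps having to check the hypotheses of Lemma~\ref{cyclic} (subgroup separability, non-virtually-cyclic vertex groups) for this particular RAAG --- while the paper's route is terser once Lemma~\ref{cyclic} is in hand. Both arrive at the same conclusion by the same mechanism; this is a variation in bookkeeping rather than a genuinely different argument.
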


\begin{proof} 
We only sketch the proof of \Cref{RAAGvb}, which is very simple. If $\cd(G)\geq n$ and $G$ is not free abelian, then the defining graph $\Ga$ of $G$ contains a subgraph of $n+1$ vertices which is not complete but contains a complete graph of $n$ vertices. This directly implies that $G$ retracts onto a subgroup isomorphic to $\Z^n*_{\Z^m}$, where $n>m$. Now \Cref{cyclic} applies and we conclude that $\vb_n^k(G)=\infty$.
\end{proof}

\begin{rmk} \label{agol_reid_rmk} After Agol's resolution of the virtual Haken conjecture \cite{Agol13}, we know that fundamental groups of closed hyperbolic 3-manifolds are large. This implies that $\vb_1(M)=\infty$ and, by Poincaré duality, that $\vb_2(M)=\infty$. Regarding higher dimensions,
Alan Reid pointed out that the fundamental group of an arithmetic hyperbolic $n$-manifold $M$ of simplest type has $\vb_i^\mathbb{Z}(\pi_1(M))=\infty$ for $1\le i <n$. This can be proven by induction as follows: as in the lines above, the assertion holds for $n \le 3$. For $n\ge 4$, we use the fact that $M$ has immersed totally geodesic submanifold $M'$ of dimension $n-1$. By the induction hypothesis, $\vb_i^\mathbb{Z}(\pi_1(M'))=\infty$ for $1\le i <n-1$. All immersed totally geodesic submanifolds contribute to homology in finite covers by \cite{submfld}, which implies that  $\vb_i^\mathbb{Z}(\pi_1(M))=\infty$ for $1\le i < n-1$. Finally, by Poincar\'e duality, we also have that $\vb_{n-1}^\mathbb{Z}(\pi_1(M))=\infty$.
\end{rmk}

\subsection{Further questions and virtual invariants}
\Cref{mainlim} shows that when an HGFC-group $G$ has $\vb_2^k(G)<\infty$, then $G$ is either a free or a surface group. This suggests the following.

\begin{qu} \label{questionvb} Let $G$ be a hyperbolic group of cohomological dimension two. If $G$ has a finite virtual second Betti number, does it follow that $G$ is a surface group?
\end{qu}

We remark that this question is closely related to two of the most notorious open questions revolving around hyperbolic groups (namely, whether all hyperbolic groups are residually finite, and whether every one-ended hyperbolic group contains a surface subgroup). \par \smallskip
A deeper understanding of where the second cohomology of $G$ comes from (in terms of Sela's finite cyclic hierarchy) was crucial in the proof of \Cref{boundedvd}, which raises the following natural question.
\begin{qu}\label{questionvb2}  Let $G$ be a torsion-free, finitely generated and residually finite group such that $\b(G)>0$ and  \[\sup\left\{ b_1(H)-\b(H) \, \vert \, H \text{ is a finite-index subgroup of } G\right\}<\infty.\]
Does it follow that $G$ is either a non-abelian free group or a hyperbolic surface group? 
\end{qu}



At this point we have seen many examples of groups, some of which admitting interesting hierarchies, where the boundedness of $\vb_2$ (or higher virtual homology notions) is very exceptional. In particular,  most limit groups (with the only exception of free, free abelian and surface groups) have unbounded $\vb_2$. We would therefore like to introduce another notion which might aid in obtaining a finer classification of limit groups based on (virtual) homological invariants.
	\begin{defn}
		Let $G$ be a finitely generated group. Define the \emph{virtual $i$-th cohomology spectrum} of $G$, with coefficients in a field $k$, to be the set 
    \begin{equation*}
        \mathrm{Spectrum}_{i}^k(G)=\{\dim_k H^i(H;k)\vert H\le G \text{ of finite index}\}.
    \end{equation*}
	\end{defn}
	Cohomological goodness implies that if $G$ is a limit group or an HGFC-group,  $$\mathrm{Spectrum}_{i}^{\mathbb{F}_p}(\hat{G})=\mathrm{Spectrum}_{i}^{\mathbb{F}_p}(G).$$
    In other words, the virtual $i$-th cohomology spectrum with coefficients in $\mathbb{F}_p$ is a profinite invariant of $G$ within good groups. One can strengthen the notion of a virtual cohomology spectrum, and define the \emph{filtered virtual $i$-th cohomology spectrum} of $G$, with coefficients in a field $k$, to be
	\begin{equation*}
		\mathcal{F}{\mathrm{Spectrum}_{i}}^k(G)=\bigg\{\frac{\dim_k H^i(H;k)}{[G:H]}\bigg\vert H\le G \text{ of finite index}\bigg\}.
	\end{equation*}
 This refined version also keeps track of the depth of the finite-sheeted coverings of $G$ in which large $i$-th homology is exhibited.
	Again,  the filtered virtual $i$-th cohomology spectrum of $G$ (with coefficients in $\mathbb{F}_p$) is a profinite invariant of good groups. This raises the following questions:
	\begin{qu} \label{spectrumq1}
    Let $G$ be a limit group or an HGFC-group.
		\begin{enumerate}
		    \item Which numbers appear in the virtual $i$-th cohomology spectrum of $G$? 
                \item Which limit groups and HGFC-groups are characterized by their filtered second Betti spectrums?
		\end{enumerate}
	\end{qu}
	The proof of Theorem \ref{mainlim} relies heavily on virtual retractions (see Section \ref{sep}). Some information is lost in the process of transferring homological data via virtual retractions. This is why the 
 proof of \Cref{boundedvd} required an additional argument. In particular, answering Question \ref{spectrumq1} should involve methods different to the ones used in this paper.


\begin{thebibliography}{10}

\bibitem{agol}
Ian Agol.
\newblock Virtual betti numbers of symmetric spaces, 2006.
\newblock arXiv:math/0611828.

\bibitem{Agol13}
Ian Agol.
\newblock The virtual {H}aken conjecture.
\newblock {\em Documenta Mathematica}, 18:1045--1087, 2013.
\newblock With an appendix by Agol, Daniel Groves, and Jason Manning.

\bibitem{Arzhantseva2001}
G.~N. Arzhantseva.
\newblock On quasiconvex subgroups of word hyperbolic groups.
\newblock {\em Geometriae Dedicata}, 87(1/3):191--208, 2001.

\bibitem{borel}
Arthur Bartels and Wolfgang Lück.
\newblock The {B}orel {C}onjecture for hyperbolic and {CAT(0)}-groups.
\newblock {\em Annals of Mathematics}, 175(2):631--689, 2012.

\bibitem{Bauer2014}
Ingrid Bauer, Fabrizio Catanese, and Fritz Grunewald.
\newblock Faithful actions of the absolute galois group on connected components of moduli spaces.
\newblock {\em Inventiones mathematicae}, 199(3):859–888, June 2014.

\bibitem{Bau67a}
Benjamin Baumslag.
\newblock Residually free groups.
\newblock {\em Proceedings of the London Mathematical Society}, s3-17(3):402--418, 1967.

\bibitem{melnikov}
Gilbert Baumslag, Benjamin Fine, and Gerhard Rosenberger.
\newblock {\em One-Relator Groups: An Overview}, page 119–157.
\newblock London Mathematical Society Lecture Note Series. Cambridge University Press, 2019.

\bibitem{Bau99}
Gilbert Baumslag, Alexei Myasnikov, and Vladimir Remeslennikov.
\newblock Algebraic geometry over groups.
\newblock {\em Journal of algebra}, 219:16--79, 1999.

\bibitem{baumslag_product}
Gilbert Baumslag and James~E. Roseblade.
\newblock Subgroups of direct products of free groups.
\newblock {\em Journal of The London Mathematical Society-second Series}, pages 44--52, 1984.

\bibitem{submfld}
Nicolas Bergeron, Fr{\'{e}}d{\'{e}}ric Haglund, and Daniel~T. Wise.
\newblock Hyperplane sections in arithmetic hyperbolic manifolds.
\newblock {\em Journal of the London Mathematical Society}, 83(2):431--448, February 2011.

\bibitem{Bessa2014}
Vagner Bessa, Fritz Grunewald, and Pavel~A. Zalesskii.
\newblock Genus for virtually surface groups and pullbacks.
\newblock {\em Manuscripta Mathematica}, 145(1–2):221–233, May 2014.

\bibitem{Bie75}
Robert Bieri.
\newblock Mayer-{V}ietoris sequences for {HNN}-groups and homological duality.
\newblock {\em Mathematische Zeitschrift}, 143(2):123--130, 1975.

\bibitem{quasicon}
Hadi Bigdely and Daniel~T. Wise.
\newblock {Quasiconvexity and relatively hyperbolic groups that split}.
\newblock {\em Michigan Mathematical Journal}, 62(2):387 -- 406, 2013.

\bibitem{Bridson2015}
Martin Bridson and Dessislava Kochloukova.
\newblock The virtual first betti number of soluble groups.
\newblock {\em Pacific Journal of Mathematics}, 274(2):497--510, April 2015.

\bibitem{bridson}
Martin~R. Bridson.
\newblock Profinite rigidity and free groups.
\newblock In {\em Mathematics Going Forward}, page 233–240. Springer International Publishing, August 2022.

\bibitem{Bri09}
Martin~R. Bridson, James Howie, Charles~F. Miller, III, and Hamish Short.
\newblock Subgroups of direct products of limit groups.
\newblock {\em Annals of Mathematics. Second Series}, 170(3):1447--1467, 2009.

\bibitem{BridsonHowie13}
Martin~R. Bridson, James Howie, Charles~F. Miller, III, and Hamish Short.
\newblock On the finite presentation of subdirect products and the nature of residually free groups.
\newblock {\em American Journal of Mathematics}, 135(4):891--933, 2013.

\bibitem{l2_lim}
Martin~R Bridson and Dessislava~H Kochloukova.
\newblock Volume gradients and homology in towers of residually-free groups.
\newblock {\em Mathematische Annalen}, 367(3):1007--1045, April 2017.

\bibitem{vir_retract_res}
Martin~R Bridson and Henry Wilton.
\newblock Subgroup separability in residually free groups.
\newblock {\em Mathematische Zeitschrift}, 260(1):25--30, September 2008.

\bibitem{Bro82}
Kenneth~S. Brown.
\newblock {\em Cohomology of groups}.
\newblock Graduate Texts in Mathematics. Springer, New York, NY, 1982.

\bibitem{Cooper2007}
Daryl Cooper, Darren Long, and Alan~W Reid.
\newblock On the virtual betti numbers of arithmetic hyperbolic 3{\textendash}manifolds.
\newblock {\em Geometry \& Topology}, 11(4):2265--2276, December 2007.

\bibitem{Dix82}
John~D. Dixon, Edward~W. Formanek, John~C. Poland, and Luis Ribes.
\newblock Profinite completions and isomorphic finite quotients.
\newblock {\em Journal of Pure and Applied Algebra}, 23(3):227--231, 1982.

\bibitem{Eck96}
Beno Eckmann.
\newblock Projective and hilbert modules over group algebras, and finitely dominated spaces.
\newblock {\em Commentarii Mathematici Helvetici}, 71(1):453--462, 1996.

\bibitem{4mflds}
Michael~H. Freedman and Frank Quinn.
\newblock {\em Topology of 4-Manifolds (PMS-39)}.
\newblock Princeton University Press, 1990.

\bibitem{Kie22}
Giles Gardam, Dawid Kielak, and Alan~D. Logan.
\newblock The surface group conjectures for groups with two generators, 2022.
\newblock arXiv:2202.11093.

\bibitem{limgood}
F.~Grunewald, A.~Jaikin-Zapirain, and P.~A. Zalesskii.
\newblock {Cohomological goodness and the profinite completion of Bianchi groups}.
\newblock {\em Duke Mathematical Journal}, 144(1):53 -- 72, 2008.

\bibitem{Gru11}
Fritz Grunewald and Pavel Zalesskii.
\newblock Genus for groups.
\newblock {\em Journal of Algebra}, 321:330--368, 2011.

\bibitem{hag-wise}
Fr{\'e}d{\'e}ric Haglund and Daniel~T Wise.
\newblock Special cube complexes.
\newblock {\em Geometric and Functional Analysis}, 17(5):1551--1620, January 2008.

\bibitem{hsu-wise}
Tim Hsu and Daniel~T. Wise.
\newblock Cubulating graphs of free groups with cyclic edge groups.
\newblock {\em American Journal of Mathematics}, 132(5):1153--1188, 2010.

\bibitem{And212}
Andrei Jaikin-Zapirain.
\newblock The finite and soluble genus of finitely generated free and surface groups.
\newblock {\em Preprint}, 2021.

\bibitem{And19}
Andrei Jaikin-Zapirain.
\newblock The universality of {H}ughes-free division rings.
\newblock {\em Selecta Mathematica}, 27(74), 2021.

\bibitem{And192}
Andrei Jaikin-Zapirain and Diego {L\' opez-\' Alvarez}.
\newblock The {S}trong {A}tiyah and {L}{\" u}ck {A}pproximation conjectures for one-relator groups.
\newblock {\em Mathematische Annalen}, 376:1741–--1793, 2019.

\bibitem{JaiMor23}
Andrei Jaikin-Zapirain and Ismael Morales.
\newblock Prosolvable rigidity of surface groups, 2023.
\newblock arXiv:2312.12293.

\bibitem{Kharlampovich1998}
Olga Kharlampovich and Alexei Myasnikov.
\newblock Tarski's problem about the elementary theory of free groups has a positive solution.
\newblock {\em Electronic Research Announcements of the American Mathematical Society}, 4(14):101--108, December 1998.

\bibitem{CW}
R.~C. Kirby and L.~C. Siebenmann.
\newblock {On the triangulation of manifolds and the Hauptvermutung}.
\newblock {\em Bulletin of the American Mathematical Society}, 75(4):742 -- 749, 1969.

\bibitem{dessi}
D.H. Kochloukova and F.Y. Mokari.
\newblock Virtual rational betti numbers of abelian-by-polycyclic groups.
\newblock {\em Journal of Algebra}, 443:75--98, 2015.

\bibitem{Kro20}
Peter Kropholler and Karl Lorensen.
\newblock Virtually torsion-free covers of minimax groups.
\newblock {\em Ann. Sci. \'{E}c. Norm. Sup\'{e}r. (4)}, 53(1):125--171, 2020.

\bibitem{graphgood}
Robert Kropholler and Gareth Wilkes.
\newblock Profinite properties of {RAAGs} and special groups.
\newblock {\em Bulletin of the London Mathematical Society}, 48(6):1001--1007, September 2016.

\bibitem{Lin93}
Peter~A. Linnell.
\newblock {Division Rings and Group von Neumann Algebras.}
\newblock {\em Forum mathematicum}, 5(6):561--576, 1993.

\bibitem{Lub14}
Alexander Lubotzky.
\newblock Finiteness properties and profinite completions.
\newblock {\em Bulletin of the London Mathematical Society}, 46(1):103--–110, 2014.

\bibitem{Lyndon50}
Roger~C. Lyndon.
\newblock Cohomology theory of groups with a single defining relation.
\newblock {\em Annals of Mathematics. Second Series}, 52:650--665, 1950.

\bibitem{Luc94}
W.~Lück.
\newblock Approximating {$L^2$}-invariants by their finite-dimensional analogues.
\newblock {\em Geometric and Functional Analysis}, pages 455--–481, 1994.

\bibitem{Luc02}
Wolfgang Lück.
\newblock {\em {$L^2$}-Invariants: Theory and Applications to Geometry and K-Theory}, volume~44 of {\em Ergebnisse der Mathematik und ihrer Grenzgebiete. 3. Folge. A Series of Modern Surveys in Mathematics}.
\newblock Springer Berlin, Heidelberg, 2002.

\bibitem{ashotvr}
Ashot Minasyan.
\newblock {Virtual Retraction Properties in Groups}.
\newblock {\em International Mathematics Research Notices}, 2021(17):13434--13477, 11 2019.

\bibitem{Mor22}
Ismael Morales.
\newblock On the profinite rigidity of free and surface groups, 2022.

\bibitem{Bie87}
W.D. Neumann, R.~Bieri, and R.~Strebel.
\newblock A geometric invariant of discrete groups.
\newblock {\em Inventiones mathematicae}, 90:451--478, 1987.

\bibitem{Oca19}
Oihana Ocaña, Alejandra Garrido, and Benjamin Klopsch.
\newblock Pro-$p$ groups of positive rank gradient and hausdorff dimension, 01 2019.

\bibitem{platonov}
V.~P. Platonov and O.~I. Tavgen{\textquotesingle}.
\newblock Grothendieck{\textquotesingle}s problem on profinite completions and representations of groups.
\newblock {\em K-Theory}, 4(1):89--101, January 1990.

\bibitem{Rib17}
Luis Ribes.
\newblock {\em Profinite graphs and groups}, volume~66 of {\em Ergebnisse der Mathematik und ihrer Grenzgebiete. 3. Folge. A Series of Modern Surveys in Mathematics [Results in Mathematics and Related Areas. 3rd Series. A Series of Modern Surveys in Mathematics]}.
\newblock Springer, Cham, 2017.

\bibitem{Sch27}
Otto Schreier.
\newblock Die {U}ntergruppen der freien {G}ruppen.
\newblock {\em Abh. Math. Sem. Univ. Hamburg}, 5(1):161--183, 1927.

\bibitem{scottwall}
Peter Scott, Terry Wall, and C.~T.~C. Wall.
\newblock {\em Topological methods in group theory}, page 137–204.
\newblock London Mathematical Society Lecture Note Series. Cambridge University Press, 1979.

\bibitem{Seg83}
Daniel Segal.
\newblock {\em Polycyclic Groups}.
\newblock Cambridge Tracts in Mathematics. Cambridge University Press, 1983.

\bibitem{sela2}
Z~Sela.
\newblock Diophanting geometry over groups {II}: Completions, closures and formal solutions.
\newblock {\em Israel Journal of Mathematics}, 134(1):173--254, December 2003.

\bibitem{sela1}
Zlil Sela.
\newblock Diophantine geometry over groups {I} : {M}akanin-{R}azborov diagrams.
\newblock {\em Publications Mathématiques de l'IHÉS}, 93:31--105, 2001.

\bibitem{galois_cohomology}
Jean-Pierre Serre.
\newblock {\em Galois Cohomology}.
\newblock Springer Berlin Heidelberg, 1997.

\bibitem{Swa69}
Richard~G. Swan.
\newblock Groups of cohomological dimension one.
\newblock {\em Journal of Algebra}, 12:585--610, 1969.

\bibitem{Venkataramana2008-nl}
T~N Venkataramana.
\newblock Virtual betti numbers of compact locally symmetric spaces.
\newblock {\em Israel Journal of Mathematics}, 166(1):235--238, August 2008.

\bibitem{wald}
Friedhelm Waldhausen.
\newblock On irreducible 3-manifolds which are sufficiently large.
\newblock {\em Annals of Mathematics}, 87(1):56--88, 1968.

\bibitem{wilton-hall}
Henry Wilton.
\newblock Hall's theorem for limit groups.
\newblock {\em Geometric and Functional Analysis}, 18(1):271--303, April 2008.

\bibitem{wilton3m}
Henry Wilton.
\newblock Residually free 3–manifolds.
\newblock {\em Algebraic \& Geometric Topology}, 8(4):2031–2047, October 2008.

\bibitem{wilton-one-ended}
Henry Wilton.
\newblock {One-ended subgroups of graphs of free groups with cyclic edge groups}.
\newblock {\em Geometry \& Topology}, 16(2):665 -- 683, 2012.

\bibitem{surfacesubs}
Henry Wilton.
\newblock Essential surfaces in graph pairs.
\newblock {\em Journal of the American Mathematical Society}, 31(4):893--919, June 2018.

\bibitem{profinite_surface}
Henry Wilton.
\newblock On the profinite rigidity of surface groups and surface words.
\newblock {\em Comptes Rendus. Math\'ematique}, 359(2):119--122, 2021.

\bibitem{Wilton17}
Henry Wilton and Pavel Zalesskii.
\newblock Distinguishing geometries using finite quotients.
\newblock {\em Geometry \& Topology}, 21:345--384, 2017.

\bibitem{wise-sep}
Daniel~T. Wise.
\newblock {Subgroup Separability of Graphs of Free Groups with Cyclic Edge Groups}.
\newblock {\em The Quarterly Journal of Mathematics}, 50(1):107--129, 03 2000.

\bibitem{Zal20}
Pavel Zalesskii and Theo Zapata.
\newblock Profinite extensions of centralizers and the profinite completion of limit groups.
\newblock {\em Revista Matem\'{a}tica Iberoamericana}, 36(1):61--78, 2020.

\end{thebibliography}
\bibliographystyle{plain}

\vspace{10mm}

\textsc{Mathematisches Institut, Universität Bonn, Endenicher Allee 60, 53115 Bonn, Germany} \\
\textit{E-mail address:} \href{mailto:fruchter@math.uni-bonn.de}{\texttt{fruchter@math.uni-bonn.de}}
\medskip

\textsc{Mathematical Institute, University of Oxford, Radcliffe Observatory, Andrew Wiles Building, Woodstock Rd, Oxford OX2 6GG} \\
\textit{E-mail address:} \href{mailto:morales@maths.ox.ac.uk}{\texttt{morales@maths.ox.ac.uk}}
\end{document}